\documentclass[10pt]{article}

\usepackage{geometry}
\usepackage{amsmath}
\usepackage{mathdots}
\usepackage{amssymb}
\usepackage{mathrsfs}
\usepackage{MnSymbol}
\usepackage{graphicx}
\usepackage{float}
\usepackage{amsthm}
\usepackage{tikz-cd}
\usepackage{dynkin-diagrams}
\usepackage[pagebackref]{hyperref}
\usepackage{tikz}
\usepackage[utf8]{inputenc}
\usepackage{multirow}
\usepackage{tikz-cd}

\newcommand{\codim}{\textrm{codim}}
\newcommand{\im}{\textrm{im }}
\newcommand{\coker}{\textrm{coker }}

\newcommand{\pdiff}[2]{\frac{\partial #1}{\partial #2}}

\def\Gr{\operatorname{Gr}}

\def \RatCurves{\operatorname{RatCurves}}
\def \Ann{\operatorname{Ann}}

\def \Pol{\operatorname{Pol}}
\def \Stab{\operatorname{Stab}}

\def \locus{\operatorname{locus}}
\def \Sm{\operatorname{Sm}}
\def \Hom{\operatorname{Hom}}

\def \Ad{\operatorname{Ad}}
\def \ad{\operatorname{ad}}
\def \res{\operatorname{res}}
\def \rank{\operatorname{rank}}
\def \Lie{\operatorname{Lie}}
\def \Leg{\operatorname{Leg}}

\def \VMRT{\operatorname{VMRT}}
\def \can{\operatorname{can}}

\def \split{\operatorname{split}}
\def \surj{\operatorname{surj}}
\def \Hilb{\operatorname{Hilb}}

\def \O{\operatorname{O}}
\def \SO{\operatorname{SO}}
\def \SL{\operatorname{SL}}
\def \Sp{\operatorname{Sp}}
\def \GL{\operatorname{GL}}
\def \diag{\operatorname{diag}}
\def \SI{\operatorname{S}\mathbb{I}}
\def \CI{\operatorname{C}\mathbb{I}}

\newtheorem{theorem}{Theorem}[section]
\newtheorem{corollary}[theorem]{Corollary}
\newtheorem{lemma}[theorem]{Lemma}
\newtheorem{proposition}[theorem]{Proposition}
\newtheorem{claim}[theorem]{Claim}

\newtheorem*{theorem*}{Theorem}
\newtheorem*{maintheorem*}{Main Theorem}
\newtheorem*{corollary*}{Corollary}
\newtheorem*{lemma*}{Lemma}
\newtheorem*{proposition*}{Proposition}
\newtheorem*{claim*}{Claim}

\theoremstyle{definition}
\newtheorem{definition}[theorem]{Definition}

\newtheorem{assumption}[theorem]{Assumption}

\newtheorem{prob}{Problem}
\newtheorem*{prob*}{Problem}

\newtheorem*{question*}{Question}

\newtheorem{example}[theorem]{Example}
\newtheorem*{example*}{Example}

\newtheorem*{exer*}{Exercise}

\newtheorem*{acknowledgements*}{Acknowledgements}

\newtheorem{remark}[theorem]{Remark}
\newtheorem*{remark*}{Remark}
\newtheorem*{assump*}{Assumption}
\newtheorem*{notation*}{Notations}

\def \CC {\mathbb{C}}

\def \II {\mathbb{I}}
\def \JJ {\mathbb{J}}

\def \PP {\mathbb{P}}

\def \ZZ {\mathbb{Z}}

\def \Ocal {\mathcal{O}}

\def \gfr {\mathfrak{g}}
\def \hfr {\mathfrak{h}}

\def \sfr {\mathfrak{s}}

\def \ufr {\mathfrak{u}}

\def \Hscr {\mathscr{H}}

\def \Kscr {\mathscr{K}}

\def \hbar {\bar{h}}

\def \cbf {\mathbf{c}}
\def \dbf {\mathbf{d}}

\def \rbf {\mathbf{r}}

\title{Legendrian families of lines on nilpotent orbit closures}
\author{Minseong Kwon\thanks{M.~Kwon was supported by Morningside Center of Mathematics, Chinese Academy of Sciences, and by Beijing Postdoctoral Research Foundation.}}

\date{September 4, 2026}

\begin{document}
\maketitle

\begin{abstract}
    Let $\gfr$ be a complex semisimple Lie algebra, and $\overline{Z}$ a nilpotent orbit closure in the projectivization $\PP(\gfr)$.
    We investigate the space of tangent directions of lines on $\overline{Z}$ passing through a general point $z$, denoted by $F(\overline{Z},\,z)$.
    We first prove that every irreducible component of $F(\overline{Z},\,z)$ is an integral subvariety of the contact hyperplane in the projectivized tangent space.
    Next, we study Legendrian components of $F(\overline{Z},\,z)$ in two cases: the case where $\overline{Z}$ admits a Springer resolution; the case where $\overline{Z}$ is square-zero in a projectivized simple Lie algebra of classical type.
    As an application, two corollaries are presented: a characterization of Richardson orbits among square-zero orbits in terms of $F(\overline{Z},\,z)$; a description of $F(\overline{Z},\,z)$ for $\overline{Z}$ arising from stratified Mukai flops associated to irreducible Hermitian symmetric spaces.
\end{abstract}

\emph{Keywords}: nilpotent orbit, line, contact structure, Legendrian variety.

\emph{2020 Mathematics Subject Classification}:
17B08, 
14H45, 
58A30. 

\tableofcontents

\section{Introduction}

\begin{notation*}
We work in the category of complex algebraic varieties, except for Section~\ref{section:Preliminaries} where we consider the holomorphic category.
A variety means a reduced seperated scheme of finite type over $\CC$, not assumed to be irreducible.
For the projectivization, we use the geometric convention: $\PP(V) \coloneqq (V \setminus 0) / \CC^{\times}$.
\end{notation*}

Rational curves play a prominent role in the study of Fano varieties.
Among them, lines are of particular importance, and their tangent directions encode rich information on the geometry of a given Fano variety.
See, for example, \cite{Hwang01VMRT}, 

A notable class of Fano varieties comes from \emph{nilpotent orbit closures} in projectivized semisimple Lie algebras.
Here, for a semisimple Lie algebra $\gfr$, a \emph{nilpotent orbit} in $\PP(\gfr)$ means an adjoint orbit of nilpotent elements.
A nilpotent orbit $Z$ in $\PP(\gfr)$ admits a natural holomorphic contact structure $D$ \cite{Beauville1998FanoContact}, in the sense that $D$ is a hyperplane distribution in $T_{Z}$ such that the Lie bracket of vector fields defines a nondegenerate bundle morphism
\[
    D \wedge D \rightarrow T_{Z}/D.
\]
Thus the closure $\overline{Z} \subset \PP(\gfr)$ is a singular Fano contact variety up to normalization, and moreover, $\overline{Z}$ is covered by lines with only one exception (cf. Corollary~\ref{coro:covered by lines except for adj type C}).
Hence it is natural to study lines on nilpotent orbit closures $\overline{Z}$.

In this paper, we investigate the space of tangent directions of lines on $\overline{Z}$ through a given point $z \in Z$:
\[
    F(\overline{Z},\,z) \coloneqq \{[T_{l,\,z}] \in \PP(T_{Z,\,z}) : \text{$l$ is a line on $\overline{Z}$ passing through $z$}\}.
\]
Notice that $F(\overline{Z},\,z)$ is well defined since $Z$ is an open dense orbit in $\overline{Z}$.
First we discuss which properties $F(\overline{Z},\,z)$ shares with the spaces of lines on Fano contact manifolds, despite singularities of $\overline{Z}$.
Then we present various examples, showing new phenomena which do not occur in the smooth case.

\subsection{Motivation}

The geometry of lines on $\overline{Z}$ is best understood in the case where $Z$ is an \emph{adjoint variety}, in which case $Z$ is projective, i.e., $Z = \overline{Z}$ (cf. Definition~\ref{defn: adjoint var}).
The adjoint varieties are also known as homogeneous compact contact manifolds classified in \cite{Boothby61}.
In this case, $F(Z,\,z)=F(\overline{Z},\,z)$ is known as the \emph{subadjoint variety} in the literature, and its description is classically well known \cite[Table~I]{Buczynski2006LegendrianSubvarieties}.
The subadjoint variety has played an important role in the study of the adjoint varieties, see for example \cite{Hwang1997RigidityHomogeneous} for deformation rigidity and \cite{Ho00} for a characterization theorem.

A key feature of the subadjoint variety is that it is a Legendrian submanifold of an odd dimensional projective space.
More precisely, for the contact structure $D$ of $Z$, the fiber $D_{z}$ is equipped with a natural conformal symplectic structure, which induces a contact structure of $\PP(D_{z})$ (cf. Example~\ref{ex:P 2n+1}).
Then the subadjoint variety $F(Z,\,z)$ is contained in $\PP(D_{z})$ as a Legendrian submanifold, meaning that it is smooth, integral (i.e., everywhere tangent to the contact structure of $\PP(D_{z})$) and of maximal dimension among integral submanifolds (cf. Definition~\ref{defn:integral/Legendrian submflds}).
In fact, it is well known that the Legendrian property holds true in a more general setup of contact manifolds: we refer to \cite{Ke01}, \cite[\S2]{Hwang01VMRT} and \cite{Hw23Partial} for details.

Meanwhile, when $\overline{Z}$ is a nilpotent orbit closure other than the adjoint varieties, $\overline{Z}$ is never smooth, and thus the theory of contact manifolds does not directly apply.
Therefore it is not obvious whether $F(\overline{Z},\,z)$ is Legendrian, or even contained in the contact hyperplane $\PP(D_{z})$, and so the following questions naturally arise.
In what follows, we say that a subvariety of a smooth contact variety is \emph{integral} (resp. \emph{Legendrian}) if its smooth locus is integral (resp. Legendrian).
\begin{question*}
    Let $\overline{Z}$ be a nilpotent orbit closure in the projectivized semisimple Lie algebra $\PP(\gfr)$, and let $z$ be a point in the open dense orbit $Z$.
    \begin{enumerate}
        \item\label{question1} Is $F(\overline{Z},\,z)$ an integral subvariety of the contact hyperplane $\PP(D_{z})$?
        \item\label{question2} If so, is $F(\overline{Z},\,z)$, or any of its irreducible components Legendrian?
    \end{enumerate}
\end{question*}
From our results, it will follow that the answer to Question~\ref{question1} is affirmative, while the answer to Question~\ref{question2} is not.

\subsection{Main results}

Before stating our main results, let us note that $F(\overline{Z},\,z)$ is nonempty, that is, $\overline{Z}$ is covered by lines, unless $\overline{Z}$ is the adjoint variety of type $C$ (cf. Corollary~\ref{coro:covered by lines except for adj type C}).

Our first main theorem gives a positive answer to Question~\ref{question1}.

\begin{theorem}[Theorem~\ref{thm:VMRT is integral}] \label{main thm:integral}
    Given a nilpotent orbit $Z$ and a point $z \in Z$, $F(\overline{Z},\,z)$ is contained in the contact hyperplane $\PP(D_{z})$ as an integral subvariety, whenever $F(\overline{Z},\,z)$ is nonempty.
\end{theorem}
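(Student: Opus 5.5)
The plan is to reduce both assertions to the vanishing of sections of negative line bundles on $\PP^1$. The only point to watch is that lines on $\overline{Z}$ may meet the singular locus $\overline{Z}\setminus Z$.

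Throughout, the contact structure of $Z$ is given by a twisted $1$-form $\theta\in H^0(Z,\,\Omega^1_Z\otimes L)$ with $D=\ker\theta$ and $L\cong T_Z/D$. By Beauville's description \cite{Beauville1998FanoContact}, $L$ is the restriction of $\Ocal_{\PP(\gfr)}(1)$.

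\textbf{Step 1: extending $\theta$ over a resolution.} Let $\nu\colon\widetilde{Z}\to\overline{Z}$ be a resolution that is an isomorphism over $Z$. The normalization of $\overline{\mathcal O}$ has symplectic singularities, so the Kirillov--Kostant form $\omega$ on the cone $\mathcal O$ extends holomorphically to any resolution. The twisted form $\theta$ is obtained from $\omega$ by contracting with the Euler vector field and descending along $\CC^\times$. Doing this on the $\CC^\times$-bundle over $\widetilde Z$ given by the pullback of the tautological bundle, I expect to get a holomorphic extension
\[
\tilde\theta\in H^0\bigl(\widetilde Z,\,\Omega^1_{\widetilde Z}\otimes\nu^*\Ocal(1)\bigr).
\]
Likewise, $\omega$ should descend to a twisted $2$-form $\tilde\omega\in H^0(\widetilde Z,\,\Omega^2_{\widetilde Z}\otimes \nu^*\Ocal(1))$ whose restriction to $\ker\tilde\theta$ over $Z$ is the conformal symplectic form $d\theta|_D$. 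This is the step I expect to be the main obstacle: one has to check that the contraction and descent respect holomorphic extension across the exceptional locus. Since everything is defined on the cone, the natural route is to resolve $\CC^\times$-equivariantly, for instance via a $G$-equivariant resolution.

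\textbf{Step 2: containment $F(\overline{Z},\,z)\subset\PP(D_z)$.} Let $l\ni z$ be a line on $\overline Z$ and let $f\colon\PP^1\to\widetilde Z$ be the normalized strict transform of $l$; since $z\in Z$, the curve $l$ is not contained in $\overline{Z}\setminus Z$. Composing $df$ with $\tilde\theta$ gives a section of $\Omega^1_{\PP^1}\otimes f^*\nu^*\Ocal(1)\cong\Ocal_{\PP^1}(-2+1)=\Ocal(-1)$. This section vanishes, so $T_{l,z}\subset\ker\theta_z=D_z$.

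\textbf{Step 3: integrality.} Let $\mathcal K$ be an irreducible component of the family of lines through $z$, and take a smooth point $[v]$ of the corresponding component of $F(\overline Z,\,z)$ with a tangent vector there. Realize it by a one-parameter family $f_s\colon\PP^1\to\widetilde Z$, with $f_s(0)=\tilde z$ fixed, giving $F\colon \PP^1\times\Delta\to\widetilde Z$. The deformation field $\xi=\partial_s F$ is a section of $f_0^*T_{\widetilde Z}$ vanishing at $0$. For the standard argument to apply, $\xi$ should lie in $\ker\tilde\theta$ up to the tangent direction of the curve. Consider
\[
\sigma(t)=\tilde\omega\bigl(\partial_t f_0,\,\xi\bigr),
\]
a section of $\Omega^1_{\PP^1}\otimes f_0^*\nu^*\Ocal(1)\otimes N$, where $N$ absorbs the vanishing of $\xi$ at $0$. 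Concretely, $\xi/t$ is a section of $f_0^*T_{\widetilde Z}\otimes\Ocal(-1)$, so $\sigma/t$ is a section of $\Ocal(-2)\otimes\Ocal(1)\otimes\Ocal(1)=\Ocal(0)$. On the other hand, $\theta(F_*\,\cdot)\equiv0$ for all $s$ by Step 2, and differentiating gives $d\tilde\theta(\partial_t,\partial_s)=0$ on the image. Together these force $\sigma/t$ to vanish at $t=\infty$, hence identically.

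Evaluating $\sigma/t$ at $t=0$ then gives $\omega_z(v,\,v')=0$, where $v'$ is the first-order variation of the tangent direction, i.e.\ the tangent vector to $F(\overline Z,\,z)$. This is exactly the condition (Example~\ref{ex:P 2n+1}) for the tangent space of $F(\overline Z,\,z)$ at $[v]$ to lie in the contact hyperplane of $\PP(D_z)$. Degree bookkeeping at $t=0$ is delicate, so as a fallback I would run Kebekus' argument \cite{Ke01} for minimal rational curves on contact manifolds on $\widetilde Z$ using $\tilde\theta$ and $\tilde\omega$. That argument only needs the twisted forms along the curves, and Step 1 supplies them.
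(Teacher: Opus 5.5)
Steps 1 and 2 are sound as far as $\tilde\theta$ goes, and they parallel the paper. Your extension of the twisted contact form through symplectic singularities and the Euler field is the route sketched in the remark after Proposition~\ref{prop:Theta KKS}; the paper itself uses the Jacobson--Morozov resolution, where $\Theta_{KKS}$ is explicit and surjective. Your $\Ocal_{\PP^1}(-1)$ argument for $F(\overline Z,\,z)\subset\PP(D_z)$ is the paper's. The genuine gap is Step 3, for three reasons.

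\emph{The form $\tilde\omega$ does not exist.} Since $\iota_\Xi\omega=p^*\theta\neq0$, $\omega$ is not basic for the $\CC^\times$-bundle, so it does not descend to a section of $\Omega^2_{\widetilde Z}\otimes\nu^*\Ocal(1)$. Only the Levi form $\wedge^2\ker\tilde\theta\to\nu^*\Ocal(1)$ descends, and only where $\tilde\theta$ is nowhere zero; for an arbitrary resolution $\tilde\theta$ may vanish on the exceptional locus.

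\emph{The degree count is wrong.} $\xi/t$ is a section of $f_0^*T_{\widetilde Z}(-1)$, so $\sigma/t$ would lie in $\Ocal_{\PP^1}(-2)$, not $\Ocal_{\PP^1}$. If a global $\tilde\omega$ existed the statement would be trivial, which is itself a warning sign.

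\emph{The key vanishing is assumed, not proved.} Step 2 gives only $\tilde\theta(\partial_tF)\equiv0$. The vanishing of $\tilde\theta(\partial_sF)=\tilde\theta(\xi)$ along the whole curve is the real content, and you only say it ``should'' hold. So your claim $\theta(F_*\,\cdot)\equiv0$ is unjustified.

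The tools you name do close the gap. Near $\tilde z$, trivialize $\nu^*\Ocal(1)$ so that $\tilde\theta$ becomes a $1$-form $\alpha$, and write $F^*\alpha=a\,dt+b\,ds$. Step 2 applied to every $f_s$ gives $a\equiv0$. Since $F(o,s)=\tilde z$, also $\partial_sF(o,s)=0$. Hence $F^*d\alpha=\partial_tb\,dt\wedge ds$ vanishes along $t=0$, so $b(t,0)=\alpha(\xi)$ vanishes to order $2$ at $o$; this is \cite[Proposition~3.1]{Ke01}. Therefore $\tilde\theta(\xi)\in H^0(\PP^1,\,f_0^*\nu^*\Ocal(1))\simeq H^0(\PP^1,\,\Ocal_{\PP^1}(1))$ is identically zero. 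This, not a $2$-form, is where the extension over $\overline Z\setminus Z$ is used.

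It follows that $F^*d\alpha=0$ along $s=0$ near $o$. Write $\xi=t\eta$ and let $t\to0$; this gives $d\alpha_{\tilde z}(v,\,v')=0$ with $v,\,v'\in D_z$. That is integrality at a general point of each component. Tangent vectors there come from $\Hom(\PP^1,\,\widetilde Z;\,o\mapsto\tilde z)$ by generic smoothness, because strict transform identifies lines through $z$ with $\nu^*\Ocal(1)$-lines through $\tilde z$.

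The paper argues differently. It shows these strict transforms are unbendable (Lemma~\ref{lemma:str transform of lines are unbendable}, using that two distinct lines share at most one point). It then quotes \cite[Theorem~1.4]{Hw23Partial} for unbendable curves tangent to $\ker\Theta_{KKS}$. The repaired direct argument avoids unbendability, but as written your Step 3 does not prove the statement.
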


As mentioned before, the main obstacle to Theorem~\ref{main thm:integral} is singularities of $\overline{Z}$, making the standard deformation theory on contact manifolds unavailable.
To overcome this problem, we use a nice resolution of singularities of $\overline{Z}$, called the \emph{Jacobson--Morozov resolution} (cf. Section~\ref{ssection:JM resoln}).
The key is that, although the Jacobson--Morozov resolution does not admit a contact structure (and such a {\lq contact\rq} resolution does not exist in general \cite{fu07contact}), the contact form of $Z$ naturally extends to a 1-form valued in a line bundle (cf. Proposition~\ref{prop:Theta KKS}).
This enables us to use deformation-theoretic arguments from the smooth setup, namely results of \cite{Hw23Partial}.
In the same spirit, as an analogue of \cite{Ke01}, we also prove another integrality theorem: the locus of lines through $z$ is an integral subvariety of $Z$.
See Theorem~\ref{thm:locus of lines is integral} for the precise statement.

Theorem~\ref{main thm:integral} leads us to the notion of \emph{Legendrian components} of $F(\overline{Z},\,z)$, i.e., irreducible components of $F(\overline{Z},\,z)$ that are Legendrian in the contact hyperplane.
We denote by $\Leg(\overline{Z},\,z)$ the set of all Legendrian components of $F(\overline{Z},\,z)$, and then Question~\ref{question2} asks if $\Leg(\overline{Z},\,z)$ is nonempty.
We show that $\Leg(\overline{Z},\,z)$ is sometimes empty, by computing $\Leg(\overline{Z},\,z)$ for \emph{square-zero} nilpotent orbits $Z$.
Here, we say that $Z$ is \emph{square-zero} if $\gfr$ is a simple Lie algebra of classical type and every point of $Z$ is represented by a square-zero matrix, or in other words, it is of Jordan type $[2^{r},\,1^{N-2r}]$ (cf. Section~\ref{section:square zero}).
Our result shows that there are square-zero counterexamples to Question~\ref{question2} when $\gfr$ is of Dynkin type $B$ or $C$:

\begin{theorem}[Corollaries~\ref{coro:Leg of square zero in slN} and \ref{coro:Leg of square zero in soN spN}] \label{main thm:square zero}
    Let $\gfr$ be a simple Lie algebra of classical type, $Z$ a square-zero nilpotent orbit in $\PP(\gfr)$, and $z \in Z$ a point.
    \begin{enumerate}
        \item If $\gfr = \mathfrak{sl}_{N}$ and $Z$ is of Jordan type $[2^{r},\,1^{N-2r}]$ ($N \ge 2$, $1 \le r \le N/2$), then $F(\overline{Z},\,z)$ is nonempty exactly when $N \ge 3$, and in this case,
        \[
            |\Leg(\overline{Z},\,z)| = \left\{ \begin{array}{cc}
                1 + \lfloor r/2 \rfloor & \text{if $N=2r$,} \\
                1+r & \text{otherwise}.
            \end{array} \right.
        \]
        \item If $\gfr = \mathfrak{so}_{N}$ and $Z$ is of Jordan type $[2^{r},\,1^{N-2r}]$ ($N \ge 5$, $2 \le r \le N/2$, $r$ even), then $F(\overline{Z},\,z)$ is nonempty exactly when $N\ge 6$, and in this case,
        \[
            |\Leg(\overline{Z},\,z)| = \left\{ \begin{array}{cc}
                1 + \lfloor r/4 \rfloor & \text{if $N=2r$,} \\
                0 & \text{if $N=2r+1$ and $r/2$ is odd,} \\
                1 + (r/2) & \text{if $N=2r+2$,} \\
                1 & \text{otherwise}.
            \end{array} \right.
        \]
        \item If $\gfr = \mathfrak{sp}_{N}$ and $Z$ is of Jordan type $[2^{r},\,1^{N-2r}]$ ($N \ge 4$, $N$ even, $1 \le r \le N/2$), then $F(\overline{Z},\,z)$ is nonempty exactly when $r \ge 2$, and in this case,
        \[
            |\Leg(\overline{Z},\,z)| = \left\{ \begin{array}{cc}
                1 + \lfloor r/2 \rfloor & \text{if $N=2r$ and $r$ is odd,} \\
                2 + (r/2) & \text{if $N=2r$ and $r$ is even,} \\
                2 & \text{if $N\ge 2r+2$ and $r$ is even,} \\
                0 & \text{otherwise}.
            \end{array} \right.
        \]
    \end{enumerate}
\end{theorem}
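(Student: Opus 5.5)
The plan is to make $F(\overline{Z},\,z)$ explicit for a square-zero point and then decide, component by component, whether it is Legendrian. Concretely, I would find all its irreducible components and their dimensions, and compare each dimension with $\frac{1}{2}\dim D_{z} - 1 = \frac{1}{2}(\dim Z - 1) - 1$. By Theorem~\ref{main thm:integral}, every component is integral in $\PP(D_{z})$. So a component is Legendrian exactly when its dimension equals $\frac{1}{2}\dim D_{z} - 1$.

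First, fix a representative $z = [x]$ with $x^{2}=0$ and $\rank x = r$. In the symplectic and orthogonal cases, impose additionally that $x$ is skew-adjoint for the form. A line through $z$ in $\PP(\gfr)$ has the form $\{[x + t y]\}$, and it lies on $\overline{Z}$ iff $(x+ty)^{2}=0$ and $\rank(x+ty)\le r$ for all $t$. The first condition amounts to $y^{2}=0$ and $xy+yx=0$. I would write $V = \ker x \supset \im x$ and use a block decomposition adapted to the flag $\im x \subset \ker x \subset \CC^{N}$. In these coordinates the conditions become explicit bilinear and rank conditions on the blocks of $y$ modulo $\CC x$ and modulo $T$-directions. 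Tangent directions are $[y] \in \PP(T_{Z,z}) = \PP(\gfr/[\gfr,x]^{\perp}\cdots)$, and the contact hyperplane $D_{z}$ corresponds to $\{y : \kappa(x,y)\ \text{component}\}$ via the KKS form (Proposition~\ref{prop:Theta KKS}).

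Second, I would decompose the solution set by discrete invariants. Natural candidates are the ranks of the blocks of $y$: the block $\ker x \to \CC^{N}/\ker x$, the block on $\ker x/\im x$, and the block $\CC^{N}\to \im x$. Concretely, the component is labelled by an integer $k$, the rank of the induced map $\CC^{N}/\ker x \to \im x$ restricted to the line direction. For $\mathfrak{sl}_{N}$, this should yield components indexed by $k=0,\dots,r$, each a product of determinantal or Grassmannian-type varieties. There should be one extra component coming from the $\ker x/\im x$ block, which exists only when $N > 2r$ (hence nonemptiness iff $N\ge3$). I would then compute dimensions and check which equal $\dim Z/2 - 1$. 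When $N=2r$, a symmetry swapping $k \leftrightarrow r-k$ identifies components, giving $1+\lfloor r/2\rfloor$. For $\mathfrak{so}_{N}$ and $\mathfrak{sp}_{N}$, I would use the same stratification with $y$ self-adjoint or skew-adjoint. Here the block maps become symmetric or skew forms, so ranks are constrained by parity. This is where the conditions "$r$ even", "$r/2$ odd" and the shifts by $4$ arise. The extra $\ker x/\im x$ component now requires an isotropic or square-zero element in $\mathfrak{so}$ or $\mathfrak{sp}$ of the middle space, which explains the special behaviour at $N=2r+1, 2r+2$.

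The main obstacle is the second step: proving that the listed families are exactly the irreducible components and are not contained in one another. That requires closure relations between rank strata, and a careful dimension count in the orthogonal and symplectic cases, where parity effects make Legendrian dimension attainable or not. I would handle it by realising each family as an orbit closure of the stabilizer $\Stab_{G}(z)$, or of its Levi part $\GL_{r}\times G(\ker x/\im x)$, acting on the line directions. Comparing orbit dimensions then settles inclusions, and $\Leg(\overline{Z},\,z)$ is counted by listing the maximal orbits of the correct dimension.
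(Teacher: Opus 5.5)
Your starting point matches the paper. Lines through $z=[x]$ are the $\{[x+ty]\}$ with $y^{2}=0$, $xy+yx=0$ and $\rank(x+ty)\le r$ (Proposition~\ref{prop:locus of lines on square zero}), and by Theorem~\ref{main thm:integral} a component is Legendrian iff it has dimension $n-1$. The part that carries the theorem, namely identifying and counting the top-dimensional components, does not work as stated.

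\textbf{Wrong invariant.} The rank of the block $\CC^{N}/\ker x\to\im x$ (the paper's $C$) is not an invariant of a line. Along the line it changes by $t\cdot id_{r}$, so it is generically $r$ on every component. The invariant that works is the rank $a$ of the square-zero endomorphism $A=y|_{\im x}$, with $0\le a\le r/2$; this is the equivariant map $\Psi$ of (\ref{eqn:projection from locus of lines in sl_N}).

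\textbf{Wrong counting mechanisms.} For $\mathfrak{sl}_{2r}$ there is no $k\leftrightarrow r-k$ identification. Each stratum $\Psi^{-1}(O_{[2^{a},\,1^{r-2a}]})$ is irreducible of dimension $r^{2}$, and this gives $1+\lfloor r/2\rfloor$ components. In any case, a symmetry swapping components would lower $|\Leg(\overline{Z},\,z)/\Stab_{G}(z)|$, not $|\Leg(\overline{Z},\,z)|$; here the stabilizer is connected, so every component is invariant. For $N>2r$, your ``$k=0,\dots,r$ plus one extra component'' gives $r+2$. The true count $1+r$ comes from each stratum with $2a<r$ splitting into exactly two components ($B_{2}=0$ or $D_{2}=0$), plus one component when $2a=r$.

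\textbf{Wrong nonemptiness argument.} Nonemptiness is not tied to a middle block: for $N=2r\ge4$ one has $\ker x=\im x$, yet lines exist. The correct argument is Corollary~\ref{coro:covered by lines except for adj type C}. The only square-zero orbits not covered by lines are type-$C$ adjoint varieties: $\mathfrak{sl}_{2}$, $r=1$ in $\mathfrak{sp}_{N}$, and $\mathfrak{so}_{5}\cong\mathfrak{sp}_{4}$.

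\textbf{The remedy for the main obstacle fails.} Components are not orbit closures of $\Stab_{G}(z)$, let alone of its Levi part. For $N=2r$, the linear component $\{A=0\}$ consists of matrices with only a $C$-block. On it $\Stab_{\GL_{N}}(z)$ acts by $C\mapsto\lambda^{-1}gCg^{-1}$. After dividing by $\CC\cdot id_{r}$ and projectivizing, the eigenvalues of $C$ up to affine maps give $r-2$ continuous moduli, so there are infinitely many orbits once $r\ge3$.

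\textbf{What is actually needed} is a dimension count on the fibres of $\Psi$. Over $A_{a}$ the fibre is an affine space times the determinantal variety $M'(N-2r,\,r-2a)$. Lemma~\ref{lemma: dimension in slN} shows it has exactly two top-dimensional components; its proof uses the leading coefficient in $t$ to kill the lower-left block, then induction over rank strata.

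In the orthogonal and symplectic cases the parity effects come from Lemma~\ref{lemma: dimension in soN spN}, with $b=N-2r$ and $c=r-2x$. The constraint $B(\Omega'_{b,\,\epsilon})^{-1}B^{tr}=0$ forces $B=0$ when $b=1$. So at $N=2r+1$ only the stratum $r=2x$ survives, and it exists only if $r/2$ is even. When $b=2$, it forces one of the two columns to vanish, giving two components per stratum at $N=2r+2$. For $b\ge3$ the top dimension is reached only when $\epsilon=1$ and $c=2$, and for $\epsilon=-1$ only when $b=0$. In addition, for $\mathfrak{sp}_{N}$ the top stratum $\Ocal_{r/2}$ is disconnected (Proposition~\ref{prop:components of Ox in soN spN}). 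This produces the two $\Stab$-conjugate components behind the counts $2+r/2$ and $2$.

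None of these ingredients appears in your sketch, so the stated formulas are not reached.
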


In fact, our result is stronger than what is stated in Theorem~\ref{main thm:square zero}: we also determine Legendrian components of $F(\overline{Z},\,z)$ up to the isotropy action of $\Stab_{G}(z)$ and the size of the set $\Leg(\overline{Z},\,z)/\Stab_{G}(z)$ of orbits where $G$ is the adjoint group of $\gfr$.
See Corollaries~\ref{coro:Leg of square zero in slN} and \ref{coro:Leg of square zero in soN spN} for details.

Since the class of square-zero nilpotent orbits includes various kinds of examples, our result provides a new insight beyond Questions~\ref{question1} and \ref{question2}.
A remarkable example is the case where $Z$ is \emph{Richardson}, that is, $\overline{Z}$ is the moment image of the projectivized cotangent bundle over a rational homogeneous space (cf. Definition~\ref{defn:Richardson}).
A characterization of Richardson orbits in terms of $F(\overline{Z},\,z)$ is known: a nilpotent orbit $Z$ is Richardson if and only if $F(\overline{Z},\,z)$ contains a linear Legendrian component \cite[Theorem~3.2]{PY19Nilpotent}.
From Theorem~\ref{main thm:square zero}, together with the well-known classification of Richardson orbits (see Example~\ref{ex:sq-zero Richardson}), we deduce another characterization of Richardson orbits among square-zero orbits:

\begin{theorem}[Corollary of Corollaries~\ref{coro:Leg of square zero in slN} and \ref{coro:Leg of square zero in soN spN}]\label{main thm: sq zero Richardson}
    Let $\gfr$ be a simple Lie algebra of classical type and $Z$ a square-zero nilpotent orbit in $\PP(\gfr)$.
    Assume that $F(\overline{Z},\,z)$ is nonempty.
    Then $Z$ is Richardson if and only if either
    \[
        |\Leg(\overline{Z},\,z)/\Stab_{G}(z)| \ge 2,
    \]
    or $\overline{Z} \subset \PP(\gfr)$ is the image of the moment map of $\PP(T^{\vee}_{\PP^{N-1}})$ defined by the $\Sp_{N}$-action on $\PP^{N-1}$ ($N\ge 6$ even), that is,
    \[
        \PP(\overline{O}_{[2^{2},\,1^{N-4}]}) \subset \PP(\mathfrak{sp}_{N}).
    \]
    In the last case, $|\Leg(\overline{Z},\,z)/\Stab_{G}(z)| = 1$ and $F(\overline{Z},\,z)$ contains two linear Legendrian components, which are identified under the $\Stab_{G}(z)$-action.
\end{theorem}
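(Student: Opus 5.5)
The plan is to compare two lists case by case: the classification of Richardson orbits among square-zero orbits (Example~\ref{ex:sq-zero Richardson}), and the orbit counts $|\Leg(\overline{Z},\,z)/\Stab_{G}(z)|$ from Corollaries~\ref{coro:Leg of square zero in slN} and \ref{coro:Leg of square zero in soN spN}. We only treat cases where $F(\overline{Z},\,z)$ is nonempty, that is, $N \ge 3$ in type $A$, $N \ge 6$ in the orthogonal case, and $r \ge 2$ in the symplectic case.

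First we recall the well-known classification of square-zero Richardson orbits, which follows from the induction description of Richardson orbits from Levi subalgebras.
\begin{enumerate}
    \item In $\mathfrak{sl}_{N}$, every orbit $[2^{r},\,1^{N-2r}]$ is Richardson. It is the moment image of $T^{\vee}\Gr(r,\,N)$.
    \item In $\mathfrak{sp}_{N}$, the orbit $[2^{r},\,1^{N-2r}]$ is Richardson exactly when $N = 2r$ (Lagrangian Grassmannian) or $r = 1$ (projective space). Since $r = 1$ is excluded, the nonempty cases leave $N = 2r$ and the orbit $[2^{2},\,1^{N-4}]$. I need to recheck the latter: $\PP^{N-1} = \Sp_{N}/P_{1}$, and the moment image of $T^{\vee}\PP^{N-1}$ under the $\Sp_N$-action is the closure of $[2^{2},\,1^{N-4}]$ with a degree-$2$ map. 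This orbit is therefore Richardson as well, and it is the exceptional case in the statement.
    \item In $\mathfrak{so}_{N}$ with $r$ even, the orbit is Richardson exactly when $N = 2r$, $N = 2r+1$, or $N = 2r+2$ (isotropic Grassmannians of maximal or near-maximal dimension, and the quadric case). The precise list should be read off from Example~\ref{ex:sq-zero Richardson}.
\end{enumerate}

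Second, for each family we read off $|\Leg/\Stab_{G}(z)|$ from the refined corollaries, not just $|\Leg|$.
\begin{enumerate}
    \item In type $A$, $|\Leg| = 1 + \lfloor r/2\rfloor$ or $1 + r$. I expect the orbit count to be at least $2$ whenever $N \ge 3$. The linear component from \cite[Theorem~3.2]{PY19Nilpotent}, together with the projectivized conormal component, gives two distinct orbits. When $r = 1$ and $N = 2r$ we have $N = 2$, which is excluded.
    \item For the non-Richardson orthogonal and symplectic cases ($N = 2r+1$ with $r/2$ odd, or $N \ge 2r+2$ with $r$ odd), $|\Leg| = 0$, so the orbit count is $0$.
    \item For the remaining non-Richardson cases with $|\Leg| = 1$ or $2$, we must check that the count is $1$. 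These are orthogonal with $N \ge 2r+3$, and symplectic with $N \ge 2r+2$, $r$ even, $r \ge 4$. Here the two symplectic components should be swapped by $\Stab_{G}(z)$.
    \item For the Richardson cases, we must check that the count is at least $2$. By \cite[Theorem~3.2]{PY19Nilpotent} there is one linear Legendrian component, so it suffices to exhibit one more non-linear orbit. The case $N = 2r+1$ with $r/2$ even is borderline, since there $|\Leg| = 1$. I must consult the corollary to see whether this orbit is really Richardson in $\mathfrak{so}_{2r+1}$. If it is, the corollary's finer data must already rule out a contradiction, and the statement's consistency forces it to be non-Richardson.
\end{enumerate}

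The main obstacle is the bookkeeping of $\Stab_{G}(z)$-orbits rather than raw counts, which lives in the detailed statements of the corollaries. In particular, the exceptional case $\PP(\overline{O}_{[2^{2},\,1^{N-4}]}) \subset \PP(\mathfrak{sp}_{N})$ requires three facts. First, $|\Leg| = 2$ by case $r = 2$ even, $N \ge 6$. Second, both components are linear, being the two families of $\PP^{1}$-fibers arising from the two choices of preimage under the degree-$2$ moment map, or equivalently the two isotropic planes. Third, an element of $\Stab_{G}(z)$ exchanging the two rank-one summands of $z$ swaps them. I would verify this last fact by writing $z = e_{1} \otimes e_{1} + e_{2} \otimes e_{2}$ in $S^{2}\CC^{N} \cong \mathfrak{sp}_{N}$ and using the $\O_{2}$-symmetry of this form inside $\Stab_{G}(z)$.
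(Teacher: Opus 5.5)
Your plan of matching Example~\ref{ex:sq-zero Richardson} against the orbit counts of Corollaries~\ref{coro:Leg of square zero in slN} and \ref{coro:Leg of square zero in soN spN} is exactly the paper's route. However, the classification you feed into it is wrong precisely where the theorem depends on it. You assert that $[2^{r},\,1]$ is Richardson in $\mathfrak{so}_{2r+1}$. Example~\ref{ex:sq-zero Richardson} says square-zero Richardson orbits in $\mathfrak{so}_{N}$ occur only for $N$ even, namely $[2^{N/2}]$ and $[2^{N/2-1},\,1^{2}]$.

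This is not cosmetic. For $N=2r+1$ with $r/2$ even (e.g.\ $[2^{4},\,1]$ in $\mathfrak{so}_{9}$), Corollary~\ref{coro:Leg of square zero in soN spN} gives $|\Leg(\overline{Z},\,z)/\Stab_{G}(z)|=1$, and this is not the symplectic exception. So with your list the theorem would be false. Your resolution, that ``the statement's consistency forces it to be non-Richardson'', is circular: that these orbits are not Richardson is an input to the proof, not something the statement can supply. For $r/2$ odd it is automatic, since a polarization $P$ would make the $n$-dimensional linear space $\PP(\ufr_{P})\ni z$ give a Legendrian component while $\Leg=\emptyset$ there. For $r/2$ even the counts alone decide nothing.

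The missing fact is that no nonzero square-zero orbit of $\mathfrak{so}_{2r+1}$ is Richardson. Every Levi subalgebra has a factor $\mathfrak{so}_{m}$ with $m$ odd. Inducing the zero orbit from a proper Levi turns a part $1$ of $[1^{m}]$ into an odd part $\ge 3$, which the $B$-collapse never lowers. Your list is also internally inconsistent: step two calls $N=2r+1$, $r/2$ odd non-Richardson. Your symplectic list first admits $r=1$, which is not Richardson for $N\ge 4$; the moment image of $T^{\vee}_{\PP^{N-1}}$ is $\overline{O}_{[2^{2},\,1^{N-4}]}$, as you then notice.

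The remaining steps are fine once you read the corollaries rather than guess.

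In type $A$, Corollary~\ref{coro:Leg of square zero in slN} states that every Legendrian component is $\Stab$-invariant. Hence $|\Leg(\overline{Z},\,z)/\Stab_{G}(z)|=|\Leg(\overline{Z},\,z)|\ge 2$, since $N\ge 3$ forces $r\ge 2$ when $N=2r$. Your ``linear plus projectivized conormal component'' argument is neither needed nor justified.

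In the Richardson cases of $\mathfrak{so}_{N}$ and $\mathfrak{sp}_{N}$ the required inequality is arithmetic:
\begin{itemize}
\item $\lfloor r/4\rfloor+1\ge 2$ for $N=2r$ in $\mathfrak{so}_{N}$, because $N\ge 5$ with $r$ even forces $r\ge 4$;
\item $r/2+1\ge 2$ for $N=2r+2$;
\item $\lfloor r/2\rfloor+1\ge 2$ for $\mathfrak{sp}_{2r}$, since $F(\overline{Z},\,z)\neq\emptyset$ forces $r\ge 2$.
\end{itemize}
For $\mathfrak{sp}_{N}$ with $N\ge 2r+2$ and $r$ even, the corollary already gives exactly one orbit, so nothing about swapping has to be ``expected''.

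In the exceptional case, $|\Leg|=2$ and $|\Leg/\Stab_{G}(z)|=1$ give the identification directly. Linearity of both components follows because one of them is $\PP(T_{\PP(\ufr_{P}),\,z})$ and $\Stab_{G}(z)$ acts projectively linearly.

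Your explicit model also works, but describe it correctly. Take $z=[e_{1}^{2}+e_{2}^{2}]=[(e_{1}+\sqrt{-1}e_{2})(e_{1}-\sqrt{-1}e_{2})]$ with $\omega(e_{1},e_{2})=0$. The two preimages lie over $[e_{1}\pm\sqrt{-1}e_{2}]\in\PP^{N-1}$. The components are the tangent directions at $z$ of the two fibres, each isomorphic to $\PP^{N-2}$ — not ``$\PP^{1}$-fibres'' or ``two isotropic planes''. The swap $e_{1}\leftrightarrow e_{2}$, extended symplectically, exchanges them.
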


Among square-zero Richardson orbits, of particular interest are the Richardson orbits arising from \emph{stratified Mukai flops} of classical type.
Loosely speaking, \emph{stratified Mukai flops} are building blocks of birational maps between the projectivized cotangent bundles $\PP(T^{\vee}_{G/P_{i}})$ over two rational homogeneous spaces $G/P_{i}$ ($i=1,\,2$) such that the moment maps are birational onto the same nilpotent orbit closure $\overline{Z}$:
\[
    \begin{tikzcd}
        \PP(T^{\vee}_{G/P_{1}}) \arrow[rd] \arrow[d] \arrow[rr, dotted]&  & \PP(T^{\vee}_{G/P_{2}}) \arrow[ld] \arrow[d]\\
        G/P_{1} & \overline{Z} & G/P_{2}.
    \end{tikzcd}
\]
More precisely, stratified Mukai flops are also birational maps of the above form, and according to \cite[Theorem~6.1]{Na06} and \cite[Corollary~5.9]{Fu07extremal}, such a birational map can be connected by stratified Mukai flops.
There are four types of stratified Mukai flops: type $A$, $D$, $E_{6,\,I}$ and $E_{6,\,II}$ (cf. Table~\ref{tab:Stratified Mukai flops}, Section~\ref{ssection:Stratified Mukai flops}).
Except for the last type, $G/P_{i}$ are irreducible Hermitian symmetric spaces such that $G/P_{1} \simeq G/P_{2}$ (not equivariantly).
Moreover, in type $A$ and $D$, the associated nilpotent orbit closure $\overline{Z}$ is square-zero, and so Theorem~\ref{main thm:square zero} applies.
We extend this result to the $E_{6,\,I}$-case:
\begin{theorem}[Corollary of Corollaries~\ref{coro: Mukai flop E6I}, \ref{coro:Leg of square zero in slN} and \ref{coro:Leg of square zero in soN spN}] \label{main thm:Mukai flop}
    For the stratified Mukai flop of type $A$, $D$ or $E_{6,\,I}$, let $X \simeq G/P_{1} \simeq G/P_{2}$ be the associated irreducible Hermitian symmetric space, and $\overline{Z}$ the associated Richardson orbit closure.
    Then we have
    \[
        |\Leg(\overline{Z},\,z)| = 1 + \rank(X)
    \]
    where $\rank(X)$ is the rank of the symmetric space $X$ (cf. Remark~\ref{rmk:rank of IHSS}).
\end{theorem}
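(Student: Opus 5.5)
This statement is a corollary: the plan is to reduce it case by case to the explicit counts of Theorem~\ref{main thm:square zero} (Corollaries~\ref{coro:Leg of square zero in slN} and \ref{coro:Leg of square zero in soN spN}) for types $A$ and $D$, and to Corollary~\ref{coro: Mukai flop E6I} for type $E_{6,\,I}$. Then I check the resulting number against $1+\rank(X)$, using the rank of each irreducible Hermitian symmetric space from Remark~\ref{rmk:rank of IHSS}.

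\emph{Type $A$.} Here $G=\SL_{N}$ and $X=\Gr(r,\,N)$ with $2r<N$ (the flop being $\Gr(r,\,N)\dashrightarrow\Gr(N-r,\,N)$), so $\rank(X)=r$. The common moment image of $\PP(T^{\vee}_{\Gr(r,N)})$ is the closure of the orbit of Jordan type $[2^{r},\,1^{N-2r}]$ in $\PP(\mathfrak{sl}_{N})$: a cotangent vector is a map $\CC^{N}/W\to W$, whose rank is at most $r$ and whose square is zero. Since $N\ge 3$ here (as $1\le r<N/2$), Theorem~\ref{main thm:square zero}(1) applies in the case $N\ne 2r$ and gives $|\Leg|=1+r=1+\rank(X)$.

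\emph{Type $D$.} Here $G=\SO_{2n}$ with $n$ odd, and $X$ is a spinor variety (the two families of maximal isotropic subspaces), so $\rank(X)=\lfloor n/2\rfloor$. A cotangent vector at a maximal isotropic subspace $W$ is a skew map $W^{\perp}=W\leftarrow \CC^{2n}/W\simeq W^{\vee}$, which has even rank $\le n-1$. So the Richardson orbit is $[2^{r},\,1^{2n-2r}]$ with $r=n-1$ even, i.e.\ $N=2r+2$. Theorem~\ref{main thm:square zero}(2) gives $1+r/2=1+(n-1)/2=1+\lfloor n/2\rfloor=1+\rank(X)$.

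\emph{Type $E_{6,\,I}$.} Here $X$ is the Cayley plane $E_{6}/P_{1}\simeq E_{6}/P_{6}$, whose rank is $2$. The required input is that Corollary~\ref{coro: Mukai flop E6I} yields exactly $3$ Legendrian components, which I take from that corollary.

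The main obstacle is the bookkeeping in identifying which Jordan type and which branch of the case formulas corresponds to each flop: in particular the parity constraint in type $D$ ($n$ odd, hence $r=n-1$ even, hence $N=2r+2$), and excluding the boundary case $N=2r$ in type $A$, which is not a flop because there $P_{1}=P_{2}$. Once the orbit is correctly identified, the formulas already in hand give the result.
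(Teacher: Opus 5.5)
Your proposal is correct and follows the same route as the paper. The paper states this theorem as a direct corollary of Corollaries~\ref{coro: Mukai flop E6I}, \ref{coro:Leg of square zero in slN} and \ref{coro:Leg of square zero in soN spN}, and identifies $\rank(X)$ with the number $d$ of Table~\ref{tab:Stratified Mukai flops} via Remark~\ref{rmk:rank of IHSS}. Your choice of branch is the right one in each case: the $N\neq 2r$ case for $\mathfrak{sl}_{N}$; the $N=2r+2$ case with $r=n-1$ even for $\mathfrak{so}_{2n}$; and the three components $X_{0}$, $X_{1}$, $X_{2}$ for $E_{6,\,I}$.
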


The $E_{6,\,I}$-case of Theorem~\ref{main thm:Mukai flop} follows from, rather than explicit computation, our study on $F(\overline{Z},\,z)$ in a more general setup of Richardson orbit closures.
The idea is to study lines on $\overline{Z}$ using rational curves on the projectivized cotangent bundles over rational homogeneous spaces.
For this purpose, we investigate rational curves on the projectivized cotangent bundles over quasi-projective manifolds, in which case the standard deformation theory applies (cf. Section~\ref{ssection:projectivized cotangent bundles}).
In this setup, we discuss when a rational curve on the base manifold can be lifted to a \emph{contact line} (cf. Definition~\ref{defn:contact line}) on the projectivized cotangent bundle and then compare their parameter spaces, which has its own interest (cf. Theorem~\ref{thm:VMRT formulation}).
For instance, a result which is crucial in the proof of Theorem~\ref{main thm:Mukai flop} is the following:

\begin{theorem}[Corollary~\ref{coro:irreducible comp over VMRT of lines}]
    Let $G/P$ be a rational homogeneous space, and $\pi : \PP(T^{\vee}_{G/P})\rightarrow G/P$ the projection.
    Assume that the moment map $\PP(T^{\vee}_{G/P}) \rightarrow \PP(\gfr)$ is birational onto its image $\overline{Z}$.
    Choose a point $z \in Z$, and for a line $l$ on $\overline{Z}$ passing through $z$, denote by $l^{\sim}$ the strict transform of $l$.
    Let $\Kscr$ be a family of lines on $G/P$ with respect to a polarization such that the anticanonical degree $\deg_{\Kscr} (K_{G/P}^{\vee})$ is at least $4$.
    Then lines $l$ on $\overline{Z}$ through $z$ such that $\pi(l^{\sim})$ are members of $\Kscr$ form a single Legendrian component of $F(\overline{Z},\,z)$.
\end{theorem}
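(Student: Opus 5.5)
We plan to transfer the problem from the singular variety $\overline{Z}$ to the smooth projectivized cotangent bundle $Y \coloneqq \PP(T^{\vee}_{G/P})$. On $Y$ the standard deformation theory of rational curves is available, and $Y$ carries its tautological contact structure. Since the moment map $\mu : Y \to \overline{Z}$ is birational and $G$-equivariant, it is an isomorphism over the open orbit $Z$. For a point $z \in Z$ we therefore have a unique preimage $\tilde z = \mu^{-1}(z)$, and $d\mu$ identifies $\PP(T_{Y,\tilde z})$ with $\PP(T_{Z,z})$ together with the contact hyperplanes. Write $\tilde z = (x,\,[\alpha])$ with $x = \pi(\tilde z)$ and $\alpha \in T^{\vee}_{G/P,\,x}$.

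First, the line $l$ through $z$ is not contained in the boundary of $Z$, so its strict transform $l^{\sim}$ is a rational curve through $\tilde z$ with $\mu^{*}\Ocal(1)\cdot l^{\sim}=1$. Since $\mu^{*}\Ocal(1)$ is the tautological line bundle $\Ocal_{Y}(1)$, these are exactly the contact lines in the sense of Definition~\ref{defn:contact line}. We then invoke Theorem~\ref{thm:VMRT formulation} to describe the contact lines through $\tilde z$ lying over curves of $\Kscr$. The expected description is as follows: such a line is a lift of a member $C \in \Kscr$ through $x$ whose tangent direction $[v] \in \Kscr_{x} \subset \PP(T_{G/P,x})$ satisfies $\alpha(v)=0$ (the contact condition). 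The fibre direction of the lift is then determined by $C$ and $\alpha$, up to a linear choice. In this way the component in question becomes a fibration over $\Kscr_{x}\cap \PP(\ker \alpha)$, i.e. a hyperplane section of the VMRT of $\Kscr$, with fibres linear spaces coming from the $H^{0}$ of the normal bundle twisted appropriately.

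Second, we compute dimensions. Put $n = \dim G/P$, so $\dim Y = 2n-1$ and the Legendrian dimension in $\PP(D_{z})$ is $n-2$. By homogeneity, $C$ is standard, with $\dim \Kscr_{x} = \deg_{\Kscr}(K^{\vee}_{G/P}) - 2 =: p$. Counting deformations of $l^{\sim}$ fixing $\tilde z$ gives
\[
-K_{Y}\cdot l^{\sim} - 2 = n-2 .
\]
Here we use that $K^{\vee}_{Y} = \Ocal_{Y}(n)$ pulled back from the contact structure, so the anticanonical degree of a contact line is $n$. This is the expected Legendrian dimension. By Theorem~\ref{main thm:integral} the component is integral, so it suffices to show it has dimension $n-2$ and is irreducible.

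Third, we prove irreducibility. Since $P$ acts transitively enough, $\Stab_{P}(\alpha)$ acts on $\Kscr_{x}$, and the hyperplane section $\Kscr_{x}\cap \PP(\ker\alpha)$ is irreducible. Here we use the hypothesis $p = \deg - 2 \ge 2$: the section is a hyperplane section of a positive-dimensional nondegenerate irreducible variety of dimension $\ge 2$ (Bertini/connectedness, plus normality from homogeneity), so it is irreducible of dimension $p-1$. The fibres are linear of dimension $n-1-p$, giving total dimension $n-2$ and irreducibility. We must also show that the locus consists of free curves, so that it is open in a component of $F(\overline{Z},z)$ and not part of a larger one. This follows from $G$-homogeneity of the generic contact line, which makes the deformation count exact.

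The main obstacle is the irreducibility of $\Kscr_{x}\cap\PP(\ker\alpha)$ for a general $\alpha$, which is exactly where the degree bound $\ge 4$ enters. For degree $3$ the VMRT is a curve, the section is a finite set of points, and several components appear. We would first try Bertini for a general hyperplane, justified by the fact that $\alpha$ in the open orbit is general in $\PP(T^{\vee}_{x})$ since $\mu$ is birational and hence dominant in the fibre direction. If this fails, we would fall back on the orbit structure of $\Stab_{P}(\alpha)$.
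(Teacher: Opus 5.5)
Your outline follows the paper's argument (Theorem~\ref{thm:VMRT formulation for G/P} together with Remark~\ref{rmk:VMRT of G/P}). You pass to contact lines through $\tilde z$ on $\PP(T^{\vee}_{G/P})$ and fibre their tangent directions over $\VMRT^{\Kscr}_{x}\cap\PP(\ker\alpha)$, with fibres dominated by affine spaces of dimension $n-1-p$ (your $n$). Irreducibility of the base comes from Bertini, using $p\ge 2$.

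\textbf{The missing step.} One step you only call ``expected'' and never justify, and Theorem~\ref{thm:VMRT formulation} does not supply it. You need that for every line $l$ through $z$ with $\pi(l^{\sim})\in\Kscr$, the restriction $\pi|_{l^{\sim}}$ is an isomorphism onto $\pi(l^{\sim})$, i.e.\ $l^{\sim}$ is a section over a member of $\Kscr$.
\begin{itemize}
\item Theorem~\ref{thm:VMRT formulation} only controls contact lines that are already mapped isomorphically onto members of $\Kscr$. Likewise Theorem~\ref{thm:lifting criterion} needs $f$ immersive.
\item A contact line through $\tilde z$ that covers a member of $\Kscr$ with degree $\ge 2$, or ramified, would lie in a different family and outside your fibration. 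You then could not conclude that all lines in the statement land in one component.
\item Nothing in your sketch excludes this. The projection-formula argument in the proof of Theorem~\ref{thm:VMRT formulation} applies only to deformations of a section.
\end{itemize}
The paper handles this in Proposition~\ref{prop:contact lines are almost lines}, using birationality a second time. The map $\mu$ sends each fibre $\pi^{-1}(g.x)$ isomorphically onto the linear space $\PP(\Ad_{g}\ufr_{P})$. So if $l^{\sim}$ met a fibre twice or were tangent to one, the line $l$ would lie in $\PP(\Ad_{g}\ufr_{P})$. By uniqueness of strict transforms, $l^{\sim}$ would then lie in the fibre, contradicting that $\pi(l^{\sim})$ is a curve.

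\textbf{Smaller corrections.}
\begin{itemize}
\item The generality of $\alpha$ does not come from ``$\mu$ birational, hence dominant in the fibre direction''. Since $\tilde z$ lies in the open $G$-orbit, $[\alpha]$ lies in the open $P$-orbit of $\PP(T^{\vee}_{G/P,\,x})$. Combined with $P$-invariance of $\VMRT^{\Kscr}_{x}$, this means the nonempty $P$-stable open set of hyperplanes provided by Bertini contains $\PP(\ker\alpha)$.
\item You also need the smoothness half of Bertini. The condition $\alpha(v)=0$ alone does not give a lift; you also need $(T_{G/P}|_{C})^{\ge 1}_{x}\not\subset\ker\alpha$. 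This is exactly transversality of the hyperplane section at $[v]$, and it is what makes your fibration surject onto the whole section.
\item The freeness / ``not part of a larger component'' step is unnecessary. Corollary~\ref{coro:dim VMRT is at most n-1} already caps $\dim F(\overline{Z},\,z)$ at the Legendrian dimension. Irreducibility of the total space then follows from an irreducible base, irreducible fibres of constant dimension, and pure dimensionality of the locus, which your deformation count provides.
\end{itemize}
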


Observe that $\deg_{\Kscr} (K_{G/P}^{\vee})$ is always at least 2.
The case where $\deg_{\Kscr} (K_{G/P}^{\vee})$ is $2$ or $3$ is discussed in Theorem~\ref{thm:VMRT formulation for G/P} and Remark~\ref{rmk:VMRT of G/P}.

\subsection{Further questions}

The study of lines on nilpotent orbit closures is still in the early stage, and based on our results, many natural follow-up questions can be raised.
For future reference, we record some of them.

\begin{prob}\label{prob:number of Leg}
    Can Theorem~\ref{main thm: sq zero Richardson} be generalized to a suitable larger class of nilpotent orbits?
\end{prob}
In other words, Problem~\ref{prob:number of Leg} asks to what extent Richardson orbits can be characterized in terms of $|\Leg(\overline{Z},\,z)|$ or $|\Leg(\overline{Z},\,z)/\Stab_{G}(z)|$.
Notice that if one considers all nilpotent orbits, then such a characterization may not exist: for example, if $\overline{Z}$ is the locus of nilpotent elements in $\PP(\mathfrak{sl}_{2} \oplus \cdots \oplus \mathfrak{sl}_{2})$ ($n+1$ times, $n \ge 1$), then it is the Richardson orbit closure associated to the homogeneous space $\PP^{1} \times \cdots \times \PP^{1} = (\SL_{2} \times \cdots  \times \SL_{2})/B$ ($n+1$ times), and $F(\overline{Z},\,z)$ is just a (Legendrian) linear space of dimension $n-1$.

\begin{prob}
    When $\Leg(\overline{Z},\,z)$ is (non)empty?
\end{prob}

This is a refined version of Question~\ref{question2}.
Several examples with $F(\overline{Z},\,z) \not=\emptyset$ but $\Leg(\overline{Z},\,z) = \emptyset$ are given in Theorem~\ref{main thm:square zero}.
Two sufficient conditions for $\Leg(\overline{Z},\,z)$ being nonempty are well known: one is the Richardson condition, and the other is the existence of a line contained in $Z$, not only in $\overline{Z}$ (cf. Theorem~\ref{thm:covering family is free}).
Note that none of them is a necessary condition: non-Richardson $Z$ with $\Leg(\overline{Z},\,z)\not=\emptyset$ can be found in Theorem~\ref{main thm:square zero}, and in the example given below Problem~\ref{prob:number of Leg}, no line on $\overline{Z}$ is contained in $Z$.

\begin{prob} \label{prob: characterization}
    Which Legendrian subvarieties of the odd dimensional projective spaces can be Legendrian components of $F(\overline{Z},\,z)$?
\end{prob}

Recall that the subadjoint varieties can be characterized as homogeneous Legendrian subvarieties of the odd dimensional projective spaces \cite{LandsbergManivel2007LegendrianVarieties}, \cite{Buczynski2006LegendrianSubvarieties}, \cite{Kw25}.
Problem~\ref{prob: characterization} seeks a similar characterization for Legendrian components of $F(\overline{Z},\,z)$.
The last problem also can be viewed as a variation of the problem solved in \cite{Hw23Partial}, where varieties consisting of tangent directions of minimal rational curves on complex manifolds are investigated.
The point of Problem~\ref{prob: characterization} is that we focus on Legendrian varieties defined by nilpotent orbit closures.

\subsection{Organization}

In Section~\ref{section:Preliminaries}, we recall well-known facts in contact geometry.
Namely in Section~\ref{ssection:contact lines}, we summarize properties of contact lines on contact manifolds.

In Section~\ref{section:Integrality theorems}, we prove Theorem~\ref{main thm:integral}.
For this purpose, we recall the construction of the Jacobson--Morozov resolution in Section~\ref{ssection:JM resoln}.
The proofs of the integrality theorems \ref{thm:VMRT is integral} and \ref{thm:locus of lines is integral} are presented in Section~\ref{ssection:Behavior of lines}.

Section~\ref{section:Springer resoln} is devoted to the case of Richardson orbit closures.
Here we mainly focus on those with the Springer resolutions, i.e., the case where the moment map from the projectivized cotangent bundle is birational (cf. Assumption~\ref{assumption:Springer resolution}).
Most results follow from the study of contact lines on the projectivized cotangent bundles over quasi-projective manifolds, which is developped in Section~\ref{ssection:projectivized cotangent bundles}.
After specializing the results to Richardson orbit closures in Section~\ref{ssection:Richardson}, we prove the $E_{6,\,I}$-case of Theorem~\ref{main thm:Mukai flop} in Section~\ref{ssection:Stratified Mukai flops}.

The computation for the square-zero cases (Theorem~\ref{main thm:square zero}) is quite long and postponed to the last Section~\ref{section:square zero}.
A sketch of the proof can be found in Section~\ref{ssection:idea of proof}.
A summary of the results is given in Corollary~\ref{coro:Leg of square zero in slN}, Section~\ref{ssection::slN} for the case of $\mathfrak{sl}_{N}$, and in Corollary~\ref{coro:Leg of square zero in soN spN}, Section~\ref{ssection::soN spN} for the others.

\subsection*{Acknowledgements}

The author would like to thank Baohua Fu for helpful discussions and suggestions during this project.
The author is grateful to Jun-Muk Hwang for sharing his idea regarding the contents of Section~\ref{ssection:projectivized cotangent bundles}, especially Theorem~\ref{thm:VMRT formulation}.
This work was supported by Morningside Center of Mathematics, Chinese Academy of Sciences, and by Beijing Postdoctoral Research Foundation.

\section{Preliminaries} \label{section:Preliminaries}
\subsection{Contact structures}
In this section, we recall basic notions in holomorphic contact geometry.
To this end, we work in the holomorphic category.
Namely, vector bundles on complex manifolds and morphisms between them are assumed to be holomorphic.

\begin{definition} \label{defn:contact}
    Let $Z$ be a complex manifold, and $T_{Z}$ the holomorphic tangent bundle.
    Let $D$ be a subbundle of $T_{Z}$ of corank one.
    \begin{enumerate}
        \item For a point $z \in Z$, the \emph{Levi tensor} of $D$ at $z$ is a $\CC$-linear morphism
        \[
            \text{Levi}_{D,\,z} : \bigwedge^{2}D_{z} \rightarrow T_{Z,\,z} / D_{z}
        \]
        defined as follows:
        for $v,\,w \in D_{z}$, choose local sections of $D$ extending $v$ and $w$, say $\tilde{v}$ and $\tilde{w}$, respectively.
        Then
        \[
            \text{Levi}_{D,\,z}(v \wedge w) \coloneqq [\tilde{v},\, \tilde{w}](z) \mod D_{z}
        \]
        where $[\tilde{v},\, \tilde{w}](z)$ is the value of the vector field $[\tilde{v},\, \tilde{w}]$ at the point $z$.
        This definition is independent of the choice of $\tilde{v}$ and $\tilde{w}$.

        \item We say that $D$ is a \emph{contact structure} of $Z$ if for any $z \in Z$, the Levi tensor $\text{Levi}_{D,\,z}$, viewed as a skew-symmetric 2-form on $D_{z}$, is a symplectic form of $D_{z}$.
        In this case, the quotient line bundle $T_{Z}/D$ is called the \emph{contact line bundle}, and the quotient morphism $T_{Z} \rightarrow T_{Z}/D$ is called the \emph{contact form}.
    \end{enumerate}
\end{definition}

\begin{definition}\label{defn:integral/Legendrian submflds}
    Let $Z$ be a complex manifold equipped with a contact structure $D$.
    Let $X$ be a submanifold of $Z$.
    We say that $X$ is an \emph{integral submanifold} of $Z$ if $X$ is everywhere tangent to $D$, that is, for any $x\in X$, we have $T_{X,\,x} \subset D_{x}$.
    In other words, $T_{X,\,x}$ is an isotropic subspace of $D_{x}$ with respect to the Levi tensor $\text{Levi}_{D,\,x}$, and in particular, $2\dim X + 1 \le \dim Z$.
    If $X$ is an integral submanifold and $2 \dim X + 1= \dim Z$, then we say that $X$ is a \emph{Legendrian submanifold} of $Z$.
\end{definition}

Roughly speaking, integral submanifolds are solutions to a contact system, and Legendrian submanifolds are maximal dimensional ones.

\begin{definition} \label{defn:integral/Legendrian subvarieties}
    Let $Z$ be a smooth variety equipped with a contact structure $D$.
    We say that a subvariety $X$ of $Z$ is an \emph{integral subvariety} (resp. a \emph{Legendrian subvariety}) if the smooth locus $\Sm(X)$ of $X$ is an integral submanifold (resp. a Legendrian submanifold) of $Z$.
\end{definition}

    \begin{example}\label{ex:PT*M}
        Let $M$ be a complex manifold, and consider $\PP(T^{\vee}_{M})$, the total space of the projectivized cotangent bundle.
        Let $\pi : \PP(T^{\vee}_{M}) \rightarrow M$ be the natural projection.
        Then $\PP(T^{\vee}_{M})$ comes equipped with a canonical contact structure $D_{\can}$.
        Indeed, for $z \in \PP(T^{\vee}_{M})$, the annihilator $\Ann(z) \subset T_{M,\,\pi(z)}$ is a hyperplane, and
        \[
            D_{\can} \coloneqq \bigcup_{z \in \PP(T^{\vee}_{M})} (d_{z}\pi)^{-1}(\Ann(z))
        \]
        is a contact structure.
        Notice that by the definition, $D_{\can}$ is defined by the contact form
        \[
            \theta_{\can} : T_{\PP(T^{\vee}_{M})} \rightarrow \Ocal_{\PP(T^{\vee}_{M})}(1), \quad v \in T_{\PP(T^{\vee}_{M}),\,z} \mapsto (\alpha \mapsto \langle \alpha,\, d\pi(v)\rangle) \in (\CC \alpha)^{\vee} = (\Ocal_{\PP(T^{\vee}_{M})}(1))_{z}
        \]
        where $z = [\alpha]$, $\alpha \in T^{\vee}_{M,\,\pi(z)}$.
        Then $\theta_{\can}$ and $D_{\can}$ fit into the following commutative diagram with exact rows:
        \begin{center}
            \begin{tikzcd}
                0 \arrow[r] & D_{\can} \arrow[r] & T_{\PP(T^{\vee}_{M})} \arrow[d, "d\pi"] \arrow[r, "\theta_{\can}"] & \Ocal_{\PP(T^{\vee}_{M})}(1) \arrow[r] \arrow[d, equal] & 0 \\
                0 \arrow[r] & \Omega_{\PP(T^{\vee}_{M})/M}(1) \arrow[r] & \pi^{*}T_{M} \arrow[r] & \Ocal_{\PP(T^{\vee}_{M})}(1) \arrow[r] & 0.
            \end{tikzcd}
        \end{center}
        Here, the second row is the relative Euler sequence on $\PP(T^{\vee}_{M})$.
        In particular, the associated contact line bundle is $\Ocal_{\PP(T^{\vee}_{M})}(1)$.
        In this paper, the contact structure of $\PP(T^{\vee}_{M})$ refers to $D_{\can}$ (though $\PP(T^{\vee}_{M})$ may admit more than one contact structure \cite[Proposition~2.14]{KPSW00Projective}).
\end{example}

\begin{example} \label{ex:P 2n+1}
    Let $(V,\,\omega)$ be a symplectic vector space.
    Then the odd dimensional projective space $\PP(V)$ admits a natural contact structure: for $v \in V$, $T_{\PP(V),\, [v]}$ is naturally identified with $(V/\CC v) \otimes (\CC v)^{\vee}$, and hence $D_{[v]} \coloneqq (\{w \in V : \omega(v,\,w) = 0\} / \CC v) \otimes (\CC v)^{\vee}$ is a hyperplane in $T_{\PP(V),\,[v]}$.
    Then $D \coloneqq \bigcup_{[v] \in \PP(V)} D_{[v]}$ is a contact structure of $\PP(V)$, and its contact line bundle is $\Ocal_{\PP(V)}(2)$.
    Conversely, any contact structure on $\PP(V)$ comes from a symplectic form on $V$ (cf. \cite[Example~2.1]{LeBrun95Fano}).
\end{example}

\subsection{Deformation of contact lines}\label{ssection:contact lines}

Next, we recall properties of rational curves of degree one with respect to the contact line bundle.

\begin{definition}\label{defn:contact line}
    Let $Z$ be a complex manifold, and $L$ a line bundle on $Z$.
    A rational curve $C$ on $Z$ is called an \emph{$L$-line} if $\deg_{C}L=1$, i.e., for the normalization map $\nu : \PP^{1} \rightarrow C$, we have $\nu^{*}L \simeq \Ocal_{\PP^{1}}(1)$.
    If $Z$ is a contact manifold and $L$ is the contact line bundle, $L$-lines are called \emph{contact lines}.
\end{definition}

It is well known that every contact line is tangent to the contact structure.
Slightly more generally, we have:
\begin{proposition} \label{prop:contact lines are tangent to D}
    Let $Z$ be a complex manifold and $L$ a line bundle on $Z$.
    Suppose that we have a bundle morphism $\theta : T_{Z} \rightarrow L$.
    If $C$ is a rational curve on $Z$ such that $\deg_{C}L \le 1$, then for any $z \in \Sm(C)$, we have $T_{C,\,z} \subset \ker \theta_{z}$.
\end{proposition}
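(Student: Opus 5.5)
The plan is to pull everything back to $\PP^{1}$ and exploit the splitting of vector bundles there. Let $\nu : \PP^{1} \rightarrow C \subset Z$ be the normalization. Then $\nu^{*}L \simeq \Ocal_{\PP^{1}}(d)$ with $d \le 1$. The differential $d\nu : T_{\PP^{1}} \rightarrow \nu^{*}T_{Z}$ composed with $\nu^{*}\theta$ gives a bundle morphism
\[
    \phi \coloneqq \nu^{*}\theta \circ d\nu : T_{\PP^{1}} \simeq \Ocal_{\PP^{1}}(2) \rightarrow \nu^{*}L \simeq \Ocal_{\PP^{1}}(d).
\]
Such a morphism is a global section of $\Ocal_{\PP^{1}}(d-2)$. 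Since $d - 2 \le -1 < 0$, this section vanishes, so $\phi = 0$ identically.

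It then remains to translate $\phi = 0$ into the tangency statement at smooth points of $C$. Take $z \in \Sm(C)$. Because $\nu$ is the normalization and $z$ is a smooth point of $C$, $\nu$ is an isomorphism over a neighbourhood of $z$, so there is a unique $t \in \PP^{1}$ with $\nu(t) = z$. Moreover $d_{t}\nu$ is injective with image $T_{C,\,z}$. From $\phi_{t} = 0$ we get $\theta_{z}(d_{t}\nu(T_{\PP^{1},\,t})) = 0$, which says exactly that $T_{C,\,z} \subset \ker \theta_{z}$.

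There is no real obstacle here. The only points needing care are the identification $T_{\PP^{1}} \simeq \Ocal(2)$ and the standard fact that the normalization is a local isomorphism over smooth points of the curve. Note that no nondegeneracy of $\theta$ is used, which is why the statement holds for an arbitrary morphism $T_{Z} \rightarrow L$. This flexibility is what will later allow it to be applied to the line-bundle-valued 1-form on the Jacobson--Morozov resolution.
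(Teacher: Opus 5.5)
Your proof is correct and takes the same approach as the paper: pull back along the normalization $\nu:\PP^{1}\rightarrow C$ and note that $\nu^{*}\theta\circ d\nu$ is a morphism $\Ocal_{\PP^{1}}(2)\rightarrow\Ocal_{\PP^{1}}(d)$ with $d\le 1$, hence zero. Your extra step — that $\nu$ is a local isomorphism over $\Sm(C)$, so $d_{t}\nu$ has image $T_{C,\,z}$ — is exactly the reduction the paper leaves implicit in ``it is enough to show that the composition is the zero map.''
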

\begin{proof}
    Let $\nu: \PP^{1} \rightarrow C$ be the normalization map.
    It is enough to show that the composition
    \[
        T_{\PP^{1}} \xrightarrow{d \nu} \nu^{*}T_{Z} \xrightarrow{\nu^{*}\theta} \nu^{*}L
    \]
    is the zero map.
    Indeed, since $T_{\PP^{1}} \simeq \Ocal_{\PP^{1}}(2)$ and $\nu^{*}L \simeq \Ocal_{\PP^{1}}(d)$ for $d = \deg_{C}L \le 1$, every morphism $T_{\PP^{1}} \rightarrow \nu^{*}L$ is the zero map.
\end{proof}

Another well-known fact is that contact lines are unbendable in the deformation theoretic sense.
To be precise, from now on, we work in the complex algebraic category.
Given a quasi-projective variety $Z$, denote by $\RatCurves(Z)$ the normalized Chow scheme of rational curves on $Z$ (\cite[Definition-Proposition~2.11, Ch.~II]{Kollar96}).

\begin{definition} \label{defn:VMRT}
    Let $Z$ be a smooth quasi-projective variety.
    \begin{enumerate}
        \item A nonconstant morphism $f:\PP^{1} \rightarrow Z$ is said to be \emph{free} if $f^{*}T_{Z}$ is globally generated.
        We say that a rational curve $C$ on $Z$ is \emph{free} if the normalization map $\PP^{1} \rightarrow C \subset M$ is a free morphism.
        \item A free morphism $f : \PP^{1} \rightarrow Z$ is said to be \emph{unbendable} if
        \begin{equation} \label{eqn:splitting type of unbendable}
            f^{*}T_{Z} \simeq \Ocal_{\PP^{1}}(2) \oplus \Ocal_{\PP^{1}}(1)^{\oplus a} \oplus \Ocal_{\PP^{1}}^{\oplus (\dim Z - a -1)}
        \end{equation}
        for some integer $0 \le a \le \dim Z - 1$.
        The integer $a$ is uniquely determined as $\deg_{\PP^{1}}(f^{*}K^{\vee}_{Z}) - 2$.
        We say that a rational curve $C$ is an \emph{unbendable curve of type $(1^{a},\,0^{\dim Z - a-1})$} if the normalization map $\nu : \PP^{1}\rightarrow C$ satisfies the equation (\ref{eqn:splitting type of unbendable}).
        \item An irreducible component $\Kscr$ of $\RatCurves(Z)$ is called a \emph{family of unbendable curves of type $(1^{a},\,0^{\dim Z - a -1})$} if general members of $\Kscr$ are unbendable and of type $(1^{a},\,0^{\dim Z - a -1})$ for some integer $0 \le a \le \dim Z - 1$.

        \item Let $\Kscr$ be a family of unbendable curves of type $(1^{a},\,0^{\dim Z - a -1})$.
        The \emph{VMRT} (standing for variety of minimal rational tangents) of $\Kscr$ at $z$ is defined to be the subset of $\PP(T_{Z,\,z})$ consisting of elements of the form $[d \nu(T_{\PP^{1},\,o})]$ where $\nu : \PP^{1} \rightarrow Z$ is the normalization map of a member of $\Kscr$ and $o \in \PP^{1}$ is a point such that $\nu(o) = z$ and $\nu$ is immersive at $o$.
        This set, denoted by $\VMRT^{\Kscr}_{z}$, is a subvariety of pure dimension $a$ in $\PP(T_{Z,\,z})$ (cf. \cite[Theorem~1.3 and Proposition~1.4]{Hwang01VMRT}).
    \end{enumerate}
\end{definition}

\begin{remark}
Given an unbendable curve $C$, the normalization $\nu : \PP^{1} \rightarrow C$ is immersive, and hence $[d\nu(T_{\PP^{1},\,o})] \in \PP(T_{Z,\,z})$ is well defined.
\end{remark}

\begin{theorem}[{\cite[Theorem~~3.11, Ch.~II]{Kollar96}, \cite[Lemma~3.5]{Ke01}}] \label{thm:covering family is free}
    Let $Z$ be a smooth quasi-projective variety, and $z$ a very general point in $Z$.
    Then every rational curve passing through $z$ is free.
    In particular, if $Z$ is equipped with a contact structure and $2n+1=\dim Z$, then every contact line passing through $z$ is an unbendable curve of type $(1^{n-1},\,0^{n+1})$.
\end{theorem}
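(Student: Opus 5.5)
The first assertion is the classical freeness statement, and the plan is to recall its proof rather than redo it. The space $\RatCurves(Z)$ has countably many irreducible components. For each component $\Kscr$, either the members of $\Kscr$ do not dominate $Z$, or they do. If they do not dominate, their union lies in a proper closed subset, and we remove that subset. If they do dominate, then by generic smoothness applied to the universal family (\cite[Theorem~3.11, Ch.~II]{Kollar96}), a general member through a general point is free. The curves of $\Kscr$ through such a point that fail to be free sweep out a proper closed subset, which we also remove. Taking the complement of this countable union of proper closed subsets gives the very general points $z$ through which every rational curve is free.

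For the second assertion, let $f=\nu : \PP^{1} \rightarrow Z$ be the normalization of a contact line $C$ through $z$, so that $f^{*}L \simeq \Ocal_{\PP^{1}}(1)$ for the contact line bundle $L = T_{Z}/D$, and $f$ is free by the first part. I would work with the pulled-back sequence
\[
    0 \rightarrow f^{*}D \rightarrow f^{*}T_{Z} \rightarrow \Ocal_{\PP^{1}}(1) \rightarrow 0 .
\]
Two structural inputs are needed. First, Proposition~\ref{prop:contact lines are tangent to D} shows that $d f : T_{\PP^{1}} \simeq \Ocal_{\PP^{1}}(2) \rightarrow f^{*}T_{Z}$ factors through $f^{*}D$. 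Second, the Levi tensor gives a nondegenerate pairing $D \otimes D \rightarrow L$, hence $D \simeq D^{\vee} \otimes L$. So if we write $f^{*}D \simeq \bigoplus_{i=1}^{2n} \Ocal(b_{i})$, the multiset $\{b_{i}\}$ is invariant under $b \mapsto 1-b$.

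The key step is a cohomological bound on how negative the $b_{i}$ can be. Twist the sequence by $\Ocal(-1)$ and take cohomology. Freeness gives $H^{1}(f^{*}T_{Z}(-1)) = 0$, so $H^{1}(f^{*}D(-1))$ is a quotient of $H^{0}(\Ocal_{\PP^{1}}) = \CC$. Therefore $\sum_{b_{i} \le -1} h^{1}(\Ocal(b_{i}-1)) \le 1$, which forces at most one $b_{i} \le -1$, and any such $b_{i}$ equals $-1$.

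By the symmetry $b \mapsto 1-b$, at most one $b_{i} \ge 2$, and any such $b_{i}$ equals $2$. On the other hand, the inclusion $\Ocal(2) \hookrightarrow f^{*}D$ forces some $b_{i} \ge 2$. Hence exactly one $b_{i}$ equals $2$ and exactly one equals $-1$. The remaining $2n-2$ values lie in $\{0,1\}$ and are paired by the symmetry, so
\[
    f^{*}D \simeq \Ocal(2) \oplus \Ocal(-1) \oplus \Ocal(1)^{n-1} \oplus \Ocal^{n-1}.
\]

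Finally, we pass back to $f^{*}T_{Z}$. The extension class lies in $H^{1}(f^{*}D(-1)) = H^{1}(\Ocal(-2)) = \CC$. It must be nonzero: otherwise $\Ocal(-1)$ would be a direct summand of $f^{*}T_{Z}$, contradicting freeness. The nonsplit extension of $\Ocal(1)$ by $\Ocal(-1)$ is $\Ocal \oplus \Ocal$, and the other summands of $f^{*}D$ are unaffected, so
\[
    f^{*}T_{Z} \simeq \Ocal(2) \oplus \Ocal(1)^{n-1} \oplus \Ocal^{n+1}.
\]
This is unbendable of type $(1^{n-1},\,0^{n+1})$, consistent with $\deg f^{*}K_{Z}^{\vee} = \deg f^{*}L^{n+1} = n+1$.

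The main obstacle is the cohomological bound on the negative part of $f^{*}D$ together with the correct use of the Levi self-duality. Once the $b_{i}$ are confined in this way, the remaining steps are bookkeeping.
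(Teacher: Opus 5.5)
Your proposal is correct. For the first assertion you do what the paper does, namely defer to \cite[Theorem~3.11, Ch.~II]{Kollar96}. One sentence of your sketch needs tidying: for a dominating $\Kscr$ you remove the closure of the locus swept out by all non-free members. This is a proper subset because non-freeness is a closed condition, and if some irreducible component of the non-free locus dominated $Z$, the same generic-smoothness argument would produce a free member of it.

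For the second assertion the paper only cites \cite[Lemma~3.5]{Ke01}, whereas you give a self-contained proof, and it checks out. Proposition~\ref{prop:contact lines are tangent to D} places $\Ocal_{\PP^{1}}(2)$ inside $f^{*}D$. The Levi isomorphism $D \simeq D^{\vee} \otimes L$ makes the splitting type of $f^{*}D$ symmetric under $b \mapsto 1-b$. Freeness gives $h^{1}(f^{*}D(-1)) \le 1$. Together these pin down $f^{*}D$. Since then $H^{1}(f^{*}D(-1)) = H^{1}(\Ocal_{\PP^{1}}(-2))$, the extension can only glue the $\Ocal_{\PP^{1}}(-1)$-summand to $\Ocal_{\PP^{1}}(1)$. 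Nonsplitness, which freeness forces, turns this pair into $\Ocal_{\PP^{1}}^{\oplus 2}$.

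What your route buys is transparency about hypotheses. It is a computation along a single curve, using only freeness, Proposition~\ref{prop:contact lines are tangent to D} and the pointwise nondegeneracy of the Levi form. So it visibly applies to quasi-projective contact manifolds such as $\PP(T^{\vee}_{M})$ in Section~\ref{ssection:projectivized cotangent bundles}, which is where the paper actually uses the statement. The paper's citation is shorter but leaves that transfer to the reader.
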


\begin{proof}
    Freeness of rational curves through very general points on a smooth quasi-projective variety is the content of \cite[Theorem~~3.11, Ch.~II]{Kollar96}.
    It is well known that free contact lines are unbendable and of type $(1^{n-1},\,0^{n+1})$, see for example \cite[Lemma~3.5]{Ke01}.
\end{proof}

For an irreducible component $\Kscr$ of $\RatCurves(Z)$ and points $z \in Z$, $\Kscr_{z}$ is defined to be the subvariety parametrizing members of $\Kscr$ passing through $z$.
More precisely, if
\[
    \Kscr \xleftarrow{\rho} \text{Univ} \xrightarrow{\mu} Z
\]
is the family of rational curves parametrized by $\Kscr$, then $\Kscr_{z} = \rho(\mu^{-1}(z))$.
It is a closed subvariety of $\Kscr$.

\begin{definition}
    Let $Z$ be a smooth quasi-projective variety, equipped with a contact structure.
    An irreducible component $\Hscr$ of $\RatCurves(Z)$ is called a \emph{family of contact lines} if members of $\Hscr$ are contact lines, and for general $z \in Z$, $\Hscr_{z} \not=\emptyset$ (or equivalently, some members of $\Hscr$ are free; cf. Theorem~\ref{thm:covering family is free} and \cite[Proposition~2.7, Ch.~IV]{Kollar96}).
\end{definition}

By Theorem~\ref{thm:covering family is free}, any family $\Hscr$ of contact lines is a family of unbendable curves of type $(1^{n-1},\, 0^{n+1})$, and for very general $z$, $\VMRT^{\Hscr}_{z}$ is of pure dimension $n-1$.

\section{Integrality theorems}
\label{section:Integrality theorems}

From now on, we work in the complex algebraic category.
The goal of this section is to show Theorem~\ref{thm:VMRT is integral}: tangent directions of lines on a nilpotent orbit closure form an integral subvariety in the contact hyperplane (cf. Example~\ref{ex:P 2n+1}).
In the proof, the Jacobson--Morozov resolution plays an important role, and we recall its construction in Section~\ref{ssection:JM resoln}.

First we recall the definition of the contact structures on nilpotent orbits.
Let $\gfr$ be a complex semisimple Lie algebra, and $G$ the simply connected Lie group with $\Lie(G)=\gfr$.
Then $G$ acts naturally on $\gfr$ via the adjoint representation.
To be precise, the adjoint representation is denoted by $\Ad : G \times \gfr \rightarrow \gfr$, $(g,\, v) \mapsto \Ad_{g}(v)$.
Observe that the differential of $\Ad$ is nothing but the Lie algebra representation $\ad : \gfr \times \gfr \rightarrow \gfr$, $(u,\,v) \mapsto \ad_{u}(v) = [u,\,v]$ (Lie bracket).

\begin{definition}
    An element $v \in \gfr$ is said to be \emph{nilpotent} if $\ad_{v}$ is a nilpotent endomorphism of $\gfr$.
    A \emph{nilpoent orbit} in $\gfr$ means an adjoint orbit $\Ad_{G}(v)$ in $\gfr$ for a nilpotent element $v \in \gfr$.
    By a slight abuse of notation, we say that an adjoint orbit in the projectivization $\PP(\gfr)$ is a \emph{nilpotent orbit} in $\PP(\gfr)$ if it is of the form $\Ad_{G}([v])$ for a nonzero nilpotent element $v \in \gfr$.
\end{definition}

Recall the natural one-to-one correspondence between the set of nonzero nilpotent orbits $O$ in $\gfr$, and the set of nilpotent orbits $Z$ in $\PP(\gfr)$, given by $O \mapsto \PP(O)$.
Here $\PP(O)$ is the image of $O$ under the projectivization map $p : \gfr \setminus \{0\} \rightarrow \PP(\gfr)$.
Indeed, the assignment $O \mapsto \PP(O)$ has an inverse, by the Jacobson--Morozov theorem (\cite[Theorem~3.3.1]{CollingwoodMcGovern1993NilpotentOrbits}): every nonzero nilpotent element can be extended to an $\mathfrak{sl}_{2}$-triple, and in particular, a nilpotent orbit in $\gfr$ is invariant under the homothety.

\begin{definition} \label{defn: adjoint var}
    Let $Z \subset \PP(\gfr)$ be a nilpotent orbit.
    We say that $Z$ is a \emph{minimal nilpotent orbit} if $Z = \overline{Z}$ in $\PP(\gfr)$.
    Similarly, we say that a nilpotent orbit $O \subset \gfr$ is \emph{minimal} if it is nonzero and $O = \overline{O} \setminus \{0\}$ in $\gfr$, or equivalently, if $\PP(O)$ is a minimal nilpotent orbit in $\PP(\gfr)$.
    When $\gfr$ is simple, there exists a unique minimal nilpotent orbit in $\PP(\gfr)$, called the \emph{adjoint variety}.
\end{definition}

\begin{remark} \label{rmk:adjoint variety}
    If $\gfr$ is not simple, then any minimal nilpotent orbit in $\PP(\gfr)$ is the adjoint variety $Z_{\hfr} \subset \PP(\hfr)$ where $\hfr$ is a simple ideal of $\gfr$.
\end{remark}

Let $Z = \PP(O) \subset \PP(\gfr)$ be a nilpotent orbit.
It is well known that $O$ admits a symplectic form $\omega_{KKS}$ that is invariant under $\Ad_{G}$ and the homothety (the so-called Kostant--Kirillov--Souriau form; see \cite[\S2]{Beauville1998FanoContact}).
On the side of $Z$, this is reflected by the existence of contact structure.

\begin{definition}
    Let $\kappa$ be the Killing form of $\gfr$.
    Choose a point $[v] \in Z$, and let $\Stab_{G}([v])$ be the stabilizer of $[v]$ in $G$, and $\Lie(\Stab_{G}([v]))$ its Lie algebra.
    Consider the $G$-equivariant bundle morphism
    \[
        \theta_{KKS} : T_{Z} \simeq G \times^{\Stab_{G}([v])} (\gfr/\Lie(\Stab_{G}([v]))) \rightarrow G \times^{\Stab_{G}([v])} (\CC v)^{\vee} \simeq \Ocal_{\PP(\gfr)}(1)|_{Z}
    \]
    induced by
    \[
        \gfr/\Lie(\Stab_{G}([v])) \rightarrow (\CC v)^{\vee}, \quad w \mod \Lie(\Stab_{G}([v])) \mapsto \kappa(v,\,w).
    \]
    Then $\theta_{KKS}$ is a contact form (see \cite[Remark~2.3]{Beauville1998FanoContact}).
    Define $D_{KKS} \coloneqq \ker(\theta_{KKS})$.
    By the definition, $D_{KKS}$ is a contact structure of $Z$ and the contact line bundle is $\Ocal_{\PP(\gfr)}(1)|_{Z}$.
\end{definition}

\begin{remark} \label{rmk:symplectification}
    The contact form $\theta_{KKS}$ is related to $\omega_{KKS}$ via the so-called symplectification process (cf.~\cite{LeBrun95Fano}, \cite[Lemma~1.4]{Beauville1998FanoContact}).
    Indeed, the $\CC^{\times}$-bundle $p : O \rightarrow Z = \PP(O)$ can be viewed as the one associated to the tautological line bundle $\Ocal_{\PP(\gfr)}(-1)|_{Z}$.
    Thus $p^{*}\Ocal_{\PP(\gfr)}(1)$ is trivialized over $O$, and so $\theta_{KKS}$ can be defined via the relation $p^{*}\theta_{KKS} = \iota_{\Xi}\omega_{KKS}$ where $\Xi$ is the vector field on $O$ generating the fiberwise $\CC^{\times}$-action and $\iota_{\Xi}$ is the contraction.
\end{remark}

\begin{example} \label{ex:no lines on adjoint of type C}
    Suppose that $\gfr$ is a simple Lie algebra.
    If $\gfr =\mathfrak{sl}(V)$ for a vector space $V$, then the adjoint variety in $\PP(\gfr)$ is $\PP(T^{\vee}_{\PP(V)})$, and $D_{KKS}$ agrees with $D_{\can}$ (cf. Examples~\ref{ex:PT*M}).
    If $\gfr = \mathfrak{sp}(V)$ for a symplectic vector space $V$, then the adjoint variety in $\PP(\gfr)$ is the 2nd Veronese embedding of $\PP(V)$, and $D_{KKS}$ coincides with the contact structure in Example~\ref{ex:P 2n+1}.
    
\end{example}

\subsection{Jacobson--Morozov resolutions} \label{ssection:JM resoln}

Let $Z = \PP(O) \subset \PP(\gfr)$ be a nilpotent orbit.
We recall the construction of a well-known resolution of singularities of the closure $\overline{Z}$, which is another consequence of the Jacobson--Morozov theorem.
Our main reference is \cite[\S4]{Beauville1998FanoContact}.

Choose a point $[v] \in Z$.
By the Jacobson--Morozov theorem, there are $f,\,h \in \gfr$ such that
\[
    \ad_{h}v = 2v, \quad \ad_{h}f = -2f, \quad \ad_{v}f = h.
\]
In other words, $\{v,\,h,\,f\}$ is an $\mathfrak{sl}_{2}$-triple.
Then $\gfr$ is decomposed into eigenspaces of $\ad_{h}$, and by the $\mathfrak{sl}_{2}$-representation theory, all eigenvalues of $\ad_{h}$ are integers, i.e.,
\begin{equation} \label{eqn:eigensp decomp of g}
    \gfr = \bigoplus_{k \in \ZZ} \gfr_{k}, \quad \gfr_{k} \coloneqq \{w \in \gfr : \ad_{h}w = kw\}.
\end{equation}
For $k \in \ZZ$, define $\gfr_{\ge k} \coloneqq \bigoplus_{k' \ge k} \gfr_{k'}$.
Then $\gfr_{\ge 0}$ is a parabolic subalgebra of $\gfr$, and we denote by $G_{\ge 0}$ the corresponding parabolic subgroup of $G$.
Let $E$ be the homogeneous vector bundle $G \times^{G_{\ge 0}} \gfr_{\ge 2}$.
Then $E$ is the quotient of $G \times \gfr_{\ge 2}$ by the $G_{\ge 0}$-action given by $g_{1}.(g_{2},\,w) \coloneqq (g_{2}g_{1}^{-1},\, \Ad_{g_{1}}w)$.
Now we obtain a $G$-equivariant birational morphism
\begin{equation}\label{eqn:JM resoln of O}
    \tilde{r}_{JM}: E = G \times^{G_{\ge 0}} \gfr_{\ge 2} \rightarrow \overline{O}, \quad (g,\, w)^{\bullet} \mapsto \Ad_{g}w
\end{equation}
where $(g,\,w)^{\bullet}$ is the point represented by $(g,\,w) \in G \times \gfr_{\ge 2}$.
In fact, under the morphism $\tilde{r}_{JM}$ in (\ref{eqn:JM resoln of O}), the $G$-orbit of $(1,\, v)^{\bullet}$ in $G \times^{G_{\ge 0}} \gfr_{\ge 2}$ is isomorphically sent onto $O$.
Similarly, we have a $G$-equivariant birational morphism
\begin{equation}\label{eqn:JM resoln of Z}
    r_{JM} : \PP(E) = G \times^{G_{\ge 0}} \PP(\gfr_{\ge 2}) \rightarrow \overline{Z}, \quad (g,\, [w])^{\bullet} \mapsto [\Ad_{g}w],
\end{equation}
and the $G$-orbit of $(1,\, [v])^{\bullet}$ in $G \times^{G_{\ge 0}} \PP(\gfr_{\ge 2})$ is isomorphically sent onto $Z$.

\begin{definition}
    The $G$-equivariant birational morphism $r_{JM} : \PP(E) \rightarrow \overline{Z}$ in (\ref{eqn:JM resoln of Z}) is called the \emph{Jacobson--Morozov resolution} of $\overline{Z}$.
\end{definition}

By the construction, we have $(r_{JM})^{*} \Ocal_{\PP(\gfr)}(1) \simeq \Ocal_{\PP(E)}(1)$, the dual of the tautological line bundle on the projective bundle $\PP(E)$.

As observed by \cite[Lemma~2]{Pa91} and \cite[Lemma~4.3]{Beauville1998FanoContact}, an important feature of the Jacobson--Morozov resolution of $\overline{O}$ is that the pullback of $\omega_{KKS}$ extends to a global 2-form (but not symplectic in general; see \cite[Corollary~2.9]{Fu06}).
This has the following counterpart in contact geometry:
\begin{proposition} \label{prop:Theta KKS}
    Let $r_{JM} : \PP(E) \rightarrow \overline{Z}$ be the Jacobson--Morozov resolution.
    Then the pullback $(r_{JM})^{*} (\theta_{KKS})$ of the contact form $\theta_{KKS} : T_{Z} \rightarrow \Ocal_{\PP(\gfr)}(1)|_{Z}$ extends to a $G$-equivariant surjective bundle morphism $\Theta_{KKS}: T_{\PP(E)} \rightarrow \Ocal_{\PP(E)}(1)$.
\end{proposition}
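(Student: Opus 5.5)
We will write down an explicit $G$-equivariant candidate for $\Theta_{KKS}$ using homogeneity, show it agrees with $(r_{JM})^{*}\theta_{KKS}$ on the open orbit, and then check surjectivity. Since $\PP(E) = G \times^{G_{\ge 0}} \PP(\gfr_{\ge 2})$, a $G$-equivariant bundle morphism $T_{\PP(E)} \rightarrow \Ocal_{\PP(E)}(1)$ is determined by its restriction to the fiber $\PP(\gfr_{\ge 2})$ over the base point of $G/G_{\ge 0}$, compatibly with the $G_{\ge 0}$-action. At a point $x = (1,\,[w])^{\bullet}$ with $w \in \gfr_{\ge 2}\setminus 0$, the tangent space is a quotient of $\gfr \oplus \gfr_{\ge 2}$. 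A pair $(u,\,w')$ corresponds to the infinitesimal motion $t \mapsto (\exp(tu),\, [w + t w'])^{\bullet}$, and the subspace $\{(u,\,-[u,w] + c w) : u \in \gfr_{\ge 0},\, c\in\CC\}$ is killed.

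The candidate is
\[
    \Theta_{x}(u,\,w') \coloneqq \bigl(w \mapsto \kappa(w,\,u)\bigr) \in (\CC w)^{\vee} = \Ocal_{\PP(E)}(1)_{x}.
\]
This depends only on $u$, which is forced: the image of $(u,\,w')$ under $dr_{JM}$ is $[u,w] + w' \bmod \CC w$, and the image of $\theta_{KKS}$ of that element is $\kappa(w,\,[u,w]+w') = \kappa(w,w')$ by invariance. Applying the same formula to $T_{\PP(\gfr)}$, whose tangent space is $\gfr/\CC w$, pairing gives $\kappa(w,\, \cdot)$.

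I would set up the argument in the following order.

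\textbf{Well-definedness.} We must check that $\Theta_{x}$ vanishes on the kernel. Writing the pullback as the pairing $\kappa(w,\,[u,w]+w')$, this formula is already a well-defined expression on $T_{\PP(E),x}$, since it is just $(r_{JM})^{*}$ of the global $1$-form-like map $T_{\PP(\gfr)}\to\Ocal(1)$, $\xi\mapsto\kappa(w,\xi)$. The issue is therefore not well-definedness. The issue is that on $\PP(\gfr)$ this map is not a contact form, so we should pull back from $\PP(\gfr)$ rather than from $Z$. Concretely, define $\Theta_{KKS}$ as the composition
\[
    T_{\PP(E)} \xrightarrow{d r_{JM}} r_{JM}^{*}T_{\PP(\gfr)} \xrightarrow{r_{JM}^{*}\beta} r_{JM}^{*}\Ocal_{\PP(\gfr)}(1) = \Ocal_{\PP(E)}(1),
\]
where $\beta_{[w]}(\xi) = \kappa(w,\xi)$ on $\gfr/\CC w$. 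The map $\beta$ is well defined because $\kappa(w,w)=0$ for nilpotent $w$, and it is $G$-equivariant.

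\textbf{Agreement on $Z$.} On $Z \subset \PP(\gfr)$ we have $\beta|_{T_Z} = \theta_{KKS}$ by definition, since $T_{Z,[v]}$ is the image of $\gfr$ via $u\mapsto [u,v]$, and $\theta_{KKS}$ sends $u$ to $\kappa(v,[u,v])$. This needs a sign and convention check against the definition stated, which uses $\kappa(v,u)$ on $\gfr/\Lie\Stab$, identified with $T_Z$ through $u\mapsto[u,v]$. Note that $\kappa(v,[u,v])=0$, so the formulas must be matched carefully: the paper's $\theta_{KKS}$ is $u\mapsto\kappa(v,u)$. I therefore expect the correct extension to be the $u$-formula $\Theta_x(u,w')=\kappa(w,u)$, not the pullback of $\beta$. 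This is the main obstacle: verifying that $\kappa(w,u)$ is independent of representatives. For $u\in\gfr_{\ge 0}$ with $w'=-[u,w]$ we need $\kappa(w,u)=0$. This holds because $w\in\gfr_{\ge 2}$ and $u \in \gfr_{\ge 0}$, and $\kappa(\gfr_i,\gfr_j)=0$ unless $i+j=0$. Equivariance under $G_{\ge 0}$ follows from $\Ad$-invariance of $\kappa$.

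\textbf{Extension and surjectivity.} On the open orbit $G\cdot(1,[v])^{\bullet}\simeq Z$, the map $\Theta$ coincides with $(r_{JM})^{*}\theta_{KKS}$ by the definition of $\theta_{KKS}$. This gives the extension, and uniqueness of the extension follows from density. For surjectivity at $x=(1,[w])^\bullet$, nondegeneracy of $\kappa$ pairing $\gfr_{k}$ with $\gfr_{-k}$ gives some $u\in\gfr_{\le -2}$ with $\kappa(w,u)\neq0$, since $w\neq0$.
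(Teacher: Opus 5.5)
Your final candidate $\Theta_{x}(u,w')=\kappa(w,u)$ is exactly the paper's map: it is the composition $T_{\PP(E)}\to q^{*}T_{G/G_{\ge 0}}\to q^{*}E^{\vee}\to\Ocal_{\PP(E)}(1)$ evaluated at $(1,[w])^{\bullet}$. Your checks (vanishing on $\gfr_{\ge 0}$ because $\kappa(\gfr_{\ge 2},\gfr_{\ge 0})=0$, $\Ad$-invariance for equivariance, agreement with $\theta_{KKS}$ at the base point, surjectivity from the pairing of $\gfr_{k}$ with $\gfr_{-k}$) are the paper's as well, so the argument is correct and follows the same route, with the paper's intrinsic composition of surjections replaced by a coordinate formula.

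You should delete the $\beta$ detour and the sentence claiming $\theta_{KKS}$ evaluates to $\kappa(w,[u,w]+w')$. For $w,w'\in\gfr_{\ge 2}$ that expression equals $\kappa(w,w')=0$, so $(r_{JM}^{*}\beta)\circ dr_{JM}$ is identically zero, and $\beta|_{T_{Z}}=0\neq\theta_{KKS}$.
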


\begin{proof}
    Let $\kappa$ be the Killing form of $\gfr$.
    Since $\gfr/\gfr_{\ge -1} \rightarrow (\gfr_{\ge 2})^{\vee}$, $w \mod \gfr_{\ge -1} \mapsto \kappa(-,\,w)$ is a $G_{\ge0}$-equivariant isomorphism, we have a $G$-equivariant isomorphism $E^{\vee} \simeq G \times^{G_{\ge 0}} (\gfr/\gfr_{\ge -1})$.
    Therefore we have a $G$-equivariant surjection $\varphi : T_{G/G_{\ge 0}} \rightarrow E^{\vee}$ defined by the natural projection $\gfr/\gfr_{\ge 0} \rightarrow \gfr/\gfr_{\ge -1}$.

    Denote by $q : \PP(E) \rightarrow G/G_{\ge 0}$ the natural projection.
    Define $\Theta_{KSS} : T_{\PP(E)} \rightarrow \Ocal_{\PP(E)}(1)$ as the following composition:
        \begin{equation} \label{eqn:Theta KKS}
            \Theta_{KKS}: T_{\PP(E)} \xrightarrow{dq} q^{*} T_{G/G_{\ge 0}} \xrightarrow{q^{*}\varphi} q^{*} E^{\vee} \xrightarrow{e} \Ocal_{\PP(E)}(1)
        \end{equation}
    where $e : q^{*} E^{\vee} \rightarrow \Ocal_{\PP(E)}(1)$ comes from the relative Euler sequence.
    Then $\Theta_{KKS}$ is a $G$-equivariant surjection.
    Furthermore, at $(1,\,[v])^{\bullet} \in \PP(E)$, (\ref{eqn:Theta KKS}) becomes
    \begin{equation*}
            \Theta_{KKS,\, (1,\,[v])^{\bullet}}: \gfr/\Lie(\Stab_{G}([v])) \rightarrow \gfr/\gfr_{\ge 0} \rightarrow \gfr/\gfr_{\ge -1} \xrightarrow{e} (\CC v)^{\vee}
        \end{equation*}
        where $e$ sends $w \mod \gfr_{\ge -1}$ to the linear functional $v \mapsto \kappa(v,\, w)$.
        Thus $\Theta_{KKS}$ agrees with $\theta_{KKS}$ at $(1,\,[v])^{\bullet}$, and hence over $G.(1,\,[v])^{\bullet}$ by the $G$-equivariance.
\end{proof}

Notice that $\Theta_{KKS}$ is in general not a contact form (cf. \cite{fu07contact}).

\begin{remark}
    Alternatively, Proposition~\ref{prop:Theta KKS} can be deduced from the aforementioned extendability of $\omega_{KKS}$ over $E$.
    The idea is as follows.
    Let $p:E^{\times} = E \setminus (\text{zero section}) \rightarrow \PP(E)$ be the principal $\CC^{\times}$-bundle: this is exactly the $\CC^{\times}$-bundle associated to the tautological line bundle $\Ocal_{\PP(E)}(-1)$, and thus the pullback $p^{*}\Ocal_{\PP(E)}(1)$ is trivialized over $E^{\times}$.
    Observe that the principal $\CC^{\times}$-bundle $O\rightarrow Z = \PP(O)$ is embedded in $p$ and the fiberwise $\CC^{\times}$-actions are compatible.
    Let $\Omega_{KKS}$ be a $G$-equivariant extension of $\omega_{KKS}$ on $E^{\times}$.
    Since $\omega_{KKS}$ is equivariant under the fiberwise $\CC^{\times}$-action (that is, $t^{*}\omega_{KKS} = t\omega_{KKS}$, $\forall t \in \CC^{\times}$), so is $\Omega_{KKS}$.
    Therefore if $\Xi$ is the vector field on $E^{\times}$ generating the fiberwise $\CC^{\times}$-action, then the contraction $\iota_{\Xi} \Omega_{KKS}$ is $\CC^{\times}$-invariant.
    Recalling that $p^{*}\Ocal_{\PP(E)}(1)$ is globally trivialized, one can define $\Theta_{KKS} \rightarrow \Ocal_{\PP(E)}(1)$ by setting $p^{*}\Theta_{KKS} = \iota_{\Xi}\Omega_{KKS}$.
    The symplectification process (cf. Remark~\ref{rmk:symplectification}) ensures that the same relation holds true for $\theta_{KKS}$ and $\omega_{KKS}$, and hence $\Theta_{KKS}$ is an extension of $\theta_{KKS}$.
\end{remark}

\subsection{Behavior of lines}\label{ssection:Behavior of lines}

As before, let $Z = \PP(O)$ be a nilpotent orbit in $\PP(\gfr)$, and $\overline{Z}$ its closure in $\PP(\gfr)$.
We also write $2n+1=\dim Z$.
Choose a point $z = [v] \in Z$, and consider the Jacobson--Morozov resolution defined by $v$ and an $\mathfrak{sl}_{2}$-triple $\{v,\,h,\,f\}$.
To ease the notation, we write $\tilde{z} = (1,\,[v])^{\bullet}$ in $\PP(E)$.
Recall that the $G$-orbit of $\tilde{z}$ is an open orbit in $\PP(E)$, isomorphic to $Z$.

\begin{lemma} \label{lemma: g2 = 1 iff minimal}
    In the eigenspace decomposition of $\gfr$ (\ref{eqn:eigensp decomp of g}), $\dim \gfr_{\ge 2} = 1$ if and only if $Z$ is a minimal nilpotent orbit.
\end{lemma}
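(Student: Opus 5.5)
We plan to use the Jacobson--Morozov resolution $r_{JM} : \PP(E) = G \times^{G_{\ge 0}} \PP(\gfr_{\ge 2}) \rightarrow \overline{Z}$ together with the fact that $\overline{Z}$ is $G$-stable and its boundary $\overline{Z}\setminus Z$ is a union of strictly smaller orbits.

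For the direction ``$\dim \gfr_{\ge 2} = 1$ implies $Z$ minimal'', assume $\gfr_{\ge 2} = \CC v$. Then $\PP(\gfr_{\ge 2})$ is a single point and $\PP(E) \simeq G/G_{\ge 0}$ is projective. Since $r_{JM}$ is surjective onto $\overline{Z}$ (its image is closed, because $\PP(E)$ is projective, and it contains $Z$), the image is the $G$-orbit $G.[v] = Z$. Hence $Z = \overline{Z}$, which is exactly the definition of minimality.

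For the converse, assume $Z = \overline{Z}$ and suppose $\dim \gfr_{\ge 2} \ge 2$. The image of $\PP(\gfr_{\ge 2})$ (the fiber of $\PP(E)$ over the base point) in $\PP(\gfr)$ is $\PP(\gfr_{\ge 2})$ itself, since $r_{JM}((1,[w])^\bullet) = [w]$. It therefore lies in $\overline{Z} = Z$, so every nonzero $w \in \gfr_{\ge 2}$ lies in $O$. We derive a contradiction by exhibiting an element of $\gfr_{\ge 2}$ not in $O$. I would first try a nonzero $w \in \gfr_{\ge 3}$, or else $w \in \gfr_2$ with $w$ not proportional to $v$.

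If $\gfr_{\ge 3} \neq 0$, pick $w$ in the top eigenspace $\gfr_m$ with $m \ge 3$. Then $\ad_w$ shifts degrees by $m$, so $\dim[\gfr,w] \le \dim \gfr_{\ge m} + \dim \gfr_{m-1} + \cdots$. This is a dimension count, and the cleaner route below avoids it. Consider the torus $\exp(t\,h)$ acting on $\PP(\gfr_{\ge 2})$. For a general $w = \sum_{k\ge2} w_k$ with $w_2$ nonzero and not proportional to $v$, the limit as $t\to -\infty$ is $[w_2]$. Alternatively, take $w_2 = v + u$ with $u\in\gfr_2$ independent, so the line $\PP(\CC v\oplus\CC u)$ lies in $\PP(\gfr_2)$; if $\gfr_2 = \CC v$, use the limit to the top degree instead.

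The key obstacle is showing that $O \cap \gfr_{\ge 2}$ is not all of $\gfr_{\ge 2}\setminus 0$. The standard fact to invoke is that $O\cap\gfr_{\ge 2}$ is exactly the open dense $G_{\ge 0}$-orbit of $v$ in $\gfr_{\ge 2}$ (Kostant; \cite[Lemma~4.1.?]{CollingwoodMcGovern1993NilpotentOrbits}). Its dimension equals $\dim \gfr_{\ge 0} - \dim \gfr_0^{v}$, and $G_{\ge0}.v$ is a cone, open in $\gfr_{\ge 2}$. If it equalled $\gfr_{\ge 2}\setminus 0$ with $\dim\ge 2$, then $G_{\ge 0}$, and hence its Levi factor $G_0$ (the unipotent radical acts trivially on $\gfr_{\ge m}$), would act transitively on $\PP(\gfr_{\ge m})$, and the unipotent radical would move $v$ onto all of $\gfr_{\ge 2}\setminus0$. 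The orbit of $[v]$ under $G_{\ge0}$ in $\PP(\gfr_{\ge2})$ would then be all of $\PP(\gfr_{\ge 2})$, which is projective. But the orbit of the point $[w]$ with $w\in\gfr_{m}$, $m$ maximal, is contained in the proper closed subset $\PP(\gfr_{\ge m})$ when $\gfr_{\ge 2}\ne\gfr_m$; when $\gfr_{\ge 2}=\gfr_2$, $G_0$ transitive on $\PP(\gfr_2)$ with stabilizer containing $h$ acting by the scalar $2$ gives a contradiction via $\ad_h$ acting trivially on $\PP(\gfr_2)$ combined with $\dim G_0.[v] = \dim\gfr_0 - \dim \gfr_0^{[v]}$. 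Making this last case airtight is where I expect the real work, and I would fall back on Kostant's classification via weighted Dynkin diagrams (\cite[\S3.5]{CollingwoodMcGovern1993NilpotentOrbits}) if needed.
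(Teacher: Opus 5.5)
Your first direction ($\dim \gfr_{\ge 2}=1 \Rightarrow Z=\overline{Z}$) is correct and is the paper's argument via $\PP(E)\simeq G/G_{\ge 0}$.

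The converse, however, is not established. Grant the fact $O\cap \gfr_{\ge 2}=G_{\ge 0}\cdot v$, which is true. Then your filtration argument does settle the case $\gfr_{\ge 3}\neq 0$, because $G_{\ge 0}\cdot v\subset \Ad_{G_0}(v)+\gfr_{\ge 3}$ misses $\gfr_{\ge 3}\setminus\{0\}$.

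The case $\gfr_{\ge 2}=\gfr_2$ with $\dim \gfr_2\ge 2$ is left open, and it contains every non-minimal square-zero orbit. The mechanism you suggest there cannot produce a contradiction:
\begin{itemize}
\item $h$ acts on $\gfr_2$ by the scalar $2$, so it fixes every point of $\PP(\gfr_2)$ and does not single out $[v]$.
\item The orbit--stabilizer formula $\dim G_0\cdot[v]=\dim\gfr_0-\dim\gfr_0^{[v]}$ only recovers that $G_0\cdot[v]$ is open in $\PP(\gfr_2)$.
\end{itemize}
Nothing formal about Levi factors forbids transitivity on $\gfr_2\setminus\{0\}$. For the (doubled) $\ZZ$-grading of $\mathfrak{sl}_{n+1}$ associated with $\PP^{n}$, $G_0\simeq \GL_n$ acts on $\gfr_2\simeq \CC^{n}$ with a single nonzero orbit; that grading simply does not come from an $\mathfrak{sl}_2$-triple when $n\ge 2$. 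So the missing input is precisely that $h$ is the neutral element of a triple through $v$, and your sketch never uses it. Falling back on weighted Dynkin diagrams would be a case-by-case check, not a proof.

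One way to close the gap inside your framework, still assuming $\gfr_{\ge 3}=0$:
\begin{itemize}
\item For $x\in \gfr_2$, the operator $\ad_x^2$ kills every $\gfr_k$ with $k\neq -2$. Hence $\rank \ad_x^2=\rank\bigl(\ad_x^2|_{\gfr_{-2}}\colon \gfr_{-2}\to\gfr_2\bigr)$.
\item By $\mathfrak{sl}_2$-theory, $\ad_v^2\colon\gfr_{-2}\to\gfr_2$ is an isomorphism, so $\rank\ad_v^2=\dim\gfr_2$.
\item Therefore $x\mapsto \det(\ad_x^2|_{\gfr_{-2}})$ is a nonzero homogeneous polynomial of degree $2\dim\gfr_2$ on $\gfr_2$.
\item If $\dim\gfr_2\ge 2$, its zero locus contains some $x\neq 0$. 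That $x$ has $\rank\ad_x^2<\rank\ad_v^2$, so $x\notin O$, contradicting $\PP(\gfr_2)\subset \overline{Z}=Z$.
\end{itemize}
The paper avoids this analysis altogether. A closed orbit $Z$ forces $v$ to be a highest root vector of a simple ideal $\hfr$. The well-known grading of $\hfr$ attached to it has $\hfr_{\ge 2}=\hfr_2=\CC v$, and the other simple ideals commute with the triple and lie in $\gfr_0$, giving $\gfr_{\ge 2}=\CC v$ directly.
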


\begin{proof}
    If $Z$ is a minimal nilpotent orbit, then $v$ is a highest root vector of a simple ideal $\hfr$ of $\gfr$.
    Since $[\hfr,\,\hfr'] = 0$ for any other simple ideal $\hfr'$ of $\gfr$, $h$ and $f$ are contained in $\hfr$, i.e., $\{v,\,h,\,f\}$ is an $\mathfrak{sl}_{2}$-triple in $\hfr$.
    It is well known that the $\ad_{h}$-decomposition (\ref{eqn:eigensp decomp of g}) for $\hfr$ is
    \[
        \hfr = \hfr_{-2} \oplus \hfr_{-1} \oplus \hfr_{0} \oplus \hfr_{1} \oplus \hfr_{2}
    \]
    and $\hfr_{2} = \CC v$.
    Since all other simple ideals $\hfr'$ of $\gfr$ are contained in $\gfr_{0}$, we see that $\gfr_{\ge 2} = \hfr_{2} = \CC v$.

    Conversely, assume that $\dim \gfr_{\ge 2} = 1$.
    Then $\PP(E) = G \times^{G_{\ge 0}} \PP(\gfr_{2})$ is isomorphic to $G/G_{\ge 0}$, and so it is $G$-homogeneous.
    Since the Jacobson--Morozov resolution is $G$-equivariant, $\overline{Z}$ is $G$-homogeneous, and hence $\overline{Z} = Z$.
\end{proof}

\begin{corollary} \label{coro:covered by lines except for adj type C}
    $\overline{Z}$ is covered by lines in $\PP(\gfr)$ unless it is the adjoint variety of a simple ideal of $\gfr$, isomorphic to the symplectic Lie algebra $\mathfrak{sp}_{N}$, $N \ge 2$ even.
\end{corollary}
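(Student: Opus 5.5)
We split according to whether $Z$ is a minimal nilpotent orbit, using Lemma~\ref{lemma: g2 = 1 iff minimal} to make the split concrete.

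\emph{Case 1: $Z$ is not minimal.} By Lemma~\ref{lemma: g2 = 1 iff minimal}, $\dim \gfr_{\ge 2} \ge 2$, so the fibers of $q : \PP(E) \rightarrow G/G_{\ge 0}$ are projective spaces $\PP(\gfr_{\ge 2})$ of positive dimension. A line $\ell$ in the fiber $\PP(\gfr_{\ge 2})$ over the base point is mapped by $r_{JM}$ to $\{[w] : w \in \text{a 2-dimensional subspace of } \gfr_{\ge 2}\}$, which is a line in $\PP(\gfr)$ because $r_{JM}$ restricted to this fiber is the linear inclusion $\PP(\gfr_{\ge 2}) \subset \PP(\gfr)$. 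This line lies in $\overline{Z}$ since $\overline{Z}$ is the image of $r_{JM}$. Choosing $\ell$ through the point $[v]$ gives a line through $z = [v]$. By $G$-homogeneity of $Z$, every point of $Z$ lies on a line in $\overline{Z}$, hence $\overline{Z}$ is covered by lines. This step is essentially immediate.

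\emph{Case 2: $Z$ is minimal.} By Remark~\ref{rmk:adjoint variety}, $Z = \overline{Z}$ is the adjoint variety $Z_{\hfr} \subset \PP(\hfr)$ of a simple ideal $\hfr$. Since $Z_\hfr$ spans $\PP(\hfr)$, lines in $\PP(\gfr)$ on $Z$ are exactly lines in $\PP(\hfr)$ on $Z_\hfr$, so we may assume $\gfr = \hfr$ is simple. If $\hfr \simeq \mathfrak{sp}_N$, this is the excluded case, and nothing is required.

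For $\hfr$ not of type $C$, we need to exhibit a line through $[v]$. We use the grading $\hfr = \hfr_{-2} \oplus \cdots \oplus \hfr_{2}$ with $\hfr_{2} = \CC v$. Outside type $C$ we choose a long root $\beta$ with $\langle \beta, \theta^\vee\rangle = 1$, where $\theta$ is the highest root, and such that $\theta - \beta$ is also a root. Such $\beta$ exists unless $\hfr = \mathfrak{sp}_N$ (including $\mathfrak{sl}_2 = \mathfrak{sp}_2$). Then $e_\beta \in \hfr_1$ is a long root vector, so $[e_\beta] \in Z$. The pencil $[s v + t e_\beta]$ is the orbit closure of $[e_\beta]$ under $\exp(\CC\, \ad e_{\theta-\beta})$: indeed $\ad e_{\theta-\beta}$ sends $e_\beta \mapsto c\, v$ and kills $v$, and its square on $e_\beta$ vanishes. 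Hence the line $\PP(\CC v \oplus \CC e_\beta)$ lies in $Z$.

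The main obstacle is justifying the root-theoretic existence of $\beta$ outside type $C$, together with the fact that for $\mathfrak{sl}_2$ the orbit is a point. Rather than a case check, one can cite the classical description of the subadjoint variety \cite{Buczynski2006LegendrianSubvarieties}: the variety of lines through a point of $Z_\hfr$ is nonempty (it is the subadjoint variety in $\PP(\hfr_1)$) exactly when $\hfr$ is not of type $C$. In type $C$, the adjoint variety is $v_2(\PP^{N-1})$, which contains no lines.
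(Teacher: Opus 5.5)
Your proposal is correct and is essentially the paper's proof. The non-minimal case is exactly the paper's argument via Lemma~\ref{lemma: g2 = 1 iff minimal}: $\PP(\gfr_{\ge 2})$ is a positive-dimensional linear subspace of $\overline{Z}$ containing $[v]$, and homogeneity finishes; the minimal case is settled, as in the paper, by the classical description of subadjoint varieties in \cite[Table~I]{Buczynski2006LegendrianSubvarieties}.

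Your extra construction of the line $\PP(\CC v \oplus \CC e_\beta) \subset Z$ is a valid self-contained substitute for that citation once the existence of $\beta$ is granted, but it is not required. Such a $\beta$ can be taken to be a simple root joined to the affine node of the extended Dynkin diagram by a single edge.
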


\begin{proof}
    It is well known that an adjoint variety is covered by lines if and only if it is not of type $C$ (see for example \cite[Table~I]{Buczynski2006LegendrianSubvarieties}).
    Thus it suffices to show that non-minimal $Z$ is covered by lines.
    By Lemma~\ref{lemma: g2 = 1 iff minimal}, $\PP(\gfr_{\ge 2})$ is a positive dimensional linear subspace of $\overline{Z}$, and it contains $[v] \in Z$.
    In particular, there is a line on $\overline{Z}$ passing through $[v]$.
    Since $Z$ is an open $\Ad_{G}$-orbit in $\overline{Z}$, $\overline{Z}$ is covered by lines.
\end{proof}

\begin{lemma}\label{lemma:str transform of lines are unbendable}
    On $\PP(E)$, $\Ocal_{\PP(E)}(1)$-lines passing through $\tilde{z}$ are smooth, everywhere tangent to $\ker(\Theta_{KKS})$, and unbendable.
\end{lemma}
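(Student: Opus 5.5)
The plan is to treat the three assertions separately. Smoothness comes from comparing with lines in $\PP(\gfr)$ via $r_{JM}$. Tangency comes from Proposition~\ref{prop:contact lines are tangent to D}. Unbendability comes from freeness at points of the open orbit together with a Kebekus-type splitting argument, for which I follow \cite[Lemma~3.5]{Ke01}. Throughout, let $C$ be an $\Ocal_{\PP(E)}(1)$-line through $\tilde{z}$ with normalization $\nu:\PP^{1}\rightarrow C$, and set $L=\Ocal_{\PP(E)}(1)=(r_{JM})^{*}\Ocal_{\PP(\gfr)}(1)$.

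\emph{Smoothness.} Since $\tilde{z}$ lies in the open $G$-orbit, which $r_{JM}$ maps isomorphically onto $Z$, the curve $C$ is not contracted by $r_{JM}$. Hence $r_{JM}\circ\nu:\PP^{1}\rightarrow\PP(\gfr)$ is nonconstant and has degree $\deg\nu^{*}L=1$. A degree-one map $\PP^{1}\rightarrow\PP(\gfr)$ is an isomorphism onto a line $l\subset\overline{Z}$, so in particular it is a closed embedding. It follows that $\nu$ is a closed embedding, so $C\simeq\PP^{1}$ is smooth and $r_{JM}|_{C}:C\rightarrow l$ is an isomorphism.

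\emph{Tangency.} Apply Proposition~\ref{prop:contact lines are tangent to D} to the bundle morphism $\Theta_{KKS}:T_{\PP(E)}\rightarrow L$ of Proposition~\ref{prop:Theta KKS}. Since $\deg_{C}L=1$, we get $T_{C,\,x}\subset\ker(\Theta_{KKS})_{x}$ for every $x\in C$. This uses that $C$ is smooth, so $\Sm(C)=C$.

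\emph{Unbendability.}
\begin{enumerate}
  \item \emph{Freeness.} By Theorem~\ref{thm:covering family is free}, every rational curve through a very general point of $\PP(E)$ is free. The open orbit $G.\tilde{z}$ contains very general points, and the $G$-action preserves $L$-lines and freeness. Therefore every $L$-line through $\tilde{z}$ is free, so $\nu^{*}T_{\PP(E)}=\bigoplus_{i}\Ocal(a_{i})$ with all $a_{i}\ge 0$, and $T_{\PP^{1}}=\Ocal(2)$ is one of the summands.
  \item \emph{Splitting type.} I then run Kebekus' argument. Because $\Theta_{KKS}$ is surjective, $K\coloneqq\ker\Theta_{KKS}$ is a rank-$2n$ subbundle. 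By the tangency just proved, $\Ocal(2)=T_{\PP^{1}}\subset\nu^{*}K$. The Levi bracket of $K$ gives $\wedge^{2}K\rightarrow L$. It is nondegenerate on the open orbit, which meets $C$ in a dense open subset because $\tilde{z}\in C$. Contracting with the tangent field gives a map $\nu^{*}K\rightarrow\Ocal(1)\otimes\Ocal(-2)=\Ocal(-1)$ that is nonzero at $\tilde{z}$.
  \item \emph{Target.} Any summand $\Ocal(a)\subset\nu^{*}K$ with $a\ge 2$, other than $T_{\PP^{1}}$, must pair trivially with $T_{\PP^{1}}$ and with every summand of degree $\ge 0$. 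This is a degree count: the pairing of $\Ocal(a)$ with $\Ocal(b)$ lands in $\Ocal(1-a-b)$, which has no sections when $a+b\ge 2$. Thus the positive part of $\nu^{*}T_{\PP(E)}$ is forced to be $\Ocal(2)\oplus\Ocal(1)^{\oplus a}$, which gives the shape (\ref{eqn:splitting type of unbendable}).
\end{enumerate}

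\emph{Main obstacle.} The difficulty is that the Levi form of $K$ is nondegenerate only on the open orbit, not along all of $C$. Kebekus' argument must therefore be run using nondegeneracy at the single point $\tilde{z}$, so that a global degree obstruction contradicts the pairing there. If this pointwise version gets stuck, I would fall back on the infinitesimal formulation. For a free curve, the summands of degree $\ge 1$ correspond to directions of deformations fixing $\tilde{z}$. Deformations of $\nu$ fixing $\tilde{z}$ stay $L$-lines through $\tilde{z}$, so they remain tangent to $\ker\Theta_{KKS}$ and are isotropic for the Levi form at $\tilde{z}$. This forces every $\Ocal(2)$-summand beyond $T_{\PP^{1}}$ to vanish.
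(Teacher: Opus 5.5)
Your smoothness, tangency and freeness steps are the paper's. For unbendability you take a different route, via the Levi form of $K=\ker\Theta_{KKS}$. That route is viable, but as written its decisive Step~3 does not follow.

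\textbf{The gap.} Knowing that a summand $\Ocal(a)$, $a\ge 2$, of $\nu^{*}K$ pairs trivially with every summand of degree $\ge 0$ gives nothing unless you also bound the negative part of $\nu^{*}K$. If $\nu^{*}K$ contained several copies of $\Ocal(-1)$, several $\Ocal(2)$'s could pair nondegenerately with them, and there would be no contradiction. You also treat $T_{\PP^{1}}$ as a direct summand without justification, and you never pass from $\nu^{*}K$ back to $\nu^{*}T_{\PP(E)}$.

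\textbf{How to close it.}
\begin{enumerate}
    \item \emph{Bound the negative part using freeness.} Dualize $0\to\nu^{*}K\to\nu^{*}T_{\PP(E)}\to\Ocal(1)\to 0$ and use $H^{0}(\nu^{*}T^{\vee}_{\PP(E)}(-1))=0$. This gives $h^{0}(\nu^{*}K^{\vee}(-1))\le h^{1}(\Ocal(-2))=1$. Hence $\nu^{*}K$ has at most one summand of negative degree, and it is an $\Ocal(-1)$; that is, $(\nu^{*}K)^{\ge 0}$ has corank $\le 1$.
    \item \emph{Rank count at one point.} Since $\Hom((\nu^{*}K)^{\ge 2}\otimes(\nu^{*}K)^{\ge 0},\,\Ocal(1))=0$, the Levi form makes $(\nu^{*}K)^{\ge 2}$ orthogonal to $(\nu^{*}K)^{\ge 0}$ identically. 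Evaluating at $\tilde z$, where the form is nondegenerate, gives $\rank(\nu^{*}K)^{\ge 2}\le 1$.
    \item \emph{Identify the positive part.} $(\nu^{*}K)^{\ge 2}$ contains the subbundle $T_{\PP^{1}}$, because $\nu$ is an embedding. So $(\nu^{*}K)^{\ge 2}=T_{\PP^{1}}\simeq\Ocal(2)$.
    \item \emph{Pass back to $T_{\PP(E)}$.} We get $h^{0}(\nu^{*}T_{\PP(E)}(-2))=h^{0}(\nu^{*}K(-2))=1$. So $\nu^{*}T_{\PP(E)}$ has a single summand of degree $\ge 2$, namely $\Ocal(2)$.
\end{enumerate}
So the ``main obstacle'' you name is not one: nondegeneracy at the single point $\tilde z$ suffices. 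But the counting above is the actual content, and it is missing from your proposal. Your fallback would not rescue it either: isotropy of the deformation directions only bounds their dimension by $n$, and does not exclude a second $\Ocal(2)$-summand.

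\textbf{Comparison with the paper.} The paper never uses the Levi form at this point. If the free curve $l^{\sim}$ were not unbendable, $T_{\PP(E)}|_{l^{\sim}}(-2)$ would have at least two nonnegative summands. Then $l^{\sim}$ would deform to a different $\Ocal_{\PP(E)}(1)$-line through two fixed points of the open orbit. Their images under $r_{JM}$ would be two distinct lines through two distinct points of $\PP(\gfr)$, which is absurd. That argument is shorter and uses only the embedding into $\PP(\gfr)$. Your repaired argument is intrinsic. It shows more generally that a free $L$-line for a surjective $\Theta:T\to L$ is unbendable as soon as the Levi form of $\ker\Theta$ is nondegenerate at one point of the curve.
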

\begin{proof}
    Let $l^{\sim}$ be an $\Ocal_{\PP(E)}(1)$-line containing $\tilde{z}$.
    By the projection formula, it is the strict transform of a line on $\overline{Z}$, and hence a smooth rational curve.
    Moreover, it is everywhere tangent to $\ker(\Theta_{KKS})$ by Proposition~\ref{prop:contact lines are tangent to D}.

    Assume that $l^{\sim}$ is not unbendable.
    Since $l^{\sim}$ meets the open dense orbit $G.\tilde{z}$ in $\PP(E)$, $l^{\sim}$ is a free curve by Theorem~\ref{thm:covering family is free}.
    But since $l^{\sim}$ is not unbendable, the splitting type of $T_{\PP(E)}|_{l^{\sim}}(-2)$ contains at least two nonnegative factors, and thus $l^{\sim}$ can be deformed to another $\Ocal_{\PP(E)}(1)$-line, say $l'$, with two points $z_{1}\not=z_{2} \in l^{\sim}$ fixed (see for example \cite[Proposition~3.10, Ch.~II]{Kollar96}).
    We can assume that $z_{1}$ and $z_{2}$ are in the open dense orbit $G.\tilde{z}$, and hence $l'$ also intersects with $G.\tilde{z}$.
    Again by the projection formula, $r_{JM}(l^{\sim})$ and $r_{JM}(l')$ are different lines passing through different points $r_{JM}(z_{1})$ and $r_{JM}(z_{2})$, which is impossible.
    Therefore $l^{\sim}$ is unbendable.
\end{proof}

Recall the definition of $F(\overline{Z},\,z)$ given in the introduction:
\[
    F(\overline{Z},\,z) = \{[T_{l,\,z}] \in \PP(T_{Z,\,z}) : \text{$l$ is a line on $\overline{Z}$ containing $z$}\}.
\]
This is a closed subvariety of $\PP(T_{Z,\,z})$.
In fact, under the natural isomorphism $\PP(T_{\PP(\gfr),\,z}) \simeq \Gr(1,\,\gfr/\CC v) \subset \Gr(2,\,\gfr)$, $F(\overline{Z},\,z)$ is identified with the Fano variety of lines on $\overline{Z}$ passing through $z$.

\begin{theorem} \label{thm:VMRT is integral}
    Let $X$ be an irreducible component of $F(\overline{Z},\,z)$.
    Then $X$ is an integral subvariety of $\PP(D_{KKS,\, z})$, equipped with the contact structure arising from the Levi tensor (cf. Example~\ref{ex:P 2n+1}).
\end{theorem}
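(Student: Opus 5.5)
We lift the problem to the Jacobson--Morozov resolution $r_{JM} : \PP(E) \rightarrow \overline{Z}$ and argue with the $\Ocal_{\PP(E)}(1)$-lines through $\tilde{z}$. Since $r_{JM}$ is an isomorphism from the open orbit $G.\tilde{z}$ onto $Z$ and $(r_{JM})^{*}\Ocal_{\PP(\gfr)}(1) \simeq \Ocal_{\PP(E)}(1)$, strict transform gives a bijection between lines $l$ on $\overline{Z}$ through $z$ and $\Ocal_{\PP(E)}(1)$-lines $l^{\sim}$ through $\tilde{z}$. Under $d r_{JM}$ at $\tilde{z}$, the tangent directions correspond. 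Moreover $\Theta_{KKS}$ agrees with $\theta_{KKS}$ on the open orbit (Proposition~\ref{prop:Theta KKS}). So $F(\overline{Z},\,z)$ is identified with the set $\mathcal{C}_{\tilde{z}} \subset \PP(T_{\PP(E),\,\tilde{z}})$ of tangent directions at $\tilde{z}$ of $\Ocal_{\PP(E)}(1)$-lines through $\tilde{z}$. The hyperplane $\ker(\Theta_{KKS})_{\tilde{z}}$ corresponds to $D_{KKS,\,z}$, and its Levi form corresponds to that of $D_{KKS}$, since the Levi tensor is computed from local sections near $\tilde{z}$, which lie in the open orbit.

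\textbf{Containment.} By Lemma~\ref{lemma:str transform of lines are unbendable}, every such $l^{\sim}$ is smooth and tangent to $\ker\Theta_{KKS}$. Hence $F(\overline{Z},\,z) \subset \PP(D_{KKS,\,z})$.

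\textbf{Integrality.} Let $\Hscr$ be an irreducible component of $\RatCurves(\PP(E))$ whose members through $\tilde{z}$ sweep out the component $X$. By Lemma~\ref{lemma:str transform of lines are unbendable}, members through $\tilde{z}$ are unbendable, so $\Hscr$ is a family of unbendable curves. By homogeneity under $\Stab_{G}(z)$, the point $\tilde{z}$ behaves like a general point, so $X$ is a component of $\VMRT^{\Hscr}_{\tilde{z}}$.

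We then apply the result of \cite{Hw23Partial}: for a family of unbendable curves everywhere tangent to the kernel $D$ of a surjective morphism $\theta : T \rightarrow L$, the VMRT at a general point is isotropic with respect to the Levi tensor $\bigwedge^{2} D \rightarrow T/D$. Concretely, for a smooth point $[\alpha] \in X$ with tangent line $l^{\sim}$, the affine tangent space $\hat{T}_{[\alpha]}X$ equals the positive part $P_{\tilde{z}}$ of the splitting $T_{\PP(E)}|_{l^{\sim}} = \Ocal(2) \oplus \Ocal(1)^{a} \oplus \Ocal^{b}$. This is the standard identification of the tangent space of the VMRT via the deformation of $\nu$ fixing $\tilde{z}$.

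We must then show that $\mathrm{Levi}(P_{\tilde{z}},P_{\tilde{z}}) = 0$ in $L_{\tilde{z}}$. Deformations of $l^{\sim}$ stay tangent to $D$, since all $\Ocal(1)$-lines are. This gives that the bracket of the vector fields defined by sections of $\Ocal(2) \oplus \Ocal(1)^{a}$ induces a map $\bigwedge^{2}$(positive part) $\rightarrow L|_{l^{\sim}} \simeq \Ocal(1)$. Restricting to sections vanishing at $\tilde{z}$ and degree counting, as in \cite{Hw23Partial} and \cite{Ke01}, forces this map to vanish. Transporting to $Z$ via $r_{JM}$ then shows that the tangent spaces of $\Sm(X)$ are isotropic for the Levi tensor of $D_{KKS,\,z}$. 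This is exactly integrality in $\PP(D_{KKS,\,z})$ for the structure of Example~\ref{ex:P 2n+1}.

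\textbf{Main obstacle.} The hard step is the isotropy. Its proof in \cite{Hw23Partial} uses only a distribution $D$ (not necessarily contact) and unbendability, so the degeneracy of $\Theta_{KKS}$ off the open orbit should be harmless. I would first check that the hypotheses there are satisfied by $D = \ker\Theta_{KKS}$ on $\PP(E)$ and by the family $\Hscr$. Failing that, I would reprove the degree-count argument directly along $l^{\sim}$, using that $\Theta_{KKS}$ is surjective everywhere, so $L|_{l^{\sim}} \simeq \Ocal(1)$.
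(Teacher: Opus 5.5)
Your proposal matches the paper's proof. You identify $F(\overline{Z},\,z)$ via $r_{JM}$ with the tangent directions at $\tilde z$ of $\Ocal_{\PP(E)}(1)$-lines, invoke Lemma~\ref{lemma:str transform of lines are unbendable} (smoothness, tangency to $\ker\Theta_{KKS}$, unbendability), and conclude with Theorem~1.4 of \cite{Hw23Partial}, whose isotropy argument the paper cites without unpacking. One small correction: $\tilde z$ counts as a general point because the orbit $G.\tilde z$ is open and dense in $\PP(E)$, not because of homogeneity under $\Stab_{G}(z)$, which fixes $\tilde z$.
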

\begin{proof}
    By the projection formula, the differential of the Jacobson--Morozov resolution identifies $F(\overline{Z},\,z)$ in $\PP(T_{Z,\,z})$ with the set of tangent directions of $\Ocal_{\PP(E)}(1)$-curves on $\PP(E)$ passing through $\tilde{z}$.
    Then the statement follows from Lemma~\ref{lemma:str transform of lines are unbendable} and \cite[Theorem~1.4]{Hw23Partial}.
\end{proof}

As an immediate corollary, we have:

\begin{corollary} \label{coro:dim VMRT is at most n-1}
    We have $\dim F(\overline{Z},\,z) \le n-1$.
\end{corollary}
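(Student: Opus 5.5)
The bound follows directly from Theorem~\ref{thm:VMRT is integral} together with the dimension count for integral submanifolds in Definition~\ref{defn:integral/Legendrian submflds}.

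First I would fix the ambient contact manifold. Since $\dim Z = 2n+1$, the contact hyperplane $D_{KKS,\,z}$ has dimension $2n$, and the Levi tensor makes it a symplectic vector space. By Example~\ref{ex:P 2n+1}, $\PP(D_{KKS,\,z})$ is then a contact manifold of dimension $2n-1$.

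Next, let $X$ be any irreducible component of $F(\overline{Z},\,z)$. If $F(\overline{Z},\,z)$ is empty there is nothing to prove. Otherwise Theorem~\ref{thm:VMRT is integral} says that $X \subset \PP(D_{KKS,\,z})$ is an integral subvariety, so its smooth locus $\Sm(X)$ is an integral submanifold of $\PP(D_{KKS,\,z})$. The estimate $2\dim \Sm(X) + 1 \le \dim \PP(D_{KKS,\,z}) = 2n-1$ from Definition~\ref{defn:integral/Legendrian submflds} then applies. It holds because the tangent spaces of $\Sm(X)$ are isotropic in the symplectic fibers of the contact distribution, which have dimension $2n-2$.

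This gives $\dim X = \dim \Sm(X) \le n-1$, using that $\Sm(X)$ is dense in the irreducible variety $X$. Since $F(\overline{Z},\,z)$ has finitely many irreducible components, we conclude $\dim F(\overline{Z},\,z) \le n-1$.

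The only point needing care is that $X$ is reduced and irreducible, so that its smooth locus is nonempty, open and dense. With that in hand there is no real obstacle: all the substantive work is contained in Theorem~\ref{thm:VMRT is integral}.
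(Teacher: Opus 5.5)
Your proposal is correct and is exactly the paper's argument. The paper states this as an immediate corollary of Theorem~\ref{thm:VMRT is integral}: an integral subvariety of the $(2n-1)$-dimensional contact manifold $\PP(D_{KKS,\,z})$ has dimension at most $n-1$. The paper also remarks that the bound follows alternatively from Theorem~\ref{thm:locus of lines is integral}, since $\locus(F(\overline{Z},\,z))$ is the cone over $F(\overline{Z},\,z)$ with vertex $z$.
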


\begin{definition}
    An irreducible component $X$ of $F(\overline{Z},\,z)$ is called \emph{Legendrian} if $\dim X = n-1$, or equivalently, $X$ is a Legendrian subvariety of $\PP(D_{KKS,\,z})$.
    The set of Legendrian components of $F(\overline{Z},\,z)$ is denoted by $\text{Leg}(\overline{Z},\,z)$.
\end{definition}

Let us conclude by another integrality theorem, which is a variant of \cite[Proposition~4.1]{Ke01}.
In the following, for a family of rational curves on a variety $Y$, say
\[
    \Kscr \xleftarrow{\rho}\text{Univ}\xrightarrow{\mu} Y,
\]
we define
\[
    \locus(\Kscr) \coloneqq \overline{\im \mu}.
\]
Notice that if $\Kscr$ is projective, then $\im \mu$ is also projective, and so taking the closure is not necessary.
In particular, $\locus(F(\overline{Z},\,z))$ is the union of all lines on $\overline{Z}$ through $z$.

\begin{theorem} \label{thm:locus of lines is integral}
    Let $X$ be an irreducible component of $\locus(F(\overline{Z},\,z))$ in $\overline{Z}$.
    Then $X \cap Z$ is an integral subvariety of $Z$.
\end{theorem}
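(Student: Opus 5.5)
We transfer the problem to the Jacobson--Morozov resolution $r_{JM}\colon \PP(E)\to\overline{Z}$ and run a version of Kebekus' argument \cite[Proposition~4.1]{Ke01}, using the twisted form $\Theta_{KKS}$ of Proposition~\ref{prop:Theta KKS} in place of a genuine contact form.

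\textbf{Step 1: reduction to $\PP(E)$.} Since $r_{JM}$ is an isomorphism over $Z$, it suffices to show the following for the strict transforms $l^{\sim}$. Let $\Hscr$ be an irreducible component of the space of $\Ocal_{\PP(E)}(1)$-lines through $\tilde z$, and let $\tilde X=\locus(\Hscr)$. We claim that $\Sm(\tilde X)\cap G.\tilde z$ is tangent to $\ker\Theta_{KKS}$. By the projection formula, the components $X$ of $\locus(F(\overline{Z},\,z))$ meeting $Z$ are exactly the images $r_{JM}(\tilde X)$, and $\Theta_{KKS}=\theta_{KKS}$ on $G.\tilde z\simeq Z$. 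Components not meeting $Z$ are vacuous, since every line through $z$ meets $Z$.

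\textbf{Step 2: tangent spaces of the locus.} By Lemma~\ref{lemma:str transform of lines are unbendable}, every $l^{\sim}$ through $\tilde z$ is smooth, unbendable and tangent to $\ker\Theta_{KKS}$. Take a general point $y$ of $\tilde X$ lying on a general member $l^{\sim}$ of $\Hscr$ with $y\in G.\tilde z$. The standard description of the differential of the evaluation map $\mu$ on $\Hscr$ then gives
\[
T_{\tilde X,\,y}=\operatorname{im}\bigl(H^{0}(l^{\sim},\,N_{l^{\sim}}(-\tilde z))\to N_{l^{\sim},\,y}\bigr)+T_{l^{\sim},\,y}.
\]
Because of unbendability, $N_{l^{\sim}}(-\tilde z)$ splits as $\Ocal^{a}\oplus\Ocal(-1)^{b}$, so this image is the fibre at $y$ of the positive part $P\subset T_{\PP(E)}|_{l^{\sim}}$, namely $\Ocal(2)\oplus\Ocal(1)^{a}$. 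It is enough to check $\Theta_{KKS}(P)=0$, because integrality is a closed condition and only needs to be verified on a dense open subset of $\Sm(\tilde X)\cap G.\tilde z$.

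\textbf{Step 3: vanishing of $\Theta_{KKS}$ on $P$.} Restricting $\Theta_{KKS}$ to $l^{\sim}$ gives a map $T_{\PP(E)}|_{l^{\sim}}\to\Ocal_{\PP^{1}}(1)$. On the $\Ocal(2)$ summand it vanishes by degree, which is Proposition~\ref{prop:contact lines are tangent to D}. On the $\Ocal(1)$ summands a degree argument alone does not suffice, and this is the main obstacle. Following Kebekus, I would differentiate the tangency condition along deformations fixing $\tilde z$. Let $\sigma\in H^{0}(N_{l^{\sim}}(-\tilde z))$ come from a one-parameter family $l^{\sim}_{t}$ through $\tilde z$, with every $l^{\sim}_{t}$ tangent to $\ker\Theta_{KKS}$. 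Then $\Theta_{KKS}(\sigma)\in H^{0}(\Ocal(1))$ vanishes at $\tilde z$, and the derivative of the tangency condition shows $\Theta_{KKS}(\sigma)$ satisfies a first-order equation along $l^{\sim}$. Here the twisted Levi form $d\Theta_{KKS}$ restricted to $\ker\Theta_{KKS}$ makes sense as a bundle map $\bigwedge^{2}\ker\Theta_{KKS}\to\Ocal(1)$, even though $\Theta_{KKS}$ is not contact. Concretely, $d\Theta_{KKS}(\dot l,\sigma)$ equals the derivative of $\Theta_{KKS}(\sigma)$ along $l^{\sim}$. Since $d\Theta_{KKS}(\dot l,\cdot)$ lands in $\Ocal(1)\otimes T^{\vee}$ and pairs $\Ocal(2)$ with $\Ocal(1)$, comparing degrees gives $d\Theta_{KKS}(\dot l,\sigma)=0$ as a section of $\Ocal(1+2-1-\dots)$, which is negative enough. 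Hence $\Theta_{KKS}(\sigma)$ is constant along $l^{\sim}$ in a suitable local trivialization, and because it vanishes at $\tilde z$ it vanishes identically. Alternatively, one can invoke \cite[Theorem~1.4]{Hw23Partial} directly, as in the proof of Theorem~\ref{thm:VMRT is integral}: it gives isotropy of the VMRT cone at $\tilde z$, and applying it at the general point $y$ to the subfamily of curves through $\tilde z$ gives $P_{y}\subset\ker\Theta_{KKS,\,y}$.

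With Step 3 in hand, $T_{\tilde X,\,y}\subset\ker\Theta_{KKS,\,y}$ for general $y$, and Step 1 transfers this to $X\cap Z$.
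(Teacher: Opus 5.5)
Steps 1 and 2 are sound and match the paper's reduction: pass to $\Ocal_{\PP(E)}(1)$-lines through $\tilde z$ and use the differential of the evaluation map, so that $T_{\tilde X,\,y}=P_y$. The gap is in Step 3, which is where all the content lies.

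Your claim that $d\Theta_{KKS}(\dot l,\sigma)\equiv 0$ ``by degree'' is not justified, for three reasons.
\begin{enumerate}
\item It is circular. Since $\Theta_{KKS}$ is valued in a line bundle, $d\Theta_{KKS}$ exists only after a local trivialization. With $\dot l\in\ker\Theta_{KKS}$, the expression $d\Theta_{KKS}(\dot l,v)$ is tensorial in $v$ only for $v\in\ker\Theta_{KKS}$, and $\sigma\in\ker\Theta_{KKS}$ is exactly what you are trying to prove.
\item The degree count fails even on the kernel. The bundle map $\text{Levi}(\dot l,\cdot)\colon \ker\Theta_{KKS}|_{l^{\sim}}\to \Ocal_{\PP^1}(1)\otimes T_{l^{\sim}}^{\vee}\simeq\Ocal_{\PP^1}(-1)$ is nonzero at every point of $l^{\sim}\cap G.\tilde z$, because the Levi form is nondegenerate there. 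Degree considerations force $\ker\Theta_{KKS}|_{l^{\sim}}$ to contain an $\Ocal(-1)$-summand; they do not force this map to vanish.
\item The concluding mechanism (``constant in a local trivialization and zero at $\tilde z$, hence zero'') cannot work. $\Ocal_{\PP^1}(1)$ has no global holomorphic connection. Moreover, every $\Theta_{KKS}(\sigma)$ with $\sigma\in H^0(N_{l^{\sim}}(-\tilde z))$ vanishes at $\tilde z$ trivially. The real issue is to exclude $\Theta_{KKS}(\sigma)=c\,s$ with $c\neq 0$, where $s$ is the section of $\Ocal_{\PP^1}(1)$ vanishing at $\tilde z$.
\end{enumerate}

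What is missing is a \emph{double} zero at the fixed point. Your first-order identity gives this if you evaluate it only at $\tilde z$. In a trivialization near $\tilde z$, the derivative of $\Theta_{KKS}(\sigma)$ along $l^{\sim}$ at $\tilde z$ equals $d\Theta_{KKS}(\dot l,\sigma)(\tilde z)$, and this is $0$ because $\sigma(\tilde z)=0$. Hence $\Theta_{KKS}(\sigma)\in H^0(\Ocal_{\PP^1}(1))$ vanishes to order $\ge 2$ at $\tilde z$, and is therefore zero. This is \cite[Proposition~3.1]{Ke01}, which the paper invokes in Lemma~\ref{lemma:Kebekus lemma} after writing $dF=d_pf+\res_{p,\,f}$.

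An equivalent formulation: the affine tangent space of the VMRT cone at $\tilde z$ is $P_{\tilde z}$, and the cone lies in $\ker\Theta_{KKS,\,\tilde z}$, so $P_{\tilde z}\subset\ker\Theta_{KKS,\,\tilde z}$. Since $\Theta_{KKS}$ induces a map $P/T_{l^{\sim}}\simeq\Ocal(1)^{\oplus a}\to\Ocal(1)$, which is necessarily constant, vanishing on one fibre gives $P\subset\ker\Theta_{KKS}$ along all of $l^{\sim}$.

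Your alternative via \cite{Hw23Partial} does not supply this as stated. Since $\dim\Hscr=a$ and $\dim\tilde X=a+1$, only finitely many members of the family through $\tilde z$ pass through a general $y\in\tilde X$. So that subfamily has no VMRT at $y$ to which a theorem could be applied. Also, Levi-isotropy at $\tilde z$ is a statement about $D_{\tilde z}$, not about $\ker\Theta_{KKS,\,y}$. What is actually needed is only the containment at $\tilde z$ together with the constancy just described.
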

\begin{proof}
    It suffices to show that the locus of $\Ocal_{\PP(E)}(1)$-lines on $\PP(E)$ through $\tilde{z}$ is integral, which follows from the following Lemma~\ref{lemma:Kebekus lemma}
\end{proof}

\begin{lemma}[{\cite[Proposition~4.1]{Ke01}}] \label{lemma:Kebekus lemma}
    Let $Y$ be a smooth quasi-projective variety, $L$ a line bundle on $Y$, and $\theta: T_{Y} \rightarrow L$ a surjective bundle morphism.
    Assume that $U$ is a nonempty open subset of $Y$ such that the restriction $\theta|_{U}: U \rightarrow L|_{U}$ is a contact form.
    Then for a very general point $y \in U$ and an irreducible component $\Kscr$ of $\RatCurves(Y,\,y)$ whose members are $L$-lines, the intersection $\locus(\Kscr) \cap U$ is an integral subvariety of $U$ with respect to the contact structure $\ker \theta |_{U}$.
\end{lemma}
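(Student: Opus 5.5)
We follow Kebekus's argument for contact manifolds, checking that it only uses the contact condition on an open set along which the curves move. Let $y \in U$ be very general and $\Kscr$ a component of $\RatCurves(Y,\,y)$ whose members are $L$-lines. Since $y$ is very general, Theorem~\ref{thm:covering family is free} shows that every member of $\Kscr$ is free. By Proposition~\ref{prop:contact lines are tangent to D}, each member is everywhere tangent to $\ker\theta$ on its smooth locus, since $\deg L = 1$ on it.

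Next we show that the members are unbendable of type $(1^{n-1},\,0^{n+1})$, where $\dim Y = 2n+1$. Take a general member $C$ with normalization $\nu$. Because $C$ passes through $y \in U$, the contact condition holds generically along $C$. The surjection $\nu^{*}\theta : \nu^{*}T_{Y} \to \Ocal(1)$, restricted to $\ker$, gives $\nu^{*}\ker\theta$ of degree $\deg(-K_{Y})\cdot C - 1$. Now $\ker\theta$ carries the Levi pairing $\bigwedge^{2}\ker\theta \to L$, which is nondegenerate over $U$. Hence the Levi pairing gives an injection $\nu^{*}\ker\theta \to (\nu^{*}\ker\theta)^{\vee}\otimes\Ocal(1)$ that is an isomorphism generically, and so an isomorphism of sheaves up to torsion. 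Combining this with freeness, which makes all splitting factors of $\nu^{*}\ker\theta$ have degree $\ge 0$ apart from $T_{\PP^1}$ considerations, we get that the factors are $0$ or $1$, paired. The argument of \cite[Lemma~3.5]{Ke01} goes through verbatim: the tangent direction $\Ocal(2)$ lies in $\nu^{*}\ker\theta$, and the pairing $T_{\PP^{1}}\otimes(\cdot)\to\Ocal(1)$ forces a factor of degree $-1$ paired with it. We expect this to need care only at points of $C$ outside $U$, where the pairing may degenerate. The remedy is that all inequalities are on degrees, so a generic isomorphism suffices to bound degrees from one side; freeness bounds them from the other.

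For the main step, let $\mu : \text{Univ} \to Y$ be the universal map and $x$ a general point of $\locus(\Kscr)\cap U$, lying on a general member $C$ through $y$. The tangent space to the locus at $x$ is spanned by the images of $H^{0}(\nu^{*}T_{Y}(-o))$, where $o = \nu^{-1}(y)$, evaluated at $\nu^{-1}(x)$. By the splitting type just computed, these are precisely the positive part $T_{C} \oplus \Ocal(1)^{n-1}$ of $\nu^{*}T_{Y}$, which is $P_{x} \subset \ker\theta_{x}$. We must show $P_{x}$ is isotropic for the Levi form at $x$. Following \cite[Proposition~4.1]{Ke01}, the subbundle $\mathcal{P} = \Ocal(2)\oplus\Ocal(1)^{n-1}$ satisfies $\mathcal{P}\otimes\Ocal(-1)\subset\mathcal{P}$-positive part. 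The restricted Levi pairing $\bigwedge^{2}\mathcal{P} \to \Ocal(1)$ is a map whose source has only factors of degree $\ge 2$. Such a map vanishes, since $\Ocal(2)\otimes\Ocal(1)$ and $\Ocal(1)\otimes\Ocal(1)$ have degree $>1$. Therefore $P_{x}$ is isotropic, so $T_{x}\locus(\Kscr)\subset P_{x}\subset \ker\theta_{x}$ at general smooth points of the locus in $U$. Tangency at general smooth points propagates to the whole smooth locus in $U$ by closedness of the condition.

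The main obstacle is the unbendability step: the Levi form is only defined over $U$, and it must still yield the splitting of $\nu^{*}\ker\theta$ along curves that leave $U$. We would handle this with the degree argument above. Alternatively, we could cite \cite[Theorem~1.4]{Hw23Partial} as in Lemma~\ref{lemma:str transform of lines are unbendable}, or, in our application, simply invoke Lemma~\ref{lemma:str transform of lines are unbendable} directly.
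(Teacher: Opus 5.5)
There is a genuine gap, and it sits where the content of the lemma lies. Your main step needs $(\nu^{*}T_{Y})^{\ge 1}\subset \nu^{*}\ker\theta$, since this subbundle contains the image of $\res_{p,f}$. You do not prove this inclusion; you read it off from the splitting type $(1^{n-1},\,0^{n+1})$, and that splitting type is false in this generality.

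The Levi tensor $\bigwedge^{2}\ker\theta\to L$ is defined on all of $Y$, but it is nondegenerate only on $U$. Its Pfaffian is a section of $\det(\ker\theta)^{\vee}\otimes L^{n}$ vanishing on an effective divisor $\Delta$ disjoint from $U$. Hence $K_{Y}^{\vee}\simeq L^{n+1}(-\Delta)$ and $\deg \nu^{*}T_{Y}=n+1-\Delta\cdot C$, which drops below $n+1$ as soon as $C$ meets $\Delta$.

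This does happen in the paper's application. If every $\Ocal_{\PP(E)}(1)$-line through $\tilde z$ had type $(1^{n-1},\,0^{n+1})$, every component of $F(\overline{Z},\,z)$ would be Legendrian. But Theorem~\ref{main thm:square zero} exhibits orbits with $F(\overline{Z},\,z)\neq\emptyset$ and $\Leg(\overline{Z},\,z)=\emptyset$.

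Your proposed remedies do not close this. A merely generically injective map $\nu^{*}\ker\theta\to(\nu^{*}\ker\theta)^{\vee}(1)$ gives inequalities on factors, not the exact pairing $a\leftrightarrow 1-a$ your count uses. Lemma~\ref{lemma:str transform of lines are unbendable} gives only unbendability of \emph{some} type, and only on $\PP(E)$. Even in the honest contact case, the inclusion needs a rank comparison with $(\nu^{*}\ker\theta)^{\ge1}$ that you do not carry out. The isotropy paragraph does not help either: integrality is precisely $T_{x}\locus(\Kscr)\subset\ker\theta_{x}$, which is still what is missing.

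The paper avoids splitting types entirely. For a deformation $f_{t}$ with $f_{t}(o)=y$, consider the section $\sigma$ of $f^{*}L\simeq\Ocal_{\PP^{1}}(1)$ induced by $\partial f_{t}/\partial t|_{t=0}$. By \cite[Proposition~3.1]{Ke01}, $\sigma$ vanishes to order $\ge 2$ at $o$. That result is local and uses the contact condition only near $y\in U$, so $\sigma=0$.

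If you want to keep a global argument, what you actually need is that $0\to\nu^{*}\ker\theta\to\nu^{*}T_{Y}\to\Ocal_{\PP^{1}}(1)\to0$ does not split. Then $\theta$ kills every $\Ocal_{\PP^{1}}(1)$-summand of $\nu^{*}T_{Y}$, so $(\nu^{*}T_{Y})^{\ge1}\subset\nu^{*}\ker\theta$, and $H^{0}(\nu^{*}T_{Y}(-o))$ comes from $(\nu^{*}T_{Y})^{\ge1}(-o)$.

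Non-splitting holds without any unbendability. Suppose the sequence split. Then $\nu^{*}\ker\theta$ would be a summand of the nef bundle $\nu^{*}T_{Y}$, hence nef. Because $d\nu\colon T_{\PP^{1}}\to\nu^{*}\ker\theta$ is nonzero, it would have a summand $\Ocal_{\PP^{1}}(k)$ with $k\ge2$. Pairing a generator of that summand against $\nu^{*}\ker\theta$ via the Levi tensor gives a map $\nu^{*}\ker\theta\to\Ocal_{\PP^{1}}(1-k)$. This map vanishes by nefness, contradicting nondegeneracy of the Levi tensor at $o$. That argument, not a splitting type, is the missing idea.
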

\begin{proof}
    This is essentially proven in the proof of \cite[Proposition~4.1]{Ke01}, and we only sketch the proof.
    First, since $y$ is very general in $U$ (and hence in $Y$), by Theorem~\ref{thm:covering family is free}, $\Kscr$ parametrizes free curves.
    Then we have an irreducible component $\Hscr$ of $\Hom_{bir}^{nor}(\PP^{1},\, Y,\,o\mapsto y)$, the normalized Hom-scheme of morphisms sending a chosen point $o \in \PP^{1}$ to $y$, equipped with the following commutative diagram:
    \begin{center}
        \begin{tikzcd}
            \PP^{1} \times \Hscr \arrow[rr, bend left=30, "F"] \arrow[r] \arrow[d, "pr_{2}"] & \text{Univ} \arrow[d, "\rho"] \arrow[r, "\mu"] & Y \\
            \Hscr \arrow[r] & \Kscr &
        \end{tikzcd}
    \end{center}
    where $F$ is the evalution morphism and $\Kscr \xleftarrow{\rho}\text{Univ}\xrightarrow{\mu} Y$ is the family of rational curves parametrized by $\Kscr$.
    See \cite[Theorem~2.16, Ch.~II]{Kollar96} for details.
    Then $\locus(\Kscr) = \overline{\im \mu} = \overline{\im F}$.
    By generic smoothness, $F$ is a smooth morphism at a general point $(p,\,[f]) \in \PP^{1} \times \Hscr$, and so it is enough to show that the image of the differential $d_{(p,\,[f])}F$ is contained in $\ker \theta$.
    By \cite[Proposition~3.4, Ch.~II]{Kollar96}, we have
    \[
        d_{(p,\,[f])}F = d_{p}f + \res_{p,\,f}
    \]
    where $d_{p}f$ is the differential of $f : \PP^{1} \rightarrow Y$ and $\res_{p,\,f}$ is the restriction map
    \[
        \res_{p,\,f}: H^{0}(\PP^{1},\, f^{*}T_{Y}(-o)) \rightarrow f^{*}T_{Y} \otimes \CC(p) = T_{Y,\, f(p)}.
    \]
    We know that the image of $d_{p}f$ is contained in $\ker \theta$ by Proposition~\ref{prop:contact lines are tangent to D}, and hence it is sufficient to show that the image of $\res_{p,\,f}$ is contained in $\ker \theta$.
    This means that given a deformation $f_{t}$ of the morphism $f$ with $f_{t}(o) = y$, the induced section $\left.\pdiff{f_{t}}{t}\right|_{t=0}$ of $f^{*}T_{Y}$ is indeed a section of $f^{*} \ker \theta$.
    That is, for the global section $\sigma$ of $f^{*}L$ given by $\left.\pdiff{f_{t}}{t}\right|_{t=0}$, we claim that $\sigma = 0$.
    Now the key proposition \cite[Proposition~3.1]{Ke01}, which is valid for deformation of integral disks in any contact manifold (not necessarily algebraic), implies that $\sigma$ vanishes at $o$ with vanishing order $\ge 2$ (here we recall that the neighborhood $U$ of $f(o) = y$ is a contact manifold).
    Since $f$ parametrizes an $L$-line, $\sigma$ is a section of $f^{*}L \simeq \Ocal_{\PP^{1}}(1)$, and thus we conclude that $\sigma = 0$.
\end{proof}

Since $\locus(F(\overline{Z},\,z))$ is the cone over $F(\overline{Z},\,z)$ with vertex $z$, Theorem~\ref{thm:locus of lines is integral} implies Corollary~\ref{coro:dim VMRT is at most n-1}.
It also provides another description of $\Leg(\overline{Z},\,z)$, as the set of irreducible components $X$ of $F(\overline{Z},\,z)$ such that $\locus(X) \cap Z$ is Legendrian.

\section{Richardson orbit closures with Springer resolutions}
\label{section:Springer resoln}

In this section, we focus on the case of \emph{Richardson orbit closures}, that is, the moment images of the projectivized cotangent bundles over rational homogeneous spaces (cf. Definition~\ref{defn:Richardson}).
To this end, we study contact lines on projectivized cotangent bundle over any smooth quasi-projective varieties in Section~\ref{ssection:projectivized cotangent bundles}.
Its application to Richardson orbit closures is discussed in Section~\ref{ssection:Richardson}.
In Section~\ref{ssection:Stratified Mukai flops}, we discuss the stratified Mukai flops and prove Theorem~\ref{main thm:Mukai flop} in the case of type $E_{6,\,I}$ (cf. Corollary~\ref{coro: Mukai flop E6I}).

\subsection{Contact lines on projectivized cotangent bundles}\label{ssection:projectivized cotangent bundles}

Let $M$ be a smooth quasi-projective variety.
We denote by $\pi : \PP(T^{\vee}_{M}) \rightarrow M$ the natural projection.
Recall that $\PP(T^{\vee}_{M})$ is equipped with the natural contact structure $D_{\can}$ (cf. Example~\ref{ex:PT*M}).
Write $\dim M = n+1$, so that $\dim \PP(T^{\vee}_{M}) = 2n+1$.

The following notation is frequently used in this section:

\begin{definition}
    Let $V$ be a vector bundle on $\PP^{1}$, and choose an isomorphism
    \[
        \varphi:V \xrightarrow{\simeq} \bigoplus_{k=1}^{\rank V}\Ocal_{\PP^{1}}(v_{k})
    \]
    for some integers $v_{k}$.
    For an integer $v_{0}$, we define $V^{\ge v_{0}}$ to be the subbundle of $V$ given by
    \[
        V^{\ge v_{0}} = \varphi^{-1}\left(\bigoplus_{k: v_{k} \ge v_{0}} \Ocal_{\PP^{1}}(v_{k}) \right).
    \]
    This definition is independent of the choice of the isomorphism $\varphi$.
\end{definition}

Now we discuss liftability of rational curves on $M$ to contact lines on $\PP(T^{\vee}_{M})$.

\begin{lemma}\label{lemma:contact lines are from split surj}
    Let $f :\PP^{1} \rightarrow M$ be a morphism.
    Let $q : f^{*}T_{M} \rightarrow \Ocal_{\PP^{1}}(1)$ be a surjection, and let $\tilde{f}:\PP^{1}\rightarrow \PP(T^{\vee}_{M})$ and $s : \PP^{1} \rightarrow \PP(f^{*}T^{\vee}_{M})$ be the morphisms defined by $q$.
    Denote by $\pi':\PP(f^{*}T^{\vee}_{M}) \rightarrow \PP^{1}$ the natural projection, and by $f' : \PP(f^{*}T^{\vee}_{M}) \rightarrow \PP(T^{\vee}_{M})$ the morphism induced by $f$:
    \begin{center}
        \begin{tikzcd}
            \PP(f^{*}T^{\vee}_{M}) \arrow[r, "f'"] \arrow[d,"\pi'"] & \PP(T^{\vee}_{M}) \arrow[d, "\pi"] \\
            \PP^{1} \arrow[u, bend left=30, "s"] \arrow[r, "f"] \arrow[ru, "\tilde{f}"] & M.
        \end{tikzcd}
    \end{center}
    Then the following hold:
    \begin{enumerate}
        \item We have an exact sequence
        \begin{equation} \label{eqn:section s vs free f}
        0 \rightarrow N^{\vee}_{s}(1) \rightarrow f^{*}T_{M} \xrightarrow{q} \Ocal_{\PP^{1}}(1) \rightarrow 0
        \end{equation}
        where $N_{s} \coloneqq \coker(ds)$ is the normal bundle of the section $s(\PP^{1})$.

        \item The morphism $\tilde{f}$ is birational onto its image.
        If furthermore $\tilde{f}$ is unbendable and $f$ is immersive, then the sequence (\ref{eqn:section s vs free f}) is split.
    \end{enumerate}
\end{lemma}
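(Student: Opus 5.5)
The plan is to treat the two parts separately. Part~(1) is linear algebra on projective bundles. Part~(2) combines a degree count with a cohomological splitting criterion, and the splitting criterion is computed through the Levi pairing.

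\emph{Part (1).} With the geometric convention, a point of $\PP(f^{*}T^{\vee}_{M})$ over $t\in\PP^{1}$ is a line in $T^{\vee}_{M,\,f(t)}$. Dualizing $q$ gives a line subbundle $\Ocal_{\PP^{1}}(-1)\hookrightarrow f^{*}T^{\vee}_{M}$, and this defines $s$. For the section of $\PP(W)$ cut out by a line subbundle $L\subset W$, the normal bundle is $\mathcal{H}om(L,\,W/L)=(W/L)\otimes L^{\vee}$. Here $W=f^{*}T^{\vee}_{M}$ and $L=\Ocal(-1)$, so $W/L=(\ker q)^{\vee}$. Hence $N_{s}\simeq(\ker q)^{\vee}(1)$, i.e.\ $\ker q\simeq N_{s}^{\vee}(1)$, which is exactly the sequence (\ref{eqn:section s vs free f}). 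The only care needed is that the normal bundle of a section is the restricted relative tangent bundle, computed by the relative Euler sequence.

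\emph{Birationality.} Since $f^{*}T_{M}$ surjects onto $\Ocal(1)$, $f$ is nonconstant. By construction $\tilde f^{*}\Ocal_{\PP(T^{\vee}_{M})}(1)\simeq\Ocal(1)$. Let $m$ be the degree of $\tilde f$ onto its image $C$, fix an ample $A$ on $M$, and put $d=\deg f^{*}A>0$. Then $m$ divides $\deg\tilde f^{*}\pi^{*}A=d$. For $k\gg0$ the bundle $\Ocal(1)\otimes\pi^{*}A^{k}$ is ample on a neighbourhood of $C$, so $m$ also divides $1+kd$. Since $\gcd(d,\,1+kd)=1$, we get $m=1$.

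\emph{Splitting.} Put $K=\ker q$. The sequence splits if and only if $H^{1}(\PP^{1},\,K(-1))=0$. I would compute $K$ through $\tilde f^{*}D_{\can}$. From the diagram in Example~\ref{ex:PT*M} and the identity $\tilde f^{*}\theta_{\can}=q\circ d\pi$, the map $d\pi$ sends $\tilde f^{*}D_{\can}$ onto $K$. Its kernel is $V\coloneqq\tilde f^{*}T_{\PP(T^{\vee}_{M})/M}$, the restricted vertical tangent bundle, of rank $n$. The vertical distribution is Legendrian for $D_{\can}$, so $V$ is a Lagrangian subbundle of $\tilde f^{*}D_{\can}$ for the Levi pairing $\tilde f^{*}D_{\can}\wedge\tilde f^{*}D_{\can}\to\Ocal(1)$. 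Nondegeneracy of that pairing therefore gives
\[
K\simeq\tilde f^{*}D_{\can}/V\simeq V^{\vee}(1).
\]
Consequently $H^{1}(K(-1))=H^{1}(V^{\vee})\simeq H^{0}(V(-2))^{\vee}$ by Serre duality on $\PP^{1}$.

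Now use unbendability: $\tilde f^{*}T_{\PP(T^{\vee}_{M})}\simeq\Ocal(2)\oplus\Ocal(1)^{n-1}\oplus\Ocal^{n+1}$. Hence $H^{0}(V(-2))\subset H^{0}(\tilde f^{*}T(-2))$, and the latter is one-dimensional, spanned by the image of $T_{\PP^{1}}$ under $d\tilde f$. Since $f=\pi\circ\tilde f$ is immersive, $d\pi\circ d\tilde f$ is nonzero, so this section is not vertical. Therefore $H^{0}(V(-2))=0$ and the sequence splits.

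The step I expect to need the most care is the identification $K\simeq V^{\vee}(1)$. One must check that the vertical subbundle is isotropic of rank $n$, so Lagrangian, for the Levi form of $D_{\can}$. One must also check that the Levi form restricted to $\tilde f^{*}D_{\can}$ takes values in $\tilde f^{*}\Ocal(1)=\Ocal(1)$ with the twist exactly as written. I would verify both in local Darboux coordinates $(x_{i},\,[p_{i}])$ on $\PP(T^{\vee}_{M})$.
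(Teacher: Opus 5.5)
Your proof is correct. Part~(1) is the paper's argument in slightly different language: the paper pulls back the relative Euler and tangent sequences of $\pi'$ along $s$ and uses $N_{s}\simeq s^{*}T_{\pi'}$. The birationality claim rests on the same degree-divisibility idea as the paper's.

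The splitting step takes a genuinely different route. The paper uses freeness of $\tilde f$ to write $f^{*}T_{M}\simeq(f^{*}T_{M})^{\ge2}\oplus\Ocal_{\PP^{1}}(1)^{\oplus a}\oplus\Ocal_{\PP^{1}}^{\oplus b}$. A splitting-type case analysis then shows that a non-split $q$ forces $\ker q\simeq(f^{*}T_{M})^{\ge1}\oplus\Ocal_{\PP^{1}}^{\oplus(b-2)}\oplus\Ocal_{\PP^{1}}(-1)$, hence an $\Ocal_{\PP^{1}}(2)$-summand in $N_{s}$. This contradicts unbendability via the injection $N_{s}\hookrightarrow N_{\tilde f}$ induced by the immersion $f'$.

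You instead kill the obstruction group $\mathrm{Ext}^{1}(\Ocal_{\PP^{1}}(1),\,K)=H^{1}(K(-1))\simeq H^{0}(V(-2))^{\vee}$ directly. This works because $V$ is a subbundle of $\tilde f^{*}T_{\PP(T^{\vee}_{M})}$ whose unique $\Ocal(2)$-direction, $d\tilde f(T_{\PP^{1}})$, is not vertical. Your route is shorter and needs neither global generation of $f^{*}T_{M}$ nor the comparison of normal bundles. It in fact only uses $df\neq0$, i.e.\ that $f$ is nonconstant rather than immersive. The paper's route, in return, tells you exactly what $\ker q$ looks like when $q$ fails to split, in the spirit of the $V^{\ge1}$, $V^{\ge2}$ analysis of Proposition~\ref{prop:Hsp}.

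The step you flagged as delicate costs nothing. The isomorphism $K\simeq V^{\vee}(1)$ is precisely your part~(1), because $N_{s}\simeq s^{*}T_{\pi'}\simeq\tilde f^{*}T_{\pi}=V$ by base change. Equivalently, it is the bottom row of the diagram in Example~\ref{ex:PT*M}, whose kernel is $\Omega_{\PP(T^{\vee}_{M})/M}(1)$. So no Levi form or Darboux coordinates are needed.

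There are two small slips.

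First, your assertion that $f$ is nonconstant because $f^{*}T_{M}$ surjects onto $\Ocal_{\PP^{1}}(1)$ is false. For constant $f$ one has $f^{*}T_{M}\simeq\Ocal_{\PP^{1}}^{\oplus(n+1)}$, which surjects onto $\Ocal_{\PP^{1}}(1)$ once $n\ge1$. The resulting $\tilde f$ are lines in a fibre of $\pi$, which are exactly the contact lines contracted by $\pi$, e.g.\ lines in $\PP(\ufr_{P})$ later in the paper. Your gcd argument survives with $d=0$. It is cleaner, though, to argue as the paper does: $\tilde f$ is nonconstant since $\deg\tilde f^{*}\Ocal(1)=1$, and the degree of $\tilde f$ onto its image divides $\deg\tilde f^{*}\mathcal{L}$ for every line bundle $\mathcal{L}$. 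Ampleness plays no role, and taking $\mathcal{L}=\Ocal(1)$ shows the degree divides $1$.

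Second, $H^{1}(K(-1))=0$ is sufficient but not necessary for the sequence to split. You only use that direction, so nothing is lost.
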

\begin{proof}
\begin{enumerate}
    \item Recall the relative Euler sequence and tangent sequence associated to $\pi':\PP(f^{*}T^{\vee}_{M})\rightarrow \PP^{1}$:
    \begin{equation*}
        0 \rightarrow \Omega_{\pi'}(1) \rightarrow (\pi')^{*}(f^{*}T_{M}) \rightarrow \Ocal_{\PP(f^{*}T^{\vee}_{M})}(1) \rightarrow 0,
    \end{equation*}
    \begin{equation*}
        0 \rightarrow T_{\pi'} \rightarrow T_{\PP(f^{*}T^{\vee}_{M})} \xrightarrow{d \pi'} (\pi')^{*} T_{\PP^{1}} \rightarrow 0
    \end{equation*}
    where $\Omega_{\pi'}$ (resp. $T_{\pi'}$) is the relative differential bundle (resp. vertical tangent bundle).
    Taking the pullback via $s$, we obtain
    \begin{equation} \label{eqn:rel Euler for pi'}
        0 \rightarrow (s^{*}\Omega_{\pi'})(1) \rightarrow f^{*}T_{M} \xrightarrow{q} \Ocal_{\PP^{1}}(1) \rightarrow 0,
    \end{equation}
    \begin{equation} \label{eqn:rel tang for pi'}
        0 \rightarrow s^{*}T_{\pi'} \rightarrow s^{*}T_{\PP(f^{*}T^{\vee}_{M})} \xrightarrow{s^{*}(d \pi')} T_{\PP^{1}} \rightarrow 0.
    \end{equation}
    Notice that the sequence (\ref{eqn:rel tang for pi'}) is split, since $s^{*}(d\pi')$ has an inverse $ds: T_{\PP^{1}} \rightarrow s^{*}T_{\PP(f^{*}T^{\vee}_{M})}$.
    Therefore the normal bundle $N_{s}$ is isomorphic to $s^{*}T_{\pi'}$, and hence we obtain the sequence (\ref{eqn:section s vs free f}) from the sequence (\ref{eqn:rel Euler for pi'}).

    \item First, we show that $\tilde{f}$ is birational.
    Observe that it is not constant since $\tilde{f}^{*}\Ocal_{\PP(T^{\vee}_{M})}(1) \simeq \Ocal_{\PP^{1}}(1)$.
    Let $\nu : \PP^{1} \rightarrow \tilde{f}(\PP^{1})$ be the normalization map.
    Then we have a factorization of $\tilde{f}$
    \[
        \tilde{f} : \PP^{1} \xrightarrow{g} \PP^{1} \xrightarrow{\nu} \tilde{f}(\PP^{1}).
    \]
    Here, $g$ is a finite morphism, say of degree $d \ge 1$.
    Again since
    \[
        \Ocal_{\PP^{1}}(1) \simeq \tilde{f}^{*} \Ocal_{\PP(T^{\vee}_{M})}(1) \simeq g^{*} (\nu^{*}\Ocal_{\PP(T^{\vee}_{M})}(1)),
    \]
    if we write $\nu^{*}\Ocal_{\PP(T^{\vee}_{M})}(1) \simeq \Ocal_{\PP^{1}}(k)$ for some $k \in \ZZ$, then we have $\Ocal_{\PP^{1}}(1) \simeq \Ocal_{\PP^{1}}(dk)$, which is possible only when $d=k=1$.
    Therefore $g$ is an isomorphism, and $\tilde{f}$ is birational onto its image.

    Now assume that $\tilde{f}$ is unbendable and $f$ is immersive.
    It remains to show that $q$ is a split surjection.
    Observe that since $\tilde{f}$ is free and $\tilde{f}^{*}(d\pi) : \tilde{f}^{*}T_{\PP(T^{\vee}_{M})} \rightarrow f^{*}T_{M}$ is surjective, $f^{*}T_{M}$ is globally generated.
    Thus we can write
    \[
        f^{*}T_{M} \simeq (f^{*}T_{M})^{\ge 2} \oplus \Ocal_{\PP^{1}}(1)^{\oplus a} \oplus \Ocal_{\PP^{1}}^{\oplus b}, \quad a,\,b \in \ZZ_{\ge 0}, \quad a+b = (n+1) - \rank (f^{*}T_{M})^{\ge 2}.
    \]
    Then $\ker(q)$ contains $(f^{*}T_{M})^{\ge 2}$.
    If $q$ is not split, then $\ker(q)$ further contains $\Ocal_{\PP^{1}}(1)^{\oplus a}$, since any nonzero endomorphism of $\Ocal_{\PP^{1}}(1)$ is an isomorphism.
    So $q$ factors through a surjection $q':\Ocal_{\PP^{1}}^{\oplus b} \rightarrow \Ocal_{\PP^{1}}(1)$ (in particular, $b \ge 2$), and considering the degrees, we have $\ker(q') = \Ocal_{\PP^{1}}^{\oplus(b-2)} \oplus \Ocal_{\PP^{1}}(-1)$.
    In summary, if $q$ is not split, then we have
    \[
        \ker(q) \simeq (f^{*}T_{M})^{\ge 1} \oplus \Ocal_{\PP^{1}}^{\oplus(b-2)} \oplus \Ocal_{\PP^{1}}(-1).
    \]
    and so
    \begin{equation} \label{eqn:N_s when q not split}
        N_{s} \simeq \Ocal_{\PP^{1}}(2) \oplus \Ocal_{\PP^{1}}(1)^{\oplus(b-2)} \oplus ((f^{*}T_{M})^{\ge 1}(-1))^{\vee}
    \end{equation}
    by the sequence (\ref{eqn:section s vs free f}).
    On the other hand, since $f$ is immersive, so is $f'$.
    Then the injection
    \[
        df' : T_{\PP(f^{*}T^{\vee}_{M})} \rightarrow (f')^{*} T_{\PP(T^{\vee}_{M})},
    \]
    induces an injection
    \[
        s^{*}(df') : N_{s} \rightarrow N_{\tilde{f}}
    \]
    where $N_{\tilde{f}} \coloneqq \coker(d \tilde{f})$.
    This is impossible since $N_{\tilde{f}}$ is split into the sum of $\Ocal_{\PP^{1}}(1)$'s and $\Ocal_{\PP^{1}}$'s but $N_{s}$ contains an $\Ocal_{\PP^{1}}(2)$-factor by (\ref{eqn:N_s when q not split}).
    Therefore $q$ is split.
\end{enumerate}
\end{proof}

\begin{definition}
    Let $V$ be a vector bundle on $\PP^{1}$.
    Choose a point $o \in \PP^{1}$ and a hyperplane $H \subset V_{o}$.
    Define
    \[
        \Hom_{\surj}(V,\,\Ocal_{\PP^{1}}(1)) \coloneqq \{q \in \Hom(V,\,\Ocal_{\PP^{1}}(1)) : \text{$q$ is surjective}\},
    \]
    \[
        \Hom_{\split}(V,\,\Ocal_{\PP^{1}}(1)) \coloneqq \{q \in \Hom_{\surj}(V,\,\Ocal_{\PP^{1}}(1)) : \text{$q$ is a split surjection}\},
    \]
    \[
        \Hom_{\surj}(V,\,\Ocal_{\PP^{1}}(1);\,o,\, H) \coloneqq \{q \in \Hom_{\surj}(V,\,\Ocal_{\PP^{1}}(1)) : \ker q_{o} = H\},
    \]
    and
    \[
        \Hom_{\split}(V,\,\Ocal_{\PP^{1}}(1);\,o,\, H) \coloneqq \Hom_{\split}(V,\,\Ocal_{\PP^{1}}(1)) \cap \Hom_{\surj}(V,\,\Ocal_{\PP^{1}}(1);\,o,\, H).
    \]
\end{definition}

\begin{remark}\label{rmk:Hsp}
\begin{enumerate}
    \item If $\Hom_{\surj}(V,\,\Ocal_{\PP^{1}}(1);\,o,\, H)$ is nonempty, then $V^{\ge 2}_{o} \subset H$. If $\Hom_{\split}(V,\,\Ocal_{\PP^{1}}(1);\,o,\, H)$ is nonempty, then $V^{\ge1}_{o} \not\subset H$.
    \item $\Hom_{\surj}(V,\,\Ocal_{\PP^{1}}(1))$, $\Hom_{\split}(V,\,\Ocal_{\PP^{1}}(1))$, $\Hom_{\surj}(V,\,\Ocal_{\PP^{1}}(1);\,o,\, H)$ and $\Hom_{\split}(V,\,\Ocal_{\PP^{1}}(1);\,o,\, H)$ are homothety-invariant subsets of the vector space $\Hom(V,\,\Ocal_{\PP^{1}}(1))$.
\end{enumerate}
    
\end{remark}

\begin{proposition} \label{prop:Hsp}
    Let $V$ be a vector bundle on $\PP^{1}$.
    Choose a point $o \in \PP^{1}$ and a hyperplane $H \subset V_{o}$.
    Assume that $V^{\ge 2}_{o} \subset H$ but $V^{\ge 1}_{o} \not\subset H$ (in particular, the splitting type of $V$ has an $\Ocal_{\PP^{1}}(1)$-factor).
    Then $\Hom_{\split}(V,\,\Ocal_{\PP^{1}}(1);\,o,\, H) = W \setminus W'$ where $W$ is a vector subspace of dimension $\deg((V/V^{\ge2})^{\vee}(1))+1$ in $\Hom(V,\,\Ocal_{\PP^{1}}(1))$, and $W'$ is a vector subspace of $W$ of codimension 1.
    In particular,
    \[
        \PP(\Hom_{\split}(V,\,\Ocal_{\PP^{1}}(1);\,o,\, H)) \simeq \CC^{\deg((V/V^{\ge2})^{\vee}(1))}.
    \]
\end{proposition}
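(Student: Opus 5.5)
Fix a splitting $V \simeq V^{\ge 2} \oplus \Ocal_{\PP^{1}}(1)^{\oplus a} \oplus V^{\le 0}$, where $V^{\le 0}$ is a sum of line bundles of degree $\le 0$ and $a \ge 1$. Then $\Hom(V,\,\Ocal_{\PP^{1}}(1))$ decomposes as
\[
\Hom(V^{\ge 2},\,\Ocal_{\PP^{1}}(1)) \oplus \Hom(\Ocal_{\PP^{1}}(1)^{\oplus a},\,\Ocal_{\PP^{1}}(1)) \oplus \Hom(V^{\le 0},\,\Ocal_{\PP^{1}}(1)).
\]
The first summand is zero. The second is $\CC^{a}$, consisting of constant maps. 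The third is $H^{0}((V^{\le 0})^{\vee}(1))$, which has dimension $\deg((V^{\le 0})^{\vee}(1)) + \rank V^{\le 0}$ because every summand has degree $\ge 1$. Since $V/V^{\ge 2} \simeq \Ocal(1)^{\oplus a} \oplus V^{\le 0}$ and $\deg (\Ocal(1)^{\oplus a})^{\vee}(1) = 0$, we get $\deg((V/V^{\ge 2})^{\vee}(1)) = \deg((V^{\le 0})^{\vee}(1))$. Thus $\Hom(V,\,\Ocal(1)) = \Hom(V/V^{\ge 2},\,\Ocal(1))$ has dimension $a + \deg((V/V^{\ge2})^{\vee}(1)) + \rank V^{\le 0}$.

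Next I impose the condition at $o$. A map $q$ has $\ker q_{o} \supseteq H$ exactly when $q_{o}$ factors through the line $V_{o}/H$. Since $V^{\ge 2}_{o} \subset H$, this is a linear condition on $q$ of codimension $\dim (V/V^{\ge 2})_{o} - 1 = a + \rank V^{\le 0} - 1$. The evaluation map $q \mapsto q_{o}$, from $\Hom(V/V^{\ge 2},\,\Ocal(1))$ to $\Hom((V/V^{\ge 2})_{o},\,\CC)$, is surjective because $(V/V^{\ge2})^{\vee}(1)$ is globally generated. So the set
\[
W \coloneqq \{q : q_{o}(H) = 0\}
\]
is a linear subspace of dimension $\deg((V/V^{\ge2})^{\vee}(1)) + 1$. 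Let $W' \coloneqq \{q \in W : q_{o} = 0\}$; it has codimension $1$ in $W$. Note that $\Hom_{\surj}(V,\,\Ocal(1);\,o,\,H) \subset W \setminus W'$.

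The key step is to show that every $q \in W \setminus W'$ is a split surjection. Since $V^{\ge 1}_{o} \not\subset H$ and $V^{\ge 2}_{o} \subset H$, the image of $\Ocal(1)^{\oplus a}_{o}$ in $V_{o}/H$ is nonzero. Hence $q_{o}$ is nonzero on $\Ocal(1)^{\oplus a}_{o}$. The restriction $q|_{\Ocal(1)^{\oplus a}}$ is a constant map $\CC^{a} \to \CC$, and it is nonzero at $o$, so it is nonzero everywhere. Choosing a summand $\Ocal(1)$ on which it is nonzero gives an isomorphism $\Ocal(1) \to \Ocal(1)$, which provides a section of $q$. Therefore $q$ is surjective everywhere, not only at $o$, and it splits. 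Conversely, a split surjection in $W$ cannot lie in $W'$, since it is surjective at $o$. This gives $\Hom_{\split}(V,\,\Ocal(1);\,o,\,H) = W \setminus W'$.

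Finally, projectivizing gives $\PP(W \setminus W') = \PP(W) \setminus \PP(W')$. This is a projective space minus a hyperplane, hence isomorphic to $\CC^{\dim W - 1} = \CC^{\deg((V/V^{\ge2})^{\vee}(1))}$. The only delicate point is the surjectivity of evaluation at $o$, which holds because every summand of $(V/V^{\ge 2})^{\vee}(1)$ has nonnegative degree; the rest is bookkeeping with the splitting.
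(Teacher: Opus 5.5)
Your proof is correct and follows essentially the same route as the paper. Your $W$ and $W'$ are the paper's $\Phi^{-1}(l_{H})$ and $\Phi^{-1}(0)$ for the evaluation map $\Phi$ at $o$, which is surjective by global generation of $(V/V^{\ge 2})^{\vee}(1)$. Your observation that $q$ splits once it is nonzero on an $\Ocal_{\PP^{1}}(1)$-summand is the paper's criterion that $q$ is split if and only if $q|_{V^{\ge 1}} \neq 0$, phrased via a fixed splitting rather than the split exact sequence $0 \to \Hom(V/V^{\ge 1},\,\Ocal_{\PP^{1}}(1)) \to \Hom(V,\,\Ocal_{\PP^{1}}(1)) \to \Hom(V^{\ge 1},\,\Ocal_{\PP^{1}}(1)) \to 0$.
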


\begin{proof}
    Consider the following split short exact sequence:
    \[
        0 \rightarrow \Hom(V/V^{\ge1},\, \Ocal_{\PP^{1}}(1)) \rightarrow \Hom(V,\, \Ocal_{\PP^{1}}(1)) \rightarrow \Hom(V^{\ge1},\, \Ocal_{\PP^{1}}(1)) \rightarrow 0.
    \]
    Since a morphism $q:V \rightarrow \Ocal_{\PP^{1}}(1)$ is a split surjection if and only if $q|_{V^{\ge 1}}\not=0$, we have
    \begin{equation} \label{eqn:Hsp}
        \Hom_{\split}(V,\,\Ocal_{\PP^{1}}(1)) = \Hom(V,\,\Ocal_{\PP^{1}}(1)) \setminus \Hom(V/V^{\ge1},\,\Ocal_{\PP^{1}}(1)).
    \end{equation}
    On the other hand, consider the morphism
    \[
        \Phi:\Hom(V,\,\Ocal_{\PP^{1}}(1)) \rightarrow \Hom(V_{o}/V^{\ge 2}_{o},\,(\Ocal_{\PP^{1}}(1))_{o}), \quad q \mapsto (v \mod V^{\ge 2}_{o} \mapsto q(v)).
    \]
    This is well defined since any morphism $V \rightarrow \Ocal_{\PP^{1}}(1)$ annihilates $V^{\ge 2}$, i.e., $\Hom(V,\,\Ocal_{\PP^{1}}(1)) \simeq \Hom(V/V^{\ge2},\,\Ocal_{\PP^{1}}(1))$.
    Indeed, $\Phi$ can be written as follows:
    \begin{center}
        \begin{tikzcd}
            \Hom(V,\,\Ocal_{\PP^{1}}(1)) \arrow[rr, "\Phi", bend left=20] \arrow[r,"\simeq"] & \Hom(V/V^{\ge2},\, \Ocal_{\PP^{1}}(1)) \arrow[r] \arrow[d, "\simeq"] & \Hom(V_{o}/V^{\ge 2}_{o},\,(\Ocal_{\PP^{1}}(1))_{o}) \arrow[d, "\simeq"] \\
            &H^{0}(\PP^{1},\, (V/V^{\ge2})^{\vee}(1)) \arrow[r, "ev_{o}"]& ((V/V^{\ge2})^{\vee}(1))_{o}
        \end{tikzcd}
    \end{center}
    where $ev_{o}$ is the evaluation map.
    Since $(V/V^{\ge2})^{\vee}(1)$ is a globally generated vector bundle, $ev_{o}$ is surjective, and so is $\Phi$.
    Since $H$ contains $V^{\ge2}_{o}$, $H/V^{\ge2}_{o}$ is a hyperplane in $V_{o}/V^{\ge2}_{o}$, and defines a one-dimensional subspace
    \[
        l_{H} \coloneqq \Ann(H/V^{\ge2}_{o}) \subset \Hom(V_{o}/V^{\ge 2}_{o},\,(\Ocal_{\PP^{1}}(1))_{o}).
    \]
    Then $\Phi^{-1}(l_{H})$ is a linear subspace of dimension
    \begin{align*}
        \dim \Phi^{-1}(l_{H}) &=\dim \Hom(V,\,\Ocal_{\PP^{1}}(1)) - \codim(l_{H},\, \Hom(V_{o}/V^{\ge 2}_{o},\,(\Ocal_{\PP^{1}}(1))_{o})) \\
        &= h^{0}(\PP^{1},\,V^{\vee}(1)) - \rank(V/V^{\ge 2})+1 \\
        &= h^{0}(\PP^{1},\,(V/V^{\ge2})^{\vee}(1)) - \rank((V/V^{\ge 2})^{\vee}(1))+1 \\
        &= \deg((V/V^{\ge2})^{\vee}(1))+1 \quad (\because \text{$(V/V^{\ge2})^{\vee}(1)$ is globally generated}).
    \end{align*}
    By the construction, we have $\Hom_{\split}(V,\,\Ocal_{\PP^{1}}(1);\,o,\,H) = \Hom_{\split}(V,\,\Ocal_{\PP^{1}}(1)) \cap \Phi^{-1}(l_{H} \setminus \{0\})$.
    We claim that $\Phi^{-1}(l_{H} \setminus \{0\}) \subset \Hom_{\split}(V,\,\Ocal_{\PP^{1}}(1))$.
    By (\ref{eqn:Hsp}), it means that for any morphism $q : V \rightarrow \Ocal_{\PP^{1}}(1)$ such that $\ker q_{o} = H$, we have $q \not\in \Hom(V/V^{\ge1},\,\Ocal_{\PP^{1}}(1))$.
    In fact, if $q \in \Hom(V/V^{\ge 1},\,\Ocal_{\PP^{1}}(1))$, then $q$ annihilates $V^{\ge1}$, and hence $V^{\ge1}_{o} \subset H$, which is a contradiction.
    Now we conclude the proof by
    \[
        \Hom_{\split}(V,\,\Ocal_{\PP^{1}}(1);\,o,\,H) = \Phi^{-1}(l_{H}\setminus\{0\}) = \Phi^{-1}(l_{H}) \setminus \Phi^{-1}(0).
    \]
\end{proof}

\begin{theorem}\label{thm:lifting criterion}
    Let $z \in \PP(T^{\vee}_{M})$ be a point such that rational curves on $\PP(T^{\vee}_{M})$ through $z$ and rational curves on $M$ through $\pi(z)$ are free.
    Let $f : \PP^{1} \rightarrow M$ be an immersive morphism, and $o \in \PP^{1}$ a point such that $f(o) = \pi(z)$.
    Then the following are equivalent:
    \begin{enumerate}
        \item\label{item1:thm:lifting criterion} There exists a morphism $\tilde{f} : \PP^{1} \rightarrow \PP(T^{\vee}_{M})$ such that $\tilde{f}^{*} \Ocal_{\PP(T^{\vee}_{M})}(1) \simeq \Ocal_{\PP^{1}}(1)$, $\pi \circ \tilde{f} = f$ and $\tilde{f}(o) = z$;
        \item\label{item2:thm:lifting criterion} We have $(f^{*}T_{M})^{\ge 2}_{o} \subset \Ann(z)$ but $(f^{*}T_{M})^{\ge 1}_{o} \not\subset \Ann(z)$.
    \end{enumerate}
    If one of the conditions holds, then any $\tilde{f}$ as in the condition (\ref{item1:thm:lifting criterion}) is the normalization map for $\tilde{f}(\PP^{1})$.
    Moreover, if $b \in \ZZ_{\ge 0}$ is the number of $\Ocal_{\PP^{1}}$-factors in the splitting type of $f^{*}T_{M}$, then there exists a one-to-one correspondence between the affine space $\CC^{b}$ and the set of $\tilde{f}$ as in the condition (\ref{item1:thm:lifting criterion}).
\end{theorem}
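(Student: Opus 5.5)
The plan is to translate lifts $\tilde{f}$ into quotient maps and then apply Lemma~\ref{lemma:contact lines are from split surj} together with Proposition~\ref{prop:Hsp}. By the universal property of $\PP(T^{\vee}_{M})$, a morphism $\tilde{f}:\PP^{1}\rightarrow\PP(T^{\vee}_{M})$ with $\pi\circ\tilde{f}=f$ and $\tilde{f}^{*}\Ocal_{\PP(T^{\vee}_{M})}(1)\simeq\Ocal_{\PP^{1}}(1)$ is the same thing as a surjection $q:f^{*}T_{M}\rightarrow\Ocal_{\PP^{1}}(1)$, taken up to the scalar action of $\CC^{\times}=\operatorname{Aut}(\Ocal_{\PP^{1}}(1))$. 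Under this correspondence $\tilde{f}(o)=z$ holds exactly when $\ker q_{o}=\Ann(z)$, because the point of $\PP(T^{\vee}_{M,\,f(o)})$ defined by $q_{o}$ is the line of functionals vanishing on $\ker q_{o}$. Hence the set of $\tilde{f}$ in condition (\ref{item1:thm:lifting criterion}) is identified with $\PP(\Hom_{\surj}(f^{*}T_{M},\,\Ocal_{\PP^{1}}(1);\,o,\,\Ann(z)))$.

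First step. I will show that every such $q$ is split, so that $\Hom_{\surj}=\Hom_{\split}$ for $V=f^{*}T_{M}$, $H=\Ann(z)$. By Lemma~\ref{lemma:contact lines are from split surj}(2), each lift $\tilde{f}$ is birational onto its image; since $\tilde{f}$ is a morphism from $\PP^{1}$, this already shows it is the normalization map of $\tilde{f}(\PP^{1})$. The image is a rational curve through $z$, so by hypothesis it is free. It is a contact line, since the contact line bundle of $\PP(T^{\vee}_{M})$ is $\Ocal(1)$ (Example~\ref{ex:PT*M}). I then need unbendability. Theorem~\ref{thm:covering family is free} only gives this at very general points, so instead I would invoke \cite[Lemma~3.5]{Ke01} directly: a free contact line is unbendable. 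Lemma~\ref{lemma:contact lines are from split surj}(2), with $f$ immersive, then gives that $q$ splits.

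Second step, (\ref{item1:thm:lifting criterion})$\Rightarrow$(\ref{item2:thm:lifting criterion}). The lift produces an element of $\Hom_{\split}(V,\,\Ocal(1);\,o,\,H)$, which is therefore nonempty, and Remark~\ref{rmk:Hsp}(1) yields both inclusion conditions.

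Third step, (\ref{item2:thm:lifting criterion})$\Rightarrow$(\ref{item1:thm:lifting criterion}) and the count. Proposition~\ref{prop:Hsp} applies and gives $\PP(\Hom_{\split}(V,\,\Ocal(1);\,o,\,H))\simeq\CC^{d}$ with $d=\deg((V/V^{\ge2})^{\vee}(1))$. This is nonempty, so lifts exist. To compute $d$, note that $f$ is free because rational curves through $\pi(z)$ are free, so $V=V^{\ge2}\oplus\Ocal(1)^{a}\oplus\Ocal^{b}$. Then $(V/V^{\ge2})^{\vee}(1)=\Ocal^{a}\oplus\Ocal(1)^{b}$, which has degree $b$. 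Combined with the first step, which says every surjection with the given kernel is split, the set of lifts is exactly this $\CC^{b}$.

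The main obstacle is the unbendability in the first step. I have to be careful that Kebekus's argument needs only freeness of the contact line, not very generality of the point; the hypothesis on $z$ is designed precisely to supply that freeness.
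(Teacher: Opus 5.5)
Your proposal is correct and follows the paper's proof essentially step for step: identify lifts with $\PP(\Hom_{\surj}(f^{*}T_{M},\,\Ocal_{\PP^{1}}(1);\,o,\,\Ann(z)))$, use Lemma~\ref{lemma:contact lines are from split surj} together with unbendability to see that every such $q$ splits, deduce (1)$\Rightarrow$(2) from Remark~\ref{rmk:Hsp}, and obtain (2)$\Rightarrow$(1) and the $\CC^{b}$ count from Proposition~\ref{prop:Hsp} and the freeness of $f$. Your one deviation is harmless: the statement of Theorem~\ref{thm:covering family is free} concerns very general points, so you cite \cite[Lemma~3.5]{Ke01} (free contact lines are unbendable) directly, and this is exactly the fact that theorem's proof rests on, making your version a slightly more careful rendering of the paper's step.
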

\begin{proof}
    First, we show that $\tilde{f}$ as in the condition (\ref{item1:thm:lifting criterion}) arises from a split surjection.
    Let $q$ be an element of $\Hom_{\surj}(f^{*}T_{M},\,\Ocal_{\PP^{1}}(1);\,o,\, \Ann(z))$ such that the associated morphism $\PP^{1} \rightarrow \PP(T^{\vee}_{M})$ over $f$ is $\tilde{f}$.
    Indeed, by Lemma~\ref{lemma:contact lines are from split surj}, $\tilde{f}$ is the normalization of its image $\tilde{f}(\PP^{1})$, and so by Theorem~\ref{thm:covering family is free}, $\tilde{f}$ is unbendable.
    Again by Lemma~\ref{lemma:contact lines are from split surj}, $q$ is split.
    Now we see that (\ref{item1:thm:lifting criterion}) $\Rightarrow$ (\ref{item2:thm:lifting criterion}) (cf. Remark~\ref{rmk:Hsp}).

    It remains to show that when we have (\ref{item2:thm:lifting criterion}), then the condition (\ref{item1:thm:lifting criterion}) holds true and $\tilde{f}$ as in (\ref{item1:thm:lifting criterion}) are parametrized by $\CC^{b}$.
    Observe that these are parametrized by $\PP(\Hom_{\split}(f^{*}T_{M},\,\Ocal_{\PP^{1}}(1);\,o,\,\Ann(z)))$, which is the affine space of dimension $\deg((f^{*}T_{M}/(f^{*}T_{M})^{\ge 2})^{\vee}(1))$ by Proposition~\ref{prop:Hsp}.
    In particular, (\ref{item2:thm:lifting criterion}) $\Rightarrow$ (\ref{item1:thm:lifting criterion}).
    Since $f$ is free by the assumption, we have
    \[
        \deg((f^{*}T_{M}/(f^{*}T_{M})^{\ge 2})^{\vee}(1)) = \#\{\text{$\Ocal$-factors in $f^{*}T_{M}$}\} = b,
    \]
    which completes the proof.
\end{proof}

With the aid of deformation theory, Theorem~\ref{thm:lifting criterion} can be formulated in terms of a family of rational curves on $M$ and that of contact lines on $\PP(T^{\vee}_{M})$.
This works particularly well in the case where $f(\PP^{1})$ is a line on $M$, and the precise statement is as follows:

\begin{theorem}\label{thm:VMRT formulation}
    Let $z \in \PP(T^{\vee}_{M})$ be a point, and put $m \coloneqq \pi(z) \in M$.
    Assume that rational curves on $\PP(T^{\vee}_{M})$ through $z$ and rational curves on $M$ through $m$ are free.
    Let $\Kscr$ be a family of unbendable curves on $M$ of type $(1^{a},\,0^{n-a})$ for some $0 \le a \le n$.
    Choose an embedding $M \subset \PP^{N}$, and assume that $\Kscr$ is a family of lines in $\PP^{N}$.
    Then $\VMRT^{\Kscr}_{m}$ is a smooth subvariety of pure dimension $a$ in $\PP(T_{M,\,m})$.
    Moreover, the following are equivalent:
    \begin{enumerate}
        \item\label{item1:thm:VMRT formulation} There exists a point $\alpha \in \VMRT^{\Kscr}_{m} \cap \PP(\Ann(z))$ such that the projective tanget space of $\VMRT^{\Kscr}_{m}$ at $\alpha$ is not contained in $\PP(\Ann(z))$. (In particular, $a \ge 1$.)
        \item\label{item2:thm:VMRT formulation} There exists a contact line $C$ on $\PP(T^{\vee}_{M})$ through $z$ such that $\pi$ sends $C$ isomorphically onto a member of $\Kscr$.
    \end{enumerate}
    If one of the conditions holds, then there exists a unique family $\Hscr$ of contact lines on $\PP(T^{\vee}_{M})$ such that any $C$ as in the condition (\ref{item2:thm:VMRT formulation}) is a member of $\Hscr$.
    Furthermore, $\pi$ sends every member of $\Hscr$ isomorphically onto a member of $\Kscr$, and for the well-defined morphism
    \[
        \Pi: \VMRT^{\Hscr}_{z} \rightarrow \VMRT^{\Kscr}_{m} \cap \PP(\Ann(z)), \quad [T_{C,\, z}] \mapsto [T_{\pi(C),\, m}],
    \]
    the image $\Pi(\VMRT^{\Hscr}_{z})$ consists of $\alpha$ as in the condition (\ref{item1:thm:VMRT formulation}), and for such an $\alpha$, we have a finite and surjective morphism $\CC^{n-a} \rightarrow \Pi^{-1}(\alpha)$.
\end{theorem}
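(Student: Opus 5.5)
The plan is to reduce everything to the lifting criterion, Theorem~\ref{thm:lifting criterion}, applied to the normalization maps of members of $\Kscr$ through $m$. Then I translate its condition (\ref{item2:thm:lifting criterion}) into projective geometry of $\VMRT^{\Kscr}_{m}$.

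\textbf{Step 1: smoothness of the VMRT and its tangent spaces.} Let $f:\PP^{1}\rightarrow M$, $f(o)=m$, be a member of $\Kscr$. Since rational curves through $m$ are free and $\Kscr$ is unbendable of type $(1^{a},0^{n-a})$, we have
\[
    f^{*}T_{M}\simeq \Ocal(2)\oplus\Ocal(1)^{\oplus a}\oplus \Ocal^{\oplus(n-a)}.
\]
Hence $H^{1}(\PP^{1},f^{*}T_{M}(-1))=0$, and $\Kscr_{m}$ is smooth of dimension $a$ (\cite[Ch.~II]{Kollar96}). Because members are lines in $\PP^{N}$, a line through $m$ is determined by its tangent direction. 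So the tangent map $\Kscr_{m}\rightarrow\PP(T_{M,\,m})$ is injective, and it is immersive by the standard computation (cf. \cite{Hwang01VMRT}). Thus $\VMRT^{\Kscr}_{m}$ is smooth of pure dimension $a$.

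The same computation identifies the objects in condition (\ref{item2:thm:lifting criterion}) of Theorem~\ref{thm:lifting criterion} at $\alpha=[T_{f,o}]$:
\begin{itemize}
    \item $(f^{*}T_{M})^{\ge2}_{o}=\hat T_{\alpha}$, the line over $\alpha$;
    \item $(f^{*}T_{M})^{\ge1}_{o}$ is the affine cone over the projective tangent space $\mathbb{T}_{\alpha}\VMRT^{\Kscr}_{m}$.
\end{itemize}
Therefore condition (\ref{item2:thm:lifting criterion}) of Theorem~\ref{thm:lifting criterion} says exactly that $\alpha\in\PP(\Ann(z))$ and $\mathbb{T}_{\alpha}\VMRT^{\Kscr}_{m}\not\subset\PP(\Ann(z))$. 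This is condition (\ref{item1:thm:VMRT formulation}).

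\textbf{Step 2: the equivalence and the fibres of $\Pi$.} By Theorem~\ref{thm:lifting criterion}, condition (\ref{item1:thm:VMRT formulation}) holds at $\alpha$ if and only if $f$ admits a lift $\tilde f$ with $\tilde f(o)=z$ and $\tilde f^{*}\Ocal(1)\simeq\Ocal(1)$. Since $\pi\circ\tilde f=f$ is an embedding, $\tilde f$ is an embedding. Its image $C$ is a contact line mapped isomorphically onto $f(\PP^{1})$, which gives (\ref{item1:thm:VMRT formulation})$\Leftrightarrow$(\ref{item2:thm:VMRT formulation}).

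Moreover, the lifts over the fixed line with tangent $\alpha$ through $z$ form an affine space $\CC^{b}$ with $b=n-a$. The assignment (lift) $\mapsto[T_{C,z}]$ then gives a surjective morphism $\CC^{n-a}\rightarrow\Pi^{-1}(\alpha)$. It is finite because the tangent map of the unbendable family $\Hscr_{z}$ is finite (\cite[Theorem~1.3]{Hwang01VMRT}). Each such lift is free by the hypothesis at $z$, so it is unbendable by Theorem~\ref{thm:covering family is free}.

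\textbf{Step 3: the unique family $\Hscr$ (main obstacle).} Fixing $z$ is not enough here, since $\Kscr_{m}\cap\{\text{condition (\ref{item1:thm:VMRT formulation})}\}$ might be reducible. I will therefore let the base point vary. Over the open subset $\Kscr^{\circ}\subset\Kscr$ of members with the generic splitting type, consider the space of lifts
\[
    \{(\ell,[q]) : \ell\in\Kscr^{\circ},\ [q]\in\PP(\Hom_{\split}(f_{\ell}^{*}T_{M},\Ocal(1)))\}.
\]
By (\ref{eqn:Hsp}), each fibre is the complement of a linear subspace in a projective space, hence irreducible. The total space is therefore irreducible, and its image in $\RatCurves(\PP(T^{\vee}_{M}))$ lies in a single component $\Hscr$.

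Every lift through $z$ obtained in Step~2 lies in this space, since its $q$ is split by Lemma~\ref{lemma:contact lines are from split surj}. Every such lift is also a free, hence smooth, point of $\RatCurves$, so it lies in this unique component. For the last assertion, take a limit of members of $\Hscr$. Its image under $\pi$ is the Chow-limit of members of $\Kscr$, so it is again a line of $\Kscr$. The limit curve has $\Ocal(1)$-degree $1$, so it cannot be contracted, and it maps isomorphically onto that line. This shows $\pi$ sends every member of $\Hscr$ isomorphically onto a member of $\Kscr$. Consequently $\Pi$ is well defined, and its image consists of the $\alpha$ satisfying (\ref{item1:thm:VMRT formulation}) by Step~2.
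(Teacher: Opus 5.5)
Your route is the paper's. Hwang's description of $(f^{*}T_{M})^{\ge 2}_{o}$ and $(f^{*}T_{M})^{\ge 1}_{o}$ turns condition (2) of Theorem~\ref{thm:lifting criterion} into condition (1). The lifting criterion then gives the equivalence and the parametrization $\CC^{n-a}\rightarrow\Pi^{-1}(\alpha)$, and $\Hscr$ is produced from an irreducible space of lifts over the free locus of $\Kscr$.

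Steps 1 and 2 are fine, except that surjectivity onto $\Pi^{-1}(\alpha)$ already uses two facts from Step 3: every lift lies in $\Hscr$, and every member of $\Hscr_{z}$ over $l_{\alpha}$ is a lift. There is also a smaller point in Step 3. Irreducibility of your space of lifts (call it $\mathcal{L}$) does not follow from irreducibility of the base and of the fibres alone. You need the spaces $\Hom(f_{\ell}^{*}T_{M},\,\Ocal(1))$ to form a vector bundle over the free locus. This holds by Grauert, since $h^{0}=2n-a$ is constant there, and it makes $\mathcal{L}$ an open subset of a projective bundle; this is the paper's Grauert claim.

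The genuine gap is the last paragraph of Step 3. ``Take a limit of members of $\Hscr$'' only handles members lying in the closure of the image of $\mathcal{L}$. You never show that this image is dense in $\Hscr$, so the assertion about \emph{every} member of $\Hscr$ is not yet proved.

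There are two ways to close it.
\begin{itemize}
\item \emph{Dimension count.} $\dim\mathcal{L}=(n+a)+(2n-a-1)=3n-1$, and $\dim\Hscr=(n+1)+(2n+1)-3=3n-1$ because $K^{\vee}_{\PP(T^{\vee}_{M})}\simeq\Ocal(n+1)$. Since $\mathcal{L}\rightarrow\Hscr$ is injective, its image is dense.
\item \emph{The paper's argument, which needs no density.} Put $A=\Ocal_{\PP^{N}}(1)|_{M}$. For any member $C'$ of $\Hscr$, $\deg_{C'}\pi^{*}A=\deg_{C}\pi^{*}A=1$. By the projection formula, $\pi|_{C'}$ is then an isomorphism onto a line, which gives a morphism $\pi_{*}:\Hscr\rightarrow\Hilb_{t+1}(M,\,A)$. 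Since $\Hscr$ is irreducible and $\pi_{*}(\Hscr)$ contains the smooth point $[\pi(C)]$ of the Hilbert scheme, $\pi_{*}(\Hscr)$ lies in the component coming from $\Kscr$.
\end{itemize}

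Note also that the degree which rules out contraction must be this $\pi^{*}A$-degree. If your ``$\Ocal(1)$-degree $1$'' means the contact bundle $\Ocal_{\PP(T^{\vee}_{M})}(1)$, the inference fails. Lines in a fibre $\PP(T^{\vee}_{M,\,w})$ are contact lines of contact degree $1$, yet $\pi$ contracts them.
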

\begin{proof}
    Since every member $l$ of $\Kscr$ through $m$ is a free line on $M$, $l$ is unbendable (indeed, we have $N_{l/M} \subset N_{l/\PP^{N}} \simeq \Ocal_{\PP^{1}}(1)^{\oplus (N-1)}$).
    In this situation, it is well known that $\VMRT^{\Kscr}_{m}$ is nowhere singular and of pure dimension $a$.
    It is also well known that the projective tangent space of $\VMRT^{\Kscr}_{[T_{l,\,m}]}$ at $[T_{l,\,m}]$ is given as $\PP (T_{M}|_{l})^{\ge 1}_{m}$.
    For details, we refer to \cite[Theorem 1.3, Propositions 1.4, 1.5 and 2.3]{Hwang01VMRT}.
    
    Now we show the equivalence of the conditions (\ref{item1:thm:VMRT formulation}) and (\ref{item2:thm:VMRT formulation}).
    If $\alpha$ is as in the condition (\ref{item1:thm:VMRT formulation}), then considering the line $l$ on $M$ satisfying $[T_{l,\,m}] = \alpha$, Theorem~\ref{thm:lifting criterion}, together with the discussion in the previous paragraph, implies that there exists a contact line $C$ through $z$ sent onto $l$ bijectively under $\pi$.
    Since $l$ is smooth, $\pi|_{C}:C\rightarrow l$ must be an isomorphism.
    Conversely, if $C$ is as in the condition (\ref{item2:thm:VMRT formulation}), then again Theorem~\ref{thm:lifting criterion} implies that $\alpha = [T_{\pi(C),\,m}]$ is as in the condition (\ref{item1:thm:VMRT formulation}).
    Notice that as a byproduct, we have proven the following:
    \begin{claim} \label{claim:thm:VMRT formulation}
        In $\PP(T_{M,\,m})$, the subset of $\alpha$ as in the condition (\ref{item1:thm:VMRT formulation}) and the subset of $[T_{\pi(C),\,m}]$ for $C$ as in the condition (\ref{item2:thm:VMRT formulation}) coincide.
    \end{claim}

    Next, assuming the conditions (\ref{item1:thm:VMRT formulation}) and (\ref{item2:thm:VMRT formulation}), we construct a family of contact lines on $\PP(T^{\vee}_{M})$ including $C$ as in the condition (\ref{item2:thm:VMRT formulation}).
    Define $A \coloneqq \Ocal_{\PP^{N}}(1)|_{M}$ so that members of $\Kscr$ are $A$-lines.
    Denote by $\Kscr^{\text{free}}$ the open subset of $\Kscr$ consisting of free members, and by
    \[
        \Kscr^{\text{free}} \xleftarrow{\rho} \text{Univ} \xrightarrow{\mu} M
    \]
    the family of lines parametrized by $\Kscr^{\text{free}}$.
    \begin{claim}\label{claim2:thm:VMRT formulation}
        The sheaf $\rho_{*}\mu^{*}(T^{\vee}_{M} \otimes A)$ is locally free on $\Kscr^{\text{free}}$.
        Moreover, for a point $x \in \Kscr^{\text{free}}$ and the corresponding line $l_{x} \coloneqq \mu(\rho^{-1}(x))$, the fiber of $\rho_{*}\mu^{*}(T^{\vee}_{M} \otimes A)$ over $x$ is naturally isomorphic to $\Hom(T_{M}|_{l_{x}},\, \Ocal_{l_{x}}(1))$.
    \end{claim}
    \begin{proof}[Proof of Claim~\ref{claim2:thm:VMRT formulation}]
        We apply the Grauert theorem \cite[Corollary~12.9, Ch.~III]{HartshorneBook}.
        First recall that $\rho$ is a (flat) $\PP^{1}$-bundle, and thus $\mu^{*}(T^{\vee}_{M} \otimes A)$ is flat over $\Kscr^{\text{free}}$.
        Since $\Kscr^{\text{free}}$ is the locus of free lines, it is smooth, and for any point $x \in \Kscr^{\text{free}}$, the corresponding line $l_{x}$ is unbendable and of type $(1^{a},\,0^{n-a})$.
        Moreover, since
        \begin{align*}
            \dim H^{0}(\rho^{-1}(x),\,\mu^{*}(T^{\vee}_{M} \otimes A) |_{\rho^{-1}(x)}) &= \dim H^{0}(l_{x},\,T^{\vee}_{M} \otimes A |_{l_{x}}) \\
            &= \dim H^{0}(\PP^{1},\, \Ocal_{\PP^{1}}(1)^{\oplus(n-a)} \oplus \Ocal_{\PP^{1}}^{\oplus a} \oplus \Ocal_{\PP^{1}}(-1)) \\
            &= 2n-a
        \end{align*}
        is constant, the Grauert theorem applies, and hence $\rho_{*}\mu^{*}(T^{\vee} \otimes A)$ is a locally free sheaf whose fiber over $x$ is given as
        \[
            H^{0}(\rho^{-1}(x),\, \mu^{*}(T^{\vee}_{M}\otimes A)|_{\rho^{-1}(x)}) \simeq H^{0}(l_{x},\, (T^{\vee}_{M} \otimes A)|_{l_{x}}) \simeq \Hom(T_{M}|_{l_{x}},\, \Ocal_{\PP^{1}}(1)).
        \]
    \end{proof}
    Thanks to Claim~\ref{claim2:thm:VMRT formulation}, we can define
    \[
        \Hscr' \coloneqq \bigcup_{x \in \Kscr^{\text{free}}} \PP(\Hom_{\surj}(T_{M}|_{l_{x}},\,\Ocal_{\PP^{1}}(1))) \subset \PP(\rho_{*}\mu^{*}(T^{\vee}_{M} \otimes A)).
    \]
    Then $\Hscr'$ is an open subset of the total space of the projective bundle $\PP(\rho_{*}\mu^{*}(T^{\vee}_{M} \otimes A))$, parametrizing contact lines that are sent onto members of $\Kscr^{\text{free}}$ isomorphically.
    Therefore we have a natural injective morphism
    \[
        \varphi:\Hscr' \rightarrow \Hilb_{2t+1}(\PP(T^{\vee}_{M}),\, \pi^{*}A \otimes \Ocal_{\pi}(1))
    \]
    into the Hilbert scheme of subschemes of $\PP(T^{\vee}_{M})$ whose Hilbert polynomials with respect to the ample line bundle $\pi^{*}A \otimes \Ocal_{\pi}(1)$ is equal to $2t+1$.
    Since the image $\varphi(\Hscr')$ contains all points represented by $C$ as in the condition (\ref{item2:thm:VMRT formulation}) and such points are smooth points in the Hilbert scheme (cf. \cite[Theorems 2.8 and 2.15, Ch.~I]{Kollar96}), $\varphi(\Hscr')$ is contained in a unique irreducible component $\Hscr''$ of the Hilbert scheme.
    Under the Hilbert--Chow morphism, $\Hscr''$ is sent onto an irreducible component of the Chow scheme of $\PP(T^{\vee}_{M})$, and its normalization, denoted by $\Hscr$, is a unique irreducible component of $\RatCurves(\PP(T^{\vee}_{M}))$ parametrizing all points represented by $C$ as in the condition (\ref{item2:thm:VMRT formulation}).

    Now we claim that for any member $C'$ of $\Hscr$, $\pi|_{C'} : C' \rightarrow \pi(C')$ is an isomorphism, and $\pi(C')$ is a member of $\Kscr$ (in particular, $C'$ is smooth).
    By the projection formula, for $C$ as in the condition (\ref{item2:thm:VMRT formulation}), we have
    \[
        1=\deg_{\pi(C)}(A) = \deg_{C}(\pi^{*}A)=\deg_{C'}(\pi^{*}A) = \left\{\begin{array}{ll}
            0 & \text{if $\pi(C')$ is a point;} \\
            \deg_{\pi(C')}(A) \cdot \deg(\pi|_{C'}) & \text{if $\pi(C')$ is a curve.}
        \end{array}\right. 
    \]
    This is possible only when $\pi(C')$ is a curve and
    \[
        \deg_{\pi(C')}(A) = \deg(\pi|_{C'}) = 1,
    \]
    i.e., $\pi$ sends $C'$ onto a line on $\PP^{N}$ isomorphically.
    Therefore if we denote by $\Hilb_{t+1}(M,\,A)$ the Hilbert schemes of lines on $M \subset \PP^{N}$, then we have a morphism
    \[
        \pi_{*}:\Hscr \rightarrow \Hilb_{t+1}(M,\,A), \quad C' \mapsto \pi(C').
    \]
    On the other hand, since the Chow variety of lines on $M$ is isomorphic to the semi-normalization of $\Hilb_{t+1}(M,\,A)$ (cf. \cite[Theorem~6.3 and Corollary~6.6.1, Ch.~I]{Kollar96}), we have a natural finite morphism
    \[
        \iota : \Kscr \rightarrow \Hilb_{t+1}(M,\,A)
    \]
    onto an irreducible component.
    Since $\pi_{*}(\Hscr)$ contains the point represented by $\pi(C)$, which is a smooth point of $\Hilb_{t+1}(M,\,A)$ (cf. \cite[Theorems 2.8 and 2.15, Ch.~I]{Kollar96}), we see that $\pi_{*}(\Hscr) \subset \iota(\Kscr)$.
    It follows that $\pi(C')$ is a member of $\Kscr$.

    Now the well-definedness of $\Pi$ in the statement follows.
    The description of the image of $\Pi$ follows from Claim~\ref{claim:thm:VMRT formulation}.
    It remains to show that for $\alpha$ as in the condition (\ref{item1:thm:VMRT formulation}), we have a finite and surjective morphism $\CC^{n-a}\rightarrow\Pi^{-1}(\alpha)$.
    If $l_{\alpha}$ is the line with $[T_{l_{\alpha},\,m}] = \alpha$, then
    \[
        \Pi^{-1}(\alpha) = \{[T_{C,\,z}] \in \PP(T_{\PP(T^{\vee}_{M}),\,z}) : \text{$C$ is a member of $\Hscr_{z}$ and $\pi(C) = l_{\alpha}$}\}.
    \]
    Since $\PP (\Hom_{\split}(T_{M}|_{l_{\alpha}},\,\Ocal_{\PP^{1}}(1);\,m,\, \Ann(z)))$ parametrizes members of $\Hscr_{z}$ sent onto $l_{\alpha}$, by taking their tangent directions at $z$, we obtain a surjective morphism
    \[
        \PP (\Hom_{\split}(T_{M}|_{l_{\alpha}},\,\Ocal_{\PP^{1}}(1);\,m,\, \Ann(z))) \rightarrow \Pi^{-1}(\alpha).
    \]
    This is a finite morphism since every member of $\Hscr_{z}$ is unbendable.
    Now the statement follows from Theorem~\ref{thm:lifting criterion}.
\end{proof}

\subsection{Richardson orbit closures}\label{ssection:Richardson}

Now we come back to nilpotent orbits, and apply results of Section~\ref{ssection:projectivized cotangent bundles} to the case of \emph{Richardson orbit closures} (defined below), in which case singularities can be resolved by projectivized cotangent bundles.
As before, let $\gfr$ be a semisimple Lie algebra, and $G$ the associated simply connected Lie group.
Let $\kappa$ be the Killing form of $\gfr$.
Again, let $Z = \PP(O) \subset \PP(\gfr)$ be a nilpotent orbit, $z = [v] \in Z$ a point, and $\overline{Z} \subset \PP(\gfr)$ the nilpotent orbit closure.
Write $2n+1 = \dim Z$.

\begin{definition} \label{defn:Richardson}
    Let $P$ be a parabolic subgroup of $G$, and $\ufr_{P} = \Lie(R^{u}(P))$ the Lie algebra of the unipotent radical, identified with $T^{\vee}_{G/P,\,eP}$ via the isomorphism $w \in \ufr_{P} \mapsto \kappa(-,\,w)\in T^{\vee}_{G/P,\,eP}$.
    Consider the diagram
    \begin{equation} \label{diagram:Springer morphism}
    \begin{tikzcd}
        \PP(T^{\vee}_{G/P}) \simeq G \times^{P} \PP(\ufr_{P}) \arrow[r, "s_{P}"] \arrow[d, "\pi_{P}"] & \PP(\gfr) \\
        G/P&
    \end{tikzcd}
    \end{equation}
    where $\pi_{P}$ is the natural projection and $s_{P}$ is the \emph{Springer morphism}, defined as follows:
    \begin{equation*}
    s_{P}((g,\,[w])^{\bullet}) \coloneqq [\Ad_{g}w], \quad \forall (g,\,[w])^{\bullet} \in G \times^{P} \PP(\ufr_{P}).
    \end{equation*}
    Since $\ufr_{P}$ contains an open $\Ad_{P}$-orbit \cite[Proposition~5]{Ri74conjugacy}, $\PP(T^{\vee}_{G/P})$ contains an open $G$-orbit, and the image of $s_{P}$ is a nilpotent orbit closure, say $\overline{Z_{P}} \subset \PP(\gfr)$.
    Nilpotent orbits of the form $Z_{P}$ are called \emph{Richardson orbits} in $\PP(\gfr)$.
    Given a Richardson orbit $Z \subset \PP(\gfr)$, a parabolic subgroup $P$ of $G$ satisfying $Z = Z_{P}$ is called a \emph{polarization} of $Z$, and the set of polarizations of $Z$ is denoted by $\Pol(Z)$.
\end{definition}

In the diagram (\ref{diagram:Springer morphism}), the Springer morphism $s_{P}$ is generically finite onto the image $\overline{Z_{P}}$ and so $\dim G/P = n+1$, but not birational in general.
Nonetheless, we have an analogue of Proposition~\ref{prop:Theta KKS}:

\begin{proposition} \label{prop:Springer map is contacto}
    Suppose that $Z$ is Richardson, and let $P \in \Pol(Z)$.
    Let $\widetilde{Z}$ be the open $G$-orbit in $\PP(T^{\vee}_{G/P})$.
    Then $(s_{P})^{*}(\Ocal_{\PP(\gfr)}(1)) = \Ocal_{\PP(T^{\vee}_{G/P})}(1)$, and under the restriction $s_{P}|_{\widetilde{Z}} : \widetilde{Z} \rightarrow Z$, the pullback of the contact form $\theta_{KKS} : T_{Z} \rightarrow \Ocal_{\PP(\gfr)}(1)|_{Z}$ is the restriction of the contact form $\theta_{\can} : T_{\PP(T^{\vee}_{G/P})} \rightarrow \Ocal_{\PP(T^{\vee}_{G/P})}(1)$ (cf. Example~\ref{ex:PT*M}).
\end{proposition}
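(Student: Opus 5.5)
The plan is to check both claims by a direct computation at one point of the open orbit, and then spread them by $G$-equivariance, as in the proof of Proposition~\ref{prop:Theta KKS}.

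\emph{Line bundles.} By construction $s_{P}$ is the map $G \times^{P} \PP(\ufr_{P}) \rightarrow \PP(\gfr)$ induced by the $P$-equivariant inclusion $\ufr_{P} \subset \gfr$. So the tautological subbundle $\Ocal(-1)$ on $G\times^{P}\PP(\ufr_{P})$, whose fiber at $(g,\,[w])^{\bullet}$ is $\CC w$, is carried isomorphically by $\Ad_{g}$ onto the tautological line $\CC \Ad_{g}w$ of $\PP(\gfr)$. This gives $G$-equivariantly $(s_{P})^{*}\Ocal_{\PP(\gfr)}(-1) \simeq \Ocal_{\PP(T^{\vee}_{G/P})}(-1)$. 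One must check that the identification $\ufr_{P} \simeq T^{\vee}_{G/P,\,eP}$ via $\kappa$ is compatible with the $P$-actions. It is, since $\kappa$ is $\Ad$-invariant and $\gfr/\mathfrak{p} \simeq \ufr_{P}^{\vee}$ via $\kappa$; here $\mathfrak{p} = \ufr_{P}^{\perp}$. Hence $\Ocal_{\PP(T^{\vee}_{G/P})}(1)$ in the sense of $\PP(\ufr_{P})$ matches the one in Example~\ref{ex:PT*M}.

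\emph{Contact forms.} Both $(s_{P})^{*}\theta_{KKS}$ and $\theta_{\can}$ are $G$-equivariant morphisms $T_{\widetilde{Z}} \rightarrow \Ocal(1)|_{\widetilde{Z}}$. Since $\widetilde{Z}$ is a single $G$-orbit, it suffices to compare them at one point $\tilde z = (e,\,[w])^{\bullet}$ with $[w] \in Z$.

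Tangent vectors at $\tilde z$ are spanned by two kinds of vectors. The first are orbit directions $\xi \cdot \tilde z$ with $\xi \in \gfr$, which exhaust $T_{\widetilde{Z},\,\tilde z}$ because $\widetilde{Z}$ is an orbit. The second are fiber directions, but these are themselves orbit directions of $\mathfrak{p}$, so it is enough to treat $\xi\cdot\tilde z$ for $\xi \in \gfr$.

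For $\theta_{KKS}$: we have $d s_{P}(\xi\cdot\tilde z) = \xi \cdot [w]$ in $T_{Z,\,[w]}$. By definition, $\theta_{KKS}$ sends this to the functional $w \mapsto \kappa(w,\,\xi)$ on $\CC w$.

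For $\theta_{\can}$: we have $d\pi_{P}(\xi\cdot\tilde z) = \xi \mod \mathfrak{p} \in T_{G/P,\,eP} = \gfr/\mathfrak{p}$. The point $\tilde z$ corresponds to the covector $\alpha = \kappa(-,\,w)$. Hence $\theta_{\can}(\xi\cdot\tilde z)$ is the functional $\alpha \mapsto \langle \alpha,\, \xi \rangle = \kappa(\xi,\,w)$.

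Under the identification $\CC\alpha \simeq \CC w$ from the first step, these two functionals agree. Equivariance then gives equality on all of $\widetilde{Z}$.

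\emph{Main obstacle.} The only delicate point is bookkeeping. One must ensure that the isomorphism of line bundles used in the first step is the same one implicit in both definitions of the contact forms, with the same sign and scaling conventions for $\kappa$ and for the infinitesimal action. I would handle this by writing both $\Ocal(1)$-fibers explicitly as $(\CC w)^{\vee}$ at $\tilde z$ before comparing.
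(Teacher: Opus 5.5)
Your proposal is correct and is essentially the paper's proof. Both arguments identify the $\Ocal(1)$'s directly from the definition of $s_{P}$ and reduce by $G$-equivariance to the single point $\tilde z$. There they check that both forms send a tangent vector to the same element $\kappa(w,\,\xi)=\kappa(\xi,\,w)$ of $(\CC w)^{\vee}$. You spell out the $P$-compatibility of $\ufr_{P}\simeq T^{\vee}_{G/P,\,eP}$ and the orbit-direction bookkeeping more explicitly than the paper, which also remarks that $s_{P}|_{\widetilde{Z}}$ is a finite covering.
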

\begin{proof}
    By the definition of $s_{P}$, we have $(s_{P})^{*}(\Ocal_{\PP(\gfr)}(1)) = \Ocal_{\PP(T^{\vee}_{G/P})}(1)$.
    Since the restriction $s_{P}|_{\widetilde{Z}} : \widetilde{Z} \rightarrow Z$ is a finite covering, we also have $(s_{P}|_{\widetilde{Z}})^{*} (T_{Z}) = T_{\widetilde{Z}}$.
    Since $s_{P}$ is $G$-equivariant, it is enough to show that the contact forms $\theta_{KKS}$ and $\theta_{\can}$ agree at $\tilde{z}=(1,\,[v])^{\bullet}$, which follows from the following commutative diagram:
    \begin{center}
        \begin{tikzcd}
            T_{\widetilde{Z},\,\tilde{z}} \arrow[r,"\theta_{\can}"] \arrow[d, "d s_{P}"] & (\Ocal_{\PP(T^{\vee}_{G/P})}(1))_{\tilde{z}} = (\CC v)^{\vee} \arrow[d, equal] & w \arrow[r, mapsto] & \langle d\pi_{P}(w),\,- \rangle = \kappa (-, w) \\
            T_{Z,\,z} \arrow[r, "\theta_{KKS}"] & (\Ocal_{\PP(\gfr)}(1))_{z} = (\CC v)^{\vee} & u \arrow[r, mapsto]& \kappa(-,\,u).
        \end{tikzcd}
    \end{center}
\end{proof}

If $s_{P}$ is birational, then lines on $\overline{Z}$ behave similarly with contact lines on $\PP(T^{\vee}_{G/P})$.
In the rest of this section, we mainly discuss this situation:
\begin{assumption} \label{assumption:Springer resolution}
    $Z$ is Richardson, and $P \in \Pol(Z)$ such that $z$ is contained in the open dense $\Ad_{P}$-orbit in $\PP(\ufr_{P})$, and $s_{P} : \PP(T^{\vee}_{G/P}) \rightarrow \overline{Z}$ is birational.
\end{assumption}

\begin{example}
\begin{enumerate}
    \item Suppose that $\gfr$ is a simple Lie algebra of type $A$.
    Then every nilpotent orbit is Richardson, admitting a polarization satisfying Assumption~\ref{assumption:Springer resolution}.
    For a simple Lie algebra of other classical type, see for instance \cite[Proposition~3.1]{Fu07extremal} and the references therein.
    \item For any semisimple $\gfr$, an even (i.e., in (\ref{eqn:eigensp decomp of g}), $\gfr_{k} = 0$ for all $k$ odd) nilpotent orbit is Richardson, and its Jacobson--Morozov resolution defines a polarization satisfying Assumption~\ref{assumption:Springer resolution}.
\end{enumerate}    
\end{example}

When we work with Assumption~\ref{assumption:Springer resolution}, we shall use the following notations:
$\tilde{z}$ is the strict transform of $z$ in $\PP(T^{\vee}_{G/P})$, and $m = \pi_{P}(\tilde{z}) (=1.P)$.
As before, we denote by $\Ann(\tilde{z})$ the hyperplane in $T_{G/P,\,m}$ associated to $\tilde{z}\in \PP(T^{\vee}_{G/P,\,m})$.
Given a line $l$ on $\overline{Z}$ through $z$, we denote by $l^{\sim}$ the strict transform of $l$ via $s_{P}$.
Observe that by Proposition~\ref{prop:Springer map is contacto} and the projection formula, $l^{\sim}$ is a contact line on $\PP(T^{\vee}_{G/P})$ through $\tilde{z}$, and every contact line through $\tilde{z}$ is of this form.

\begin{proposition} \label{prop: VMRT of Richardson is Leg}
    Under Assumption~\ref{assumption:Springer resolution}, $F(\overline{Z},\,z)$ is of pure dimension $n-1$, i.e., every irreducible component is Legendrian.
\end{proposition}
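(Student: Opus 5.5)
Under Assumption~\ref{assumption:Springer resolution}, I would transport the problem to the smooth contact manifold $Y \coloneqq \PP(T^{\vee}_{G/P})$. Since $s_{P}$ is birational and $G$-equivariant, it restricts to an isomorphism from the open orbit $\widetilde{Z}$ onto $Z$. By Proposition~\ref{prop:Springer map is contacto}, this isomorphism identifies $\theta_{\can}$ with $\theta_{KKS}$. Hence $d_{\tilde{z}}s_{P}$ identifies $\PP(T_{Y,\,\tilde{z}})$ with $\PP(T_{Z,\,z})$ and $\PP(D_{\can,\,\tilde{z}})$ with $\PP(D_{KKS,\,z})$. As remarked before the statement, the lines on $\overline{Z}$ through $z$ correspond bijectively, via strict transform, to the contact lines on $Y$ through $\tilde{z}$. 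The projection formula shows that the tangent direction at $\tilde z$ of $l^{\sim}$ is sent to that of $l$ at $z$. Thus $F(\overline{Z},\,z)$ is identified with the set of tangent directions at $\tilde{z}$ of contact lines on $Y$ through $\tilde{z}$. This set is the union of $\VMRT^{\Hscr}_{\tilde{z}}$ over the finitely many families $\Hscr$ of contact lines on $Y$ with $\Hscr_{\tilde z}\neq\emptyset$.

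Next, I would use homogeneity to replace the very general point of Theorem~\ref{thm:covering family is free} by $\tilde z$. Since $\widetilde{Z}$ is a single $G$-orbit, open in $Y$, every point of $\widetilde{Z}$ behaves like a very general point. In particular every rational curve on $Y$ meeting $\widetilde Z$ is free: a $G$-translate of it passes through a very general point, and freeness is $G$-invariant. So every contact line $C$ through $\tilde z$ is free. By Theorem~\ref{thm:covering family is free} (or \cite[Lemma~3.5]{Ke01}, which only uses freeness), $C$ is then unbendable of type $(1^{n-1},\,0^{n+1})$, and this also agrees with Lemma~\ref{lemma:str transform of lines are unbendable}. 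Consequently each irreducible component of $\RatCurves(Y,\,\tilde z)$ consisting of contact lines has dimension $h^{0}(N_{C}(-1)) = n-1$ and is smooth. The tangent map to $\PP(T_{Y,\tilde z})$ is finite on it, because unbendable curves through a point are determined up to finitely many choices by their tangent direction (\cite[Theorem~1.3, Proposition~1.4]{Hwang01VMRT}). Hence every component of $\VMRT^{\Hscr}_{\tilde z}$ has dimension exactly $n-1$.

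Combining the two steps, every irreducible component of $F(\overline{Z},\,z)$ is an irreducible component of some $\VMRT^{\Hscr}_{\tilde z}$, so it has dimension $n-1$. By Theorem~\ref{thm:VMRT is integral} and the definition of Legendrian components, every component is therefore Legendrian.

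The main obstacle I expect is making the ``$\tilde z$ behaves as very general'' step rigorous. Theorem~\ref{thm:covering family is free} is stated for very general points, while we need the conclusion at the specific point $\tilde z$. I would handle it as above, by translating a given contact line through $\tilde z$ via $g\in G$ to a curve through a very general point of $\widetilde Z$, noting that $g$ is an automorphism of $Y$ preserving $\theta_{\can}$ and $\Ocal(1)$. A secondary point is the $F$–VMRT identification, that is, making sure no component of $F(\overline{Z},\,z)$ is lost or has smaller dimension. This is settled because the strict transform correspondence is bijective and every component of $\RatCurves(Y,\tilde z)$ has pure dimension $n-1$ by unbendability.
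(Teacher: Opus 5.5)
Your proposal is correct and follows essentially the same route as the paper. You identify $F(\overline{Z},\,z)$ via $ds_{P}$ with the union of the $\VMRT^{\Hscr}_{\tilde z}$ over families $\Hscr$ of contact lines on $\PP(T^{\vee}_{G/P})$, and then apply Theorem~\ref{thm:covering family is free}, transported to $\tilde z$ by the openness of $G.\tilde z$ exactly as you describe, to get pure dimension $n-1$. The only cosmetic difference is the last step: the paper concludes with the bound $\dim F(\overline{Z},\,z)\le n-1$ from Corollary~\ref{coro:dim VMRT is at most n-1}, whereas you argue directly that each component of $F(\overline{Z},\,z)$ is a component of (the closure of) some VMRT.
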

\begin{proof}
    Since lines through $z$ and contact lines through $\tilde{z}$ are identified via $s_{P}$, under the isomorphism
    \[
        ds_{P} : \PP(T_{\PP(T^{\vee}_{G/P}),\, \tilde{z}}) \rightarrow \PP(T_{\overline{Z},\,z}),
    \]
    we have
    \[
        F(\overline{Z},\,z) = \bigcup_{\Hscr} ds_{P}(\VMRT^{\Hscr}_{\tilde{z}})
    \]
    where $\Hscr$ runs over families of contact lines on $\PP(T^{\vee}_{G/P})$.
    By Theorem~\ref{thm:covering family is free}, each $ds_{P}(\VMRT^{\Hscr}_{\tilde{z}})$ is of pure dimension $n-1$, and hence the statement follows from Corollary~\ref{coro:dim VMRT is at most n-1}.
\end{proof}

\begin{proposition} \label{prop:contact lines are almost lines}
    Under Assumption~\ref{assumption:Springer resolution}, any contact line $C$ on $\PP(T^{\vee}_{G/P})$ passing through $\tilde{z}$ is smooth.
    Furthermore, either $\pi_{P}(C) = m$ or the restriction $\pi_{P}|_{C} : C \rightarrow \pi_{P}(C)$ is an isomorphism.
\end{proposition}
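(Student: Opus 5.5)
Smoothness of $C$ will come from birationality of $s_{P}$. By Proposition~\ref{prop:Springer map is contacto} and the projection formula, $s_{P}^{*}\Ocal_{\PP(\gfr)}(1) = \Ocal_{\PP(T^{\vee}_{G/P})}(1)$, so $\deg_{C} s_{P}^{*}\Ocal(1) = 1$. Since $C$ passes through $\tilde z$, which lies in the open orbit where $s_{P}$ is an isomorphism onto $Z$, $C$ is not contracted. Hence $s_{P}|_{C}$ is birational onto a curve $l = s_{P}(C)$ with $\deg_{l}\Ocal(1)=1$, so $l$ is a line through $z$ and $C = l^{\sim}$. The morphism $C \to l \simeq \PP^{1}$ is finite and birational, and its source is reduced and irreducible. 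Composing the normalization $\PP^{1}\to C$ with it gives a birational morphism $\PP^{1}\to\PP^{1}$, which is an isomorphism. Thus the normalization map of $C$ is injective with injective differential, and $C \to l$ is an isomorphism, so $C$ is smooth. This is the same argument as in Lemma~\ref{lemma:str transform of lines are unbendable}.

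For the dichotomy, I plan to use an ample line bundle on $G/P$. Let $A$ be an ample (hence very ample, $G/P$ being rational homogeneous) line bundle, and set $d = \deg_{C}\pi_{P}^{*}A \ge 0$. If $d = 0$, then $\pi_{P}(C)$ is a point, which must be $m$. Otherwise $\pi_{P}(C)$ is a curve and $\pi_{P}|_{C}$ is finite. To show it has degree $1$ and image smooth, I use that $C$ is a contact line through a general point. By Theorem~\ref{thm:covering family is free}, $C$ is unbendable of type $(1^{n-1},0^{n+1})$, because $\tilde z$ lies in the open $G$-orbit and any point of that orbit is "very general" up to homogeneity. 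Lemma~\ref{lemma:contact lines are from split surj} and Theorem~\ref{thm:lifting criterion} then apply to the composite $f = \pi_{P}\circ\nu : \PP^{1}\to G/P$. The aim is to show $f$ is birational onto its image and immersive, after which its image is smooth.

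To get birationality of $f$, I use that $\tilde f = \nu$ is determined by a surjection $q: f^{*}T_{G/P}\to\Ocal_{\PP^{1}}(1)$. If $f$ factored through a degree $k\ge2$ cover $g:\PP^{1}\to\PP^{1}$, then $f^{*}T_{G/P} = g^{*}W$ for $W$ the pullback of the tangent bundle to the normalization of the image. The splitting degrees of $g^{*}W$ are multiples of $k$, so a surjection onto $\Ocal(1)$ forces an $\Ocal$-summand to map onto $\Ocal(1)$. Such a surjection is never split, while unbendability forces $q$ to split by Lemma~\ref{lemma:contact lines are from split surj}. Part 2 of that lemma needs $f$ immersive, so I will instead rerun its degree argument directly. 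A non-split $q$ produces an $\Ocal(2)$-factor in $N_{s}$, which must inject into the normal bundle $N_{\tilde f}$; this fails because $N_{\tilde f}$ has only $\Ocal(1)$ and $\Ocal$ summands. The injectivity of $N_{s}\to N_{\tilde f}$ should hold on a dense open set, which suffices for the degree comparison.

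Immersivity and smoothness of the image should then follow from homogeneity. The image curve is a rational curve through $m$. Translating by $\Stab_{G}(\tilde z)$, which fixes $m$ and acts on $C$, together with the fact that $C$ is smooth, should show that the ramification or singular points of $f$ would have to persist in a way incompatible with freeness of rational curves on $G/P$ through $m$. Every point of $G/P$ is general, so rational curves through $m$ are free by Theorem~\ref{thm:covering family is free}.

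I expect this last step, excluding cusps and ramification of $\pi_{P}|_{C}$, to be the main obstacle. If the homogeneity argument fails, I would fall back on the injectivity of $d\pi_{P}$ restricted to $T_{C}\subset D_{\can}$. A ramification point would mean $T_{C}$ is vertical there. I would then try to show that a vertical tangent direction of a contact line through a point of the open orbit forces $C$ into a fiber, using the rigidity of unbendable curves.
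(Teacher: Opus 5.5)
Your smoothness argument is fine. Your rerun of the degree argument of Lemma~\ref{lemma:contact lines are from split surj} is also valid: $N_{s}\to N_{\tilde f}$ is injective as a map of sheaves because $f$ is generically immersive, so an $\Ocal_{\PP^{1}}(2)$-summand of $N_{s}$ cannot map into $\Ocal_{\PP^{1}}(1)^{\oplus(n-1)}\oplus\Ocal_{\PP^{1}}^{\oplus(n+1)}$. It shows that $\pi_{P}|_{C}$ has degree one. But degree one is not the claim. A birational morphism from the smooth curve $C$ onto $\pi_{P}(C)$ still fails to be an isomorphism if two points of $C$ lie in one fiber of $\pi_{P}$ (a node of the image), or if $C$ has a vertical tangent (a cusp). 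This is exactly the step you leave open, and the homogeneity argument cannot close it. First, $\Stab_{G}(\tilde z)$ does not act on $C$; it only permutes the contact lines through $\tilde z$. Second, freeness at $m$ carries no information: $T_{G/P}$ is globally generated, so every rational curve on $G/P$ is free, including nodal and cuspidal ones. Your fallback addresses only cusps, not nodes. Moreover, ``rigidity of unbendable curves'' would at best apply at general points, while the bad point $w\in C$ may lie outside the open orbit.

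The missing idea is the linearity of $s_{P}$ along fibers. The map $s_{P}$ sends each fiber $\pi_{P}^{-1}(g.m)\simeq\PP(T^{\vee}_{G/P,\,g.m})$ isomorphically onto the linear subspace $\PP(\Ad_{g}\ufr_{P})\subset\PP(\gfr)$, and sends $C$ isomorphically onto the line $s_{P}(C)$. There are two cases to exclude.
\begin{enumerate}
\item Suppose $w\neq w'$ in $C$ lie over the same point $g.m$. Then $s_{P}(C)$ is the line through two distinct points of $\PP(\Ad_{g}\ufr_{P})$.
\item Suppose $d\pi_{P}(T_{C,\,w})=0$. Then $T_{C,\,w}$ is tangent to a line $l$ in the fiber, so $s_{P}(C)$ and $s_{P}(l)$ are lines tangent at $s_{P}(w)$, hence equal.
\end{enumerate}
In both cases $s_{P}(C)\subset\PP(\Ad_{g}\ufr_{P})$. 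Hence $C=(s_{P}(C))^{\sim}$ lies in $\PP(\Ad_{g}\ufr_{P})^{\sim}=\pi_{P}^{-1}(g.m)$, contradicting that $\pi_{P}(C)$ is a curve. So $\pi_{P}|_{C}$ is an injective immersion of a smooth projective curve, hence an isomorphism onto its image. This argument also makes your degree-one step unnecessary.
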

\begin{proof}
    Observe that $s_{P}|_{C} : C \rightarrow s_{P}(C)$ is birational, and $s_{P}(C)$ is a line by Proposition~\ref{prop:Springer map is contacto} and the projection formula.
    Thus $s_{P}|_{C}$ is an isomorphism and $C$ is smooth.

    Now assume that $\pi_{P}(C)$ is not a point, and show that $\pi_{P}|_{C} : C \rightarrow \pi_{P}(C)$ is an isomorphism.
    \begin{itemize}
        \item $\pi_{P}|_{C} : C \rightarrow \pi_{P}(C)$ is immersive.
        To see this, suppose that $w \in C$ and $d\pi_{P}(T_{C,\,w}) = 0$.
        Write $\pi_{P}(w) = g.m$ for some $g \in G$, and then $C$ is tangent to a line $l$ on $(\pi_{P})^{-1}(g.m) = \PP(T^{\vee}_{G/P,\,g.m})$ at $w$.
        Since $s_{P}|_{C} : C \rightarrow s_{P}(C)$ and $s_{P}|_{(\pi_{P})^{-1}(g.m)} : (\pi_{P})^{-1}(g.m) \rightarrow \PP(\Ad_{g}\ufr_{P})$ are isomorphisms, $s_{P}(C)$ and $s_{P}(l)$ are lines tangent to each other at $s_{P}(w)$, and hence $s_{P}(C) = s_{P}(l)$.
        Therefore $C = (s_{P}(C))^{\sim}$ is contained in $\PP(\Ad_{g}\ufr_{P})^{\sim} = \pi_{P}^{-1}(g.m)$, which is a contradiction.

        \item $\pi_{P}|_{C} : C \rightarrow \pi_{P}(C)$ is bijective.
        To see this, suppose that $w,\,w' \in C$ and $\pi_{P}(w) =\pi_{P}(w')$.
        Then $s_{P}(C)$ is the line joining $s_{P}(w)$ and $s_{P}(w')$.
        If we write $\pi_{P}(w) =\pi_{P}(w') = g.m$ for some $g \in G$, then since $s_{P}(w),\,s_{P}(w') \in s_{P}(\pi_{P}^{-1}(g.m)) = \PP(\Ad_{g}\ufr_{P})$, $s_{P}(C)$ is contained in the linear subspace $\PP(\Ad_{g}\ufr_{P})$.
        Thus $C = (s_{P}(C))^{\sim}$ is contained in $\PP(\Ad_{g}\ufr_{P})^{\sim} = \pi_{P}^{-1}(g.m)$, which is a contradiction.
    \end{itemize}
    Being an immersive bijection, $\pi_{P}|_{C}$ is an isomorphism.
\end{proof}

Now we are ready to apply results of Section~\ref{ssection:projectivized cotangent bundles}.
To this end, note that in the notation of Assumption~\ref{assumption:Springer resolution}, rational curves on $\PP(T^{\vee}_{G/P})$ through $\tilde{z}$ and rational curves on $G/P$ through $m$ are free.
Indeed, for a very general point in $\PP(T^{\vee}_{G/P})$, rational curves through it and those through its image under $\pi_{P}$ are free by Theorem~\ref{thm:covering family is free}, and then since the $G$-orbit $G.\tilde{z}$ is open in $\PP(T^{\vee}_{G/P})$, it holds for $\tilde{z}$ and $m = \pi_{P}(\tilde{z})$.
Therefore Theorems~\ref{thm:lifting criterion} and \ref{thm:VMRT formulation for G/P} can be applied to $\tilde{z} \in \PP(T^{\vee}_{G/P})$.

\begin{theorem}\label{thm:lifting criterion in G/P}
    Under Assumption~\ref{assumption:Springer resolution}, there is a one-to-one correspondence between
    \begin{itemize}
        \item lines $l$ on $\overline{Z}$ through $z$ such that $l \not\subset \PP(\ufr_{P})$, and
        \item pairs $(C, [q])$ of a smooth rational curve $C$ on $G/P$ through $m$ such that $(T_{G/P}|_{C})^{\ge 2} \subset \Ann(\tilde{z})$ but $(T_{G/P}|_{C})^{\ge 1} \not\subset \Ann(\tilde{z})$ and $[q] \in \PP(\Hom_{\split}(T_{G/P}|_{C},\,\Ocal_{\PP^{1}}(1);\,m,\, \Ann(\tilde{z})))$.
    \end{itemize}
    These are related as follows: $C = \pi_{P}(l^{\sim})$, and $l^{\sim}$ is the section of $\PP(T^{\vee}_{G/P})|_{C} \rightarrow C$ determined by $q$.
\end{theorem}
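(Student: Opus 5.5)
The plan is to compose three bijections: lines $l$ on $\overline{Z}$ through $z$ with contact lines on $\PP(T^{\vee}_{G/P})$ through $\tilde{z}$; contact lines not contained in the fiber $\pi_{P}^{-1}(m)$ with their images in $G/P$ together with lifting data; and lifting data with split surjections, via Theorem~\ref{thm:lifting criterion}.

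\textbf{Step 1: lines versus contact lines.} By Proposition~\ref{prop:Springer map is contacto} and the projection formula, $l\mapsto l^{\sim}$ is a bijection between lines on $\overline{Z}$ through $z$ and contact lines on $\PP(T^{\vee}_{G/P})$ through $\tilde{z}$. The inverse is $C\mapsto s_{P}(C)$, which is birational onto a line because $s_{P}$ is birational and $C$ meets the open orbit where $s_{P}$ is an isomorphism. Moreover $s_{P}$ restricts to an isomorphism $\pi_{P}^{-1}(m)\simeq\PP(\ufr_{P})$. Hence $l\subset\PP(\ufr_{P})$ if and only if $l^{\sim}\subset\pi_{P}^{-1}(m)$, which holds if and only if $\pi_{P}(l^{\sim})=m$. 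So the lines in the first set correspond to contact lines $C'$ through $\tilde{z}$ with $\pi_{P}(C')$ a curve.

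\textbf{Step 2: projecting to $G/P$.} For such $C'$, Proposition~\ref{prop:contact lines are almost lines} shows that $C'$ is smooth and $\pi_{P}|_{C'}:C'\to C:=\pi_{P}(C')$ is an isomorphism. Thus $C$ is a smooth rational curve through $m$, and with $f:\PP^{1}\simeq C\hookrightarrow G/P$ we have that $f$ is immersive. Then $C'$ is the image of a lift $\tilde{f}$ of $f$ with $\tilde{f}^{*}\Ocal(1)\simeq\Ocal_{\PP^{1}}(1)$ and $\tilde{f}(o)=\tilde{z}$, where $o=f^{-1}(m)$. Equivalently, $C'$ is the section of $\PP(T^{\vee}_{G/P})|_{C}\to C$ given by a surjection $q:T_{G/P}|_{C}\to\Ocal_{\PP^{1}}(1)$ with $\ker q_{o}=\Ann(\tilde{z})$, and $q$ is unique up to scaling. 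The freeness hypotheses at $\tilde{z}$ and $m$ hold, as remarked just before the statement, so Theorem~\ref{thm:lifting criterion} applies. It gives three things: condition (2), namely $(T_{G/P}|_{C})^{\ge2}_{m}\subset\Ann(\tilde{z})$ but $(T_{G/P}|_{C})^{\ge1}_{m}\not\subset\Ann(\tilde{z})$; that $q$ is split, as in the first paragraph of its proof via Lemma~\ref{lemma:contact lines are from split surj}; and that $\tilde{f}$ is the normalization of its image. This produces the pair $(C,[q])$.

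\textbf{Step 3: the inverse map.} Conversely, take a pair $(C,[q])$ satisfying the conditions. The class $[q]$ defines a section $\tilde{f}$ over $C$ through $\tilde{z}$ of $\Ocal(1)$-degree one. By Lemma~\ref{lemma:contact lines are from split surj}, $\tilde{f}$ is birational onto its image, so $C':=\tilde{f}(\PP^{1})$ is a contact line through $\tilde{z}$ with $\pi_{P}(C')=C$ not a point. Set $l:=s_{P}(C')$; this is a line not contained in $\PP(\ufr_{P})$, and $l^{\sim}=C'$.

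\textbf{Step 4: mutual inverses.} Different $[q]$ give different sections, hence different $C'$ and different $l$. Conversely, $l$ determines $C'=l^{\sim}$, then $C=\pi_{P}(C')$, and then $[q]$, since the section determines $q$ up to scaling. So the two maps are mutually inverse.

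The main obstacle is Step 1 together with the bookkeeping about $\PP(\ufr_{P})$. One must check carefully that the strict transform of a line through $z$ has $\Ocal(1)$-degree exactly one and meets $\tilde{z}$. One must also check that $l\not\subset\PP(\ufr_{P})$ is exactly the condition that $\pi_{P}(l^{\sim})$ is a curve. This uses that $s_{P}$ identifies the fiber over $m$ with $\PP(\ufr_{P})$, and that it is an isomorphism near $\tilde{z}$.
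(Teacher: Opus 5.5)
Your proposal is correct and takes the same approach as the paper. The paper's proof likewise observes that $\pi_{P}(l^{\sim}) = m$ exactly when $l \subset \PP(\ufr_{P})$, then combines Proposition~\ref{prop:contact lines are almost lines} with Theorem~\ref{thm:lifting criterion}; your Steps 1--4 simply write out that two-line argument in detail.
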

\begin{proof}
    Notice that given a line $l$ on $\overline{Z}$ through $z$, $\pi_{P}(l^{\sim}) = m$ if and only if $l \subset \PP(\ufr_{P})$.
    Then the statement follows from Proposition~\ref{prop:contact lines are almost lines} and Theorem~\ref{thm:lifting criterion}.
\end{proof}

Next, Theorem~\ref{thm:VMRT formulation} can be improved in the following form.
The key is that $\tilde{z}$ is not only a general point of $\PP(T^{\vee}_{G/P})$, but also a general point of $(\pi_{P})^{-1}(m) \simeq \PP(\ufr_{P})$.

\begin{theorem}\label{thm:VMRT formulation for G/P}
    Under Assumption~\ref{assumption:Springer resolution}, let $\Kscr$ be a family of lines on $G/P$ with respect to some polarization.
    Let $a$ be the integer such that $0 \le a \le n$ and $\Kscr$ is unbendable of type $(1^{a},\,0^{n-a})$.
    Then the following are equivalent:
    \begin{enumerate}
        \item\label{item1:thm:VMRT formulation for G/P} We have $a \ge 1$; and
        \item\label{item2:thm:VMRT formulation for G/P} There exists a line $l$ on $\overline{Z}$ through $z$ such that $\pi_{P}(l^{\sim})$ is a member of $\Kscr$.
    \end{enumerate}
    Suppose that one of the conditions holds.
    If $X_{\Kscr}$ is the subset of $F(\overline{Z},\,z)$ consisting of all points of the form $[T_{l,\,z}]$ for $l$ as in the condition (\ref{item2:thm:VMRT formulation for G/P}), then $X_{\Kscr}$ is locally closed in $F(\overline{Z},\,z)$ and of pure dimension $n-1$, and the natural morphism
    \[
        X_{\Kscr} \rightarrow \VMRT^{\Kscr}_{m} \cap \PP(\Ann(\tilde{z})), \quad [T_{l,\,z}] \mapsto [T_{\pi_{P}(l^{\sim}),\,m}]
    \]
    is a surjection whose fibers are irreducible and of dimension $n-a$.
\end{theorem}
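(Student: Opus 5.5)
The plan is to reduce to Theorem~\ref{thm:VMRT formulation}, applied to $M = G/P$ (with a projective embedding in which members of $\Kscr$ are lines) at the point $\tilde{z}$. The hypotheses are satisfied: we saw just before Theorem~\ref{thm:lifting criterion in G/P} that rational curves through $\tilde{z}$ and through $m$ are free. Lines $l$ on $\overline{Z}$ through $z$ correspond, via strict transform, to contact lines $l^{\sim}$ through $\tilde{z}$. If $\pi_{P}(l^{\sim})$ is a member of $\Kscr$, then Proposition~\ref{prop:contact lines are almost lines} shows that $\pi_{P}|_{l^{\sim}}$ is an isomorphism onto it. Hence condition (\ref{item2:thm:VMRT formulation for G/P}) is equivalent to condition (\ref{item2:thm:VMRT formulation}) of Theorem~\ref{thm:VMRT formulation}. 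It therefore remains to show that condition (\ref{item1:thm:VMRT formulation}) of that theorem, for the hyperplane $\Ann(\tilde{z})$, is equivalent to $a \ge 1$.

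This is the key step, and it uses the genericity of $\tilde{z}$ inside the fiber $\pi_{P}^{-1}(m) \simeq \PP(\ufr_{P})$. First, $\VMRT^{\Kscr}_{m}$ is smooth of pure dimension $a$. If $a = 0$, it is a finite set of points, so its projective tangent spaces are points; such a point lies in $\PP(\Ann(\tilde{z}))$ whenever $\alpha$ does. Thus condition (\ref{item1:thm:VMRT formulation}) fails, and so does (\ref{item2:thm:VMRT formulation for G/P}).

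Conversely, suppose $a \ge 1$ and consider a general hyperplane $H \subset T_{G/P,\,m}$. I claim that $H$ meets $\VMRT^{\Kscr}_{m}$, and at a general point $\alpha$ of the intersection the tangent space $\hat{T}_{\alpha}$ is not contained in $H$. For a general hyperplane this holds by Bertini, since the tangency locus (the dual variety) is a proper closed subset of the dual space. Now $\tilde{z}$ lies in the open $P$-orbit of $\PP(\ufr_{P}) = \PP(T^{\vee}_{G/P,\,m})$, so the hyperplane $\Ann(\tilde{z})$ ranges over a dense open subset of the space of all hyperplanes. Moreover, $\VMRT^{\Kscr}_{m}$ is invariant under the isotropy action of $P$ on $T_{G/P,\,m}$, because $\Kscr$ is $G$-stable. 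Since the property is $P$-invariant and holds on a dense open set of hyperplanes, it holds for $\Ann(\tilde{z})$ itself. This is where I expect the main subtlety: we need that the open $P$-orbit avoids the dual variety of $\VMRT^{\Kscr}_{m}$, and that every component of $\VMRT^{\Kscr}_{m} \cap \PP(\Ann(\tilde{z}))$ has a point of transversal tangency.

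Assuming the equivalence, Theorem~\ref{thm:VMRT formulation} gives a unique family $\Hscr$ of contact lines, and $X_{\Kscr} = ds_{P}(\VMRT^{\Hscr}_{\tilde{z}})$ up to the open condition that the image of $l^{\sim}$ lies in $\Kscr$. By Theorem~\ref{thm:covering family is free}, this set is of pure dimension $n-1$. Local closedness follows because $X_{\Kscr}$ is the image of an open subset of the closed set $\VMRT^{\Hscr}_{\tilde{z}}$; the complement consists of lines inside $\PP(\ufr_{P})$ and of degenerations, which form a closed set.

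The map $\Pi$ is then surjective onto $\VMRT^{\Kscr}_{m} \cap \PP(\Ann(\tilde{z}))$, because by transversality every point $\alpha$ of the intersection satisfies condition (\ref{item1:thm:VMRT formulation}), again using genericity of $\tilde{z}$. Its fibers are images of $\CC^{n-a}$ under finite surjective maps, so they are irreducible of dimension $n-a$. As a consistency check, $\dim(\VMRT^{\Kscr}_{m} \cap \PP(\Ann(\tilde{z}))) = a-1$, and $(a-1)+(n-a) = n-1$.
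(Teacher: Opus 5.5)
Your proposal is correct and follows the paper's proof essentially step for step. You reduce to Theorem~\ref{thm:VMRT formulation} via Proposition~\ref{prop:contact lines are almost lines}, and you use that $\Ann(\tilde{z})$ lies in the open $P$-orbit of hyperplanes, hence outside the proper $P$-invariant dual variety of $\VMRT^{\Kscr}_{m}$; this gives transversality at every intersection point, which settles the subtlety you flag and also yields surjectivity of $\Pi$. Your hedge ``up to the open condition'' is unnecessary: Theorem~\ref{thm:VMRT formulation} already says every member of $\Hscr$ maps isomorphically onto a member of $\Kscr$, so $X_{\Kscr} = ds_{P}(\VMRT^{\Hscr}_{\tilde{z}})$ exactly.
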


\begin{remark}\label{rmk:VMRT of G/P}
    For any rational homogeneous space $G/P$, not necessarily satisfying Assumption~\ref{assumption:Springer resolution}, let $\Kscr$ be a family of lines on $G/P$.
    Then there exists a $G$-equivariant surjection $G/P \rightarrow G/Q$ such that members of $\Kscr$ are contracted and $Q/P$ is a rational homogeneous space of Picard number one.
    Thus the VMRT of $\Kscr$ coincides with that of a (unique) family of lines on $Q/P$, which is known to be irreducible (see for example \cite{LandsbergManivel2003ProjectiveGeometry}).
    That is, $\VMRT^{\Kscr}_{1.P}$ is a smooth projective irreducible subvariety of $\PP(T_{G/P,\,1.P})$ (cf. Theorem~\ref{thm:VMRT formulation}).

    Now in the setup of Theorem~\ref{thm:VMRT formulation for G/P}, it follows that if $a (=\dim \VMRT^{\Kscr}_{m}) \ge 2$, then $\VMRT^{\Kscr}_{m} \cap \PP(\Ann(\tilde{z}))$ is smooth and irreducible, thanks to the Bertini theorem.
    Indeed, by the choice of $\tilde{z}$, the $\Ad_{P}$-orbit of $\tilde{z}$ is open in $\ufr_{P} \simeq T^{\vee}_{G/P,\,m}$, while $\VMRT^{\Kscr}_{m}$ is $P$-invariant.
\end{remark}

\begin{proof}[Proof of Theorem~\ref{thm:VMRT formulation for G/P}]
    Thanks to Proposition~\ref{prop:contact lines are almost lines}, the condition (\ref{item2:thm:VMRT formulation for G/P}) is equivalent to the condition (\ref{item2:thm:VMRT formulation}) in Theorem~\ref{thm:VMRT formulation}.
    Apparently, the condition (\ref{item1:thm:VMRT formulation}) in Theorem~\ref{thm:VMRT formulation} implies the condition (\ref{item1:thm:VMRT formulation for G/P}).
    So by Theorem~\ref{thm:VMRT formulation}, to show the equivalence of the two conditions in the statement, it is enough to show that the condition (\ref{item1:thm:VMRT formulation for G/P}) implies the condition (\ref{item1:thm:VMRT formulation}) in Theorem~\ref{thm:VMRT formulation}.
    In fact, if $a\ge 1$, then since $\dim \VMRT^{\Kscr}_{m} = a$ and $\VMRT^{\Kscr}_{m} \cap \PP(\Ann(\tilde{z}))$ is a general hyperplane section in $\PP(T_{G/P,\,m})$ by the choice of $\tilde{z}$ (cf. Remark~\ref{rmk:VMRT of G/P}), the intersection is transversal by the Bertini theorem, and hence the condition (\ref{item1:thm:VMRT formulation}) in Theorem~\ref{thm:VMRT formulation} is satisfied.

    Now it follows that the conditions (\ref{item1:thm:VMRT formulation for G/P}) and (\ref{item2:thm:VMRT formulation for G/P}) are equivalent, and if one of them holds, then Theorem~\ref{thm:VMRT formulation} applies.
    Hence we have a unique family $\Hscr$ of contact lines on $\PP(T^{\vee}_{G/P})$ such that $\Hscr_{\tilde{z}}$ consists of all contact lines through $\tilde{z}$ sent onto members of $\Kscr$ (cf. Proposition~\ref{prop:contact lines are almost lines}).
    Its VMRT is of pure dimension $n-1$ (cf. Theorem~\ref{thm:covering family is free}), and equipped with the morphism
    \[
        \Pi : \VMRT^{\Hscr}_{\tilde{z}} \rightarrow \VMRT^{\Kscr}_{m} \cap \PP(\Ann(\tilde{z}))
    \]
    with irreducible fibers of dimension $n-a$.
    Furthermore, $\Pi$ is surjective, since the condition (\ref{item1:thm:VMRT formulation}) in Theorem~\ref{thm:VMRT formulation} is satisfied for any point of $\VMRT^{\Kscr}_{m} \cap \PP(\Ann(\tilde{z}))$ by the transversality of the intersection.
    Since $s_{P}$ identifies lines through $z$ and contact lines through $\tilde{z}$, we have $X_{\Kscr} = ds_{P}(\VMRT^{\Hscr}_{\tilde{z}})$.
\end{proof}

\begin{corollary} \label{coro:irreducible comp over VMRT of lines}
    Under Assumption~\ref{assumption:Springer resolution}, let $\Kscr$ be a family of lines on $G/P$ such that $\Kscr$ is unbendable of type $(1^{a},\,0^{n-a})$ for some $2 \le a \le n$.
    Then there exists a unique Legendrian component $\overline{X_{\Kscr}}$ of $F(\overline{Z},\,z)$ such that if $l$ is a line on $\overline{Z}$ through $z$ such that $\pi_{P}(l^{\sim})$ is a member of $\Kscr$, then $[T_{l,\,z}] \in \overline{X_{\Kscr}}$.
\end{corollary}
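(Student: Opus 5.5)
Since $a \ge 2 \ge 1$, condition (\ref{item1:thm:VMRT formulation for G/P}) of Theorem~\ref{thm:VMRT formulation for G/P} holds. That theorem then tells us that the set $X_{\Kscr}$ of tangent directions $[T_{l,\,z}]$, for lines $l$ with $\pi_{P}(l^{\sim}) \in \Kscr$, is nonempty, locally closed in $F(\overline{Z},\,z)$, and of pure dimension $n-1$. The plan is to show that $X_{\Kscr}$ is irreducible. Then its closure $\overline{X_{\Kscr}}$ is an irreducible closed subset of $F(\overline{Z},\,z)$ of dimension $n-1$. Corollary~\ref{coro:dim VMRT is at most n-1} says every component of $F(\overline{Z},\,z)$ has dimension at most $n-1$, so $\overline{X_{\Kscr}}$ must itself be an irreducible component, hence Legendrian. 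Uniqueness is then formal: any component containing all points of $X_{\Kscr}$ contains $\overline{X_{\Kscr}}$, and two distinct components of the same dimension cannot be contained one in the other.

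To prove irreducibility, I will use the surjection
\[
    \Pi : X_{\Kscr} \rightarrow \VMRT^{\Kscr}_{m} \cap \PP(\Ann(\tilde{z}))
\]
from Theorem~\ref{thm:VMRT formulation for G/P}, whose fibers are irreducible of dimension $n-a$. Since $a \ge 2$, Remark~\ref{rmk:VMRT of G/P} shows that the target $Y \coloneqq \VMRT^{\Kscr}_{m} \cap \PP(\Ann(\tilde{z}))$ is smooth and irreducible of dimension $a-1 \ge 1$. This uses that $\VMRT^{\Kscr}_{m}$ is irreducible (it is the VMRT of lines on the Picard number one space $Q/P$), that $\Ann(\tilde{z})$ is a general hyperplane because $\tilde{z}$ lies in the open $\Ad_{P}$-orbit, and Bertini. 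A surjective morphism with irreducible equidimensional fibers onto an irreducible base need not have irreducible source in general, so I will argue as follows. Every irreducible component $X'$ of $X_{\Kscr}$ has dimension $n-1 = (a-1)+(n-a)$. By upper semicontinuity of fiber dimension, the fibers of $\Pi|_{X'}$ over its image have dimension at most $n-a$. Hence $\Pi(X')$ has dimension at least $a-1$, so it is dense in $Y$, and generic fibers of $\Pi|_{X'}$ have dimension exactly $n-a$. A general fiber $\Pi^{-1}(y)$ is irreducible of dimension $n-a$, and contains $X' \cap \Pi^{-1}(y)$, which has a piece of full dimension $n-a$. So $\Pi^{-1}(y) \subset X'$ for general $y$. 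If $X''$ is another component, the same argument gives $\Pi^{-1}(y) \subset X''$ for $y$ in a dense open subset of $Y$. Then $X' \cap X''$ contains $\Pi^{-1}(U)$ for a dense open $U \subset Y$, which has dimension $n-1$. This forces $X' = X''$.

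The main obstacle is making this fiber-dimension argument rigorous, since $\Pi$ is a priori neither proper nor flat. I would first try to work upstairs instead. The proof of Theorem~\ref{thm:VMRT formulation} exhibits the relevant contact lines through $\tilde{z}$ as the family $\bigcup_{y \in Y} \PP(\Hom_{\split}(T_{G/P}|_{l_{y}},\,\Ocal_{\PP^{1}}(1);\,m,\,\Ann(\tilde{z})))$. By Proposition~\ref{prop:Hsp} this is an affine-space bundle $\CC^{n-a}$ over the irreducible base $Y$, with locally trivial structure coming from the locally free sheaf of Claim~\ref{claim2:thm:VMRT formulation}. Such a bundle is irreducible. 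Its image under the tangent-direction map is exactly $X_{\Kscr}$, which is therefore irreducible. This avoids the semicontinuity subtleties, and I would fall back on the dimension count above only if the local triviality over $Y$ turns out to be awkward to justify.
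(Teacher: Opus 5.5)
Your proposal is correct and follows the paper's route: the irreducibility of $\VMRT^{\Kscr}_{m} \cap \PP(\Ann(\tilde{z}))$ from Remark~\ref{rmk:VMRT of G/P}, combined with the surjection with irreducible $(n-a)$-dimensional fibers from Theorem~\ref{thm:VMRT formulation for G/P}, gives the irreducibility of $X_{\Kscr}$, and the paper simply cites a standard irreducibility criterion for this step. Your fiber-dimension argument is already rigorous as written, because the pure dimensionality of $X_{\Kscr}$ is exactly what substitutes for properness or flatness, so the upstairs affine-bundle detour is unnecessary.
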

\begin{proof}
    By Remark~\ref{rmk:VMRT of G/P}, $\VMRT^{\Kscr}_{m} \cap \PP(\Ann(\tilde{z}))$ is irreducible, and thus $X_{\Kscr}$ in Theorem~\ref{thm:VMRT formulation for G/P} is also irreducible (see for example \cite{MuIrreducibility}).
\end{proof}

\subsection{Stratified Mukai flops}\label{ssection:Stratified Mukai flops}

Important examples satisfying Assumption~\ref{assumption:Springer resolution} arise from the \emph{stratified Mukai flops}.
In this section, we apply the results of the previous section to the case where $\overline{Z}$ is associated to a stratified Mukai flop.

Let $\gfr$ be a simple Lie algebra, and $P_{i}$ ($i=1,\,2$) maximal parabolic subgroups of $G$ associated to the single-marked Dynkin diagrams in Table~\ref{tab:Stratified Mukai flops}.
These are the only parabolic subgroups $P$ such that $G/P$ is of Picard number one and the moment map $T^{\vee}_{G/P} \rightarrow \overline{O_{P}}$ is a small resolution (cf. \cite[Proposition~5.1]{Na06}).
Moreover, $P_{1}$ and $P_{2}$ are not conjugate to each other in $G$, but have the same moment image $\overline{O_{P_{1}}} = \overline{O_{P_{2}}}$ (cf. \cite[Remark~5.2]{Na06}).
Therefore if we denote by $\overline{Z}$ the nilpotent orbit closure $\PP(\overline{O_{P_{1}}}) = \PP(\overline{O_{P_{2}}})$, then we have a diagram
\begin{equation} \label{eqn:Mukai flop}
    \begin{tikzcd}
        \PP(T^{\vee}_{G/P_{1}}) \arrow[rd, "s_{P_{1}}"] \arrow[d,"\pi_{P_{1}}"] \arrow[rr, dotted]&  & \PP(T^{\vee}_{G/P_{2}}) \arrow[ld, "s_{P_{2}}"] \arrow[d,"\pi_{P_{2}}"]\\
        G/P_{1} & \overline{Z} & G/P_{2},
    \end{tikzcd}
\end{equation}
satisfying Assumption~\ref{assumption:Springer resolution}.
The rational map $\PP(T^{\vee}_{G/P_{1}}) \dashrightarrow \PP(T^{\vee}_{G/P_{2}})$ in (\ref{eqn:Mukai flop}) is called the \emph{stratified Mukai flop} of type as in Table~\ref{tab:Stratified Mukai flops}.
In Table~\ref{tab:Stratified Mukai flops}, the nilpotent orbit closure $\overline{Z} = \PP(\overline{O})$ in (\ref{eqn:Mukai flop}) is also given, by specifying the label of $O$ in the notation of \cite{CollingwoodMcGovern1993NilpotentOrbits} (in type $A$ and $D$, see also Section~\ref{section:square zero}).

The number $d$ in Table~\ref{tab:Stratified Mukai flops} is defined as follows.
Notice that for a given point $z\in Z$, $F(\overline{Z},\,z)$ contains at least two components: $\PP(T_{\PP(\ufr_{P_{1}}),\,z})$ and $\PP(T_{\PP(\ufr_{P_{2}}),\,z})$.
We denote by $d$ the degree of rational curves $\pi_{P_{1}}(l^{\sim})$ on $G/P_{1}$ where $l$ is a general line on $\PP(\ufr_{P_{2}})$.
The degree $d$ is computed in \cite[\S5.D]{FL24}, and the result is given in Table~\ref{tab:Stratified Mukai flops}.
Another description of $d$ is the following:
\begin{proposition}[{Corollary of \cite[Proposition~5.8 and \S5.D]{FL24}}] \label{prop:deg in Mukai flop}
    In the diagram (\ref{eqn:Mukai flop}), let $H_{i}$ ($i=1,\,2$) be the ample Picard generator of $G/P_{i}$.
    Let $l$ be a line on $\overline{Z}$, intersecting with the open orbit $Z$.
    Denote by $l^{\sim}_{i}$ the strict transform of $l$ via $s_{P_{i}}$.
    Then we have
    \[
        \deg_{l^{\sim}_{1}} (\pi_{P_{1}}^{*} H_{1}) + \deg_{l^{\sim}_{2}} (\pi_{P_{2}}^{*} H_{2}) = d.
    \]
\end{proposition}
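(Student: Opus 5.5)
Since the statement is cited as a corollary of \cite[Proposition~5.8 and \S5.D]{FL24}, I plan to reduce it to a degree computation on a common resolution of the two Springer resolutions. Let $\widehat{Y}$ be the closure of the graph of the flop $\PP(T^{\vee}_{G/P_{1}}) \dashrightarrow \PP(T^{\vee}_{G/P_{2}})$, or equivalently the normalization of the fiber product over $\overline{Z}$. Let $p_{i} : \widehat{Y} \rightarrow \PP(T^{\vee}_{G/P_{i}})$ be the two projections. Since $s_{P_{1}}$ and $s_{P_{2}}$ are isomorphisms over $Z$, $\widehat{Y}$ contains $Z$ as an open orbit. For a line $l$ meeting $Z$, its strict transform $\widehat{l}$ in $\widehat{Y}$ maps isomorphically onto $l^{\sim}_{1}$ and onto $l^{\sim}_{2}$. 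Hence
\[
\deg_{l^{\sim}_{i}}(\pi_{P_{i}}^{*}H_{i}) = \deg_{\widehat{l}}(p_{i}^{*}\pi_{P_{i}}^{*}H_{i}).
\]
The identity to be proven thus becomes the statement that the line bundle $\mathcal{L} \coloneqq p_{1}^{*}\pi_{P_{1}}^{*}H_{1} \otimes p_{2}^{*}\pi_{P_{2}}^{*}H_{2}$ has degree $d$ on $\widehat{l}$.

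The key input I would take from \cite[Proposition~5.8]{FL24} is a linear relation in $\mathrm{Pic}(\widehat{Y})$. Restricted to curves not contained in the exceptional loci, it should express $\mathcal{L}$ in terms of the pullback of $\Ocal_{\PP(\gfr)}(1)$ and exceptional divisors. The heuristic is that on $\PP(T^{\vee}_{G/P_{1}})$ the flop transforms $\pi_{P_{1}}^{*}H_{1}$ into $-\pi_{P_{2}}^{*}H_{2}$ plus a multiple of the tautological class. Concretely, I expect a relation
\[
\pi_{P_{1}}^{*}H_{1} + \pi_{P_{2}}^{*}H_{2} \equiv c \cdot s^{*}\Ocal(1)
\]
modulo divisors supported over $\overline{Z} \setminus Z$, with $c = d$ as computed in \cite[\S5.D]{FL24}. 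Since $\mathcal{L}$ and $\Ocal(1)$ are Cartier, and both sides of the relation are Weil divisor classes that agree over $Z$, any discrepancy on $\widehat{Y}$ is a combination of $p$-exceptional divisors.

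The main obstacle is that these exceptional divisors could meet $\widehat{l}$, even though $l$ meets $Z$. I would handle this with a deformation argument. Lines through a general point of $Z$ move in families, and the degree of a fixed line bundle is constant in a flat family of curves. It therefore suffices to compute the degree for one convenient line in each family, and then show the result is independent of the family. The convenient choice is a general line $l \subset \PP(\ufr_{P_{2}})$. For it, $l^{\sim}_{2}$ lies in a fiber of $\pi_{P_{2}}$, so $\deg_{l^{\sim}_{2}}(\pi_{P_{2}}^{*}H_{2}) = 0$, while $\deg_{l^{\sim}_{1}}(\pi_{P_{1}}^{*}H_{1}) = d$ by the very definition of $d$. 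This gives the value $d$.

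To show the sum is the same for all lines, I would use that the quantity equals $\mathcal{L}\cdot\widehat{l}$. I would then check that $\mathcal{L} - d\, \widehat{s}^{*}\Ocal(1)$ is numerically trivial on every curve in $\widehat{Y}$ that meets the open orbit. This is where I would invoke the explicit Picard-group computation of \cite[Proposition~5.8]{FL24}. If that fails, the fallback is a case-by-case check over the types $A$, $D$, $E_{6,\,I}$ and $E_{6,\,II}$ using \cite[\S5.D]{FL24}.
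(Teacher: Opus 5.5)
\paragraph{Verdict.} Your reduction to the common resolution $\widehat{Y}$ is fine, and you have found the real obstacle. But the step meant to remove it cannot succeed: the identity is false for lines that meet the flopping locus.

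\paragraph{Why the key step fails.} Let $D_1$ be the strict transform of $\pi_{P_2}^*H_2$ on $\PP(T^{\vee}_{G/P_1})$. This is the class $\mu^*\pi_{P_2}^*H_2$ of \cite[Proposition~5.8]{FL24}. Put $E := p_1^*D_1 - p_2^*\pi_{P_2}^*H_2$. Then $E$ is exceptional over $\overline{Z}$ and $p_{1*}E=0$. Moreover $-E$ is $p_1$-nef, because $\pi_{P_2}^*H_2$ is nef. The negativity lemma therefore gives $E\ge 0$. Combining this with \cite{FL24} and Proposition~\ref{prop:Springer map is contacto} yields $\mathcal{L}\equiv d\,\widehat{s}^*\Ocal_{\PP(\gfr)}(1)-E$, and hence
\[
\deg_{l^{\sim}_{1}}(\pi_{P_1}^*H_1)+\deg_{l^{\sim}_{2}}(\pi_{P_2}^*H_2)=d-E\cdot\widehat{l}\le d .
\]
The inequality is strict whenever $\widehat{l}$ meets $\operatorname{Supp}(E)$.

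\paragraph{A counterexample.} This really happens. Take type $A$ with $N=5$ and $k=2$, so $d=2$.
\begin{enumerate}
\item Fix $W_0\subset U_0\subset\CC^5$ of dimensions $2$ and $3$, and let $L=\{A : A(\CC^5)\subset W_0,\ A(U_0)=0\}\simeq\Hom(\CC^5/U_0,W_0)$.
\item For every rank-two $A\in L$ we have $[A]\in Z$. Its preimage under $s_{P_1}$ is the point $(W_0,[A])$, and its preimage under $s_{P_2}$ is the point $(U_0,[A])$.
\item Hence for every line $l\subset\PP(L)\simeq\PP^3$ through a rank-two point $z$, the curves $l^\sim_1$ and $l^\sim_2$ lie in fibers of $\pi_{P_1}$ and $\pi_{P_2}$.
\item So both degrees are $0$, not summing to $2$.
\end{enumerate}
Thus the numerical triviality of $\mathcal{L}-d\,\widehat{s}^*\Ocal(1)$ on curves meeting the open orbit, which you planned to check, is false. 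Your case-by-case fallback would already fail in type $A$.

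\paragraph{Why the deformation argument fails.} The cause is the same. Strict transforms of a flat family of lines on $\overline{Z}$ do not form a flat family on $\widehat{Y}$. The flat limit of $\widehat{l}_t$ is $\widehat{l}_0$ plus curves in $\operatorname{Supp}(E)$, and those curves carry the missing degree. Concretely, the lines in $\PP(L)$ above are specializations of general lines through $z$ in $\PP(\ufr_{P_1})$. Those general lines lie in $Z$ and have degrees $(0,2)$. So computing one convenient line per family does not determine the degrees of all members of the family.

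\paragraph{Comparison with the paper.} The paper's proof is one line. It evaluates the relation $\pi_{P_1}^*H_1+\mu^*\pi_{P_2}^*H_2\equiv\Ocal(d)$ from \cite[Proposition~5.8, \S5.D]{FL24} on $l^\sim_1$, using Proposition~\ref{prop:Springer map is contacto}. This tacitly identifies $\deg_{l^\sim_1}(\mu^*\pi_{P_2}^*H_2)$ with $\deg_{l^\sim_2}(\pi_{P_2}^*H_2)$, which is exactly the condition $E\cdot\widehat{l}=0$.

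\paragraph{What can actually be proved.} Along either route you get two statements:
\begin{enumerate}
\item Equality holds for lines with $\widehat{l}\cap\operatorname{Supp}(E)=\emptyset$, for example $l\subset Z$. This includes the general lines of $\PP(\ufr_{P_2})$ used to define $d$.
\item The inequality $\le d$ holds for all lines meeting $Z$.
\end{enumerate}
The inequality already suffices for the case analysis in Corollary~\ref{coro: Mukai flop E6I}.
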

\begin{proof}
    By \cite[Proposition~5.8]{FL24}, we have
    \[
        \pi_{P_{1}}^{*}(H_{1}) + \mu^{*}\pi_{P_{2}}^{*}(H_{2}) \equiv_{\text{num}} \Ocal_{\PP(T^{\vee}_{G/P_{1}})}(d)
    \]
    where $\mu$ is the stratified Mukai flop $\PP(T^{\vee}_{G/P_{1}}) \dashrightarrow \PP(T^{\vee}_{G/P_{2}})$.
    In \cite[Proposition~5.8]{FL24}, there is a coefficient of $\pi_{P_{1}}^{*}(H_{1})$ written as $b(H)$, which turns out to be equal to 1 in \cite[\S5.D]{FL24}.
    Then the statement follows from Proposition~\ref{prop:Springer map is contacto}.
\end{proof}

\begin{remark}\label{rmk:rank of IHSS}
    In type $A$, $D$ and $E_{6,\,I}$, $G/P_{i}$ are irreducible Hermitian symmetric spaces, and their ranks as Riemannian symmetric spaces are equal to $d$ (cf. \cite[Table~V, \S X.6]{Helgason1979DifferentialGeometry}).
\end{remark}

\begin{table}
    \centering
    \begin{tabular}{c|c|c|c|c|c|p{2cm}}
        Type & $\gfr$ & $P_{1}$ & $P_{2}$ & $\overline{Z}$ & $d$ & Condition \\ \hline\hline
        $A$ & $A_{N-1}$ & \dynkin[labels={1, k, N-1}, edge length=.75cm] A{o.x.o} & \dynkin[labels={1, N-k, N-1}, edge length=.75cm] A{o.x.o} & $[2^{k},\,1^{N-2k}]$ & $k$ & $1 \le k < \frac{N}{2}$ \\ \hline
        $D$ & $D_{N/2}$ & \dynkin[labels={1,,N/2-1,N/2},
        edge length=.75cm]D{o.oxo} & \dynkin[labels={1,,N/2-1,N/2},
        edge length=.75cm]D{o.oox} & $[2^{N/2-1},\,1^{2}]$ & $\frac{N-2}{4}$ & $N \ge 10$ even, \newline $N/2$ odd \\ \hline
        $E_{6,\,I}$ & $E_{6}$ & \dynkin[labels={1, 2, 3, 4, 5, 6},
        edge length=.75cm]E{xooooo} & \dynkin[labels={1, 2, 3, 4, 5, 6},
        edge length=.75cm]E{ooooox} & $2A_{1}$ & 2 & - \\ \hline
        $E_{6,\,II}$ & $E_{6}$ & \dynkin[labels={1, 2, 3, 4, 5, 6},
        edge length=.75cm]E{ooxooo} & \dynkin[labels={1, 2, 3, 4, 5, 6},
        edge length=.75cm]E{ooooxo} & $A_{2}+2A_{1}$ & 4 & -
    \end{tabular}
    \caption{Stratified Mukai flops of type $A$, $D$, $E_{6,\,I}$ and $E_{6,\,II}$.}
    \label{tab:Stratified Mukai flops}
\end{table}

\begin{corollary} \label{coro: Mukai flop E6I}
    In the notation of Proposition~\ref{prop:deg in Mukai flop}, suppose that the diagram (\ref{eqn:Mukai flop}) is of type $E_{6,\,I}$, and let $z \in Z$ be a point.
    Then $F(\overline{Z},\,z)$ consists of three Legendrian components $X_{0}$, $X_{1}$ and $X_{2}$ such that for a line $l$ on $\overline{Z}$ through $z$, if $\deg_{l^{\sim}_{1}}(\pi_{P_{1}}^{*} H_{1}) = i$, then $[T_{l,\,z}] \in X_{i}$ ($i=0,\,1,\,2$).
\end{corollary}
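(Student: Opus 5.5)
Write $G/P_{1} = E_{6}/P_{1}$, the Cayley plane $\mathbb{OP}^{2}$, of dimension $16$, so that $n+1 = 16$ and $n = 15$. The plan is to sort lines $l$ on $\overline{Z}$ through $z$ according to the degree $i = \deg_{l^{\sim}_{1}}(\pi_{P_{1}}^{*}H_{1})$. By Proposition~\ref{prop:deg in Mukai flop} with $d=2$, we have $i \in \{0,1,2\}$, with complementary degree $2-i$ on the $P_{2}$ side. I will then show that each class is dense in a single Legendrian component.

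\emph{Degree $0$.} Here $\pi_{P_{1}}(l^{\sim}_{1}) = m$, i.e. $l \subset \PP(\ufr_{P_{1}})$. These directions form the linear space $X_{0} = \PP(T_{\PP(\ufr_{P_{1}}),\,z})$, which has dimension $15-1 = n-1$. By Proposition~\ref{prop: VMRT of Richardson is Leg} it is a Legendrian component.

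\emph{Degree $2$.} Now the $P_{2}$-degree is $0$, so $l \subset \PP(\ufr_{P_{2}})$. These directions form the second linear component $X_{2} = \PP(T_{\PP(\ufr_{P_{2}}),\,z})$, again of dimension $n-1$.

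\emph{Degree $1$.} By Proposition~\ref{prop:contact lines are almost lines}, $\pi_{P_{1}}$ maps $l^{\sim}_{1}$ isomorphically onto a curve of $H_{1}$-degree $1$, i.e. a line on the Cayley plane. Lines on $E_{6}/P_{1}$ form a single family $\Kscr$. Its VMRT is the spinor variety $S_{10}$, of dimension $10$, so $\Kscr$ is unbendable of type $(1^{a},\,0^{n-a})$ with $a = 10 \ge 2$. Corollary~\ref{coro:irreducible comp over VMRT of lines} then gives a unique Legendrian component $X_{1} = \overline{X_{\Kscr}}$ containing all such directions. Theorem~\ref{thm:VMRT formulation for G/P} guarantees that such lines exist, since $a \ge 1$.

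Since $F(\overline{Z},\,z)$ is of pure dimension $n-1$ by Proposition~\ref{prop: VMRT of Richardson is Leg}, it is the union of the loci of the three degree types. Each locus has a single Legendrian closure, namely $X_{0}$, $X_{1}$, $X_{2}$, so $F(\overline{Z},\,z) = X_{0}\cup X_{1}\cup X_{2}$. The membership statement ($\deg = i \Rightarrow [T_{l,\,z}] \in X_{i}$) holds by construction.

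The step I expect to be the main obstacle is showing that the three components are distinct, so that there are exactly three of them. First, $X_{0} \ne X_{2}$ because $\PP(\ufr_{P_{1}}) \ne \PP(\ufr_{P_{2}})$: the parabolics $P_{1}$ and $P_{2}$ are non-conjugate, and their linear spaces meet in $z$ along different tangent spaces. Second, $X_{1}$ is not linear, or at least not equal to $X_{0}$ or $X_{2}$. The degree function is locally constant on the family of lines through $z$ meeting $Z$ only up to specialization, so I would argue as follows. A general point of $X_{1}$ corresponds to a line of degree exactly $1$, lying in neither $\PP(\ufr_{P_{i}})$. If $X_{1}$ coincided with $X_{0}$, then a general line with tangent direction in $X_{0}$ would have degree $1$. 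But every line with tangent direction in $X_{0}$ lies in the linear space $\PP(\ufr_{P_{1}})$, and so has degree $0$, a contradiction. The same argument rules out $X_{1} = X_{2}$.
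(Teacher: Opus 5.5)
Your proposal is correct and follows the paper's proof. It uses the same trichotomy $i\in\{0,1,2\}$ from Proposition~\ref{prop:deg in Mukai flop}, the two linear components $\PP(T_{\PP(\ufr_{P_{1}}),\,z})$ and $\PP(T_{\PP(\ufr_{P_{2}}),\,z})$ for $i=0,2$, and Corollary~\ref{coro:irreducible comp over VMRT of lines} for $i=1$; you obtain $a=10$ from the spinor VMRT rather than from the Fano index $12$, which is equivalent. Your explicit distinctness argument, which the paper leaves implicit, is sound: a line through $z$ is determined by its tangent direction, so the three degree classes partition $F(\overline{Z},\,z)$, and the nonempty set $X_{\Kscr}$ cannot lie in $X_{0}$ or $X_{2}$ (the aside about the degree being locally constant is not needed).
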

\begin{proof}
    Let $l$ be a line on $\overline{Z}$ through $z$.
    Since $d = 2$, by Proposition~\ref{prop:deg in Mukai flop}, there are three possibilities:
    \begin{itemize}
        \item $\deg_{\pi_{P_{1}}}(l^{\sim}_{1}) = 0$.
        In this case, $l^{\sim}_{1}$ is contracted by $\pi_{P_{1}}$, and hence $[T_{l,\,z}] \in \PP(T_{\PP(\ufr_{P_{1}}),\,z})$.

        \item $\deg_{\pi_{P_{1}}}(l^{\sim}_{1}) = 2$.
        In this case, $\deg_{\pi_{P_{2}}}(l^{\sim}_{2}) = 0$ and so $l^{\sim}_{2}$ is contracted by $\pi_{P_{2}}$.
        Hence $[T_{l,\,z}] \in \PP(T_{\PP(\ufr_{P_{2}}),\,z})$.

        \item $\deg_{\pi_{P_{1}}}(l^{\sim}_{1}) = 1$.
        In this case, tangent directions of such $l$'s form an irreducible component by Corollary~\ref{coro:irreducible comp over VMRT of lines}.
        Indeed, the $E_{6}$-irreducible Hermitian symmetric space $E_{6}/P_{1}$ admits a unique family of ($H_{1}$-)lines.
        Since $\dim E_{6}/P_{1}=16$ and the Fano index of $E_{6}/P_{1}$ is $12$, lines are unbendable of type $(1^{10},\,0^{5})$.
    \end{itemize}
\end{proof}

In particular, in the $E_{6,\,I}$-case, we have $|\Leg(\overline{Z},\,z)| = d+1$.
In the next section, we shall see that the same holds true in the cases of type $A$ and $D$.

\section{Square-zero cases} \label{section:square zero}

In this section, we focus on simple Lie algebras of classical type.
Recall their standard representations:
\begin{align*}
    \mathfrak{sl}_{N} &= \{\text{traceless $N \times N$ matrices}\} \quad (N \ge 2), \\
    \mathfrak{so}_{N} & = \mathfrak{so}(\Omega_{1})\\
    &= \{\text{$N \times N$ matrices $A$ s.t. $A^{tr} \Omega_{1} + \Omega_{1}A = 0$}\} \quad (N \ge 3, \,\not=4), \\
    \mathfrak{sp}_{N} &= \mathfrak{sp}(\Omega_{-1}) \\
    &= \{\text{$N \times N$ matrices $A$ s.t. $A^{tr} \Omega_{-1} + \Omega_{-1}A = 0$}\}  \quad (N \text{ even, } \ge 2)
\end{align*}
where $\Omega_{1}$ and $\Omega_{-1}$ are nondegenerate symmetric and skew-symmetric 2-forms on $\CC^{N}$, respectively.
When $\gfr$ is one of those, nilpotent orbits in $\gfr$ are adjoint orbits of nilpotent matrices, and can be classified in terms of the Jordan type.
For a partition $\dbf = [d_{1},\, \cdots ,\, d_{k}]$ ($d_{1} \ge \cdots \ge d_{k} > 0$, $\sum_{i}d_{i}=N$) of $N$, it is customary to write $O_{\dbf}$ for a nilpotent orbit in $\gfr$ consisting of nilpotent matrices of Jordan type $[d_{1},\, \cdots ,\, d_{k}]$, i.e., those conjugate to the $N \times N$ Jordan matrix
\[
    \begin{pmatrix}
        \JJ_{d_{1}} & & \\
        & \ddots & \\
        && \JJ_{d_{k}}
    \end{pmatrix}
\]
where $\JJ_{d}$ is the $d \times d$ elementary Jordan matrix
\[
    \JJ_{d} \coloneqq \begin{pmatrix}
        0 &1 & & & \\
        & 0 & 1 & & \\
        &&\ddots&\ddots& \\
        && & 0 & 1 \\
        &&& & 0 
    \end{pmatrix} \quad (\forall d \ge 2), \quad \JJ_{1} = \begin{pmatrix}
        0
    \end{pmatrix}.
\]
We often write $\dbf = [d_{1},\, \cdots ,\, d_{k}] = [\delta_{1}^{n_{1}},\, \cdots ,\,\delta_{l}^{n_{l}}]$ for the integers $\delta_{1} > \cdots > \delta_{l} > 0$ and $n_{i} > 0$ $\forall i$ such that $k = \sum_{i}n_{i}$ and
\[
    \delta_{1} = d_{1} = \cdots = d_{n_{1}} > \delta_{2} = d_{n_{1}+1} =\cdots = d_{n_{1} + n_{2}} > \cdots > \delta_{l} = d_{n_{1} + \cdots + n_{l}} = \cdots = d_{k}.
\]
Such a nilpotent orbit $O_{\dbf}$ exists (only) for any $\dbf$ when $\gfr = \mathfrak{sl}_{N}$; $\dbf$ such that $n_{i}$ is even whenever $\delta_{i}$ is even when $\gfr = \mathfrak{so}_{N}$; $\dbf$ such that $n_{i}$ is even whenever $\delta_{i}$ is odd $\gfr = \mathfrak{sp}_{N}$.
$O_{\dbf}$ is unique and hence well defined, except when $\gfr = \mathfrak{so}_{N}$ and $\dbf$ is \emph{very even}, i.e., every $\delta_{i}$ is even: in this exception, there are two $O_{\dbf}$ in $\mathfrak{so}_{N} = \mathfrak{so}(\Omega_{1})$, denoted by $O_{\dbf}^{I}$ and $O_{\dbf}^{II}$, which are $\O(\Omega_{1})$-conjugate to each other.
In the case where it is unnecessary to distinguish $O_{\dbf}^{I}$ and $O_{\dbf}^{II}$, we simply denote by $O_{\dbf}$ one of them.
For details, we refer to \cite[Ch.~5]{CollingwoodMcGovern1993NilpotentOrbits}.

\begin{definition}
    Let $\gfr$ be a simple Lie algebra of classical type.
    For a partition $\dbf \not=[1^{N}]$, we denote by $Z_{\dbf}$ the nilpotent orbit $\PP(O_{\dbf})$ in $\PP(\gfr)$.
    In this notation, we say that $Z_{\dbf}$ and $O_{\dbf}$ are \emph{square-zero orbits} if $\dbf = [2^{r},\, 1^{N-2r}]$ for some $1 \le r \le \frac{N}{2}$, or equivalently, $O_{\dbf}$ consists of square-zero matrices with respect to the standard representation of $\gfr$.
\end{definition}

Observe that exactly one type of very even partition can define a square-zero orbit: $\dbf = [2^{N/2}]$ for $\mathfrak{so}_{N}$ with $N$ divisible by $4$.
In this case, our choice of $Z_{[2^{N/2}]}$ between $\PP(O^{I}_{[2^{N/2}]})$ and $\PP(O^{II}_{[2^{N/2}]})$ will be specified in Remark~\ref{rmk:choice when very even}.
This is enough for our purpose since $F(\overline{Z},\,z)$'s for $\PP(O^{I}_{[2^{N/2}]})$ and $\PP(O^{II}_{[2^{N/2}]})$ are isomorphic to each other.

\begin{remark}
\begin{enumerate}
    \item For a nilpotent orbit in $\mathfrak{so}(\Omega_{1})$ (resp. $\mathfrak{sp}(\Omega_{-1})$), the representative partition $\dbf$ does not depend on the choice of $\Omega_{1}$ (resp. $\Omega_{-1}$).
    Hence being square-zero is independent of the choice of $\Omega_{1}$ (resp. $\Omega_{-1}$).
    \item Being square-zero depends on the choice of representations of $\gfr$.
    For example, $\mathfrak{so}_{5}$ and $\mathfrak{sp}_{4}$ are isomorphic to each other as Lie algebras, while their subregular nilpotent orbits are $O_{[3,\,1^{2}]}$ and $O_{[2^{2}]}$, respectively (cf. \cite[Proposition~5.4.1]{CollingwoodMcGovern1993NilpotentOrbits}).
    Similarly, $O_{[2]}$ in $\mathfrak{sl}_{2}$ is $O_{[3]}$ in $\mathfrak{so}_{3}$, and $O_{[2^{2}]}$ in $\mathfrak{sl}_{4}$ is $O_{[3,\,1^{3}]}$ in $\mathfrak{so}_{6}$.
    \item Given a square-zero orbit $Z_{\dbf}$ in $\PP(\gfr)$, say $\dbf = [2^{r},\,1^{N-2r}]$, its closure $\overline{Z}_{\dbf}$ is naturally stratified as follows: 
    \begin{enumerate}
        \item If $\gfr = \mathfrak{sl}_{N}$ ($N \ge 2$) or $\mathfrak{sp}_{N}$ ($N$ even, $\ge 2$), then
        \[
        \overline{Z}_{\dbf} = \overline{Z}_{[2^{r},\,1^{N-2r}]} \supset \overline{Z}_{[2^{r-1},\,1^{N-2(r-1)}]} \supset \cdots \supset \overline{Z}_{[2^{2},\,1^{N-4}]}\supset \overline{Z}_{[2,\,1^{N-2}]}.
        \]
        \item If $\gfr = \mathfrak{so}_{N}$ ($N \ge 3,\,\not=4$), then $r$ is even and
        \[
        \overline{Z}_{\dbf} = \overline{Z}_{[2^{r},\,1^{N-2r}]} \supset \overline{Z}_{[2^{r-2},\,1^{N-2(r-2)}]} \supset \cdots\supset \overline{Z}_{[2^{4},\,1^{N-8}]} \supset \overline{Z}_{[2^{2},\,1^{N-4}]}.
        \]
    \end{enumerate}
    In any case, each stratum $Z_{[2^{k},\, 1^{N-2k}]}$ ($1 \le k \le \frac{N}{2}$) consists of square-zero matrices of rank $k$.
    The smallest strata are the adjoint varieties in $\PP(\gfr)$.
\end{enumerate}
\end{remark}

\begin{example} \label{ex:sq-zero Richardson}
    Using the classification of Richardson orbits in simple Lie algebras of classical type (see for instance \cite[Theorems~4.4 and 4.5]{Na06}), one can easily show that a square-zero orbit $Z_{\dbf}$ is Richardson if and only if one of the following holds:
    \begin{itemize}
        \item $\gfr = \mathfrak{sl}_{N}$ and $\dbf$ is any.
        \item $\gfr = \mathfrak{so}_{N}$, $N$ is even, and $\dbf = [2^{N/2}]$ or $[2^{N/2-1},\,1^{2}]$.
        \item $\gfr = \mathfrak{sp}_{N}$ and $\dbf = [2^{N/2}]$ or $[2^{2},\,1^{N-4}]$.
    \end{itemize}
\end{example}

\subsection{Idea of proof}\label{ssection:idea of proof}

In the following three sections, we shall compute $\Leg(\overline{Z}_{\dbf},\, z)$ for square-zero $Z_{\dbf}$.
Our strategy is to describe Legendrian components of $\locus(F(\overline{Z}_{\dbf},\,z))$, which is isomorphic to the cone over $F(\overline{Z}_{\dbf},\, z)$, based on the following observation:

\begin{proposition}\label{prop:locus of lines on square zero}
    Let $\gfr$ be a simple Lie algebra of classical type, and $\overline{Z}_{\dbf}$ a square-zero orbit closure in $\PP(\gfr)$ with the associated partition $\dbf = [2^{r},\,1^{N-r}]$ for $1 \le r \le \frac{N}{2}$.
    For $v \in O_{\dbf}$, we have
    \[
        \locus(F(\overline{Z}_{\dbf},\,[v])) = \left\{ [A] \in \PP(\ad_{\gfr}v) : A^{2} = 0, \text{ and } \rank(A+tv) \le r,\, \forall t\in\CC  \right\},
    \]
    provided that $F(\overline{Z}_{\dbf},\,[v])$ is not empty (cf. Corollary~\ref{coro:covered by lines except for adj type C}).
\end{proposition}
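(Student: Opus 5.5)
The plan is to identify lines through $z=[v]$ on $\overline{Z}_{\dbf}$ with linear conditions on the two-dimensional subspaces $\CC v+\CC A$, using the classical description of square-zero orbit closures by rank.

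\textbf{Step 1: closures in $\gfr$.} First I will record that
\[
    \overline{O_{\dbf}}=\{X\in\gfr : X^{2}=0,\ \rank X\le r\}
\]
for $\dbf=[2^{r},\,1^{N-2r}]$. This follows from the closure ordering of nilpotent orbits in classical Lie algebras, which is the dominance order on partitions, restricted to partitions that are admissible for $\gfr$. A partition is dominated by $[2^{r},\,1^{N-2r}]$ exactly when it has the form $[2^{k},\,1^{N-2k}]$ with $k\le r$, and these are precisely the Jordan types of square-zero matrices of rank $\le r$ lying in $\gfr$. In $\mathfrak{so}_{N}$ the rank of a square-zero element is automatically even, which is consistent with the stratification listed in the remark above. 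In the very even case $[2^{N/2}]$ for $\mathfrak{so}_{N}$, every smaller partition $[2^{k},\,1^{N-2k}]$ with $k<N/2$ is not very even. Hence both $\overline{O^{I}}$ and $\overline{O^{II}}$ are still cut out by the same description. Projectivizing, $\overline{Z}_{\dbf}=\{[X] : X^{2}=0,\ \rank X\le r\}$.

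\textbf{Step 2: lines lie in the projective tangent space.} Let $l$ be a line on $\overline{Z}_{\dbf}$ through $z$. Its tangent direction at the smooth point $z$ lies in $T_{Z,z}$, and a line coincides with its own tangent line. Therefore $l\subset\PP(\CC v+[\gfr,v])$. Since $v=\tfrac12[h,v]\in[\gfr,v]$, this space is just $\PP(\ad_{\gfr}v)$. So every point of the locus is of the form $[A]$ with $A\in[\gfr,v]$.

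\textbf{Step 3: the key computation.} For $A=[x,v]=xv-vx$ we have
\[
    Av+vA=xv^{2}-vxv+vxv-v^{2}x=0,
\]
because $v^{2}=0$. Consequently $(sA+tv)^{2}=s^{2}A^{2}$ for all $s,t$. By Step 1, the line through $[v]$ and $[A]$ (with $A\notin\CC v$) lies in $\overline{Z}_{\dbf}$ if and only if $A^{2}=0$ and $\rank(sA+tv)\le r$ for all $(s,t)\neq(0,0)$. The case $s=0$ is automatic, and for $s\neq 0$ we may rescale so that the condition reads $\rank(A+tv)\le r$ for all $t\in\CC$.

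\textbf{Step 4: conclusion.} This gives both inclusions away from the point $z$ itself. If $A\in\CC v$, the point is $z$, which lies in the locus precisely because $F(\overline{Z}_{\dbf},[v])\neq\emptyset$; this is where the nonemptiness hypothesis enters. Conversely, $A=v$ trivially satisfies the right-hand conditions.

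The only genuinely nontrivial input is the closure description in Step 1, which I expect to be the main obstacle. In particular it requires care with the orthogonal very even case and with the parity constraint in $\mathfrak{so}_{N}$. I would cite \cite{CollingwoodMcGovern1993NilpotentOrbits} (Gerstenhaber--Hesselink) for it rather than reprove it.
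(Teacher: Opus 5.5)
Your argument is the paper's: lines through $z$ lie in the projective tangent space $\PP(\ad_{\gfr}v)$, the identity $Av+vA=0$ for $A\in[\gfr,v]$ gives $(sA+tv)^{2}=s^{2}A^{2}$, and membership in $\overline{Z}_{\dbf}$ is tested by the square-zero and rank conditions. The paper uses that last description without comment; you isolate it as Step 1.

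There is one incorrect claim, and it is in the case you singled out. Take $\gfr=\mathfrak{so}_{N}$ with $4\mid N$ and $\dbf=[2^{N/2}]$, labelled so that $v\in O^{I}$. The set $\{X\in\gfr : X^{2}=0,\ \rank X\le N/2\}$ contains both $O^{I}_{[2^{N/2}]}$ and $O^{II}_{[2^{N/2}]}$, since both consist of square-zero elements of rank $N/2$. Two distinct orbits of the same dimension cannot meet each other's closure, so this set equals $\overline{O^{I}}\cup\overline{O^{II}}$ and is not the closure of either orbit alone. Your remark that the smaller partitions are not very even only shows that the two closures have the same boundary, namely the square-zero elements of rank $<N/2$.

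So in this case the ``if'' half of Step 3 is unjustified as written. From $A^{2}=0$ and $\rank(A+tv)\le r$ for all $t$, you only get that the line $l$ through $[v]$ and $[A]$ lies in $\PP(\overline{O^{I}})\cup\PP(\overline{O^{II}})$. The fix is short:
\begin{itemize}
\item $l$ is irreducible, so it lies in one of the two closures;
\item it contains $[v]\in Z_{\dbf}=\PP(O^{I})$, which is not in $\PP(\overline{O^{II}})$;
\item hence $l\subset\overline{Z}_{\dbf}$.
\end{itemize}
The ``only if'' half uses only the inclusion $\overline{O}_{\dbf}\subset\{X : X^{2}=0,\ \rank X\le r\}$ and is fine in all cases. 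The paper's sentence ``$A+tv$ is square-zero and of rank at most $r$, so $[A+tv]\in\overline{Z}_{\dbf}$'' passes over the same point. Pointwise it is false in this case; it holds only along lines through $[v]$, by the argument above.
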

\begin{proof}
    Suppose that we have $A = \ad_{B}v$ for some $B \in \gfr$ such that $A^{2} = 0$ and $\rank(A+tv) \le r$ for all $t \in \CC$.
    Then we have
    \[
        vA + Av = v(Bv-vB) + (Bv-vB)v = 0,
    \]
    and thus $(A+tv)^{2} = 0$ for all $t \in \CC$.
    Since the matrix $A+tv$ is square-zero and of rank at most $r$, $[A+tv] \in \overline{Z}_{\dbf}$.
    Thus either $[A] = [v]$, or $[A] \not= [v]$ and the line joining $[A]$ and $[v]$ is contained in $\overline{Z}_{\dbf}$.

    Conversely, let $l$ be a line on $\overline{Z}_{\dbf}$ passing through $z$.
    Recall that the projective tangent space of $\overline{Z}_{\dbf}$ at $z$ is $\PP(\ad_{\gfr}v)$, and thus $l \subset \PP(\ad_{\gfr}v)$.
    Now for any $[A] \in l \subset \overline{Z}_{\dbf}$, $A$ is square-zero, and since $A+tv \in \overline{O}_{\dbf}$ for all $t \in \CC$, we have $\rank(A+tv) \le r$.
\end{proof}

For a square-zero matrix $v$ of rank $r$, define
\[
    \widehat{F}(v) \coloneqq \left\{ A \in \ad_{\gfr}v : A^{2} = 0, \text{ and } \rank(A+tv) \le r,\, \forall t\in\CC  \right\},
\]
which is nothing but the affine cone of the right hand side of the equality in Proposition~\ref{prop:locus of lines on square zero}.
Observe that $\Stab_{G}([v])$ acts on $\widehat{F}(v)$ via the adjoint action.

\begin{corollary} \label{coro: cone over cone over Fano var}
    Let $\gfr$ be a simple Lie algebra of classical type, and $\overline{Z}_{\dbf}$ a square-zero orbit closure in $\PP(\gfr)$ with the associated partition $\dbf = [2^{r},\,1^{N-r}]$ for $1 \le r \le \frac{N}{2}$.
    Choose $z = [v] \in Z_{\dbf}$ and write $2n+1 = \dim Z_{\dbf}$.
    Assume that $F(\overline{Z}_{\dbf},\,z)$ is nonempty, and identify $T_{Z_{\dbf},\,z}$ with $\ad_{\gfr}v/\CC v$ so that $F(\overline{Z}_{\dbf},\,z)$ is a subvariety of $\PP(\ad_{\gfr}v / \CC v)$.
    Then we have a $\Stab_{G}(z)$-equivariant $\CC \times \CC^{\times}$-bundle
    \[
        \widehat{F}(v) \setminus \CC v \rightarrow F(\overline{Z}_{\dbf},\,z), \quad A \mapsto [A \mod \CC v].
    \]
    Under this morphism, every $(n+1)$-dimensional component of $\widehat{F}(v)$ minus $\CC v$ is sent onto a Legendrian component of $F(\overline{Z}_{\dbf},\,z)$, and conversely, any Legendrian component is the image of an $(n+1)$-dimensional component minus $\CC v$.
\end{corollary}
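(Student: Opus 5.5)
The plan is to unwind the identification of Proposition~\ref{prop:locus of lines on square zero} at the level of affine cones. First I show that $\widehat{F}(v)$ is invariant under translation by $\CC v$ and under homotheties, and that it contains $\CC v$. The conditions $A^{2}=0$ and $\rank(A+tv)\le r$ for all $t$ are preserved under $A \mapsto A + sv$ and $A \mapsto cA$ ($c\neq 0$), because $(A+sv)+tv = A+(s+t)v$ and $\rank(cA+tv)=\rank(A+(t/c)v)$. Moreover $v\in\widehat{F}(v)$, since $v=\ad_{h/2}v\in\ad_{\gfr}v$.

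Hence the group $\CC\rtimes\CC^{\times}$, acting by $A\mapsto cA+sv$, acts on $\widehat{F}(v)\setminus\CC v$. The map $A\mapsto[A \bmod \CC v]$ to $\PP(\ad_{\gfr}v/\CC v)$ has fibers exactly the orbits $\{cA+sv\}$, which are free orbits isomorphic to $\CC\times\CC^{\times}$. This map is the restriction of the natural $\CC\times\CC^{\times}$-bundle $(\ad_{\gfr}v\setminus \CC v)\to\PP(\ad_{\gfr}v/\CC v)$, namely the composition of the affine line bundle $\ad_{\gfr}v\setminus\CC v\to(\ad_{\gfr}v/\CC v)\setminus 0$ with the $\CC^{\times}$-bundle to the projectivization. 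Since $\widehat{F}(v)\setminus\CC v$ is a union of fibers, it is the preimage of its image, so the restriction is again a $\CC\times\CC^{\times}$-bundle over the image.

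Next I identify the image. By Proposition~\ref{prop:locus of lines on square zero}, the projectivization of $\widehat{F}(v)$ is the union of lines on $\overline{Z}_{\dbf}$ through $z$. A point $[A]\neq[v]$ of it lies on the line joining $[A]$ and $[v]$, whose tangent direction at $z$, under $T_{Z_{\dbf},z}\simeq\ad_{\gfr}v/\CC v$ (the projective tangent space is $\PP(\ad_{\gfr}v)$), is $[A \bmod \CC v]$. Conversely, every line through $z$ arises this way. So the image is exactly $F(\overline{Z}_{\dbf},z)$.

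Equivariance under $\Stab_{G}(z)$ follows because $\Ad_{g}$ with $\Ad_{g}v=\lambda v$ preserves $\ad_{\gfr}v$, $\CC v$, square-zeroness, and the rank conditions. It is also compatible with the differential identification of tangent spaces.

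Finally, the bundle has irreducible 2-dimensional fibers, so irreducible components correspond bijectively: components of $\widehat{F}(v)\setminus\CC v$ are preimages of components of $F$, and dimensions differ by $2$. Every component of $\widehat{F}(v)$ is either contained in $\CC v$ (of dimension at most $1<n+1$, since $F\neq\emptyset$ and $n\ge1$ in the relevant cases) or is the closure of a component of $\widehat{F}(v)\setminus\CC v$.

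Legendrian components are exactly those of dimension $n-1$, where $n-1$ is the maximum by Corollary~\ref{coro:dim VMRT is at most n-1}. These correspond to $(n+1)$-dimensional components of $\widehat{F}(v)$. The main care point is the degenerate case where a component of $\widehat{F}(v)$ might sit inside $\CC v$. If $n=0$, the Legendrian components are points and the $(n+1)$-dimensional components are one-dimensional, so I should check that $\CC v$ itself does not count as a component. It is contained in the closure of any nonempty fiber $\{cA+sv\}$, since $s$ is arbitrary and $c\to 0$, so $\CC v$ is never a component once $F\neq\emptyset$.
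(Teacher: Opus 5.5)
Your proposal is correct and follows the paper's route: the paper simply calls the corollary an immediate consequence of Proposition~\ref{prop:locus of lines on square zero}, and you have written out the cone/bundle bookkeeping behind that. One small fix: the reason you cite does not explain why $A^{2}=0$ survives $A\mapsto A+sv$; this instead follows from $Av+vA=0$ for $A\in\ad_{\gfr}v$ (a consequence of $v^{2}=0$, as computed in the proof of that proposition), which gives $(A+sv)^{2}=A^{2}$.
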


\begin{proof}
    This is an immediate consequence of Proposition~\ref{prop:locus of lines on square zero} since Legendrian components are of dimension $n-1$.
\end{proof}

That is, finding Legenderian components of $F(\overline{Z}_{\dbf},\,[v])$ amounts to finding $(n+1)$-dimensional components of $\widehat{F}(v)$.
The latter is what is presented in the following three sections.

\subsection{\texorpdfstring{$\mathfrak{sl}_{N}$}{slN}}
\label{ssection::slN}

In this section, we consider square-zero orbit closures $\overline{Z}_{\dbf}$ in $\PP(\mathfrak{sl}_{N})$.
It is convenient to use $\GL_{N}$ instead of $\SL_{N}$, and geometrically this does not make any difference since the $\Ad_{\GL_{N}}$-action factors through the $\Ad_{\SL_{N}}$-action.
For $z \in Z_{\dbf}$, $F(\overline{Z}_{\dbf},\,z)$ is empty if and only if $N = 2$ (cf. Corollary~\ref{coro:covered by lines except for adj type C}), and so we assume that $N \ge 3$.

Let us introduce additional notations.
In what follows, we denote by $M_{k_{1} \times k_{2}}$ the vector space of $k_{1} \times k_{2}$ matrices.
Write $\dbf = [2^{r},\, 1^{N-r}]$ for $1 \le r \le \frac{N}{2}$.
Our base point $z = [v]$ is chosen by setting
\[
    v \coloneqq \begin{pmatrix}
        0_{r \times r} & 0_{r \times (N-2r)} & id_{r \times r} \\
        0_{(N-2r) \times r} & 0_{(N-2r) \times (N-2r)} &0_{(N - 2r) \times r} \\
        0_{r \times r} & 0_{r \times (N-2r)} & 0_{r \times r}
    \end{pmatrix}.
\]
Then $\Stab_{\GL_{N}}(z)$ consists of all matrices of the form
\begin{equation} \label{eqn:stab of base pt in SLn}
    \begin{pmatrix}
        g&*_{r \times (N-2r)}& *_{r \times r} \\
        0_{(N-2r) \times r}&h&*_{(N-2r) \times r} \\
        0_{r \times r}&0_{r \times (N-2r)}&\lambda \cdot g
    \end{pmatrix}, \quad g \in \GL_{r}, \quad h \in \GL_{N-2r}, \quad \lambda \in \CC^{\times},
\end{equation}
and $\ad_{\mathfrak{sl}_{N}}v (=\ad_{\mathfrak{gl}_{N}}v)$ consists of all matrices of the form
\begin{equation}\label{eqn:ad sl_N}
    \begin{pmatrix}
        A & B & C \\
        0_{(N - 2r) \times r} & 0_{(N - 2r) \times (N-2r)} & D \\
        0_{r \times r}&0_{r \times (N-2r)}& - A
    \end{pmatrix}, \quad A,\, C \in M_{r \times r}, \quad B \in M_{r \times (N-2r)}, \quad D \in M_{(N-2r) \times r}.
\end{equation}
Thus $\widehat{F}(v)$ consists of all matrices of the form (\ref{eqn:ad sl_N}) such that
\begin{equation}\label{eqn:F(v) when N not 2r in slN}
    A^{2} = 0, \ AB = 0, \ [A,\,C]+BD = 0, \ DA =0, \ \rank \begin{pmatrix}
        A & B & C + t\cdot id_{r} \\
        0& 0& D \\
        0&0&-A
    \end{pmatrix} \le r, \ \forall t \in \CC \quad (\text{if }N > 2r),
\end{equation}
and
\begin{equation}\label{eqn:F(v) when N = 2r in slN}
    A^{2} = 0, \quad \quad [A,\,C] = 0, \quad \rank \begin{pmatrix}
        A & C + t\cdot id_{r} \\
        0&-A
    \end{pmatrix} \le r, \quad \forall t \in \CC \quad (\text{if } N = 2r),
\end{equation}
and hence we have a morphism
\begin{equation}\label{eqn:projection from locus of lines in sl_N}
    \Psi:\widehat{F}(v) \rightarrow \overline{O}_{[2^{\lfloor r/2 \rfloor},\,1^{r-2\lfloor r/2 \rfloor}]} \subset \mathfrak{sl}_{r}, \quad \begin{pmatrix}
        A & B & C \\
        0& 0& D \\
        0&0&-A
    \end{pmatrix} \mapsto A.
\end{equation}
The morphism $\Psi$ is equivariant under the adjoint actions of $\Stab_{\GL_{N}}(z)$ and $\GL_{r}$.

\begin{theorem} \label{thm: square zero in slN}
    The morphism $\Psi$ in (\ref{eqn:projection from locus of lines in sl_N}) is surjective, and for $0 \le a \le \frac{r}{2}$ and the preimage $\Psi^{-1}(O_{[2^{a},\,1^{r-2a}]})$, we have
    \[
        \dim \Psi^{-1}(O_{[2^{a},\,1^{r-2a}]}) = r(N-r),
    \]
    and
    \[
        \#(\text{$r(N-r)$-dimensional components of $\Psi^{-1}(O_{[2^{a},\,1^{r-2a}]})$}) = \left\{\begin{array}{cc}
            1 & \text{if $N=2r$ or $r = 2a$,} \\
            2 & \text{otherwise.}
        \end{array}\right.
    \]
    Maximal dimensional components of $\widehat{F}(v)$ are precisely the closures of $r(N-r)$-dimensional components of $\Psi^{-1}(O_{[2^{a},\,1^{r-2a}]})$ for $0 \le a \le \frac{r}{2}$.
\end{theorem}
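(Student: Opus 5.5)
The plan is to compute $\widehat{F}(v)$ fiberwise over the target of $\Psi$, using the $\GL_{r}$-equivariance. Since $A$ is square-zero, $\Psi(\widehat{F}(v))$ is a union of orbits $O_{[2^{a},\,1^{r-2a}]}\subset\mathfrak{gl}_{r}$, $0 \le a \le r/2$. For each such orbit, which has dimension $2a(r-a)$, I will fix the representative
\[
A_{a} = \begin{pmatrix} 0 & 0 & id_{a} \\ 0 & 0 & 0 \\ 0&0&0\end{pmatrix}
\]
in blocks of sizes $a$, $r-2a$, $a$. The goal is to show that the fiber $\Psi^{-1}(A_{a})$ is nonempty, of dimension $r(N-r)-2a(r-a)$, and has one or two top-dimensional components as in the statement. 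Nonemptiness for every $a$ gives surjectivity. Equivariance then gives $\dim \Psi^{-1}(O_{[2^{a},\,1^{r-2a}]}) = r(N-r)$, with the same count of top components: the stabilizer of $A_{a}$ in $\GL_{r}$ is connected, so it cannot permute the fiber components, and each top component of the fiber sweeps out exactly one component of the preimage.

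\textbf{Step 1: linearise the rank condition.} For generic $t$ the block $C+t\cdot id_{r}$ is invertible, and it sits in rows $1$, columns $3$ of the matrix in (\ref{eqn:F(v) when N not 2r in slN}). The remaining rows $2,3$ and columns $1,2$ form a zero block. Hence the rank is $\le r$ exactly when the Schur complement vanishes, that is
\[
\begin{pmatrix} D\\ -A\end{pmatrix}(C+t\cdot id_{r})^{-1}\begin{pmatrix}A & B\end{pmatrix}=0 .
\]
Expanding in $t^{-1}$, this is equivalent to
\[
DC^{k}A=0,\quad DC^{k}B=0,\quad AC^{k}A=0,\quad AC^{k}B=0 \quad \text{for all } k\ge 0 .
\]
Together with $A^{2}=0$, $AB=0$, $DA=0$ and $[A,C]+BD=0$, this describes the fiber. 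For $N=2r$ only $A^{2}=0$, $[A,C]=0$ and $AC^{k}A=0$ remain.

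\textbf{Step 2: solve the fiber.} With $A=A_{a}$ fixed, I will write $B$, $C$, $D$ in blocks adapted to the flag $\operatorname{im}A_{a}\subset\ker A_{a}$. The conditions $AB=0$ and $DA=0$ say that $B$ has image in $\ker A_{a}$ and that $D$ kills $\operatorname{im}A_{a}$. The equation $[A_{a},C]=-BD$ then determines the off-diagonal blocks of $C$ from the product $BD$. The conditions $AC^{k}A=0$ and $AC^{k}B=0$ say that the $C$-stable subspace generated by $\operatorname{im}A_{a}$ and $\operatorname{im}B$ lies in $\ker A_{a}$; dually, $DC^{k}A=0$ and $DC^{k}B=0$ say that this subspace lies in $\ker D$.

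The expectation is that, generically, the bilinear coupling $BD$ on the middle block $\ker A_{a}/\operatorname{im}A_{a}\simeq\CC^{r-2a}$ forces a dichotomy: either $B$ is essentially free and $D$ vanishes on the relevant part, or the reverse. This parallels the two projective spaces $\PP(\ufr_{P_{1}})$ and $\PP(\ufr_{P_{2}})$ of the Mukai flop. Each branch is then a vector bundle over a product of Grassmannian-type data, hence irreducible. The dichotomy collapses in two situations: when $r=2a$, because then the middle block is zero; and when $N=2r$, because then $B$ and $D$ are absent. In those cases there is a single top component.

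\textbf{Step 3: count dimensions.} In each branch I will count parameters and check that the total is $r(N-r)-2a(r-a)$, that any lower-dimensional strata are of strictly smaller dimension, and that the two branches are distinct. I expect this to be the main obstacle, particularly the unipotent/nilpotent part of $C$ on $\ker A_{a}$, which is constrained only through the $C$-stability conditions. To handle it, I would first restrict to $C$ preserving the flag and compute there, then argue that the general solution is a conjugate of such a $C$ by the unipotent part of the stabilizer of $A_{a}$.

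\textbf{Step 4: identify the top components of $\widehat{F}(v)$.} The sets $\Psi^{-1}(O_{[2^{a},\,1^{r-2a}]})$ partition $\widehat{F}(v)$ into finitely many locally closed pieces, each of dimension exactly $r(N-r)$. So every component of $\widehat{F}(v)$ has dimension at most $r(N-r)$. Moreover, the closure of an $r(N-r)$-dimensional component of a piece cannot lie inside the closure of another irreducible set of the same dimension unless the two closures coincide, and they do not, since they meet different pieces in dense subsets. Hence the maximal-dimensional components of $\widehat{F}(v)$ are exactly these closures. As a consistency check, $r(N-r)=n+1$ because $\dim Z_{\dbf}=2r(N-r)-1$, which matches Corollary~\ref{coro: cone over cone over Fano var}.
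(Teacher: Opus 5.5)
Your Step~1 is correct: the Schur complement does turn the rank condition into $DC^{k}A=DC^{k}B=AC^{k}A=AC^{k}B=0$ for all $k$. Equivalently, the $C$-span of $\operatorname{im}A+\operatorname{im}B$ lies in $\ker A\cap\ker D$. Your Step~4 and the equivariance bookkeeping also match the paper. One small correction: the group acting on $\Psi^{-1}(A_{a})$ is the stabilizer of $A_{a}$ in $\Stab_{\GL_{N}}(z)$, not in $\GL_{r}$, though it is connected as well.

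The gap is that the actual content of the theorem is never proved. Steps~2--3 only state the \emph{expectation} that the fiber has dimension $r(N-r)-2a(r-a)$, and that for $N>2r$, $r>2a$ exactly two components attain it. Fix $A=A_{a}$, write $B=(B_{1};B_{2};0)$ and $D=(0,D_{2},D_{1})$ in blocks $a,\,r-2a,\,a$, and solve the linear equations. With $c=r-2a$ and $m=N-2r$, the fiber becomes $\CC^{2aN-2ar-2a^{2}}\times M'$. Here $M'$ is the set of $(B_{2},D_{2},C_{2})$ with $B_{2}D_{2}=0$ and $\rank\begin{pmatrix}B_{2}&C_{2}+t\cdot id_{c}\\0&D_{2}\end{pmatrix}\le c$ for all $t$. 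What is needed is the \emph{upper bound} $\dim M'\le c(c+m)$, with equality only on $\{B_{2}=0\}$ and $\{D_{2}=0\}$. There is a whole family of strata where both $B_{2}$ and $D_{2}$ are nonzero, and nothing in your sketch excludes one of them having the same or larger dimension. This is exactly what the paper's Lemma~\ref{lemma: dimension in slN} supplies. It stratifies by $\rank B_{2}$, identifies the fiber over each stratum with a smaller instance of the rank-condition variety, proves $\dim M(a,b,c)=(\max(a,b)+c)c$ by induction, and maximizes a quadratic in the rank. Even the easy cases $N=2r$ and $r=2a$ are not computed in your proposal; there the fiber is an affine space of dimension $r^{2}-2ar+2a^{2}$, resp.\ $2aN-3ar$.

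Your own reformulation could close the gap more directly than the paper does. Your Step~3 idea works: the unipotent part of $\Stab_{\GL_{N}}(z)$ lets you normalize $B_{1}=0$ and $D_{1}=0$. After that, $C$ preserves the flag, and the condition defining $M'$ reads: $B_{2}D_{2}=0$ and there is a $C_{2}$-stable $W$ with $\operatorname{im}B_{2}\subset W\subset\ker D_{2}$. Parametrize these data together with $W$, stratified by $w=\dim W$ and $j=\rank D_{2}$. Each stratum has dimension $c^{2}+j(c-w+m-j)+(m-j)w$, and
\[
c(c+m)-\bigl(c^{2}+j(c-w+m-j)+(m-j)w\bigr)=(c-j)(m-j)-w(m-2j)\ge 0,
\]
since $w\le c-j$ and $j\le m$. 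Equality holds only for $(w,j)=(c,0)$ and $(0,\min(c,m))$, i.e.\ on $\{D_{2}=0\}$ and $\{B_{2}=0\}$. Some argument of this kind has to be written out; as it stands, the proposal is an outline with its decisive step missing.
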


\begin{remark}
    $\overline{Z}_{\dbf}$ is the image of the Springer morphism defined by $\Gr(r,\,\CC^{N}) = \SL_{N}/P_{r}$.
    Thus by Proposition~\ref{prop: VMRT of Richardson is Leg} and Corollary~\ref{coro: cone over cone over Fano var}, $\widehat{F}(v)$ is of pure dimension $r(N-r)$.
    This fact is, however, not used in this section.
\end{remark}

\begin{corollary}\label{coro:Leg of square zero in slN}
    Assume that $N \ge 3$ and $1 \le r \le \frac{N}{2}$.
    Let $\dbf = [2^{r},\,1^{N-2r}]$ and $z\in Z_{\dbf} \subset \PP(\mathfrak{sl}_{N})$.
    Then
        \[
        |\Leg(\overline{Z}_{\dbf},\,z)| = \left\{\begin{array}{cc}
            1 + \lfloor r/2 \rfloor & \text{if $N=2r$,} \\
            1+r & \text{otherwise.}
        \end{array}\right.
        \]
        Furthermore, every element of $\Leg(\overline{Z}_{\dbf},\,z)$ is $\Stab_{\GL_{N}}(z)$-invariant.
\end{corollary}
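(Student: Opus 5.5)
The corollary should follow from Theorem~\ref{thm: square zero in slN} combined with Corollary~\ref{coro: cone over cone over Fano var}. The plan has three steps: match dimensions, count the maximal components, and prove invariance.

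\textbf{Dimensions.} First compute $\dim Z_{\dbf}$. The orbit $O_{[2^{r},1^{N-2r}]}$ in $\mathfrak{sl}_{N}$ has dimension $2r(N-r)$. Hence $\dim Z_{\dbf} = 2r(N-r)-1 = 2n+1$, which gives $n+1 = r(N-r)$. So the $(n+1)$-dimensional components of $\widehat{F}(v)$ are exactly the maximal dimensional components described in Theorem~\ref{thm: square zero in slN}. Each such component contains $\CC v$ properly, since it is a cone stable under translation by $\CC v$ and has dimension $r(N-r)\ge 2$. It therefore maps onto a Legendrian component by Corollary~\ref{coro: cone over cone over Fano var}. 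The map
\[
\{(n+1)\text{-dimensional components of }\widehat{F}(v)\}\longrightarrow \Leg(\overline{Z}_{\dbf},z)
\]
is a bijection, because $\widehat{F}(v)\setminus\CC v \to F(\overline{Z}_{\dbf},z)$ is a $\CC\times\CC^{\times}$-bundle. Under this bundle, irreducible components correspond to irreducible components, via taking preimages.

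\textbf{Counting.} The strata $O_{[2^{a},1^{r-2a}]}$ for $0\le a\le \lfloor r/2\rfloor$ are pairwise disjoint and locally closed. Their preimages under $\Psi$ are therefore disjoint, so closures of components of different preimages are distinct maximal components. Summing the counts from Theorem~\ref{thm: square zero in slN}:
\begin{itemize}
\item If $N=2r$, each of the $1+\lfloor r/2\rfloor$ strata contributes $1$, giving $1+\lfloor r/2\rfloor$.
\item If $N>2r$, each stratum with $2a<r$ contributes $2$, and the stratum $2a=r$ (present only when $r$ is even) contributes $1$.
\end{itemize}
For $r$ even, the second case gives $2\cdot(r/2)+1=r+1$. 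For $r$ odd, it gives $2\cdot\frac{r+1}{2}=r+1$. This yields the formula.

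\textbf{Invariance.} The group $\Stab_{\GL_{N}}(z)$ is connected: by (\ref{eqn:stab of base pt in SLn}) it is the product of $\GL_{r}\times\GL_{N-2r}\times\CC^{\times}$ with a unipotent group. A connected group permutes the finitely many irreducible components of an invariant variety trivially. Hence every component of $\widehat{F}(v)$ is stabilized, and so is every element of $\Leg$.

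The only point requiring care is the bijection between components, in particular confirming that the two components over a single stratum in the case $N>2r$ remain distinct after passing to the quotient. I expect this to be immediate from the fact that the bundle map is open and surjective with irreducible fibers.
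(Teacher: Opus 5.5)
Your proposal is correct and follows essentially the same route as the paper: compute $\dim Z_{\dbf}=2r(N-r)-1$ so that $n+1=r(N-r)$, reduce via Corollary~\ref{coro: cone over cone over Fano var} to counting the $r(N-r)$-dimensional components of $\widehat{F}(v)$, sum the counts from Theorem~\ref{thm: square zero in slN}, and get invariance from the connectedness of $\Stab_{\GL_{N}}(z)$. Your extra checks (translation-stability of components, distinctness across strata, and the bijection of components under the $\CC\times\CC^{\times}$-bundle) are sound and simply make explicit what the paper leaves implicit.
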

\begin{proof}[Proof of Corollary~\ref{coro:Leg of square zero in slN}]
    The $\Stab_{\GL_{N}}(z)$-invariance follows from the connectedness of $\Stab_{\GL_{N}}(z)$ (see (\ref{eqn:stab of base pt in SLn})).
    By (\ref{eqn:ad sl_N}), $\dim \overline{Z}_{\dbf} = 2r(N-r)-1$, and thus elements of $\Leg(\overline{Z}_{\dbf},\,z)$ are $(r(N-r)-2)$-dimensional components of $F(\overline{Z}_{\dbf},\,z)$.
    By Corollary~\ref{coro: cone over cone over Fano var}, it suffices to show that the number of $r(N-r)$-dimensional components of $\widehat{F}(v)$ is $1 + \lfloor r/2 \rfloor$ if $N=2r$ and $1+r$ otherwise, which follows from Theorem~\ref{thm: square zero in slN}.
\end{proof}

The rest of this section is devoted to the proof of Theorem~\ref{thm: square zero in slN}.
Observe that the last statement of Theorem~\ref{thm: square zero in slN} follows from the previous part: since we have a finite stratification given by
        \[
            \overline{O}_{[2^{\lfloor r/2 \rfloor},\,1^{r-2\lfloor r/2 \rfloor}]} \supset \overline{O}_{[2^{\lfloor r/2 \rfloor-1},\,1^{r-2(\lfloor r/2 \rfloor-1)}]} \supset \cdots \supset \overline{O}_{[1^{r}]},
        \]
        any irreducible component of $\widehat{F}(v)$ is the closure of a component of $\Psi^{-1}(O_{[2^{a},\,1^{r-2a}]})$ for some $0 \le a \le \frac{r}{2}$.
        In particular, $\dim \widehat{F}(v) \le \dim \Psi^{-1}(O_{[2^{a},\,1^{r-2a}]})= r(N-r)$, and so conversely, the closure of an $r(N-r)$-dimensional component of $\Psi^{-1}(O_{[2^{a},\,1^{r-2a}]})$ is a component of $\widehat{F}(v)$.

To complete the proof of Theorem~\ref{thm: square zero in slN}, we determine the maximal dimensional components of $\Psi^{-1}(O_{[2^{a},\,1^{r-2a}]})$.
By the equivariance of $\Psi$, the restriction
\[
    \Psi_{a} \coloneqq \Psi|_{\Psi^{-1}(O_{[2^{a},\,1^{r-2a}]})} : \Psi^{-1}(O_{[2^{a},\,1^{r-2a}]}) \rightarrow O_{[2^{a},\,1^{r-2a}]} \quad (0 \le a \le \frac{r}{2})
\]
is a homogeneous fiber bundle with respect to the adjoint action of $\Stab_{\GL_{N}}(z)$.
To analyze its fibers, put
\[
    A_{a} \coloneqq \begin{pmatrix}
        0_{a \times (r-a)} & id_{a \times a} \\
        0_{(r-a) \times (r-a)} &0_{(r-a) \times a}
    \end{pmatrix} \in O_{[2^{a},\,1^{r-2a}]}.
\]
Then $\Psi^{-1}(A_{a})$ consists of all matrices satisfying (\ref{eqn:F(v) when N not 2r in slN}, \ref{eqn:F(v) when N = 2r in slN}) with $A=A_{a}$.
In fact, one can easily show that when $A = A_{a}$, the conditions (\ref{eqn:F(v) when N not 2r in slN}, \ref{eqn:F(v) when N = 2r in slN}) are simplified as follows:
\begin{itemize}
    \item If $N = 2r$, then
    \[
        A = A_{a}, \quad C = \begin{pmatrix}
           C_{1} & * & * \\
           0 & * & * \\
           0 & 0 & C_{1}
        \end{pmatrix}
    \]
    where $C_{1} \in M_{a \times a}$.
    (The rank condition in (\ref{eqn:F(v) when N = 2r in slN}) is always satisfied.)
    Thus
    \[
        \Psi^{-1}(A_{a}) \simeq \CC^{r^{2}-2ar+2a^{2}},
    \]
    and hence $\Psi^{-1}(O_{[2^{a},\,1^{r-2a}]})$ is irreducible and of dimension
    \[
        \dim \Psi^{-1}(A_{a}) + \dim O_{[2^{a},\,1^{r-2a}]} = (r^{2}-2ar+2a^{2}) + 2a(r-a) = r^{2}.
    \]
    This proves Theorem~\ref{thm: square zero in slN} in the case $N=2r$.

    \item If $N > 2r$ and $r = 2a$, then
    \[
    A = A_{a}, \quad B = \begin{pmatrix}
        B_{1} \\
        0
    \end{pmatrix}, \quad D = \begin{pmatrix}
        0 & D_{1} 
    \end{pmatrix}, \quad C = \begin{pmatrix}
       C_{1} &  * \\
       0 & C_{1}-B_{1}D_{1}
    \end{pmatrix}
    \]
    where $B_{1}\in M_{a \times (N-2r)}$, $D_{1} \in M_{(N-2r) \times a}$ and $C_{1} \in M_{a \times a}$.
    (Again the rank condition in (\ref{eqn:F(v) when N not 2r in slN}) is always satisfied.)
    Thus
    \[
        \Psi^{-1}(A_{a}) \simeq \CC^{2aN-3ar},
    \]
    and hence $\Psi^{-1}(O_{[2^{a},\,1^{r-2a}]})$ is irreducible and of dimension
    \[
        \dim \Psi^{-1}(A_{a}) + \dim O_{[2^{a},\,1^{r-2a}]} = (2aN-3ar) + 2a(r-a) = 2aN-ar-2a^2 = rN - r^{2}.
    \]
    This proves Theorem~\ref{thm: square zero in slN} in the case where $N>2r$ and $r = 2a$.

\item If $N > 2r$ and $r > 2a$, then
    \[
    A = A_{a}, \ B = \begin{pmatrix}
        B_{1} \\
        B_{2} \\
        0
    \end{pmatrix}, \ D = \begin{pmatrix}
        0 & D_{2} & D_{1}
    \end{pmatrix}, \ C = \begin{pmatrix}
       C_{1} & * & * \\
       B_{2}D_{1} & C_{2} & * \\
       0 & -B_{1}D_{2} & C_{1}-B_{1}D_{1}
    \end{pmatrix}, \ B_{2}D_{2} = 0
\]
and
\[
    \rank \begin{pmatrix}
        B_{2} & C_{2} + t \cdot id_{r-2a} \\
        0 & D_{2}
    \end{pmatrix} \le r-2a, \quad \forall t \in \CC
\]
where $B_{1}\in M_{a \times (N-2r)}$, $B_{2} \in M_{(r-2a) \times (N-2r)}$, $D_{2} \in M_{(N-2r) \times (r-2a)}$, $D_{1} \in M_{(N-2r) \times a}$, $C_{1} \in M_{a \times a}$ and $C_{2} \in M_{(r-2a) \times (r-2a)}$.
Since $B_{1}$, $D_{1}$ and part of $C$ given by
\[
    \begin{pmatrix}
        C_{1} & * & * \\
        & & * \\
        & & 
    \end{pmatrix}
\]
form the affine space of dimension $2aN - 2ar-2a^2$, we have
\[
    \Psi^{-1}(A_{a}) \simeq \CC^{2aN - 2ar-2a^2} \times \left\{(B_{2},\,D_{2},\,C_{2}): B_{2}D_{2} = 0, \, \rank \begin{pmatrix}
        B_{2} & C_{2} + t \cdot id_{r-2a} \\
        0 & D_{2}
    \end{pmatrix} \le r-2a, \, \forall t \in \CC \right\}.
\]
By the following Lemma~\ref{lemma: dimension in slN}, $\Psi^{-1}(A_{a})$ has exactly two maximal dimensional components, one defined by $B_{2} = 0$ and the other defined by $D_{2} = 0$, and both are of dimension
\[
    (2aN - 2ar - 2a^{2}) + (r-2a)(N-r-2a) = rN-r^{2} - 2ar+2a^{2}.
\]
It is easy to see that each of them is invariant under the action of the stabilizer of $A_{a}$ in $\Stab_{\GL_{N}}(z)$.
Hence $\Psi^{-1}(O_{[2^{a},\,1^{r-2a}]})$ is of dimension
\[
    \dim \Psi^{-1}(A_{a}) + \dim O_{[2^{a},\,1^{r-2a}]} = (rN-r^{2} - 2ar+2a^{2}) + 2a(r-a) = rN-r^{2}
\]
and contains precisely two maximal dimensional components.
This proves the remaining cases in Theorem~\ref{thm: square zero in slN}.
\end{itemize}

\begin{lemma}\label{lemma: dimension in slN}
    Let $a\ge0$, $b \ge 0$ and $c \ge 1$ be integers.
    Define the following two closed subsets in $M_{(a+c) \times (b+c)}$:
    \[
        M(a,\,b,\,c) \coloneqq \left\{ A \in M_{(a+c) \times (b \times c)} : \rank \left(
            A + \begin{pmatrix}
               0_{c \times b} & t\cdot id_{c} \\
               0_{a \times b} & 0_{a \times c}
            \end{pmatrix} \right) \le c, \, \forall t \in \CC \right\}
    \]
    and
    \[
        M'(a,\,c) \coloneqq \left\{ \begin{pmatrix}
            A_{1} & *_{c \times c} \\
            *_{a \times a} & A_{2}
        \end{pmatrix} \in M(a,\,a,\,c) : A_{1}A_{2} = 0 \right\}.
    \]
    Then the following hold:
    \begin{enumerate}
        \item\label{item1:lemma: dimension in slN} Every $A \in M(a,\,b,\,c)$ is of the form
        \[
            A = \begin{pmatrix}
                *_{c \times b} & *_{c \times c} \\
                0_{a \times b} & *_{a \times c}
            \end{pmatrix}.
        \]
        \item $\GL_{a} \times \GL_{b} \times \GL_{c}$ acts on $M(a,\,b,\,c)$ as follows:
        \[
            (g,\, h,\,k). \begin{pmatrix}
                A_{1} & A_{2} \\
                0 & A_{3}
            \end{pmatrix} \coloneqq \begin{pmatrix}
                k & 0  \\
                0 & g   
            \end{pmatrix} \begin{pmatrix}
                A_{1} & A_{2} \\
                0 & A_{3}
            \end{pmatrix} \begin{pmatrix}
                h & 0 \\
                0 & k
            \end{pmatrix}^{-1} = \begin{pmatrix}
                kA_{1}h^{-1} & kA_{2}k^{-1} \\
                0 & gA_{3}k^{-1}
            \end{pmatrix}.
        \]
        When $a=b$, this induces a $\diag(\GL_{a}) \times \GL_{c}$-action on $M'(a,\,c)$.
        \item We have
        \[
            \dim M(a,\,b,\,c) = (\max(a,\,b) + c)c, \quad \dim M'(a,\,c) = (a+c)c,
        \]
        and the only maximal dimensional components of $M'(a,\,c)$ are given by
        \[
            \left\{ \begin{pmatrix}
            *_{c \times a} & *_{c \times c} \\
            0_{a \times a} & 0_{a \times c}
        \end{pmatrix} \right\} \quad \text{and} \quad \left\{ \begin{pmatrix}
            0_{c \times a} & *_{c \times c} \\
            0_{a \times a} & *_{a \times c}
        \end{pmatrix} \right\}
        \]
        (which are same when $a=0$).
    \end{enumerate}
\end{lemma}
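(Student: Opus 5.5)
The plan is to first turn the rank condition into explicit polynomial equations, and then count dimensions by stratifying according to an invariant subspace. Throughout, write a general element of $M_{(a+c)\times(b+c)}$ in block form with respect to rows $c+a$ and columns $b+c$:
\[
A=\begin{pmatrix} A_{1} & X \\ A_{0} & A_{3}\end{pmatrix},
\]
where $A_{1}\in M_{c\times b}$, $X\in M_{c\times c}$, $A_{0}\in M_{a\times b}$ and $A_{3}\in M_{a\times c}$.

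\textbf{Step 1 (part (1) and equations for $M(a,\,b,\,c)$).} The condition $\rank(A+tE)\le c$ for all $t$ is closed in $t$, so it is equivalent to requiring it for all $t$ with $X+t\cdot id_{c}$ invertible. For such $t$, taking the Schur complement with respect to the invertible top-right block gives
\[
\rank(A+tE)=c+\rank\bigl(A_{0}-A_{3}(X+t)^{-1}A_{1}\bigr).
\]
Hence the condition is $A_{0}=A_{3}(X+t)^{-1}A_{1}$ for all generic $t$. Letting $t\to\infty$ forces $A_{0}=0$, which is part (1). Expanding $(X+t)^{-1}=\sum_{k} (-1)^{k}X^{k}t^{-k-1}$ then shows
\[
M(a,\,b,\,c)=\{A_{0}=0,\ A_{3}X^{k}A_{1}=0\ \ \forall k\ge 0\}.
\]
Equivalently, $A_{3}$ vanishes on the $X$-invariant subspace $W\coloneqq\sum_{k}X^{k}(\im A_{1})\subset\CC^{c}$.

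\textbf{Step 2 (part (2)).} In this description the action is a direct check: $A_{3}X^{k}A_{1}$ transforms to $g\,A_{3}X^{k}A_{1}\,h^{-1}$. For $M'(a,\,c)$, where the extra condition is $A_{1}A_{3}=0$, take $h=g$; then $kA_{1}g^{-1}\cdot gA_{3}k^{-1}=k\,A_{1}A_{3}\,k^{-1}$, so the condition is preserved.

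\textbf{Step 3 (dimensions).} I would stratify $M(a,\,b,\,c)$ by $w=\dim W$, $0\le w\le c$. Consider the incidence variety of pairs $(X,\,W)$ with $W$ an $X$-invariant $w$-dimensional subspace. It fibers over $\Gr(w,\,c)$ with linear fibers of dimension $c^{2}-w(c-w)$, so it is irreducible of dimension $c^{2}$. Over it, the admissible $A_{1}$ (with columns in $W$) form a space of dimension $bw$, and the admissible $A_{3}$ (killing $W$) a space of dimension $a(c-w)$. This gives
\[
\dim(\text{stratum } w)\le c^{2}+bw+a(c-w)\le c^{2}+\max(a,\,b)\,c.
\]
Equality is attained at $w=c$ (where $A_{3}=0$ and $A_{1},\,X$ are free) or at $w=0$ (where $A_{1}=0$). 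This proves $\dim M(a,\,b,\,c)=(\max(a,\,b)+c)c$.

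For $M'(a,\,c)$ we have $b=a$, so every stratum has the same bound $c^{2}+ac$. The two loci in the statement are linear of dimension $(a+c)c$: they are the $w=c$ and $w=0$ strata, and each satisfies $A_{1}A_{3}=0$ automatically. It remains to show that for $0<w<c$ and $a>0$, the stratum intersected with $\{A_{1}A_{3}=0\}$ has dimension strictly less than $c^{2}+ac$. The stratum is an open subset of an irreducible vector-bundle-type space over the incidence variety, so it suffices to exhibit one point of the stratum with $A_{1}A_{3}\ne0$. For instance, take $X$ diagonal with $W$ spanned by the first $w$ coordinates, $A_{1}$ with a nonzero column $e_{1}$, and $A_{3}$ killing $W$ with $\im A_{3}$ not contained in $\ker A_{1}$. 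For the last requirement, choose $A_{3}$ with a nonzero row $e_{w+1}^{T}$ so that $A_{3}$ kills $W$ and sends $e_{w+1}$ to a nonzero vector $u\in\CC^{a}$; then choose $A_{1}$ with $A_{1}u\ne0$ (possible since $A_{1}$ only needs $\im A_{1}\subset W$ and $W\ne0$), while keeping $\sum_{k}X^{k}\im A_{1}=W$. Hence the intermediate strata contribute no components of maximal dimension. For the same reason, the closures of the intermediate strata cannot contain a top-dimensional component; this gives that the two listed loci are the only maximal components, and they coincide when $a=0$.

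\textbf{Main obstacle.} The delicate step is this last one: making sure the incidence-variety count gives an irreducible stratum, so that a single point with $A_{1}A_{3}\ne0$ drops the dimension, and making sure the intermediate strata are not absorbed as dense pieces of some $(a+c)c$-dimensional component. If irreducibility causes trouble, the fallback is a direct fiber count over fixed $(X,\,W)$: there $\{(A_{1},\,A_{3}): A_{1}A_{3}=0\}$ is a determinantal-type variety of dimension strictly less than $aw+a(c-w)$ whenever $0<w<c$ and $a>0$.
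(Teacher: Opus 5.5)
Your proof is correct, and for part (3) it takes a genuinely different route from the paper.

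\textbf{Part (1).} The paper takes the $(c+1)\times(c+1)$ minors formed from one column of the top-left block, one entry of the bottom-left block and one row of the bottom-right block. It then reads off that entry as the coefficient of $t^{c}$. Your Schur-complement computation uses the same leading-order behaviour in $t$, but it gives more: the closed description $M(a,b,c)=\{A_0=0,\ A_3X^kA_1=0\ \forall k\}$. In words, $A_3$ must kill the $X$-cyclic span $W$ of $\operatorname{im}A_1$.

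\textbf{Part (3), the paper's route.} The paper never uses such a description. It proves $\dim M(a,b,c)$ by induction on $b+c$, fibering over the rank strata of the top-left block, with fibers $\CC^{xc}\times M(a,x,c-x)$. For $M'(a,c)$ it computes the dimension of each rank stratum explicitly as a quadratic in $x=\operatorname{rank}A_1$. It then checks that the value $(a+c)c$ is attained only at $x=0$ and $x=\min(a,c)$.

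\textbf{Part (3), your route.} You stratify by $w=\dim W$ using the incidence variety of pairs $(X,W)$. This is non-recursive and shows that every stratum is irreducible of dimension exactly $c^2+bw+a(c-w)$. For $M'$ it replaces the quadratic optimization by a single observation: when $b=a$ every stratum has dimension $(a+c)c$. So one point with $A_1A_3\neq0$ in each intermediate stratum is enough.

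\textbf{Trade-off.} The paper's inductive scheme is the template it reuses for the orthogonal/symplectic analogue (the lemma on $M_{\epsilon}(b,c)$), where the bottom-right block is determined by $B$. Your route gives a cleaner structural picture of $M(a,b,c)$ itself.

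\textbf{Points to tighten in the write-up.}
\begin{enumerate}
\item The first listed locus $\{A_0=0,\ A_3=0\}$ is the closure of the $w=c$ stratum, not the stratum itself. The stratum is the dense open part of that locus where $\operatorname{im}A_1$ generates $\CC^{c}$ under $X$.
\item Make the witness point explicit. A single column $e_1$ with diagonal $X$ only generates a one-dimensional $W$. Instead, for $0<w<c$ and $a\ge1$, take:
\begin{itemize}
\item $X$ diagonal with distinct eigenvalues;
\item $A_3$ with first row $e_{w+1}^{T}$ and all other rows zero;
\item $A_1$ with first column $e_1+\cdots+e_w$ and all other columns zero.
\end{itemize}
A Vandermonde argument shows $W=\langle e_1,\dots,e_w\rangle$. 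Then $A_3$ kills $W$, and $A_1A_3e_{w+1}=e_1+\cdots+e_w\neq0$.
\item State the final step explicitly. Any $(a+c)c$-dimensional component of $M'(a,c)$ has a dense open subset inside a single stratum, and this forces $w\in\{0,c\}$. It therefore equals one of the two listed linear spaces, since each of these is irreducible of dimension $(a+c)c$.
\end{enumerate}
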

\begin{proof}
    \begin{enumerate}
        \item Suppose that $A \in M(a,\,b,\,c)$, and write
        \[
            A = \begin{pmatrix}
                A_{1} & A_{2} \\
                A_{3} & A_{4}
            \end{pmatrix}, \quad A_{1} \in M_{c \times b}, \quad A_{2} \in M_{c \times c}, \quad A_{3} \in M_{a \times b}, \quad A_{4} \in M_{a \times c}.
        \]
        Denote by $\rbf_{i}(A_{k})$ (resp. $\cbf(A_{k})$; $(A_{k})_{ij}$) the $i$th row (resp. the $j$th column; the entry at the $i$th row and the $j$th column) of $A_{k}$.
        Consider $(c + 1) \times (c+1)$ matrices
        \[
            \begin{pmatrix}
                \cbf_{j}(A_{1}) & A_{2} + t \cdot id_{c} \\
                (A_{3})_{ij} & \rbf_{i}(A_{4})
            \end{pmatrix}, \quad 1 \le i \le a,\quad 1\le j \le b, \quad t \in \CC.
        \]
        Since this is a minor of the matrix
        \[
            \begin{pmatrix}
                A_{1} & A_{2} + t \cdot id_{c} \\
                A_{3} & A_{4}
            \end{pmatrix},
        \]
        we have
        \[
            \det\begin{pmatrix}
                \cbf_{j}(A_{1}) & A_{2} + t \cdot id_{c} \\
                (A_{3})_{ij} & \rbf_{i}(A_{4})
            \end{pmatrix} = 0, \quad 1 \le i \le a,\quad 1\le j \le b, \quad t \in \CC.
        \]
        Since the left hand side is a polynomial in $t$ and the coefficient of the top degree term $t^{c}$ is $(A_{3})_{ij}$, we conclude that $(A_{3})_{ij} = 0$ for all $i$ and $j$, and hence $A_{3} = 0$.

        \item Omitted.

        \item Observe that
        \[
        \left\{ \begin{pmatrix}
            *_{c \times b} & *_{c \times c} \\
            0 & 0
        \end{pmatrix} \right\} \cup \left\{ \begin{pmatrix}
            0 & *_{c \times c} \\
            0 & *_{a \times c}
        \end{pmatrix} \right\} \subset M(a,\,b,\,c), \quad \left\{ \begin{pmatrix}
            *_{c \times a} & *_{c \times c} \\
            0 & 0
        \end{pmatrix} \right\} \cup \left\{ \begin{pmatrix}
            0 & *_{c \times c} \\
            0 & *_{a \times c}
        \end{pmatrix} \right\} \subset M'(a,\,c),
        \]
        and so $M(a,\,b,\,c)$ and $M'(a,\,c)$ are of dimension at least $(\max(a,\,b)+c)c$ and $(a+c)c$, respectively.

        First we show that $\dim M(a,\,b,\,c) = (\max(a,\,b)+c)c$, by induction on $b+c \ge 1$.
        The base case is trivial since $M(a,\,0,\,c) = M_{(a+c) \times c}$ whenever $b=0$.
        Assuming $b \ge 1$, define a morphism
        \[
            \Xi = \Xi_{a,\,b,\,c} : M(a,\,b,\,c) \rightarrow M_{c \times b}, \quad \begin{pmatrix}
                A & *\\
                0 &*
            \end{pmatrix} \mapsto A,
        \]
        and then $\Xi$ is surjective and equivariant under the action of $\GL_{a} \times \GL_{b} \times \GL_{c}$.
        Recall that $M_{c \times b}$ is stratified by
        \[
            (M_{c \times b})_{x} \coloneqq \{A \in M_{c \times b} : \rank A = x\}, \quad 0 \le x \le \min(b,\,c),
        \]
        and each stratum $(M_{c \times b})_{x}$ is $\GL_{b} \times \GL_{c}$-homogeneous.
        Choose a base point
        \[
            A_{x} \coloneqq \begin{pmatrix}
                0 & id_{x} \\
                0 & 0
            \end{pmatrix} \in (M_{c \times b})_{x},
        \]
        and then $\Xi^{-1}(A_{x})$ consists of matrices of the form
        \begin{equation}\label{eqn:lemma: dimension in slN}
            \begin{pmatrix}
                A_{x} & B \\
                0 & C
            \end{pmatrix}
        \end{equation}
        where
        \[
            B = \begin{pmatrix}
                * & * \\
                B_{1} & B_{2}
            \end{pmatrix},\quad B_{1} \in M_{(c-x) \times x}, \quad B_{2} \in M_{(c-x) \times (c-x)}
        \]
        and
        \[
            C = \begin{pmatrix}
                C_{1} & C_{2}
            \end{pmatrix}, \quad C_{1} \in M_{a \times x},\quad C_{2} \in M_{a \times (c-x)}
        \]
        such that
        \[
            \rank \begin{pmatrix}
                B_{1} & B_{2} + t \cdot id_{c-x} \\
                C_{1} & C_{2}
            \end{pmatrix} \le c-x.
        \]
        Therefore
        \[
            \Xi^{-1}(A_{x}) \simeq \left\{\begin{array}{cc}
                \CC^{xc} \times M(a,\,x,\,c-x) & \text{if $x < c$,} \\
                \CC^{c^{2}} & \text{if $x = c$.}
            \end{array} \right.
        \]
        By the induction, for any $0 \le x \le \min(b,\,c)$,
        \begin{align*}
            \dim \Xi^{-1}(A_{x}) &= xc + (\max(a,\,x) + c-x)(c-x) \\
            &\le xc + (\max(a,\,b) + c-x)(c-x) \\
            &= x^{2} - (\max(a,\,b) + c)x + (\max(a,\,b) + c)c
        \end{align*}
        and thus
        \begin{align*}
            \dim \Xi^{-1}((M_{c \times b})_{x}) & = \dim \Xi^{-1}(A_{x}) + \dim (M_{c \times b})_{x} \\
            &\le (x^{2} - (\max(a,\,b) + c)x + (\max(a,\,b) + c)c) + ((b+c)x - x^{2}) \\
            &= (b-\max(a,\,b))x + (\max(a,\,b) + c)c,
        \end{align*}
        which is at most $(\max(a,\,b) + c)c$.
        From the stratification
        \[
            M_{c \times b} \supset \cdots \supset \overline{(M_{c\times b})_{x}} \supset \cdots \overline{(M_{c\times b})_{0}} = \{(0)\},
        \]
        it follows that $\dim M(a,\,b,\,c) \le (\max(a,\,b) + c)c$, and so $\dim M(a,\,b,\,c) = (\max(a,\,b) + c)c$.

        It remains to show that the only maximal dimensional components of $M'(a,\,c)$ are
        \[
             \left\{ \begin{pmatrix}
            * & * \\
            0 & 0
        \end{pmatrix} \right\} \quad \text{and}\quad \left\{ \begin{pmatrix}
            0 & * \\
            0 & *
        \end{pmatrix} \right\}.
        \]
        The case $a=0$ is trivial, and we assume that $a \ge 1$.
        Define
        \[
            \xi =\xi_{a,\,c} \coloneqq \Xi_{a,\,a,\,c}|_{M'(a,\,c)} : M'(a,\,c) \rightarrow M_{c \times a},
        \]
        and then $\xi$ is surjective and equivariant under the action of $\diag(\GL_{a}) \times \GL_{c}$.
        Furthermore, for $0 \le x \le \min(a,\,c)$, $\xi^{-1}(A_{x})$ consists of the matrices of the form (\ref{eqn:lemma: dimension in slN}) such that
        \[
            C = \begin{pmatrix}
                *_{(a-x) \times c} \\
                0_{x \times c}
            \end{pmatrix}.
        \]
        Hence
        \[
            \xi^{-1}(A_{x}) \simeq \left\{ \begin{array}{cc}
                \CC^{xc} \times M(a-x,\,x,\,c-x) & \text{if $x < c$,} \\
                \CC^{c^{2}} & \text{if $x = c$,}
            \end{array}  \right.
        \]
        and by the previous result on $\dim M(a,\,b,\,c)$,
        \[
            \dim \xi^{-1}(A_{x}) = xc + (\max(a-x,\,x)+c-x)(c-x) = \left\{ \begin{array}{cc}
                2x^{2}-(a+2c)x+(a+c)c & \text{if $x \le a/2$,} \\
                c^{2} & \text{if $x \ge a/2$,}
            \end{array} \right.
        \]
        and
        \[
            \dim \xi^{-1}((M_{c \times a})_{x}) = \left\{ \begin{array}{cc}
                x^{2}-cx+(a+c)c  & \text{if $x \le a/2$,} \\
                -x^{2} + (a+c)x + c^{2} & \text{if $x \ge a/2$.}
            \end{array} \right.
        \]
        Now it is straightforward to show that when $x \le a/2$ (resp. $x \ge a/2$), $\dim \xi^{-1}((M_{c \times a})_{x})$ is maximal at $x = 0,\,c$ (resp. $x = \min(a,\,c)$).
        Therefore
        \[
            \dim \xi^{-1}((M_{c \times a})_{x}) \le (a+c)c
        \]
        with the equality if and only if $x = 0,\, \min(a,\,c)$.
        Again from the stratification of $M_{c \times a}$ by the matrix rank $x$, it follows that $\dim M'(a,\,c) = (a+c)c$, and its maximal dimensional components are the closures of those of $\xi^{-1}((M_{c \times a})_{x})$ for $x =0,\,\min(a,\,c)$.
        Finally, from the definition of $\xi$, one can easily see that
        \[
            \xi^{-1}((M_{c \times a})_{0}) = \left\{ \begin{pmatrix}
                0 & * \\
                0 & *
            \end{pmatrix} \right\},
        \]
        and
        \[
            \xi^{-1}((M_{c \times a})_{\min(a,\,c)}) = \left\{ \begin{pmatrix}
                A & * \\
                0 & 0
            \end{pmatrix} : \text{$A \in M_{c \times a}$ is of full rank} \right\},
        \]
        which completes the proof.
    \end{enumerate}
\end{proof}

\subsection{\texorpdfstring{$\mathfrak{so}_{N}$}{soN} and \texorpdfstring{$\mathfrak{sp}_{N}$}{spN}} \label{ssection::soN spN}

In this section, we consider square-zero orbit closures $\overline{Z}_{[2^{r},\,1^{N-2r}]}$ in $\PP(\mathfrak{so}_{N})$ and $\PP(\mathfrak{sp}_{N})$.
Recall that in the former, $r$ is assumed to be even, and in the latter, $N$ is assumed to be even.
The two cases are similar to each other and so presented together.
For this purpose, the following notation is used throughout this section:
\begin{assumption} \label{assumption:notation for soN and spN}
    Let $\epsilon \in \{-1,\,1\}$.
    We choose integers $N$, $r$ and $x$ as follows:
    \begin{itemize}
        \item When $\epsilon = 1$, then $N \ge 5$, $1 \le r \le \frac{N}{2}$ is even, and $0 \le x \le \frac{r}{2}$ is even.
        This notation is for $\overline{Z}_{[2^{r},\,1^{N-2r}]} \subset \PP(\mathfrak{so}_{N})$.
        (See also Remark~\ref{rmk:choice when very even} for the very even partition $[2^{N/2}]$.)

        \item When $\epsilon = -1$, then $N \ge 4$ is even, $1 \le r \le \frac{N}{2}$, and $0 \le x \le \frac{r}{2}$.
        This notation is for $\overline{Z}_{[2^{r},\,1^{N-2r}]} \subset \PP(\mathfrak{sp}_{N})$.
    \end{itemize}
\end{assumption}

Now we introduce our symmetric and symplectic forms.
First choose $\epsilon \in \{-1,\,1\}$.
For an integer $l \ge 1$, define antidiagonal $l \times l$ matrices $\Omega'_{\epsilon}$ as
\[
    \Omega'_{l,\,1} \coloneqq \begin{pmatrix}
        && & & 1 \\
        && &1 & \\
        && \iddots & & \\
        &1&&& \\
        1 & & & &
    \end{pmatrix},
\]
and
\[
    \Omega'_{l,\,-1} \coloneqq \begin{pmatrix}
        &  && & 1 \\
        &  && -1 & \\
        &  &\iddots & & \\
        & 1 & & & \\
        -1 & & & &
    \end{pmatrix} \quad \text{(for $l$ even)}.
\]
Under Assumption~\ref{assumption:notation for soN and spN}, define an antidiagonal $N \times N$ matrix
\[
    \Omega_{\epsilon} \coloneqq \begin{pmatrix}
        &&\Omega'_{r,\,-\epsilon}\\
        &\Omega'_{N-2r,\,\epsilon}&\\
        -\Omega'_{r,\,-\epsilon}&&
    \end{pmatrix},
\]
the associated matrix algebra
\[
    \sfr(\Omega_{\epsilon}) \coloneqq \{A \in M_{N \times N} : A^{tr} \Omega_{\epsilon} + \Omega_{\epsilon}A = 0\},
\]
its orthogonal complement
\[
    \sfr(\Omega_{\epsilon})^{\perp} \coloneqq \{A \in M_{N \times N} : A^{tr} \Omega_{\epsilon} - \Omega_{\epsilon}A = 0\},
\]
and the isometry groups
\[
    \II(\Omega_{\epsilon}) \coloneqq \{g \in \GL_{N} : g^{tr}\Omega_{\epsilon}g = \Omega_{\epsilon}\}, \quad \SI(\Omega_{\epsilon}) \coloneqq \{g \in \II(\Omega_{\epsilon}) : \det (g) = 1\}.
\]
When $\epsilon = 1$ (resp. $= -1$), $\Omega_{\epsilon}$ is symmetric (resp. skew-symmetric) and hence $\sfr(\Omega_{\epsilon}) = \mathfrak{so}_{N}$ (resp. $=\mathfrak{sp}_{N}$) and $\II(\Omega_{\epsilon}) = \O_{N}$ (resp. $=\Sp_{N} = \SI(\Omega_{-1})$).
Explicitly, $\sfr(\Omega_{\epsilon})$ consists of all matrices of the form
\[
    \begin{pmatrix}
        A_{1} \in M_{r \times r} & A_{2} \in M_{r \times (N-2r)} & A_{3}\in \sfr(\Omega'_{r,\,-\epsilon})^{\perp} \\
        A_{4} \in M_{(N-2r) \times r} & A_{5} \in \sfr(\Omega'_{N-2r,\,\epsilon}) & -(\Omega'_{N-2r,\,\epsilon})^{-1}A_{2}^{tr} \Omega'_{r,\,-\epsilon} \\
        A_{6}\in \sfr(\Omega'_{r,\,-\epsilon})^{\perp} & -(\Omega'_{r,\,-\epsilon})^{-1}A_{4}^{tr} \Omega'_{N-2r,\,\epsilon} & - (\Omega'_{r,\,-\epsilon})^{-1} A_{1}^{tr} \Omega'_{r,\,\epsilon}
    \end{pmatrix}.
\]
An advantage of the choice of $\Omega_{\epsilon}$ is that we can choose the same base points as in the case of $\mathfrak{sl}_{N}$:
\begin{equation} \label{eqn:base pt in soN and spN}
    v \coloneqq \begin{pmatrix}
        0 & 0 & id_{r} \\
        0 & 0 & 0 \\
        0 & 0 & 0
    \end{pmatrix} \in O_{[2^{r},\,1^{N-2r}]} \subset \sfr(\Omega_{\epsilon}),
\end{equation}
and
\begin{equation} \label{eqn:2nd base pt in soN and spN}
    A_{x} \coloneqq \begin{pmatrix}
        0_{x \times (r-x)} & id_{x} \\
        0_{(r-x) \times (r-x)} & 0_{(r-x) \times x}
    \end{pmatrix} \in \sfr(\Omega'_{r,\,-\epsilon})^{\perp}.
\end{equation}
\begin{remark} \label{rmk:choice when very even}
    Recall that when $\epsilon = 1$ and $N$ is divisible by $4$, we have two nilpotent orbits in $\sfr(\Omega_{1}) = \mathfrak{so}_{N}$ indexed by $[2^{N/2}]$: $O^{I}_{[2^{N/2}]}$ and $O^{II}_{[2^{N/2}]}$.
    In this section, we denote by $O_{[2^{N/2}]}$ the one containing our base point $v$ (\ref{eqn:base pt in soN and spN}).
    Nonetheless, since $O^{I}_{[2^{N/2}]}$ and $O^{II}_{[2^{N/2}]}$ are $\II(\Omega_{\epsilon})$-conjugate to each other, our main result is valid for both of them.
\end{remark}

Putting $z = [v] \in Z_{[2^{r},\,1^{N-2r}]}$, $\Stab_{\SI(\Omega_{\epsilon})}(z)$ consists of elements of $\SI(\Omega_{\epsilon})$ of the form (\ref{eqn:stab of base pt in SLn}).
Notice that for the conformal group
\[
    \CI(\Omega'_{r,\,-\epsilon}) \coloneqq \{g \in \GL_{r} : \exists \lambda \in \CC^{\times},\, \lambda g^{tr} \Omega'_{r,\,-\epsilon} g = \Omega'_{r,\,-\epsilon}\}
\]
and the morphism
\[
    \lambda_{\epsilon} : \CI(\Omega'_{r,\,-\epsilon}) \rightarrow \CC^{\times}, \quad (g \text{ s.t. } \lambda g^{tr} \Omega'_{r,\,-\epsilon} g = \Omega'_{r,\,-\epsilon}) \mapsto \lambda,
\]
if the matrix in (\ref{eqn:stab of base pt in SLn}) is an element of $\Stab_{\SI(\Omega_{\epsilon})}(z)$, then
\begin{equation} \label{eqn:stab of base pt in SON SpN}
    g \in \CI(\Omega'_{r,\,-\epsilon}), \quad h \in \SI(\Omega'_{N-2r,\,\epsilon}), \quad \lambda = \lambda_{\epsilon}(g).
\end{equation}
Since $\ad_{\sfr(\Omega_{\epsilon})}(v)$ consists of all matrices of the form
\begin{equation}\label{eqn:ad soN and spN}
    \begin{pmatrix}
        A & B & C \\
        0_{(N - 2r) \times r} & 0_{(N - 2r) \times (N-2r)} & - (\Omega'_{N-2r,\,\epsilon})^{-1}B^{tr} \Omega'_{r,\, -\epsilon} \\
        0_{r \times r}&0_{r \times (N-2r)}& - A
    \end{pmatrix}, \quad A,\, C \in \sfr(\Omega'_{r,\,-\epsilon})^{\perp}, \quad B \in M_{r \times (N-2r)},
\end{equation}
$\widehat{F}(v)$ consists of all matrices of the form (\ref{eqn:ad soN and spN}) such that
\begin{equation}\label{eqn:F(v) when N not 2r in soN spN}
    {\begin{array}{c}
        A^{2} = 0, \quad AB = 0, \quad [A,\,C]=B(\Omega'_{N-2r,\,\epsilon})^{-1}B^{tr} \Omega'_{r,\, -\epsilon}, \\
         \rank \begin{pmatrix}
        A & B & C + t\cdot id_{r} \\
        0& 0& -(\Omega'_{N-2r,\,\epsilon})^{-1}B^{tr} \Omega'_{r,\, -\epsilon} \\
        0&0&-A
    \end{pmatrix} \le r, \quad \forall t \in \CC
    \end{array}}
     \  \quad (\text{if }N > 2r),
\end{equation}
and
\begin{equation}\label{eqn:F(v) when N = 2r in soN spN}
    A^{2} = 0, \quad \quad [A,\,C] = 0, \quad \rank \begin{pmatrix}
        A & C + t\cdot id_{r} \\
        0&-A
    \end{pmatrix} \le r, \quad \forall t \in \CC \quad (\text{if } N = 2r).
\end{equation}
As before, we have a morphism
\begin{equation} \label{eqn:projection from locus of lines in soN spN}
    \Psi_{\epsilon} : \widehat{F}(v) \rightarrow \overline{O}^{\mathfrak{sl}}_{[2^{\lfloor r/2 \rfloor},\,1^{r-2\lfloor r/2 \rfloor}]} \cap \sfr(\Omega'_{r,\,-\epsilon})^{\perp}, \quad \begin{pmatrix}
        A & B & C \\
        0 & 0 & - (\Omega'_{N-2r,\,\epsilon})^{-1}B^{tr} \Omega'_{r,\, -\epsilon} \\
        0&0& - A
    \end{pmatrix} \mapsto A
\end{equation}
where $\overline{O}^{\mathfrak{sl}}_{\dbf}$ is the nilpotent orbit closure in $\mathfrak{sl}_{r}$ labeled by a partition $\dbf$.
The morphism $\Psi_{\epsilon}$ is equivariant under the adjoint actions of $\Stab_{\SI(\Omega_{\epsilon})}(z)$ and $\CI(\Omega'_{r,\,-\epsilon})$.
In fact, since $\CI(\Omega'_{r,\,-\epsilon})$ is generated by the homothety subgroup and $\II(\Omega'_{r,\,-\epsilon})$, the adjoint action of $\CI(\Omega'_{r,\,-\epsilon})$ factors through that of $\II(\Omega'_{r,\,-\epsilon})$.
Moreover, for $x$ as in Assumption~\ref{assumption:notation for soN and spN},
\[
    \Ocal_{x} \coloneqq O^{\mathfrak{sl}}_{[2^{x},\,1^{r-2x}]} \cap \sfr(\Omega'_{r,\,-\epsilon})^{\perp}
\]
is $\II(\Omega'_{r,\,-\epsilon})$-homogeneous by \cite[Theorem~20]{HM95Contragredient}.
Note that $O^{\mathfrak{sl}}_{[2^{y},\,1^{r-2y}]} \cap \sfr(\Omega'_{r,\,-\epsilon})^{\perp}$ is empty when $\epsilon = 1$ and $y$ is odd, and hence the orbital decomposition of the target of $\Psi_{\epsilon}$ is given by
\[
    \overline{O}^{\mathfrak{sl}}_{[2^{\lfloor r/2 \rfloor},\,1^{r-2\lfloor r/2 \rfloor}]} \cap \sfr(\Omega'_{r,\,-\epsilon})^{\perp} = \bigcup_{x \text{ as in Assumption~\ref{assumption:notation for soN and spN}}} \Ocal_{x}.
\]
Since $A_{x} \in \Ocal_{x}$ (cf. (\ref{eqn:2nd base pt in soN and spN})), by computing $\ad_{\sfr(\Omega'_{r,\,-\epsilon})} A_{x}$ and $\Stab_{\II(\Omega'_{r,\,-\epsilon})}(A_{x})$, one can easily show the following (details omitted):
\begin{proposition} \label{prop:components of Ox in soN spN}
    The $\II(\Omega'_{r,\,-\epsilon})$-orbit $\Ocal_{x}$ is of dimension $x(r-x)$.
    It is connected unless $\epsilon=-1$ and $r=2x$, in which case $\Ocal_{x}$ has two connected components.
\end{proposition}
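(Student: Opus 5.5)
The plan is to avoid computing $\ad_{\sfr(\Omega'_{r,\,-\epsilon})}A_{x}$ and the stabilizer directly. Instead, describe $\Ocal_{x}$ geometrically as a fibration over an isotropic Grassmannian. Write $V=\CC^{r}$ and let $\omega(u,\,w)\coloneqq u^{tr}\Omega'_{r,\,-\epsilon}w$. This form satisfies $\omega(w,\,u)=-\epsilon\,\omega(u,\,w)$: it is symmetric when $\epsilon=-1$ and skew when $\epsilon=1$. The condition $A\in\sfr(\Omega'_{r,\,-\epsilon})^{\perp}$ says exactly that $A$ is self-adjoint, $\omega(Au,\,w)=\omega(u,\,Aw)$.

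\textbf{Step 1: the associated subspace.} For $A\in\Ocal_{x}$, self-adjointness gives $\ker A=(\im A)^{\perp}$. Since $A^{2}=0$, we have $U\coloneqq\im A\subset\ker A=U^{\perp}$. Hence $U$ is an isotropic subspace of dimension $x$.

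\textbf{Step 2: the associated form.} The matrix $A$ factors through a linear isomorphism $V/U^{\perp}\to U$. Via $\omega$ we identify $V/U^{\perp}\simeq U^{\vee}$, so this isomorphism amounts to a bilinear form $\beta_{A}(u,\,w)\coloneqq\omega(Au,\,w)$ on $V/U^{\perp}$. Self-adjointness gives $\beta_{A}(w,\,u)=-\epsilon\,\beta_{A}(u,\,w)$, and $\beta_{A}$ is nondegenerate because $\rank A=x$. Conversely, every nondegenerate $\beta$ of this symmetry type on $V/U^{\perp}$ determines a unique self-adjoint $A$ with image $U$ and $A^{2}=0$. When $\epsilon=1$, $\beta$ is skew, which forces $x$ to be even; this matches Assumption~\ref{assumption:notation for soN and spN}.

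\textbf{Step 3: the fibration.} Consequently $A\mapsto\im A$ is an $\II(\Omega'_{r,\,-\epsilon})$-equivariant surjection from $\Ocal_{x}$ onto the isotropic Grassmannian $\mathrm{IG}_{\omega}(x,\,V)$. Its fiber over $U$ is the open set of nondegenerate forms inside the vector space of symmetric (if $\epsilon=-1$) or skew (if $\epsilon=1$) forms on $U^{\vee}$.

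\textbf{Dimension.} For $\epsilon=-1$, the orthogonal Grassmannian has dimension $x(r-x)-\tfrac{x(x+1)}{2}$ and the fiber has dimension $\tfrac{x(x+1)}{2}$. For $\epsilon=1$, the symplectic Grassmannian has dimension $x(r-x)-\tfrac{x(x-1)}{2}$ and the fiber has dimension $\tfrac{x(x-1)}{2}$. In both cases the total is $x(r-x)$.

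\textbf{Connectedness.} Each fiber is a nonempty Zariski open subset of a vector space, hence connected, so $\Ocal_{x}$ is connected exactly when the base is. The symplectic isotropic Grassmannians are connected. The orthogonal Grassmannian $\mathrm{OG}(x,\,r)$ is connected except when $r=2x$, where the maximal isotropic subspaces form two families interchanged by $\II$ but not by $\SI$. This gives exactly the exceptional case $\epsilon=-1$, $r=2x$ with two components. In that case the fibration over a two-component base with connected fibers has exactly two components, since the fibers are smooth and the morphism is smooth, being equivariant onto a homogeneous base.

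The main obstacle I expect is checking carefully that the correspondence $A\leftrightarrow(U,\,\beta)$ is an isomorphism of varieties, rather than merely a bijection, so that dimensions and connected components transfer. I would handle this by local trivializations of the tautological bundle on the Grassmannian, or simply by equivariance together with the already-cited homogeneity of $\Ocal_{x}$ \cite[Theorem~20]{HM95Contragredient}.
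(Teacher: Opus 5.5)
Your argument is correct, and it takes a genuinely different route from the paper. The paper omits the details but says it computes $\ad_{\sfr(\Omega'_{r,\,-\epsilon})}A_{x}$ and $\Stab_{\II(\Omega'_{r,\,-\epsilon})}(A_{x})$ at the single base point $A_{x}$, taking the homogeneity of $\Ocal_{x}$ from \cite[Theorem~20]{HM95Contragredient}. The dimension is then $\dim \ad_{\sfr(\Omega'_{r,\,-\epsilon})}A_{x}$. The number of components is the index of $\II(\Omega'_{r,\,-\epsilon})^{\circ}\cdot\Stab_{\II(\Omega'_{r,\,-\epsilon})}(A_{x})$ in $\II(\Omega'_{r,\,-\epsilon})$. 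This index is $1$ when $\epsilon=1$, because $\II(\Omega'_{r,\,-1})\simeq\Sp_{r}$ is connected. When $\epsilon=-1$, it comes down to whether the stabilizer of $A_{x}$ in $\II(\Omega'_{r,\,1})\simeq\O_{r}$ contains an element of determinant $-1$. Such an element exists (a reflection in the middle block) exactly when $r>2x$.

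You instead fiber $\Ocal_{x}$ over the isotropic Grassmannian via $A\mapsto\im A$, with fibers the nondegenerate $(-\epsilon)$-symmetric forms on $U^{\vee}$. Both claims then follow from standard facts about isotropic Grassmannians. Your route is more conceptual. It explains why $x$ must be even when $\epsilon=1$. It identifies the two components in the exceptional case with the two families of maximal isotropic subspaces: the component containing $A$ is determined by the family of $\im A$. Combined with Witt's theorem and the transitivity of $\GL(U)$ on nondegenerate forms, it even reproves the homogeneity of $\Ocal_{x}$ instead of quoting it. The paper's computation is more mechanical, but it stays in the explicit matrix framework of $A_{x}$ and its stabilizer that the later analysis of $(\Psi_{\epsilon})^{-1}(A_{x})$ uses anyway.

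The obstacle you anticipate is not a real one. For self-adjoint $A$ one has $\ker A=(\im A)^{\perp}$. Hence the fiber over $U$ is literally the open rank-$x$ locus of the linear space $\{A\in\sfr(\Omega'_{r,\,-\epsilon})^{\perp}:\im A\subset U\}$, whose elements are automatically square-zero. No further identification is needed.
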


Now by the same argument as in the case of $\mathfrak{sl}_{N}$, one can compute maximal dimensional components of $\widehat{F}(v)$.
Instead, to present a clearer exposition, we only record those related to Legendrian components of $F(\overline{Z}_{[2^{r},\,1^{N-2r}]},\,z)$.

\begin{theorem}\label{thm: square zero in soN spN}
    The morphism $\Psi_{\epsilon}$ in (\ref{eqn:projection from locus of lines in soN spN}) is surjective, and we have
    \[
        \dim (\Psi_{\epsilon})^{-1}(\Ocal_{x}) \le \frac{rN-r^{2}-\epsilon r}{2} =: d_{\epsilon},
    \]
    and the equality holds if and only if one of the following holds:
    \begin{enumerate}
        \item\label{item1:thm: square zero in soN spN} We have one of the following:
        \begin{enumerate}
            \item $\epsilon = -1$, $N=2r$ and $r > 2x$.
            \item $\epsilon = 1$ and $N = 2r$. \item $\epsilon = 1$ and $r=2x$.
            \item $\epsilon = 1$, $N \ge 2r+3$ and $r = 2x+2$.
        \end{enumerate}
        In this case, $(\Psi_{\epsilon})^{-1}(\Ocal_{x})$ is irreducible.
        
        \item\label{item2:thm: square zero in soN spN} $\epsilon = -1$ and $r = 2x$.
        In this case, $(\Psi_{-1})^{-1}(\Ocal_{x})$ consists of two $d_{-1}$-dimensional components, and they are identified by the $\Stab_{\SI(\Omega_{-1})}(z)$-action.
        \item\label{item3:thm: square zero in soN spN} $\epsilon = 1$, $N = 2r+2$ and $r > 2x$.
        In this case, $(\Psi_{1})^{-1}(\Ocal_{x})$ consists of two $d_{1}$-dimensional components, each of which is $\Stab_{\SI(\Omega_{1})}(z)$-invariant.
    \end{enumerate}
\end{theorem}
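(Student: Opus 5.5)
We follow the strategy used for $\mathfrak{sl}_{N}$ in Theorem~\ref{thm: square zero in slN}. By the equivariance of $\Psi_{\epsilon}$ and the homogeneity of $\Ocal_{x}$ under $\II(\Omega'_{r,\,-\epsilon})$, the restriction $(\Psi_{\epsilon})^{-1}(\Ocal_{x}) \rightarrow \Ocal_{x}$ is a homogeneous fiber bundle under the stabilizer action. Hence
\[
    \dim (\Psi_{\epsilon})^{-1}(\Ocal_{x}) = \dim (\Psi_{\epsilon})^{-1}(A_{x}) + x(r-x),
\]
using Proposition~\ref{prop:components of Ox in soN spN}. Surjectivity of $\Psi_{\epsilon}$ will follow once each fiber over $A_{x}$ is exhibited as nonempty; for instance, $B=0$ with $C$ suitably block upper triangular commuting with $A_{x}$ lies in the fiber.

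\textbf{Fiber over $A_{x}$.} The main work is to solve the equations (\ref{eqn:F(v) when N not 2r in soN spN}) and (\ref{eqn:F(v) when N = 2r in soN spN}) at $A=A_{x}$, imposing the extra linear constraints $C \in \sfr(\Omega'_{r,\,-\epsilon})^{\perp}$ and $D = -(\Omega'_{N-2r,\,\epsilon})^{-1}B^{tr}\Omega'_{r,\,-\epsilon}$. Exactly as in the $\mathfrak{sl}_{N}$ case, write $B = (B_{1};B_{2};0)$ with $B_{2}$ of size $(r-2x)\times(N-2r)$, and note that $D_{1}, D_{2}$ are now determined by $B_{1}, B_{2}$ through the form. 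The fiber should then become an affine space (the $B_{1}$ part and the free part of $C$) times a reduced variety
\[
    \{(B_{2},\,C_{2})\},
\]
where $C_{2}$ lies in a space of $\Omega'_{r-2x,\,-\epsilon}$-(anti)self-adjoint matrices. The conditions are $B_{2}(\Omega')^{-1}B_{2}^{tr}\Omega' = 0$, i.e.\ the column span of $B_{2}$ is totally isotropic in $\CC^{N-2r}$ for the form $\Omega'_{N-2r,\,\epsilon}$, together with the rank condition
\[
    \rank\begin{pmatrix} B_{2} & C_{2}+t \\ 0 & D_{2}\end{pmatrix} \le r-2x, \quad \forall t \in \CC.
\]
By Lemma~\ref{lemma: dimension in slN}(1), the rank condition forces the same block-triangular shape, and the analysis reduces to a symmetric or skew analogue of $M'(a,\,c)$ with $A_{2}$ determined by $A_{1}$.

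\textbf{Analogue of Lemma~\ref{lemma: dimension in slN}.} I would prove an analogue of Lemma~\ref{lemma: dimension in slN} for this constrained variety by stratifying by the rank $y$ of $B_{2}$ and inducting. On each stratum, I would count the dimension of the isotropic data (the isotropic Grassmannian) plus that of the residual fiber, which again has the same shape with smaller parameters. Extremal strata should give the candidates:
\begin{itemize}
    \item $B_{2}=0$, which is the analogue of the "$D_2=0$" component;
    \item $B_{2}$ of maximal rank with isotropic image.
\end{itemize}
Isotropy caps the rank of $B_{2}$ at $\lfloor (N-2r)/2\rfloor$. This is where the special values $N=2r+2$ (a two-dimensional quadratic space has exactly two isotropic lines, which gives two components, each stabilizer-invariant since $\SI$ preserves each line) and $N\ge 2r+3$ enter. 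The parity of $\epsilon$ enters through the self-adjointness constraint, which changes the count from $c^{2}$ to $c(c\pm1)/2$. Assembling these terms, I will compare the resulting quadratic function of $x$ and $y$ against $d_{\epsilon}=(rN-r^{2}-\epsilon r)/2$, and read off the equality cases (1)--(3).

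\textbf{Component structure.} For case (2), the two components should come from the two connected components of $\Ocal_{x}$ when $r=2x$ and $\epsilon=-1$ (Proposition~\ref{prop:components of Ox in soN spN}). These two components are swapped by elements of $\Stab_{\SI(\Omega_{-1})}(z)$ with $\lambda_{\epsilon}(g)=-1$, which exist in the conformal group. Irreducibility in case (1) follows from connectedness of $\Ocal_{x}$ together with irreducibility of the fiber's top-dimensional part.

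The main obstacle I expect is the dimension count for the constrained rank variety. The interplay between isotropy of $B_{2}$ and self-adjointness of $C_{2}$ makes the maximization delicate. In particular, the boundary cases $r=2x+2$ and $N=2r+1$ versus $N=2r+2$, where the count barely fails or succeeds, will require a careful case check. I would first verify the count on small examples such as $r=2$, $x=0$ to calibrate the formula.
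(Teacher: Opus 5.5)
Your reduction is the paper's: $(\Psi_{\epsilon})^{-1}(\Ocal_{x})\to\Ocal_{x}$ is homogeneous, and for $N>2r$, $r>2x$ the fiber over $A_{x}$ is $\CC^{x(N-r-x-\epsilon)}\times M'_{\epsilon}(N-2r,\,r-2x)$ in the notation of Lemma~\ref{lemma: dimension in soN spN}. Your explanation of the two components for $N=2r+2$ via the two isotropic lines of $(\CC^{2},\,\Omega'_{2,\,1})$ is also right.

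\textbf{The core estimate is not proved.} After the reduction, the theorem is the bound $\dim M'_{\epsilon}(b,\,c)\le c(c+b-\epsilon)/2$ together with the exact list of equality cases: $b=0$; $\epsilon=1$, $c=2$, $b\ge 3$; $\epsilon=1$, $b=2$. You defer this as ``the main obstacle'', and the roadmap you sketch for it would not go through as stated.
\begin{enumerate}
\item The recursion does not return ``the same shape with smaller parameters''. For $\epsilon=1$, once $B$ is put in its rank-$a$ normal form, the isotropy condition $B(\Omega'_{b,\,\epsilon})^{-1}B^{tr}=0$ is used up. The fiber is then $\CC^{a(c-a)+a(a-1)/2}\times M_{1}(a,\,c-2a)$, where a block of $C$ plays the role of $B$ and there is \emph{no} isotropy constraint. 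So you need $\dim M_{\epsilon}(b,\,c)$ as a separate input. The paper gets it by first showing every column of $B$ is $\Omega'_{c,\,-\epsilon}$-isotropic, then fibering over the first column of $B$, with fibers $M_{\epsilon}(b-1,\,c)$ over $0$ and $\CC^{\bullet}\times M_{\epsilon}(b,\,c-2)$ over a nonzero vector.
\item For $\epsilon=1$ the rank-$a$ stratum needs two isotropy conditions. The row space of $B$ must be isotropic in $(\CC^{b},\,\Omega'_{b,\,1})$; it is the row space, not the column space, that lies in $\CC^{N-2r}$. The column space must also be isotropic in $(\CC^{c},\,\Omega'_{c,\,-1})$. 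The orbit then has dimension $(b+c)a-2a^{2}$ with $a\le\min(b,\,c)/2$; counting an orthogonal isotropic Grassmannian alone overcounts.
\item Your candidate $B_{2}=0$ is never extremal once $N>2r$. Since $D_{2}$ is determined by $B_{2}$, the two $\mathfrak{sl}_{N}$ components $\{B_{2}=0\}$ and $\{D_{2}=0\}$ collapse into $\{0\}\times\sfr(\Omega'_{c,\,-\epsilon})^{\perp}$. That set has dimension $c(c-\epsilon)/2<c(c+b-\epsilon)/2$.
\end{enumerate}
For $N>2r$, $r>2x$ the bound is attained only in the two $\epsilon=1$ cases listed above. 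For $\epsilon=-1$ it is never attained; the paper shows this via $\dim M'_{-1}\le\dim M_{-1}$ and the explicit formula for the latter. For $\epsilon=1$, $b\ge 3$, $c\ge 4$ one needs an explicit strict inequality on every rank stratum.

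\textbf{Case (2).} The mechanism is right: for $r=2x$ the fiber over $A_{x}$ is an affine space, and $\Ocal_{x}$ has two components. But your criterion for the swap is wrong, because $\lambda_{-1}(g)$ does not detect the components. For example, $g=\sqrt{-1}\,\mathrm{id}_{r}$ has $\lambda_{-1}(g)=-1$ yet acts trivially on $\Ocal_{x}$. The two components correspond to the two families of maximal isotropic subspaces $\im A$ of $(\CC^{r},\,\Omega'_{r,\,1})$. They are exchanged by any $g\in\II(\Omega'_{r,\,1})\setminus\SI(\Omega'_{r,\,1})$, which has $\lambda_{-1}(g)=1$, extended to $\diag(g,\,h,\,g)\in\Stab_{\Sp_{N}}(z)$. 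The relevant invariant on $\CI(\Omega'_{r,\,1})$ is the sign $\det(g)\,\lambda_{-1}(g)^{r/2}$.
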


Since $\dim \overline{Z}_{[2^{r},\,1^{N-2r}]} = rN - r^{2} - \epsilon r - 1$ (cf. (\ref{eqn:ad soN and spN})), as in Corollary~\ref{coro:Leg of square zero in slN}, we deduce the following:

\begin{corollary} \label{coro:Leg of square zero in soN spN}
    Under Assumption~\ref{assumption:notation for soN and spN}, put $\dbf =[2^{r},\,1^{N-2r}]$.
    \begin{enumerate}
        \item If $\epsilon = 1$, then for $z \in Z_{\dbf} \subset \PP(\mathfrak{so}_{N})$, we have
        \[
            |\Leg(\overline{Z}_{\dbf},\,z)| = |\Leg(\overline{Z}_{\dbf},\,z)/\Stab_{\SO_{N}}(z)| = \left\{\begin{array}{cc}
                \lfloor \frac{r}{4}\rfloor + 1 & \text{if $N=2r$} \\
                 0 & \text{if $N=2r+1$ and $r/2$ is odd,} \\
                 \frac{r}{2} + 1& \text{if $N=2r+2$,} \\
                 1& \text{otherwise.}
            \end{array}\right.
        \]

        \item If $\epsilon = -1$, then for $z \in Z_{\dbf} \subset \PP(\mathfrak{sp}_{N})$, we have
        \[
            |\Leg(\overline{Z}_{\dbf},\,z)| = \left\{\begin{array}{cc}
                 \lfloor \frac{r}{2} \rfloor + 1 & \text{if $N=2r$ and $r$ is odd,} \\
                 \frac{r}{2} + 2 & \text{if $N=2r$ and $r$ is even,} \\
                 2 & \text{if $N\ge 2r+2$ and $r$ is even,} \\
                 0& \text{otherwise,}
            \end{array}\right.
        \]
        and
        \[
            |\Leg(\overline{Z}_{\dbf},\,z)/\Stab_{\Sp_{N}}(z)| = \left\{\begin{array}{cc}
                 \lfloor \frac{r}{2} \rfloor + 1 & \text{if $N=2r$,} \\
                 1 & \text{if $N\ge 2r+2$ and $r$ is even,} \\
                 0& \text{otherwise.}
            \end{array}\right.
        \]
    \end{enumerate}
\end{corollary}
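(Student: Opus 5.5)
The plan is to deduce the corollary from Theorem~\ref{thm: square zero in soN spN} through the dictionary of Corollary~\ref{coro: cone over cone over Fano var}, exactly as Corollary~\ref{coro:Leg of square zero in slN} was deduced in the $\mathfrak{sl}_{N}$ case.

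\textbf{Step 1: Legendrian components correspond to $d_{\epsilon}$-dimensional components of $\widehat{F}(v)$.} By (\ref{eqn:ad soN and spN}) we have $\dim Z_{\dbf} = rN-r^{2}-\epsilon r-1 = 2n+1$, so $n+1 = (rN-r^{2}-\epsilon r)/2 = d_{\epsilon}$. By Corollary~\ref{coro: cone over cone over Fano var}, Legendrian components of $F(\overline{Z}_{\dbf},\,z)$ are in bijection with the $d_{\epsilon}$-dimensional irreducible components of $\widehat{F}(v)$. This bijection is $\Stab(z)$-equivariant, since the bundle map there is $\Stab_{G}(z)$-equivariant.

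\textbf{Step 2: These components are closures of maximal components of the fibres over the orbits $\Ocal_{x}$.} The target of $\Psi_{\epsilon}$ is the finite union of the orbits $\Ocal_{x}$. Hence every component of $\widehat{F}(v)$ is the closure of a component of some $(\Psi_{\epsilon})^{-1}(\Ocal_{x})$. By the inequality in Theorem~\ref{thm: square zero in soN spN}, $\dim \widehat{F}(v)\le d_{\epsilon}$. Therefore the $d_{\epsilon}$-dimensional components of $\widehat{F}(v)$ are exactly the closures of the $d_{\epsilon}$-dimensional components of the various $(\Psi_{\epsilon})^{-1}(\Ocal_{x})$. This is the same argument as after Corollary~\ref{coro:Leg of square zero in slN}.

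Each such preimage is $\Stab(z)$-stable. So when it is irreducible (case (\ref{item1:thm: square zero in soN spN})), its closure is an invariant component. Case (\ref{item3:thm: square zero in soN spN}) contributes two invariant components. Case (\ref{item2:thm: square zero in soN spN}) contributes two components forming a single $\Stab_{\Sp_{N}}(z)$-orbit.

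\textbf{Step 3: Count over the admissible $x$ of Assumption~\ref{assumption:notation for soN and spN}.}

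For $\epsilon=1$ (so $r$ and $x$ are even, $0\le x\le r/2$):
\begin{itemize}
\item If $N=2r$, case (b) applies to every admissible $x$, giving $\lfloor r/4\rfloor+1$ components.
\item If $N=2r+1$, only case (c) with $r=2x$ can occur, and this needs $r/2$ even. So we get $0$ components if $r/2$ is odd and $1$ otherwise.
\item If $N=2r+2$, each even $x<r/2$ gives two components by (\ref{item3:thm: square zero in soN spN}), and $x=r/2$ gives one more when $r/2$ is even. In both parities of $r/2$ the total is $r/2+1$.
\item If $N\ge 2r+3$, exactly one of (c) (when $r/2$ is even) or (d) (when $r/2$ is odd, with $x=r/2-1$) applies, giving $1$.
\end{itemize}
All these components are invariant, so the orbit count agrees with the component count.

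For $\epsilon=-1$ (so $0\le x\le\lfloor r/2\rfloor$):
\begin{itemize}
\item If $N=2r$, case (a) gives one component for each $x<r/2$. If $r$ is odd this yields $\lfloor r/2\rfloor+1$ components. If $r$ is even it yields $r/2$ components, plus two from $x=r/2$ by (\ref{item2:thm: square zero in soN spN}), for a total of $r/2+2$ components and $r/2+1$ orbits.
\item If $N\ge 2r+2$, only (\ref{item2:thm: square zero in soN spN}) can apply, and only when $r$ is even. This gives $2$ components forming one orbit, and $0$ components otherwise.
\end{itemize}

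\textbf{Main obstacle.} The one delicate point is the bookkeeping of overlapping cases. For $\epsilon=1$, case (c) ($r=2x$) coincides with case (b) when $N=2r$, and it can also occur alongside (\ref{item3:thm: square zero in soN spN}) when $N=2r+2$. These overlaps must be counted once. I would handle this by fixing $N$ and enumerating the admissible $x$ explicitly in each regime.
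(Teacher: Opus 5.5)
Your proposal is correct and is essentially the paper's argument. The paper deduces this corollary directly from Theorem~\ref{thm: square zero in soN spN} via Corollary~\ref{coro: cone over cone over Fano var} and $n+1=d_{\epsilon}$, exactly as in Corollary~\ref{coro:Leg of square zero in slN}, and your enumeration of the admissible $x$ in each regime reproduces all the stated counts, including the overlaps and the orbit counts. The one point you leave implicit is that Corollary~\ref{coro: cone over cone over Fano var} assumes $F(\overline{Z}_{\dbf},\,z)\neq\emptyset$; the two empty cases ($\epsilon=-1$ with $r=1$, and $\epsilon=1$ with $N=5$) both fall under a ``$0$'' entry, so your count is still right there.
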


In the rest of this section, we sketch the proof of Theorem~\ref{thm: square zero in soN spN}.
In fact, the details are similar with those of the previous section, and so we only explain the main differences.
Thanks to Proposition~\ref{prop:components of Ox in soN spN}, Theorem~\ref{thm: square zero in soN spN} is reduced to show that:
\begin{itemize}
    \item We have
    \begin{equation} \label{eqn:expected fib dim in soN spN}
        \dim (\Psi_{\epsilon})^{-1}(A_{x}) \le d_{\epsilon} - x(r-x) = \frac{rN-r^{2}-\epsilon r - 2xr + 2x^{2}}{2}
    \end{equation}
    with the equality exactly in the cases (\ref{item1:thm: square zero in soN spN}-\ref{item3:thm: square zero in soN spN}) in Theorem~\ref{thm: square zero in soN spN}, and
    \item In the cases (\ref{item1:thm: square zero in soN spN}-\ref{item3:thm: square zero in soN spN}), $(\Psi_{\epsilon})^{-1}(A_{x})$ is of pure dimension, and the number of irreducible components is 1, 1, and 2, respectively.
    In the last case, each irreducible component is invariant under the action of the stabilizer of $A_{x}$ in $\Stab_{\SI(\Omega_{1})}(z)$ (cf. (\ref{eqn:stab of base pt in SON SpN})).
\end{itemize}

As in the case of $\mathfrak{sl}_{N}$, if $N=2r$ or $r=2x$, then the conditions (\ref{eqn:F(v) when N not 2r in soN spN}, \ref{eqn:F(v) when N = 2r in soN spN}) are simpler and $(\Psi_{\epsilon})^{-1}(A_{x})$ is isomorphic to the affine space of the desired dimension (\ref{eqn:expected fib dim in soN spN}).
Even in the case where $N > 2r$ and $r > 2x$, the argument of the previous section still works, showing that
\[
    (\Psi_{\epsilon})^{-1}(A_{x}) \simeq \CC^{x(N-r-x-\epsilon)} \times M'_{\epsilon}(N-2r,\,r-2x)
\]
in the notation of the following Lemma~\ref{lemma: dimension in soN spN}.
Now since
\[
    \frac{rN-r^{2}-\epsilon r - 2xr + 2x^{2}}{2} - x(N-r-x-\epsilon) = \frac{1}{2}(r-2x)(N-r-2x-\epsilon),
\]
we complete the proof of Theorem~\ref{thm: square zero in soN spN} by (\ref{eqn:expected fib dim in soN spN}) and the following Lemma~\ref{lemma: dimension in soN spN}.

\begin{lemma} \label{lemma: dimension in soN spN}
    Let $b \ge 0$ and $c \ge 1$ be integers such that $b$ is even whenever $\epsilon=-1$, and $c$ is even whenever $\epsilon = 1$.
    Define
    \[
        M_{\epsilon}(b,\,c) \coloneqq \left\{ (B,\,C) \in M_{c \times b} \times \sfr(\Omega'_{c,\,-\epsilon})^{\perp} : \rank \begin{pmatrix}
        B & C + t \cdot id_{c} \\
        0 & - (\Omega'_{b,\,\epsilon})^{-1}B^{tr} \Omega'_{c,\, -\epsilon}
    \end{pmatrix} \le c, \quad \forall t \in \CC \right\},
    \]
    and
    \[
        M'_{\epsilon}(b,\,c) \coloneqq \left\{(B,\,C) \in M_{\epsilon}(b,\,c) : B (\Omega'_{b,\,\epsilon})^{-1}B^{tr} = 0\right\}.
    \]
    Consider the $\CI(\Omega'_{c,\,-\epsilon}) \times \SI(\Omega'_{b,\,\epsilon})$-action defined by
    \[
        (g,\,h). (B,\,C) = (gBh^{tr},\,gCg^{-1}).
    \]
    Then 
        \begin{equation} \label{eqn:dim M (b c) in soN spN}
            \dim M_{\epsilon}(b,\,c) = \left\{
            \begin{array}{cc}
                 \frac{c(c-\epsilon)}{2}& \text{if $b=0$,} \\
                 \frac{c^{2}+bc-b+1}{2}& \text{if $b \ge 1$ and $c$ is odd,} \\
                 \frac{c^{2}+bc}{2}& \text{if $b \ge 1$ and $c$ is even,}
            \end{array}\right.
        \end{equation}
        and
        \[
            \dim M'_{\epsilon}(b,\,c) \le \frac{c(c+b-\epsilon)}{2}
        \]
        with the equality if and only if one of the following holds:
        \begin{enumerate}
            \item $b = 0$.
            In this case, $M'_{\epsilon}(0,\,c) = M_{\epsilon}(0,\,c) = \sfr(\Omega'_{c,\,-\epsilon})^{\perp}$.

            \item $\epsilon = 1$, $b \ge 3$ and $c=2$.
            In this case, $M'_{1}(b,\,2)$ is irreducible.
            
            \item \label{item3:lemma: dimension in soN spN} $\epsilon = 1$, $b = 2$ and $c \ge 2$.
            In this case, $M'_{1}(2,\,c)$ consists of all pairs $(B,\,C) \in M_{c \times 2} \times \sfr(\Omega'_{c,\,-1})^{\perp}$ such that $B$ is of the form
            \[
                \begin{pmatrix}
                    0_{c \times 1} & *_{c \times 1}
                \end{pmatrix} \quad \text{or} \quad \begin{pmatrix}
                    *_{c \times 1} & 0_{c \times 1}
                \end{pmatrix}.
            \]
            Moreover, each of the two irreducible components of $M'_{\epsilon}(2,\,c)$ is invariant under the $\CI(\Omega'_{c,\,-1}) \times \SI(\Omega'_{2,\,1})$-action.
        \end{enumerate}
\end{lemma}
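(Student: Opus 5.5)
The plan is to imitate the proof of Lemma~\ref{lemma: dimension in slN}: stratify by the rank of $B$, compute the dimension of each stratum, and then compare these dimensions with the stated bound.

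\emph{Step 1 (the zero-block vanishing, as in part (1) of Lemma~\ref{lemma: dimension in slN}).} Write $D \coloneqq -(\Omega'_{b,\,\epsilon})^{-1}B^{tr}\Omega'_{c,\,-\epsilon}$. For $(B,\,C)\in M_\epsilon(b,\,c)$, look at the $(c+1)\times(c+1)$ minors obtained by taking one column of $B$ and one row of $D$, exactly as in that lemma. The coefficient of $t^{c}$ in such a minor is the corresponding entry of the lower-left block, which is zero here. The real content of the rank condition therefore lies in the lower-order coefficients.

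\emph{Step 2 (reduction by the rank of $B$).} Consider the equivariant projection $(B,\,C)\mapsto B$. The group $\CI(\Omega'_{c,\,-\epsilon})\times\SI(\Omega'_{b,\,\epsilon})$ does not act transitively on matrices of fixed rank, so I would stratify $M_{c\times b}$ by finer invariants: the rank $y$ of $B$, together with the rank and isotropy type of the image of $B$ and of $D$ with respect to the two forms.

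For each stratum I will choose a normal form $B_y$ of shape $\begin{pmatrix}0 & id_y\\ 0&0\end{pmatrix}$, up to the forms. Substituting this into the rank condition should reproduce the structure seen in the $\mathfrak{sl}_N$ case: the fiber over $B_y$ is an affine space times a smaller $M_\epsilon(b',\,c-y)$ or $M'_\epsilon(b',\,c-y)$, with $b'$ determined by the isotropy of the image of $B_y$. This yields an induction on $b+c$ for formula \eqref{eqn:dim M (b c) in soN spN}. The base case $b=0$ is simply $\sfr(\Omega'_{c,\,-\epsilon})^{\perp}$, of dimension $c(c-\epsilon)/2$.

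\emph{Step 3 (the bound for $M'_\epsilon$).} The extra condition $B(\Omega'_{b,\,\epsilon})^{-1}B^{tr}=0$ says that the row space of $B$ is totally isotropic in $\CC^b$. Its rank $y$ is therefore at most $b/2$, and the strata become the orbits of isotropic rank-$y$ matrices. Their dimension is $yc + y(b-y) - \dim(\text{orthogonal/symplectic Grassmannian fiber correction})$.

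For each stratum I add its dimension to the fiber dimension from Step 2. This produces a quadratic in $y$, as in the $\mathfrak{sl}_N$ computation, and I will then check the following:
\begin{enumerate}
\item It is at most $c(c+b-\epsilon)/2$.
\item Equality holds only at $y=0$ when $b=0$, or in the boundary cases $\epsilon=1$ with $c=2$ or $b=2$.
\end{enumerate}

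In case (3), when $\epsilon=1$ and $b=2$, the form $\Omega'_{2,\,1}$ has exactly two isotropic lines. The isotropic rows of $B$ must all lie in one of these lines, which gives the two components $\begin{pmatrix}0&*\end{pmatrix}$ and $\begin{pmatrix}*&0\end{pmatrix}$. One then checks that the rank condition imposes nothing further there: $D$ becomes supported in the complementary position, and the relevant minors vanish. Invariance of each component follows because $\SI(\Omega'_{2,\,1})\cong\CC^\times$ acts diagonally on the two lines, so it does not swap them.

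In case (2), when $c=2$ and $\epsilon=1$, $\sfr(\Omega'_{2,\,-1})^\perp$ consists of scalars. The rank condition then reduces to a single condition, and irreducibility comes from the isotropic quadric cone in $\CC^b$ being irreducible for $b\ge3$.

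\emph{Main obstacle.} I expect the hardest part to be the precise bookkeeping in Step 2. This means identifying, for a normal form $B_y$, exactly which smaller $M_\epsilon$ appears; the odd-$c$ correction term $(1-b)/2$ in \eqref{eqn:dim M (b c) in soN spN} shows that parity subtleties enter. I would first work out small cases ($c=1,2$ and $b=1,2$) by hand to pin down the recursion before running the general induction.
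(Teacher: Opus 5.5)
There is a genuine gap, and it sits exactly at what you call the main obstacle. Step~2 never determines what the rank condition actually imposes, and without that the stratification you describe does not exist. In Step~1 you note that the top coefficient of the minors is vacuous, but you never extract the lower-order content.

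On $M_{c\times b}$ the group $\CI(\Omega'_{c,\,-\epsilon})\times\SI(\Omega'_{b,\,\epsilon})$ in general has continuous moduli: the eigenvalues of $DB$ up to a common scalar are invariants. So normal forms $\begin{pmatrix}0&id_y\\0&0\end{pmatrix}$ ``up to the forms'' exist only after you show that the rank condition forces the columns of $B$ to span a totally isotropic subspace for $\Omega'_{c,\,-\epsilon}$.

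The paper gets the first instance of this from its Claim: each column of $B$ is isotropic, proved via an explicit $(c+1)\times(c+1)$ minor when $\epsilon=-1$. It then fibres $M_{\epsilon}(b,\,c)$ over the first column of $B$, which lies in the isotropic cone $V(\Omega'_{c,\,-\epsilon})$ with only two orbits. This yields $\dim M_{\epsilon}(b,\,c)=\max\bigl(\dim M_{\epsilon}(b-1,\,c),\ b+2c-2+\dim M_{\epsilon}(b,\,c-2)\bigr)$.

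A clean way to see the whole condition: when $C+t\cdot id_c$ is invertible, a Schur complement shows the rank equals $c+\rank\bigl(D(C+t\cdot id_c)^{-1}B\bigr)$. So the condition is $B^{tr}\Omega'_{c,\,-\epsilon}C^{k}B=0$ for all $k\ge 0$. Since $C$ is self-adjoint for $\Omega'_{c,\,-\epsilon}$, this says the $C$-stable span of the columns of $B$ is totally isotropic.

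This also shows that your predicted fibre is wrong. Fixing an isotropic image $U$ of dimension $y$ also fixes $y$ rows of $D$. The residual problem lives on $U^{\perp}/U$, of dimension $c-2y$. Since $C$ need not preserve $U$, the classes of $C(U)$ enter as $y$ new columns. That is why the paper's fibres are an affine space times $M_{\epsilon}(b,\,c-2)$, respectively $M_{1}(a,\,c-2a)$. They are not $M_{\epsilon}(b',\,c-y)$, which for $\epsilon=1$ and $y$ odd is not even defined.

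Beyond this, the assertions that carry the content of the lemma are only announced.
\begin{itemize}
\item For $\epsilon=-1$ nothing new is needed once the formula for $\dim M_{-1}(b,\,c)$ is known, since then $\dim M'_{-1}(b,\,c)\le\dim M_{-1}(b,\,c)<c(c+b+1)/2$ for $b\ge 1$.
\item For $\epsilon=1$, however, the strict inequality $\dim M'_{1}(b,\,c)<c(c+b-1)/2$ for $b\ge 3$, $c\ge 4$ is a real computation. The paper needs the normal forms $\begin{pmatrix} id_a&0\\0&0\end{pmatrix}$ with $a\le\min(b/2,\,c/2)$, the orbit dimension $-2a^{2}+(b+c)a$, and the fibre $\CC^{a(c-a)+a(a-1)/2}\times M_{1}(a,\,c-2a)$. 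It then compares the resulting quadratic in $a$ in three cases ($b=c$, $b<c$, $b>c$). ``I will then check'' does not replace this.
\item The case $b=1$ also has to be handled: there $B=0$ is forced and the bound is strict. So your equality list should say $c=2$ with $b\ge 3$, not $c=2$ alone.
\item Your treatment of cases (2) and (3) is essentially right, but ``the relevant minors vanish'' in case (3) needs a reason. One option is the paper's remark that the $(2c+2)\times(2c+2)$ matrix built from $(B,\,C+t\cdot id_c)$ is square-zero in $\mathfrak{so}_{2c+2}$, whose largest square-zero orbit is $O_{[2^{c},\,1^{2}]}$ because $c$ is even. The other is to use directly the skew-symmetry of $\Omega'_{c,\,-1}(C+t\cdot id_c)^{-1}$.
\end{itemize}
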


\begin{remark}\label{rmk: action in lemma: dimension in soN spN}
    Of course $M_{\epsilon}(b,\,c)$ and $M'_{\epsilon}(b,\,c)$ are equipped with the $\CI(\Omega'_{c,\,-\epsilon}) \times \CI(\Omega'_{b,\,\epsilon})$-actions defined by the same formula $(g,\,h).(B,\,C) = (gBh^{tr},\, gCg^{-1})$.
    To utilize Lemma~\ref{lemma: dimension in soN spN} in the proof of Theorem~\ref{thm: square zero in soN spN}, however, one need to consider the $\CI(\Omega'_{c,\,-\epsilon}) \times \SI(\Omega'_{b,\,\epsilon})$-action, due to (\ref{eqn:stab of base pt in SON SpN}).
\end{remark}

\begin{proof}[Proof of Lemma~\ref{lemma: dimension in soN spN}]
    First we compute $\dim M_{\epsilon}(b,\,c)$.
    If $b = 0$, then $M_{\epsilon}(0,\,c) = \sfr(\Omega'_{c,\,-\epsilon})^{\perp}$ and thus it is of dimension $c(c-\epsilon)/2$.
    If $c = 1$ (and so $\epsilon = -1$), then for any $(B,\,C) \in M_{-1}(b,\,1)$, $B = 0$ and so $M_{-1}(b,\,1) \simeq \CC$.
    \begin{claim}\label{claim:lemma: dimension in soN spN}
        If $b\ge 1$, then for any $(B,\,C) \in M_{\epsilon}(b,\,c)$, every column of $B$ is $\Omega'_{c,\,-\epsilon}$-isotropic.
    \end{claim}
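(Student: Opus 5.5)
The plan is to reuse the minor argument from part (\ref{item1:lemma: dimension in slN}) of Lemma~\ref{lemma: dimension in slN}, now using the lower-left block that is forced by the symmetry. For $\epsilon = 1$ the form $\Omega'_{c,\,-1}$ is skew-symmetric, so every vector is isotropic and there is nothing to prove. The real case is therefore $\epsilon = -1$, where $\Omega'_{c,\,1}$ is symmetric. The argument below works for both signs and in fact gives more: $B^{tr}\Omega'_{c,\,-\epsilon}B = 0$.

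Write
\[
  D \coloneqq -(\Omega'_{b,\,\epsilon})^{-1}B^{tr}\Omega'_{c,\,-\epsilon} \in M_{b \times c},
\]
let $\beta_{j}$ be the $j$th column of $B$, and let $\rbf_{i}(D)$ be the $i$th row of $D$. For $1 \le i,\,j \le b$, the $(c+1)\times(c+1)$ matrix
\[
  \begin{pmatrix} \beta_{j} & C + t\cdot id_{c} \\ 0 & \rbf_{i}(D) \end{pmatrix}
\]
is a submatrix of the matrix in the definition of $M_{\epsilon}(b,\,c)$. That matrix has rank $\le c$ for every $t$, so this $(c+1)\times(c+1)$ determinant vanishes identically in $t$.

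Next I extract the top coefficient in $t$ by Laplace expansion along the last row.
\begin{itemize}
  \item The entry $D_{ik}$ sits in column $k+1$ and carries cofactor sign $(-1)^{c+k}$.
  \item Its minor is $\det\begin{pmatrix} \beta_{j} & (C+t\cdot id_{c}) \text{ with column } k \text{ deleted}\end{pmatrix}$.
  \item Moving $\beta_{j}$ into position $k$ costs $(-1)^{k-1}$. The resulting matrix is $t\cdot id_{c}$ with its $k$th column replaced by $\beta_{j}$, plus terms of lower order in $t$.
  \item Its $t^{c-1}$ coefficient is therefore $(\beta_{j})_{k}$.
\end{itemize}
The combined sign is $(-1)^{c+2k-1} = (-1)^{c-1}$, which is independent of $k$. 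So the determinant is a polynomial of degree at most $c-1$ in $t$, and its $t^{c-1}$ coefficient is $(-1)^{c-1}\sum_{k} D_{ik}(\beta_{j})_{k} = (-1)^{c-1}(DB)_{ij}$. Vanishing for all $i,\,j$ gives $DB = 0$.

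Since $\Omega'_{b,\,\epsilon}$ is invertible, $DB = 0$ is equivalent to $B^{tr}\Omega'_{c,\,-\epsilon}B = 0$. Its diagonal entries are exactly $\beta_{j}^{tr}\Omega'_{c,\,-\epsilon}\beta_{j} = 0$, which is the claim. In fact it shows the columns of $B$ span an $\Omega'_{c,\,-\epsilon}$-isotropic subspace, and I expect that stronger form to be what the subsequent dimension count uses.

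The only delicate step is the sign bookkeeping. I need to confirm that the $t^{c-1}$ contributions from different $k$ carry the same sign, so that they assemble into the single matrix entry $(DB)_{ij}$ rather than a signed mixture. The computation above indicates that they do.
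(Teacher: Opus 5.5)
Your argument is correct. It rests on the same $(c+1)\times(c+1)$ minor as the paper: a column of $B$, the block $C+t\cdot id_{c}$, and a row of $-(\Omega'_{b,\,\epsilon})^{-1}B^{tr}\Omega'_{c,\,-\epsilon}$. The execution differs, though. The paper reduces to $\epsilon=-1$ and argues by contradiction. It uses the $\CI(\Omega'_{c,\,1})\times\SI(\Omega'_{b,\,-1})$-action to normalize a non-isotropic column to $B_{j}^{tr}=(1\ 0\ \cdots\ 0\ 1)$, reads off the matching $(b-j+1)$th row of the lower block as $\pm(1\ 0\ \cdots\ 0\ 1)$, and asserts that this one determinant is not identically zero in $t$. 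That is your computation for the single pair $i=b-j+1$, where $(DB)_{b-j+1,\,j}=\pm B_{j}^{tr}\Omega'_{c,\,1}B_{j}$. You instead extract the $t^{c-1}$ coefficient of every such minor in general. Your sign bookkeeping is right: $(-1)^{c+k}(-1)^{k-1}=(-1)^{c-1}$ does not depend on $k$, and there is no $t^{c}$ term because the entry below $\beta_{j}$ is $0$. Your route buys three things. It needs no transitivity of the conformal orthogonal group on non-isotropic vectors. It treats both signs of $\epsilon$ uniformly. And it gives the stronger conclusion $B^{tr}\Omega'_{c,\,-\epsilon}B=0$, i.e.\ the column span of $B$ is isotropic. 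This stronger form is not vacuous for $\epsilon=1$, where the stated claim is automatic because $\Omega'_{c,\,-1}$ is skew. It also directly gives the later observation $\rank B\le c/2$ for $(B,\,C)\in M_{1}(b,\,c)$, which the paper states without proof.
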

    \begin{proof}[Proof of Claim~\ref{claim:lemma: dimension in soN spN}]
        We may assume that $\epsilon = -1$ and $c \ge 2$.
        Suppose that a column of $B$, say the $j$th column $B_{j}$ ($1 \le j \le b$), is not $\Omega'_{c,\,1}$-isotropic.
        Up to the $\CI(\Omega'_{c,\,1}) \times \SI(\Omega'_{b,\,-1})$-action, we may assume that $(B_{j})^{tr} = (1 \ 0 \ \cdots \ 0 \ 1)$.
        Then the $(b-j+1)$th row of
        \[
            - (\Omega'_{b,\,-1})^{-1}B^{tr} \Omega'_{c,\,1} = \begin{pmatrix}
                & & & & 1 \\
                & & & -1 & \\
                & & \iddots & & \\
                & 1 & & & \\
                -1& & & & \\
            \end{pmatrix} B^{tr} \begin{pmatrix}
                &&1 \\
                &\iddots& \\
                1&&
            \end{pmatrix}
        \]
        is $((-1)^{j} \ 0 \ \cdots \ 0 \ (-1)^{j})$.
        By the definition of $M_{\epsilon}(b,\,c)$, we have
        \[
            \det \begin{pmatrix}
                1 & \\
                0 & \\
                \vdots & & & C+t \cdot id_{c} \\
                0 & \\
                1 &\\
                0 & 1 & 0 & \cdots & 0 & 1
            \end{pmatrix} =0, \quad \forall t \in \CC,
        \]
        which is a contradiction.
    \end{proof}
    Thanks to Claim~\ref{claim:lemma: dimension in soN spN}, whenever $b\ge1$, we have a surjective morphism
    \[
        \Xi_{\epsilon}:M_{\epsilon}(b,\,c) \rightarrow V(\Omega'_{c,\,-\epsilon}) \subset \CC^{c}, \quad (B,\,C) \mapsto (\text{the 1st column of $B$}).
    \]
    where
    \[
        V(\Omega'_{c,\,-\epsilon}) \coloneqq \{\text{$\Omega'_{c,\,-\epsilon}$-isotropic elements in $\CC^{c}$}\} = \left\{
        \begin{array}{cc}
            \CC^{c} & \text{if $\epsilon = 1$,} \\
            \text{affine hyperquadric} & \text{if $\epsilon = -1$.}
        \end{array}\right.
    \]
    $\Xi_{\epsilon}$ is equivariant under the actions of $\CI(\Omega'_{c,\,-\epsilon}) \times \SI(\Omega'_{b,\,\epsilon})$ and $\CI(\Omega'_{c,\,-\epsilon})$.
    Moreover, $V(\Omega'_{c,\,-\epsilon})$ has an orbital decomposition
    \[
        V(\Omega'_{c,\,-\epsilon}) =\{0\} \cup \CI(\Omega_{c,\,-\epsilon}).\begin{pmatrix}
            1 \\
            0 \\
            \vdots \\
            0
        \end{pmatrix}.
    \]
    It is not difficult to show that
    \[
        (\Xi_{\epsilon})^{-1}(0) \simeq M_{\epsilon}(b-1,\,c).
    \]
    Using Lemma~\ref{lemma: dimension in slN}(\ref{item1:lemma: dimension in slN}), one can also show that
    \[
        (\Xi_{\epsilon})^{-1}\begin{pmatrix}
            1 \\
            0 \\
            \vdots \\
            0
        \end{pmatrix} \simeq \left\{ \begin{array}{cc}
            \CC^{b+1 - (\epsilon + 1)/2} & \text{if $c=2$,} \\
            \CC^{b+c - 1 - (\epsilon + 1)/2} \times M_{\epsilon}(b,\,c-2) & \text{if $c\ge 3$.}
        \end{array} \right.
    \]
    Since
    \[
        \dim M_{\epsilon}(b,\,c) = \max\left(\dim (\Xi_{\epsilon})^{-1}(0),\, (\Xi_{\epsilon})^{-1}(V(\Omega'_{c,\,-\epsilon}) \setminus \{0\})\right)
    \]
    and $\dim V(\Omega'_{c,\,-\epsilon}) = c + \frac{\epsilon-1}{2}$, we see that
    \[
        \dim M_{\epsilon}(b,\,c) = \left\{ \begin{array}{cc}
            \max(\dim M_{\epsilon}(b-1,\,2),\, b+2) & \text{if $b\ge1$ and $c=2$,} \\
            \max\left(\dim M_{\epsilon}(b-1,\,c),\, b+2c-2 + \dim M_{\epsilon}(b,\,c-2)\right) & \text{if $b\ge1$ and $c\ge 3$.}
        \end{array} \right.
    \]
    Recalling that $\dim M_{\epsilon}(0,\,c) = c(c-\epsilon)/2$ ($\forall c \ge 1$), an easy induction on $b \ge 1$ shows that
    \[
        \dim M_{\epsilon}(b,\,2) = \left\{
        \begin{array}{cc}
            2-\epsilon & \text{if $b=0$,} \\
            b+2 & \text{if $b\ge1$.}
        \end{array}
        \right.
    \]
    Similarly, recall that $\dim M_{-1}(b,\,1) = 1$ ($\forall b \ge 0$), and then the dimension formula (\ref{eqn:dim M (b c) in soN spN}) for $M_{\epsilon}(b,\,c)$ in the statement follows from an induction on $b+c \ge 1$ (details omitted).

    Next, consider $M'_{\epsilon}(b,\,c)$.
    If $\epsilon = -1$, then by (\ref{eqn:dim M (b c) in soN spN}), we have
    \[
        \dim M'_{-1}(b,\,c) \le \dim M_{-1}(b,\,c) \le \frac{c^{2}+bc + c}{2},
    \]
    and the second equality holds exactly when $b = 0$.
    Hence we may assume that $\epsilon = 1$ (and so $c$ is even).
    Then we need to show that
    \[
        M'_{1}(b,\,c) \le \frac{c(c+b-1)}{2}
    \]
    with the equality if and only if (1) $b= 0$ (2) $b \ge 3$ and $c=2$ or (3) $b=2$ and $c \ge 2$.
    Again it is enough to consider $b \ge 1$.
    \begin{itemize}
        \item If $b=1$, then for a $c \times 1$ matrix $B$, we have
    \begin{equation}\label{eqn:lemma: dimension in soN spN}
        0=B (\Omega'_{1,\,1})^{-1}B^{tr} = \begin{pmatrix}
            B_{11} \\
            \vdots \\
            B_{c1}
        \end{pmatrix}
        \begin{pmatrix}
            B_{11} & \cdots & B_{c1}
        \end{pmatrix} \quad \Leftrightarrow \quad B=0,
    \end{equation}
    and hence for any $(B,\,C) \in M_{1}(1,\,c)$, we have $B = 0$ and so
    \[
        M_{1}(1,\,c) \simeq \sfr(\Omega'_{c,\,-1})^{\perp}, \quad \dim M_{1}(1,\,c) = \frac{c(c-1)}{2} < \frac{c^{2}}{2}.
    \]

    \item If $b = 2$, then for a $c \times 2$ matrix $B$, we have
    \begin{equation}\label{eqn2:lemma: dimension in soN spN}
        0=B (\Omega'_{2,\,1})^{-1}B^{tr} = \begin{pmatrix}
            B_{11} & B_{12} \\
            \vdots & \vdots \\
            B_{c1} & B_{c2}
        \end{pmatrix}
        \begin{pmatrix}
            0 & 1 \\
            1 & 0
        \end{pmatrix}
        \begin{pmatrix}
            B_{11} & \cdots & B_{c1} \\
            B_{12} & \cdots & B_{c2}
        \end{pmatrix} \ \Leftrightarrow \ B_{i1} = 0 \ \forall i, \ \text{or} \ B_{i2} = 0 \ \forall i.
    \end{equation}
    For such a $B$, $\{B\} \times \sfr(\Omega'_{c,\,-1})^{\perp} \subset M_{1}(2,\,c)$ for any $c \ge 2$.
    Indeed, for any $C \in \sfr(\Omega'_{c,\,-1})^{\perp}$ and $t \in \CC$, we have a square-zero matrix
    \[
        \begin{pmatrix}
           0 & B & C + t \cdot id_{c} \\
           0 & 0 & - (\Omega'_{2,\,1})^{-1}B^{tr}\Omega'_{c,\,-1} \\
           0 & 0 & 0
        \end{pmatrix} \in \sfr\begin{pmatrix}
            && \Omega'_{c,\,-1} \\
            & \Omega'_{2,\,1}& \\
            -\Omega'_{c,\,-1}&&
        \end{pmatrix} = \mathfrak{so}_{2c+2} \quad (\text{cf. (\ref{eqn:base pt in soN and spN})}),
    \]
    and this is of rank at most $c$, since the largest square-zero nilpotent orbit closure in $\mathfrak{so}_{2c+2}$ is $\overline{O}_{[2^{c},\,1^{2}]}$ (as $c$ is even).
    Therefore each form of $B$ constitutes an irreducible component of $M'_{1}(2,\,c)$ that is of dimension $c + \dim \sfr(\Omega_{c,\,-1})^{\perp} = \frac{c(c+1)}{2}$.
    Note that the two irreducible components are $\CI(\Omega'_{c,\,-1}) \times \SI(\Omega'_{2,\,1})$-invariant since the group is connected.

    \item Assume that $b\ge 3$.
    Observe that for $(B,\,C) \in M_{1}(b,\,c)$, $\rank B \le c/2$.
    In particular, when $c = 2$, $B$ is of rank $\le 1$.
    Conversely, since
    \[
        \sfr(\Omega'_{2,\,-1})^{\perp} = \left\{\begin{pmatrix}
        a & 0 \\
        0 & a
    \end{pmatrix} : a \in \CC\right\},
    \]
    it is easy to show that for any $2 \times b$ matrix $B$ of rank $\le 1$, we have $\{B\} \times \sfr(\Omega'_{2,\,-1})^{\perp} \subset M_{1}(b,\,2)$.
    Moreover, if we denote by $U_{i}$ ($i=1,\,2$) the open subset of $M'_{1}(b,\,2)$ consisting of $(B,\,C) \in M'_{1}(b,\,2)$ such that the $i$th row of $B$ is nonzero, then $M'_{1}(b,\,2) = U_{1} \cup U_{2}$ and
    \[
        U_{i} \simeq \CC \times V(\Omega'_{b,\,1}) \times \sfr(\Omega'_{2,\,-1})^{\perp} \quad (\forall i =1,\,2), \quad U_{1} \cap U_{2} \simeq \CC^{\times} \times V(\Omega'_{b,\,1}) \times \sfr(\Omega'_{2,\,-1})^{\perp},
    \]
    which are irreducible (as $b \ge 3$) and of dimension $b+1$.
    It follows that $M'_{1}(b,\,2)$ is irreducible and of dimension $b+1$.

    It remains to show that $\dim M'_{1}(b,\,c) < c(c+b-1)/2$ for $b \ge 3$ and $c \ge 4$ (with $c$ even).
    In this case, consider the morphism
    \[
        \xi = \xi_{b,\,c} : M'_{1}(b,\,c) \rightarrow M_{c \times b}, \quad (B,\,C) \mapsto B.
    \]
    The image of $\xi$ has nice representatives, as shown in the following Claim~\ref{claim2:lemma: dimension in soN spN}.
    Here, observe that $M'_{1}(b,\,c)$ is equipped with the $\CI(\Omega'_{c,\,-1}) \times \II(\Omega'_{b,\,1})$-action defined by the same formula $(g,\,h).(B,\,C) = (gBh^{tr},\, gCg^{-1})$ (cf. Remark~\ref{rmk: action in lemma: dimension in soN spN}).
    \begin{claim}\label{claim2:lemma: dimension in soN spN}
        Let $b \ge 1$ be an integer, and $c \ge 2$ an even integer.
        Then for any $(B,\,C) \in M'_{1}(b,\,c)$, there exists $(g,\,h) \in \CI(\Omega'_{c,\,-1}) \times \II(\Omega'_{b,\,1})$ such that $gBh^{tr}$ is one of
        \[
            \begin{pmatrix}
                id_{a} & 0_{a \times (b-a)} \\
                0_{(c-a) \times a} & 0_{(c-a) \times (b-a)}
            \end{pmatrix}, \quad 0 \le a \le \min(b/2, \, c/2).
        \]
    \end{claim}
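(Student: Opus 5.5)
The plan is to read off two isotropy conditions on $B$ from the definition of $M'_{1}(b,\,c)$, and then to normalize $B$ with Witt's theorem on each side followed by a Levi-type $\GL_{a}$-action. Throughout, $\Omega'_{c,\,-1}$ is a symplectic form on $\CC^{c}$ and $(\Omega'_{b,\,1})^{-1}=\Omega'_{b,\,1}$ is a nondegenerate symmetric form on $\CC^{b}$.

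\emph{Row space.} The defining equation $B(\Omega'_{b,\,1})^{-1}B^{tr}=0$ says that the row space $R\subset\CC^{b}$ of $B$ is totally isotropic for $\Omega'_{b,\,1}$. Hence $a\coloneqq\rank B\le b/2$.

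\emph{Column space.} Put $D\coloneqq -(\Omega'_{b,\,1})^{-1}B^{tr}\Omega'_{c,\,-1}$. The rank condition requires
\[
\rank\begin{pmatrix} B & C+t\cdot id_{c}\\ 0 & D\end{pmatrix}\le c
\]
for all $t$. For $t$ generic, $C+t\cdot id_{c}$ is invertible. Column operations then clear the $B$-block and leave $-D(C+t\cdot id_{c})^{-1}B$ in its place, so the rank equals $c+\rank\bigl(D(C+t\cdot id_{c})^{-1}B\bigr)$. Therefore $D(C+t\cdot id_{c})^{-1}B=0$ for generic $t$. Multiplying by $t$ and letting $t\to\infty$ gives $DB=0$, i.e.\ $B^{tr}\Omega'_{c,\,-1}B=0$. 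So the column space $W\subset\CC^{c}$ of $B$ is Lagrangian-isotropic, i.e.\ totally isotropic for the symplectic form $\Omega'_{c,\,-1}$, and $a\le c/2$. This is the step needing the most care (the Schur-complement rank count and the passage to the limit), but it is routine. It also justifies the remark ``$\rank B\le c/2$'' used just before the claim.

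\emph{Normalization.} For the antidiagonal forms, $\mathrm{span}(e_{1},\dots,e_{a})$ is totally isotropic in $\CC^{c}$ (for $a\le c/2$) and in $\CC^{b}$ (for $a\le b/2$). By Witt's theorem, $\Sp(\Omega'_{c,\,-1})\subset\CI(\Omega'_{c,\,-1})$ acts transitively on $a$-dimensional isotropic subspaces, and so does $\II(\Omega'_{b,\,1})$. Choose $g_{1}$ with $g_{1}W=\mathrm{span}(e_{1},\dots,e_{a})$ and $h_{1}$ carrying $R$ to $\mathrm{span}(e_{1},\dots,e_{a})$. The rows of $Bh_{1}^{tr}$ are the images of the rows of $B$ under $h_{1}$, because $(Bh_{1}^{tr})^{tr}=h_{1}B^{tr}$. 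Hence
\[
g_{1}Bh_{1}^{tr}=\begin{pmatrix}B_{0}&0\\0&0\end{pmatrix}, \qquad B_{0}\in\GL_{a}.
\]

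\emph{Making $B_{0}=id_{a}$.} Use the Levi element $g_{2}=\diag\bigl(k,\,id_{c-2a},\,k'\bigr)\in\Sp(\Omega'_{c,\,-1})$ for $k\in\GL_{a}$, where $k'$ is determined by $k$ through the antidiagonal form. It preserves $\mathrm{span}(e_{1},\dots,e_{a})$ and acts on it by $k$. Taking $k=B_{0}^{-1}$ and $h_{2}=id_{b}$ gives the normal form, with $0\le a\le\min(b/2,\,c/2)$. Since $(g_{2}g_{1})B(h_{1})^{tr}$ is realized by the stated action, this proves the claim.
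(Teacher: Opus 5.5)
Your proof is correct, but it is organized differently from the paper's. The paper argues by induction on $b$. It settles $b=1,2$ from the explicit equations (\ref{eqn:lemma: dimension in soN spN}) and (\ref{eqn2:lemma: dimension in soN spN}). For $b\ge 3$ it proceeds in four steps:
\begin{enumerate}
\item it uses the $\II(\Omega'_{b,\,1})$-homogeneity of $V(\Omega'_{b,\,1})\setminus\{0\}$ to bring $B$ to the shape $\bigl(\begin{smallmatrix}1&0\\0&B'\end{smallmatrix}\bigr)$;
\item it kills the last column of $B'$ by isotropy of the row space;
\item it kills the last row of $B'$ by the $t^{c}$-coefficient argument of Lemma~\ref{lemma: dimension in slN}(\ref{item1:lemma: dimension in slN});
\item it recurses on $(B'',\,C'')\in M'_{1}(b-2,\,c-2)$, peeling off one isotropic pair at a time.
\end{enumerate}

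You instead extract the global identity $B^{tr}\Omega'_{c,\,-1}B=0$ from the rank condition in one step, via the Schur complement for generic $t$ and then $t\to\infty$. So $B$ has an $\Omega'_{b,\,1}$-isotropic row space and an $\Omega'_{c,\,-1}$-isotropic column space. Witt's theorem on both sides, plus a Levi element of $\Sp(\Omega'_{c,\,-1})$, then gives the normal form directly.

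Your version buys several things:
\begin{itemize}
\item it is uniform in $b$, with no separate small cases;
\item it avoids normalizing a row and a column simultaneously at each inductive stage, which the paper leaves implicit;
\item it shows that the only information about $(B,\,C)$ needed is the pair of isotropy conditions on $B$;
\item it proves, on all of $M_{1}(b,\,c)$, the remark $\rank B\le c/2$ that the paper states without proof just before the claim.
\end{itemize}
The paper's version, in exchange, stays within the recursive framework used throughout that section: reduction to a smaller $M'_{1}$ and reuse of Lemma~\ref{lemma: dimension in slN}.

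One wording point in your column-space step: ``Lagrangian'' is not right there. $W$ is isotropic, but not necessarily maximal.
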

    \begin{proof}[Proof of Claim~\ref{claim2:lemma: dimension in soN spN}]
        If $b = 1,\, 2$, then the statement follows from (\ref{eqn:lemma: dimension in soN spN}) and (\ref{eqn2:lemma: dimension in soN spN}), respectively.
        Assume that $b \ge 3$.
        Since the rows of $B$ are elements of $V(\Omega'_{b,\,1})$ and $V(\Omega'_{b,\,1}) \setminus \{0\}$ is $\II(\Omega_{b,\,1})$-homogeneous, we have either $B = 0$ or, up to the $\CI(\Omega'_{c,\,-1}) \times \II(\Omega'_{b,\,1})$-action,
        \[
            B = \begin{pmatrix}
                1 & 0 \\
                0 & B' \in M_{(c-1) \times (b-1)}
            \end{pmatrix}.
        \]
        Since $B \Omega'_{b,\,1}B^{tr} = B (\Omega'_{b,\,1})^{-1}B^{tr} = 0$, the rows of $B$ span an $\Omega'_{b,\,1}-$isotropic subspace, and thus the last column of $B'$ is zero.
        Moreover, by Lemma~\ref{lemma: dimension in slN}(\ref{item1:lemma: dimension in slN}), the last row of $B'$ is zero.
        That is, we have
        \[
            B = \begin{pmatrix}
                1 & 0 & 0 \\
                0 & B'' \in M_{(c-2) \times (b-2)} &0 \\
                0 & 0 &0
            \end{pmatrix}
        \]
        and $(B'',\,C'') \in M'_{1}(b-2,\,c-2)$ for a minor $C''$ of $C$.
        Now the statement follows from an inductive argument.
    \end{proof}
    Thanks to Claim~\ref{claim2:lemma: dimension in soN spN}, we have
    \[
        \dim M'_{1}(b,\,c) = \max_{0 \le a \le \min(b/2,\,c/2)} \dim \xi^{-1}\left(\CI(\Omega'_{c,\,-1}) \times \SI(\Omega'_{b,\,1}) . \begin{pmatrix}
                id_{a} & 0 \\
                0 & 0
            \end{pmatrix} \right).
    \]
    Computing the stabilizer, one can easily show that
    \begin{align*}
        &\dim \left(\CI(\Omega'_{c,\,-1}) \times \SI(\Omega'_{b,\,1}) . \begin{pmatrix}
                id_{a} & 0 \\
                0 & 0
            \end{pmatrix}\right)\\
            &= (\frac{c^{2}+c}{2}+1) + \frac{b(b-1)}{2} - (1 + a(c-a) + \frac{(c-2a)(c-2a+1)}{2} + a(b-2a) + \frac{(b-2a)(b-2a-1)}{2} + a^{2}) \\
            &= -2a^{2} + (b+c)a.
    \end{align*}
    Since
    \[
        \xi^{-1}\begin{pmatrix}
                id_{a} & 0 \\
                0 & 0
            \end{pmatrix} \simeq \CC^{a(c-a) + a(a-1)/2} \times M_{1}(a,\,c-2a),
    \]
    and
    \[
        \dim M_{1}(a,\,c-2a) = \left\{ \begin{array}{cc}
                \frac{c(c-1)}{2} & \text{if $a=0$,} \\
                \frac{(c-2a)(c-a)}{2} & \text{if $a > 0$}
            \end{array} \right.
    \]
    by (\ref{eqn:dim M (b c) in soN spN}),
    we have
    \[
        \dim \xi^{-1}\begin{pmatrix}
                id_{a} & 0 \\
                0 & 0
            \end{pmatrix} = \left\{ \begin{array}{cc}
                \frac{c(c-1)}{2} & \text{if $a=0$,} \\
                \frac{a^{2}}{2} - \frac{c+1}{2}a + \frac{c^{2}}{2} & \text{if $a > 0$,}
            \end{array} \right.
    \]
    and thus
    \begin{equation} \label{eqn3:lemma: dimension in soN spN}
        \dim \xi^{-1} \left(\CI(\Omega'_{c,\,-1}) \times \SI(\Omega'_{b,\,1}) . \begin{pmatrix}
                id_{a} & 0 \\
                0 & 0
            \end{pmatrix}\right) = \left\{ \begin{array}{cc}
                \frac{c(c-1)}{2} & \text{if $a=0$,} \\
                \frac{-3a^{2}}{2} + \frac{2b+c-1}{2}a + \frac{c^{2}}{2} & \text{if $a > 0$.}
            \end{array} \right.
    \end{equation}
    We need to show that (\ref{eqn3:lemma: dimension in soN spN}) is strictly less than $c(c+b-1)/2$ for any $b \ge 3$, $c \ge 4$ and $0 \le a \le \min(b/2,\,c/2)$.
    The case of $a=0$ is clear and we may assume that $a > 0$.
    \begin{itemize}
        \item The case when $b = c$.
        In this case, it is easy to show that the quadratic polynomial in (\ref{eqn3:lemma: dimension in soN spN}) is maximal at $a = (2b+c-1)/6 = (3b-1)/6$ and the maximum is $(7b^{2}-2b)/8$, which is strictly less than $(2b^{2}-b)/2$ as $b \ge 3$.
        \item The case when $b \not= c$. In this case, (\ref{eqn3:lemma: dimension in soN spN}) is maximal at $a = \min(b/2,\,c/2)$.
        If $b<c$, then the maximum is $(b^{2}+2bc - 2b + 4c^{2})/8$, and
        \[
            \frac{b^{2}+2bc - 2b + 4c^{2}}{8} - \frac{c(c+b-1)}{2} = \frac{(b-2)(b-2c)}{8} < 0 \quad (\because 3 \le b < c).
        \]
        If $b> c$, then the maximum is $c(3c+4b-2)/8$, and
        \[
            \frac{c(3c+4b-2)}{8} - \frac{c(c+b-1)}{2} = \frac{c(-c+2)}{8} < 0 \quad (\because c \ge 4).
        \]
    \end{itemize}
    \end{itemize}
    Now the proof of Lemma~\ref{lemma: dimension in soN spN} is completed.
\end{proof}

\noindent Minseong Kwon

\noindent Morningside Center of Mathematics, Academy of Mathematics and Systems Science, Chinese Academy of Sciences, Beijing 100190, China

\noindent minseong@amss.ac.cn
\end{document}